\documentclass[12pt,a4paper]{amsart}
\usepackage[czech,english]{babel}
\usepackage[T1]{fontenc}
\usepackage{amssymb,amsxtra}
\usepackage[all,cmtip]{xy}
\usepackage{hyperref}

\pagestyle{plain}
\raggedbottom

\emergencystretch=2em

\textwidth=36pc
\calclayout

\hyphenation{con-tra-acyclic co-acyclic}

\newcommand{\+}{\protect\nobreakdash-}
\newcommand{\<}{\protect\nobreakdash--}

\renewcommand{\:}{\colon}
\newcommand{\rarrow}{\longrightarrow}
\newcommand{\ot}{\otimes}
\newcommand{\ocn}{\odot}

\DeclareMathOperator{\Hom}{Hom}
\DeclareMathOperator{\Ext}{Ext}
\DeclareMathOperator{\Tor}{Tor}
\DeclareMathOperator{\Tot}{Tot}
\DeclareMathOperator{\cone}{cone}
\DeclareMathOperator{\Fil}{\mathsf{Fil}}
\DeclareMathOperator{\Spec}{Spec}

\DeclareFontFamily{U}{mathb}{\hyphenchar\font45}
\DeclareFontShape{U}{mathb}{m}{n}{
      <5> <6> <7> <8> <9> <10> gen * mathb
      <10.95> mathb10 <12> <14.4> <17.28> <20.74> <24.88> mathb12
      }{}
\DeclareSymbolFont{mathb}{U}{mathb}{m}{n}
\DeclareFontSubstitution{U}{mathb}{m}{n}
\DeclareMathSymbol{\blackdiamond}{0}{mathb}{"0C}

\newcommand{\bu}{{\text{\smaller\smaller$\scriptstyle\bullet$}}}
\newcommand{\cu}{{\text{\smaller$\scriptstyle\blackdiamond$}}}
\newcommand{\subcu}{{\text{\smaller$\scriptscriptstyle\blackdiamond$}}}

\newcommand{\lrarrow}{\mskip.5\thinmuskip\relbar\joinrel\relbar\joinrel
 \rightarrow\mskip.5\thinmuskip\relax}
\newcommand{\llarrow}{\mskip.5\thinmuskip\leftarrow\joinrel\relbar
 \joinrel\relbar\mskip.5\thinmuskip\relax}

\newcommand{\bModl}{{\operatorname{\mathbf{--Mod}}}}
\newcommand{\bModr}{{\operatorname{\mathbf{Mod--}}}}
\newcommand{\rModl}{{\operatorname{\mathrm{--Mod}}}}
\newcommand{\rModr}{{\operatorname{\mathrm{Mod--}}}}
\newcommand{\sModl}{{\operatorname{\mathsf{--Mod}}}}
\newcommand{\sModr}{{\operatorname{\mathsf{Mod--}}}}
\newcommand{\bmodl}{{\operatorname{\mathbf{--mod}}}}
\newcommand{\bmodr}{{\operatorname{\mathbf{mod--}}}}
\newcommand{\rmodl}{{\operatorname{\mathrm{--mod}}}}
\newcommand{\rmodr}{{\operatorname{\mathrm{mod--}}}}

\newcommand{\smodr}{{\operatorname{\mathsf{mod--}}}}

\newcommand{\bComodl}{{\operatorname{\mathbf{--Comod}}}}
\newcommand{\bcomodl}{{\operatorname{\mathbf{--comod}}}}
\newcommand{\bContra}{{\operatorname{\mathbf{--Contra}}}}
\newcommand{\bComodr}{{\operatorname{\mathbf{Comod--}}}}

\newcommand{\Ab}{\mathsf{Ab}}

\newcommand{\sA}{\mathsf A}
\newcommand{\sB}{\mathsf B}
\newcommand{\sC}{\mathsf C}
\newcommand{\sD}{\mathsf D}
\newcommand{\sE}{\mathsf E}
\newcommand{\sF}{\mathsf F}
\newcommand{\sH}{\mathsf H}
\newcommand{\sK}{\mathsf K}
\newcommand{\sL}{\mathsf L}
\newcommand{\sR}{\mathsf R}
\newcommand{\sS}{\mathsf S}
\newcommand{\sT}{\mathsf T}
\newcommand{\sW}{\mathsf W}
\newcommand{\sZ}{\mathsf Z}

\newcommand{\bA}{\mathbf A}

\newcommand{\bC}{\mathbf C}

\newcommand{\cC}{\mathcal C}
\newcommand{\cL}{\mathcal L}
\newcommand{\cM}{\mathcal M}
\newcommand{\cR}{\mathcal R}
\newcommand{\cS}{\mathcal S}
\newcommand{\cW}{\mathcal W}

\newcommand{\bb}{{\mathsf{b}}}
\newcommand{\bco}{{\mathsf{bco}}}
\newcommand{\bctr}{{\mathsf{bctr}}}
\newcommand{\co}{{\mathsf{co}}}
\newcommand{\ctr}{{\mathsf{ctr}}}
\newcommand{\ac}{{\mathsf{ac}}}
\newcommand{\abs}{{\mathsf{abs}}}

\newcommand{\inj}{{\mathsf{inj}}}
\newcommand{\proj}{{\mathsf{proj}}}
\renewcommand{\flat}{{\mathsf{flat}}}
\renewcommand{\cot}{{\mathsf{cot}}}
\newcommand{\fg}{{\mathsf{fg}}}
\newcommand{\fp}{{\mathsf{fp}}}
\newcommand{\gs}{{\mathsf{gs}}}
\newcommand{\st}{{\mathsf{st}}}
\newcommand{\cmp}{{\mathsf{cmp}}}

\newcommand{\binj}{{\mathbf{inj}}}
\newcommand{\bproj}{{\mathbf{proj}}}
\newcommand{\bfree}{{\mathbf{free}}}
\newcommand{\bflat}{{\mathbf{flat}}}
\newcommand{\bcot}{{\mathbf{cot}}}
\newcommand{\bfp}{{\mathbf{fp}}}

\newcommand{\rop}{{\mathrm{op}}}
\newcommand{\sop}{{\mathsf{op}}}

\newcommand{\id}{{\mathrm{id}}}

\newcommand{\bModrinj}{{\operatorname{\mathbf{Mod_\binj--}}}}
\newcommand{\sModrinj}{{\operatorname{\mathsf{Mod_\inj--}}}}
\newcommand{\bModrfp}{{\operatorname{\mathbf{Mod_\bfp--}}}}
\newcommand{\bModrproj}{{\operatorname{\mathbf{Mod_\bproj--}}}}
\newcommand{\bmodrproj}{{\operatorname{\mathbf{mod_\bproj--}}}}

\newcommand{\boL}{\mathbb L}
\newcommand{\boR}{\mathbb R}
\newcommand{\boZ}{\mathbb Z}
\newcommand{\boQ}{\mathbb Q}

\newcommand{\Section}[1]{\bigskip\section{#1}\medskip}

\theoremstyle{plain}
\newtheorem{thm}{Theorem}[section]

\newtheorem{lem}[thm]{Lemma}
\newtheorem{prop}[thm]{Proposition}
\newtheorem{cor}[thm]{Corollary}
\theoremstyle{definition}
\newtheorem{ex}[thm]{Example}

\newtheorem{rem}[thm]{Remark}

\begin{document}

\author{Leonid Positselski}

\address{Leonid Positselski, Institute of Mathematics, Czech Academy
of Sciences \\ \v Zitn\'a~25, 115~67 Praha~1 \\ Czech Republic}

\email{positselski@math.cas.cz}

\author{Jan \v S\v tov\'\i\v cek}

\address{Jan {\v S}{\v{t}}ov{\'{\i}}{\v{c}}ek, Charles University,
Faculty of Mathematics and Physics, Department of Algebra,
Sokolovsk\'a 83, 186 75 Praha, Czech Republic}

\email{stovicek@karlin.mff.cuni.cz}

\title{Contraderived categories of CDG-modules}

\begin{abstract}
 For any CDG\+ring $B^\cu=(B^*,d,h)$, we show that the homotopy category
of graded-projective (left) CDG\+modules over $B^\cu$ is equivalent
to the quotient category of the homotopy category of graded-flat
CDG\+modules by its full triangulated subcategory of flat CDG\+modules.
 The \emph{contraderived category} (\emph{in the sense of Becker})
$\sD^\bctr(B^\cu\bModl)$ is the common name for these two
triangulated categories.
 We also prove that the classes of cotorsion and graded-cotorsion
CDG\+modules coincide, and the contraderived category of CDG\+modules
is equivalent to the homotopy category of graded-flat graded-cotorsion
CDG\+modules.
 Assuming the graded ring $B^*$ to be graded right coherent, we show
that the contraderived category $\sD^\bctr(B^\cu\bModl)$ is compactly
generated and its full subcategory of compact objects is
anti-equivalent to the full subcategory of compact objects in
the coderived category of right CDG\+modules $\sD^\bco(\bModr B^\cu)$.
 Specifically, the latter triangulated category is the idempotent
completion of the absolute derived category of finitely presented
right CDG\+modules $\sD^\abs(\bmodr B^\cu)$.
\end{abstract}

\maketitle

\tableofcontents

\section*{Introduction}
\medskip

\setcounter{subsection}{-1}
\subsection{Brief summary}\label{brief-summary-subsecn}
 One of the main results of Neeman's paper~\cite[Proposition~8.1 and 
Theorem~8.6]{Neem} can be summarized by saying that \emph{for the exact
category of flat modules over a ring, Becker's contraderived category~\cite{Bec}
agrees with the conventional derived category}.
 This result is closely related to the \emph{flat/projective
periodicity} phenomenon, originally discovered
in~\cite[Theorem~2.5]{BG}.

 Furthermore, the \emph{cotorsion periodicity} theorem, proved
by Bazzoni, Cort\'es-Izurdiaga, and Estrada in~\cite[Theorem~1.2(2),
Proposition~4.8(2), or Theorem~5.1(2)]{BCE}, implies that \emph{for
the exact category of flat modules over a ring, Becker's coderived
category agrees with the conventional derived category} as well.
 So all the three triangulated categories are the same.
 In this paper, we generalize these results to curved DG\+modules over
an arbitrary curved DG\+ring, which in particular encompasses
the settings of DG\+modules over ordinary DG\+rings and matrix
factorizations of modules over a ring.

 Another result from Neeman's
paper~\cite[Propositions~7.12 and~7.14]{Neem} says that Becker's
contraderived category $\sD^\bctr(R\rModl)=\sK(R\rModl_\proj)$ of left
modules over a right coherent ring $R$ is compactly generated, and
provides a description of the full subcategory of compact objects in
$\sD^\bctr(R\rModl)$ as the opposite category to the bounded derived
category $\sD^\bb(\rmodr R)$ of finitely presented right $R$\+modules.
 In this paper, we generalize this result to left CDG\+modules over
any CDG\+ring $B^\cu=(B^*,d,h)$ whose underlying graded ring $B^*$
is graded right coherent.


 A reader who is not familiar with the concepts or the terminology can read through the rest of this introduction about motivation, a more detailed explanation, and significance of these results.

\subsection{What is the derived category?}
 Classically, this question has a straightforward answer.
 The derived category of modules over a ring or sheaves on a space
is defined as the category of complexes up to quasi-isomorphism.
 Here by a quasi-isomorphism one means a morphism of complexes
inducing an isomorphism on the cohomology objects.
 This is the definition going back to Grothendieck and
Verdier~\cite[Exemples~I.1.2.3 and n${}^\circ$~II.1.1]{Ver}.

 Similarly one defines the derived category of differential graded
(DG) modules over a DG\+ring~\cite[Section~4.1]{Kel} or a sheaf
of DG\+rings.
 \emph{Curved differential graded} (\emph{CDG}) rings and
modules~\cite[Section~1]{Pcurv}, \cite[Section~3.1]{Pkoszul},
\cite[Section~1.1]{EP}, \cite[Sections~3.2 and~6.1]{Prel},
\cite[Section~6]{Pksurv} are natural generalizations of DG\+rings
and DG\+modules in many respects, but their differentials do \emph{not}
square to zero, so they do not have cohomology modules.
 Accordingly, the conventional notion of a quasi-isomorphism is
\emph{undefined} for curved DG\+structures.

 The concepts of derived categories naturally applicable to
CDG\+modules are the \emph{derived categories of the second
kind}~\cite[Remark~9.2]{PS4}, \cite[Section~7]{Pksurv}.
 Here by the \emph{derived category of the first kind} one means
the conventional derived category, as in~\cite{Ver,Spal,Neem-de,Kel}.
 ``Derived categories of the second kind'' are an umbrella term uniting
several constructions; the most important ones are called
the \emph{coderived}, the \emph{contraderived}, and the \emph{absolute
derived} category.

 Even so, there are two approaches to defining the coderived and
contraderived categories.
 What we call the \emph{coderived} and \emph{contraderived categories
in the sense of Positselski} were introduced in~\cite{Psemi,Pkoszul,KLN}
and perfected in~\cite[Sections~1.4\<1.6]{EP}.
 What we call the \emph{coderived} and \emph{contraderived categories
in the sense of Becker} were worked out in~\cite[Proposition~1.3.6]{Bec}
and go back to the papers~\cite{Jor,Kra,Neem}.

 The coderived and contraderived categories in the sense of
Positselski, as well as the absolute derived categories, are centered about finding a suitable replacement of quasi-isomorphisms.
 Roughly speaking, the absolute derived category is the maximal Verdier
quotient of the homotopy category of CDG\+modules making all short exact
sequences of CDG\+modules to become distinguished triangles.
 The Positselski coderived category is the maximal quotient category
which in addition preserves coproducts, and the Positselski
contraderived category is the maximal such quotient which in addition
preserves products.

The coderived and contraderived categories in the sense of
Becker are, on the other hand, centered about classes of CDG\+modules which can act as resolutions. The Becker contraderived category can be defined as the homotopy category of graded-projective CDG\+modules (which, for an ordinary ring, is none other than the homotopy category of unbounded complexes of projective modules).
Dually, the Becker coderived category is the homotopy category of graded-injective CDG\+modules. In fact, Becker showed in~\cite{Bec} that both can be again obtained as suitable Verdier quotients of the homotopy category of CDG\+modules. In the case of Becker coderived categories, the present authors described in~\cite[\S7.5]{PS5} the class of objects which one quotients out as the minimal subcategory of CDG\+modules which contains all contractible CDG\+modules and is closed under extensions and direct limits. The Becker contraderived category is more complicated and is the subject of study in this paper.

Similar ideas as above also apply to subcategories of CDG\+modules given by a certain property of the underlying graded modules. A key case for us, alluded to in~\S\ref{brief-summary-subsecn}, is the one of graded-flat CDG\+modules (i.~e.\ those CDG\+modules whose underlying graded module is flat over the underlying graded ring). As the properties which we consider are given by extension-closed subcategories of graded modules, this is also related to the theory of coderived and contraderived categories of \emph{exact} categories.
In the context of derived categories of the second kind in the sense of Positselski, this idea was explored already in~\cite[Section~1.3]{EP} and~\cite[Section~5.1]{Pedg}.
The derived categories of the second kind in the sense of Becker are less understood.
Their constructions are usually straightforward even if
technical~\cite[Section~B.7]{Pcosh}, but their properties may be unobvious and unexpected.

 It is still an open problem whether the constructions of
the first-named author of the present paper and the constructions of
Becker \emph{ever} produce different outputs within their natural
common domain of definition.
 But for all we know, the coderived and contraderived categories in
the sense of Becker may be better behaved (at least, we can prove
stronger results for them in greater generality).

\subsection{Matrix factorizations as motivating examples}
\label{introd-matrix-factorizations}
 \emph{Matrix factorizations} are an important and popular example of
CDG\+modules (or sheaves of CDG\+modules) arising in the context of algebraic geometry and
modern mathematical physics~\cite{DK,KL,Or1}.
 Classically, a matrix factorization of $w\in R$ for a regular local commutative Noetherian ring $R$, as introduced in~\cite{Eis}, is a pair of maps between free $R$\+modules $(d_0\:F^0\to F^1, d_1\:F^1\to F^0)$ such that $d_1d_0=w\cdot\id_{F^0}$ and $d_0d_1=w\cdot\id_{F^1}$. This construction generalizes naturally in that we can replace $\Spec{R}$ by a more general (not necessarily affine or regular) algebraic variety or scheme, let $w$ be a section of an invertible sheaf on $X$, and allow $F^0$ and $F^1$ to be chosen from other classes of quasi-coherent sheaves.
 However, in this generality, the task of defining appropriate derived categories of matrix factorizations becomes rather nontrivial~\cite{PL,Or2}.
 A straightforward technical attempt to define a direct analogue of
the conventional derived category of quasi-coherent sheaves for matrix
factorizations fails~\cite[Section~3.3]{EP}, as the Thomason--Trobaugh
theorem~\cite[Key Proposition~5.2.2]{TT} is not true for matrix
factorizations.

In this context, flat quasi-coherent factorizations (i.~e.\ those where the free modules $F^0, F^1$ above are replaced by flat quasi-coherent sheaves) are one of the natural objects of interest.
 Over Noetherian schemes of finite Krull dimension, the construction
of the coderived category in the sense of Positselski works well
for flat quasi-coherent factorizations; and even the absolute derived
category is sufficient, due to the fact that the exact category of
flat quasi-coherent sheaves has finite homological dimension
(by the Raynaud--Gruson theorem~\cite[Corollaire~II.3.2.7]{RG}).
 The exposition in~\cite{EP} implicitly utilizes this observation
(see~\cite[Remarks~1.4, 2.4, and~2.6]{EP}).

 But for more complicated schemes, for all we know, the outputs
produced by the first-named author's constructions of derived
categories of the second kind for flat quasi-coherent factorizations
may differ from the desired ones.
 As already mentioned, over an affine scheme, or in other words, over
a commutative ring, and for the usual unbounded complexes of modules,
it is not known whether Positselski's coderived category of flat
modules agrees with the homotopy category of
projective modules.
 One approach to work around this problem was suggested in~\cite[Remark~2.6]{EP}:
over non-Noetherian schemes, use \emph{very flat} quasi-coherent
factorizations instead of the flat ones.
 There are enough very flat quasi-coherent sheaves on a quasi-compact
semi-separated scheme, and the exact category of very flat quasi-coherent
sheaves over such a scheme has finite homological dimension;
so Positselski's coderived and absolute derived category constructions
(agree with each other and) produce the desired output.
 This became one of the motivations for the importance of
the \emph{Very Flat Conjecture}, proved in~\cite[Main Theorem~1.1 and
Sections~1.3, 2, and~8]{PSl}; see the discussion in~\cite[Section~0.2
of the Introduction]{PSl}.

 Here instead, we focus on Becker's approach to derived categories of the second kind for matrix factorizations.
 Although this is rather non-obvious, this works very well for affine schemes, 
as we prove in this paper
(see Examples~\ref{matrix-factorizations-example}
and~\ref{matrix-factorizations-example-contd}).
 As to the flat quasi-coherent factorizations over nonaffine schemes,
the usual definition of Becker's contraderived category makes no sense
for them, due to the lack of projective sheaves.
 However, the definition of what we call below the \emph{flat contraderived category} is applicable, and so is the definition of Becker's coderived category.
 We expect that the outputs of the two constructions should be related exactly as in the affine case and that
this presumably follows from what might be viewed as a version of cotorsion
periodicity theorem for quasi-coherent matrix factorizations
over schemes.
 But this topic falls outside of the scope of the present paper,
where only the affine case is worked out.

\subsection{Compact generators as motivating problem}
 Let $R$ be a ring.
 The study of the homotopy category of (unbounded complexes of)
projective $R$\+modules was initiated by J\o rgensen~\cite{Jor}.
 It was shown in~\cite{Jor} that, under certain assumptions,
the homotopy category $\sK(R\rModl_\proj)$ of complexes of projective
left $R$\+modules is compactly generated, and the full subcategory of
compact objects in $\sK(R\rModl_\proj)$ is anti-equivalent to
the bounded derived category of finitely presented right $R$\+modules
$\sD^\bb(\rmodr R)$ \,\cite[Theorem~3.2]{Jor}.

 In a subsequent paper by Neeman~\cite{Neem}, it was shown that
J\o rgensen's assumptions can be relaxed, and it suffices to assume
the ring $R$ to be
right coherent~\cite[Propositions~7.12 and~7.14]{Neem}.
 Generally speaking, however (for an arbitrary ring $R$), the homotopy
category $\sK(R\rModl_\proj)$ need not be compactly generated, as Neeman's
counterexample~\cite[Example~7.16]{Neem} illustrates.
 
 Another important result proved in Neeman's paper~\cite{Neem} is
a triangulated equivalence between the homotopy category of
projective modules and the derived category of the exact category
of flat modules.
 This result holds for an arbitrary ring~$R$
\,\cite[Proposition~8.1 and Theorem~8.6]{Neem}.
Our aim is to generalize both these results from ordinary rings to CDG\+rings (and so in particular to categories of affine matrix factorizations from the last subsection).

\smallskip

 To give the reader a better idea and also because it plays a role in
our results, we recall the story and the state of the art for
the dual-analogous situation of the homotopy category of injective
modules.
 The homotopy category of injective objects in a locally Noetherian
abelian category was studied by Krause in~\cite{Kra}.
 It was shown in~\cite{Kra} that the homotopy category $\sK(\sA_\inj)$
of injective objects in a locally Noetherian abelian category $\sA$
is compactly generated, and the full subcategory of compact objects
in $\sK(\sA_\inj)$ is equivalent to the bounded derived category
$\sD^\bb(\sA_\fg)$ of the abelian category of finitely generated
objects in~$\sA$ \,\cite[Proposition~2.3]{Kra}.

 Subsequently, the study of homotopy categories of injectives
was extended to locally coherent abelian categories in
the preprint~\cite{Sto2} by the second-named author of
the present paper.
 In the context of locally coherent abelian categories $\sA$, it was
shown in~\cite{Sto2} that the homotopy category of injective objects
$\sK(\sA_\inj)$ is compactly generated, and the full subcategory of
compact objects in $\sK(\sA_\inj)$ is equivalent to the bounded derived
category $\sD^\bb(\sA_\fp)$ of the abelian category of finitely
presented objects in~$\sA$ \,\cite[Corollary~6.13]{Sto2}.

 On the other hand, an argument, largely due to Arinkin, extending
the mentioned result of Krause to CDG\+rings with one-sided Noetherian
underlying graded rings was presented in
the memoir~\cite[Section~3.11]{Pkoszul} by the first-named author
of the present paper.
 In the preprint~\cite{Pedg}, this result was further extended to
\emph{abelian DG\+categories}, or rather more specifically to
locally Noetherian and certain locally coherent abelian
DG\+categories~\cite[Theorems~9.23 and~9.39]{Pedg}.

 Finally, in the paper~\cite{PS5} by the present authors,
a generalization of the mentioned results
from~\cite{Kra,Pkoszul,Sto2,Pedg} to arbitrary locally coherent
abelian DG\+categories was established.
 According to~\cite[Theorem~8.19]{PS5}, for any locally coherent
abelian DG\+category~$\bA$, the homotopy category $\sH^0(\bA_\binj)$
of the full DG\+subcategory $\bA_\binj$ of graded-injective objects
in $\bA$ is compactly generated, and the full subcategory of
compact objects in $\sH^0(\bA_\binj)$ is equivalent to
the idempotent completion of the absolute derived category
$\sD^\abs(\bA_\bfp)$ of the abelian DG\+subcategory $\bA_\bfp$
of finitely presented objects in~$\bA$.
 In particular, for any CDG\+ring $B^\cu=(B^*,d,h)$ with
a graded right coherent underlying graded ring $B^*$, the homotopy
category $\sH^0(\bModrinj B^\cu)$ of graded-injective right
CDG\+modules over $B^\cu$ is compactly generated, and the full
subcategory of compact objects in  $\sH^0(\bModrinj B^\cu)$ is
equivalent to the idempotent completion of the absolute derived
category $\sD^\abs(\bmodr B^\cu)=\sD^\abs(\bModrfp B^\cu)$ of finitely
presented right CDG\+modules over~$B^\cu$.

\subsection{Description of the results}
\label{introd-description-of-results-subsecn}
 As indicated, our aim is to generalize the above-mentioned
results of J\o rgensen and Neeman~\cite{Jor,Neem} about the homotopy
categories of projective modules to the homotopy categories of
graded-projective CDG\+modules over CDG\+rings.
 So, there are two main results in this paper, which we complement by examples of particular settings where they apply in~\S\ref{examples-Dbctr-subsecn} and~\S\ref{examples-compacts-subsecn}.

 The first theorem claims an equivalence between what we call
the \emph{projective contraderived category} and the \emph{flat
contraderived category} of CDG\+modules over an arbitrary
CDG\+ring $B^\cu=(B^*,d,h)$.
 We denote by $B^\cu\bModl_\bproj$ the DG\+category of graded-projective
left CDG\+modules and by $B^\cu\bModl_\bflat$ the DG\+category of
graded-flat left CDG\+modules over $B^\cu$, while
$\sH^0(B^\cu\bModl)_\flat\subset\sH^0(B^\cu\bModl_\bflat)$ denotes
the full triangulated subcategory of flat CDG\+modules.
 
\begin{thm}[Theorem~\ref{projective-and-flat-contraderived-theorem}]
\label{introd-projective-and-flat-contraderived-theorem}
 Let $B^\cu=(B^*,d,h)$ be a CDG\+ring.
 Then the composition of the fully faithful inclusion of triangulated
categories\/ $\sH^0(B^\cu\bModl_\bproj)\rarrow\sH^0(B^\cu\bModl_\bflat)$
and the triangulated Verdier quotient functor\/
$\sH^0(B^\cu\bModl_\bflat)\rarrow
\sH^0(B^\cu\bModl_\bflat)/\sH^0(B^\cu\bModl)_\flat$ is a triangulated
equivalence
$$
 \sH^0(B^\cu\bModl_\bproj)\simeq
 \sH^0(B^\cu\bModl_\bflat)/\sH^0(B^\cu\bModl)_\flat.
$$
\end{thm}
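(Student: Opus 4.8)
The plan is to prove that the stated composite, which I will denote $\Phi\colon \sH^0(B^\cu\bModl_\bproj)\rarrow\sH^0(B^\cu\bModl_\bflat)/\sH^0(B^\cu\bModl)_\flat$, is fully faithful and essentially surjective. Full faithfulness will rest on an orthogonality property of graded\+projective CDG\+modules, while essential surjectivity will come from graded\+projective resolutions in the exact category of graded\+flat CDG\+modules together with a totalization construction. An alternative, and perhaps cleaner, packaging of the same two ingredients is via a hereditary complete cotorsion pair and the associated abelian model structure in the sense of Becker, whose homotopy category is simultaneously the stable category of graded\+projectives and the Verdier quotient by the trivial objects; I will keep the more hands\+on formulation below.

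For full faithfulness I would invoke the standard Verdier localization lemma: if every graded\+projective CDG\+module $P$ lies in the left orthogonal ${}^\perp\sH^0(B^\cu\bModl)_\flat$, i.e.\ $\Hom_{\sH^0(B^\cu\bModl_\bflat)}(P,F)=0$ for every flat CDG\+module $F$, then the Verdier quotient functor $Q$ induces an isomorphism $\Hom_{\sH^0(B^\cu\bModl_\bflat)}(P,X)\cong\Hom(QP,QX)$ for all objects $X$. Taking $X=P'$ graded\+projective and using that $\sH^0(B^\cu\bModl_\bproj)\hookrightarrow\sH^0(B^\cu\bModl_\bflat)$ is a full subcategory then gives full faithfulness of $\Phi$. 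The real work is the orthogonality $\Hom_{\sH^0}(P,F)=0$. I would prove it by reducing to the case of a free CDG\+module $P$, whose underlying graded module is a summand of a graded\+free module and hence graded\+pure\+projective, and by using that a flat CDG\+module $F$ is assembled from totalizations of \emph{pure} short exact sequences of graded\+flat CDG\+modules (any admissible sequence of graded\+flats has graded\+flat, hence pure, cokernel). A closed degree\+$0$ morphism $P\rarrow F$ is then contracted by solving the homotopy equation $Ds=f$ in the CDG\+Hom complex one internal degree at a time, the successive lifts being furnished by pure\+projectivity and the curvature contributions being reincorporated at each step.

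For essential surjectivity, let $G$ be an arbitrary graded\+flat CDG\+module. Since there are enough graded\+projective CDG\+modules, I would build an admissible resolution $\dotsb\rarrow P_1\rarrow P_0\rarrow G\rarrow 0$ with each $P_i$ graded\+projective and every syzygy graded\+flat; the latter holds because the kernel of an admissible epimorphism between graded\+flat CDG\+modules is again graded\+flat, by the vanishing of $\Tor_1^{B^*}(-,G)$ against the graded\+flat cokernel. Totalizing along the resolution degree (with the coproduct convention, which is unproblematic since each homogeneous element has finite support in the resolution direction) yields a CDG\+module $P=\Tot(P_\bu)$ whose underlying graded module is a coproduct of graded\+projectives, hence graded\+projective, and whose total differential squares to the curvature element because the $P_i$ are CDG\+modules and the resolution maps are closed. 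The augmentation assembles into a morphism $P\rarrow G$ whose cone is the totalization of the exact augmented resolution. Granting that this cone lies in $\sH^0(B^\cu\bModl)_\flat$, we obtain $\Phi P\cong QG$, so $\Phi$ is dense.

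The main obstacle is exactly this last identification: one must show that the totalization of an \emph{unbounded} exact complex of graded\+flat CDG\+modules genuinely belongs to the triangulated subcategory $\sH^0(B^\cu\bModl)_\flat$, and not merely that it is graded\+flat and acyclic. This is the point where the argument ceases to be formal and where the curved analogue of the flat/projective periodicity phenomenon of \cite{BG,Neem} is needed. I would handle it by presenting $P=\Tot(P_\bu)$ as the homotopy colimit (mapping telescope) of its brutal truncations, each of which is a finite iterated cone of totalizations of the short exact sequences $0\rarrow\mathrm{syz}_{i+1}\rarrow P_i\rarrow\mathrm{syz}_i\rarrow 0$ of graded\+flats, and then appealing to the defining closure properties of $\sH^0(B^\cu\bModl)_\flat$ under coproducts, extensions, and telescopes. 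Verifying that these closure properties force the telescope into the flat subcategory --- equivalently, that the periodicity mechanism identifies the class of flat CDG\+modules with the totalizations of exact complexes of graded\+flats --- is where I expect the principal difficulty, while the curvature, though requiring care in writing down the totalizations, should not obstruct this homotopy\+colimit decomposition.
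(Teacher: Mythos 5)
Your high-level plan --- semiorthogonality of $\sH^0(B^\cu\bModl)_\flat$ against $\sH^0(B^\cu\bModl_\bproj)$ plus existence of approximation triangles --- is the same as the paper's, but your construction of the approximation triangle is not merely incomplete: it is false. The cone of your map $\Tot^\sqcup(P_\bu^\cu)\rarrow G^\cu$ is the \emph{direct sum} totalization of an exact complex that is bounded \emph{above} in the resolution direction, and this is the wrong pairing of boundedness with totalization: bounded-above exactness trivializes $\Tot^\sqcap$, not $\Tot^\sqcup$ (compare Lemma~\ref{co-contra-acyclic-total-complex-lemma}). Concretely, take $B^\cu=(R,0,0)$ for an ordinary ring $R$ and $G^\cu=R$ concentrated in degree~$0$. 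Resolve $G^\cu$ by the contractible complexes of projectives $P_k^\cu=(R\to R)$ (identity differential) concentrated in degrees $k$ and $k+1$, with the augmentation $P_0^\cu\rarrow G^\cu$ and the transition maps $P_k^\cu\rarrow P_{k-1}^\cu$ given by the identity of $R$ in the bottom degree of the source; the syzygies are $Z_k^\cu=R$ concentrated in degree~$k$, so all of your hypotheses (graded-projective terms, graded-flat syzygies, exactness) hold. Then $\Tot^\sqcup(P_\bu^\cu)$ is concentrated in degrees $0$ and $1$, both components being $\bigoplus_{k\ge0}R$, and the total differential is bidiagonal with unit diagonal entries, hence bijective on finitely supported sequences; so $\Tot^\sqcup(P_\bu^\cu)$ is \emph{contractible}. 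The cone of $\Tot^\sqcup(P_\bu^\cu)\rarrow G^\cu$ is therefore homotopy equivalent to $G^\cu$ itself, which is not acyclic, hence not a flat CDG-module, and not zero in $\sH^0(B^\cu\bModl_\bflat)/\sH^0(B^\cu\bModl)_\flat$ (its identity would have to factor in $\sH^0$ through an acyclic complex). Note also that your construction is not even well defined: the trivial resolution $P_0^\cu=G^\cu$ outputs $G^\cu$, while the resolution above outputs~$0$. Your telescope repair cannot close this gap either: in the Verdier quotient, the cone of $\Tot(P_k\to\dotsb\to P_0)\rarrow G^\cu$ is isomorphic to a shift of the syzygy $Z_{k+1}^\cu$, because the totalization of the finite exact complex $Z_{k+1}\to P_k\to\dotsb\to P_0\to G$ \emph{is} flat (an iterated extension of totalizations of short exact sequences of graded-flats) and the stupid filtration splits off $Z_{k+1}^\cu$ by a graded-split short exact sequence. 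Since syzygies are only graded-flat, not flat, the telescope pieces do not lie in $\sH^0(B^\cu\bModl)_\flat$, and no closure property of that subcategory applies.

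This failure is exactly where the flat/projective periodicity content of the theorem sits, and it cannot be produced by a totalization trick. The paper instead obtains the approximation triangles from Becker's hereditary complete cotorsion pair of graded-projective and contraacyclic CDG-modules (Theorem~\ref{contraderived-cotorsion-pair-category-theorem}, proved by the small object argument), restricted to the exact category of graded-flat CDG-modules via Lemma~\ref{restricted-exact-structure}, combined with the key identification ``flat $=$ graded-flat $+$ contraacyclic'' (Corollary~\ref{flat=graded-flat-and-contraacyclic-cor}); the latter is where the real work happens, through $\aleph_1$-direct limits of countably presented graded-flat CDG-modules (Proposition~\ref{graded-flats-as-aleph-1-direct-limits} and Lemmas~\ref{resolution-of-countably-presented-graded-flat}--\ref{aleph-1-graded-flat-contraacyclic}). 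Separately, your orthogonality argument for full faithfulness is also not viable as stated: over a graded ring with components in all degrees there is no ``one internal degree at a time'' induction, and pure-projectivity of $P^*$ furnishes no coherent system of lifts assembling into a global contracting homotopy --- this is the same obstruction that makes the unbounded case over an ordinary ring (Neeman's theorem) nontrivial. The paper's route here is the graded Govorov--Lazard theorem (flat CDG-modules are direct limits of contractible graded-projective ones), a telescope argument reducing countably generated graded-free CDG-modules to finitely presented ones (Proposition~\ref{countable-graded-free-orthogonal-to-flats}), and then deconstruction together with the Eklof lemma (Corollary~\ref{flats-are-contraacyclic-cor}).
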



 In addition, we consider the DG\+category $B^\cu\bModl_\bflat^\bcot$
of graded-flat graded-cotorsion CDG\+modules.
 Its homotopy category $\sH^0(B^\cu\bModl_\bflat^\bcot)$ can be thought
of as \emph{Becker's coderived category of the exact DG\+category of
graded-flat CDG\+modules over~$B^\cu$}.
 The next theorem claims that it is equivalent to the contraderived
category.

\begin{thm}[Theorem~\ref{graded-flat-coderived-contraderived-theorem}]
\label{introd-graded-flat-coderived-contraderived-theorem}
 Let $B^\cu=(B^*,d,h)$ be a CDG\+ring.
 Then the composition of the fully faithful inclusion of triangulated
categories\/ $\sH^0(B^\cu\bModl_\bflat^\bcot)\rarrow
\sH^0(B^\cu\bModl_\bflat)$ and the triangulated Verdier quotient
functor\/ $\sH^0(B^\cu\bModl_\bflat)\rarrow
\sH^0(B^\cu\bModl_\bflat)/\sH^0(B^\cu\bModl)_\flat$ is a triangulated
equivalence \hbadness=1325
$$
 \sH^0(B^\cu\bModl_\bflat^\bcot)\simeq
 \sH^0(B^\cu\bModl_\bflat)/\sH^0(B^\cu\bModl)_\flat.
$$
\end{thm}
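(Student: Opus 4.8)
The plan is to deduce the statement from a complete cotorsion pair in the exact DG\+category $B^\cu\bModl_\bflat$ of graded-flat CDG\+modules, via the same abstract Becker-style mechanism that underlies Theorem~\ref{introd-projective-and-flat-contraderived-theorem}. Concretely, I would work with the cotorsion pair in $B^\cu\bModl_\bflat$ whose left class is the flat CDG\+modules $\sH^0(B^\cu\bModl)_\flat$ and whose right class is the graded-flat graded-cotorsion CDG\+modules $B^\cu\bModl_\bflat^\bcot$. The identification of this right orthogonal class with the graded-cotorsion modules rests on the coincidence of cotorsion and graded-cotorsion CDG\+modules announced in the abstract.

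First I would establish completeness of this cotorsion pair. Lifting the graded flat cotorsion pair (graded-flat, graded-cotorsion) over $B^*$ to the CDG level by an Eklof--Trlifaj / small-object construction, performed compatibly with the curved differential $d$ and the curvature $h$, I obtain for every graded-flat CDG\+module $M$ an admissible short exact sequence $0\rarrow M\rarrow C\rarrow F\rarrow 0$ in $B^\cu\bModl_\bflat$ with $C$ graded-flat graded-cotorsion and $F$ flat. This sequence is a distinguished triangle in $\sH^0(B^\cu\bModl_\bflat)$ whose third term $F$ lies in $\sH^0(B^\cu\bModl)_\flat$; hence $M\cong C$ in the Verdier quotient, which gives essential surjectivity of the composite functor.

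Next I would prove the homotopy-level orthogonality $\Hom_{\sH^0(B^\cu\bModl_\bflat)}(\sH^0(B^\cu\bModl)_\flat,\>B^\cu\bModl_\bflat^\bcot)=0$, i.e.\ that every closed degree-zero morphism from a flat CDG\+module to a graded-flat graded-cotorsion one is null-homotopic. This upgrades the $\Ext^1$\+orthogonality of the cotorsion pair to the vanishing of all morphisms in $\sH^0$, using that $\sH^0(B^\cu\bModl)_\flat$ is a triangulated subcategory closed under shifts. Since $\sH^0(B^\cu\bModl_\bflat^\bcot)$ is thereby contained in the right orthogonal of the quotiented-out subcategory, the standard Verdier-quotient criterion shows that the composite $\sH^0(B^\cu\bModl_\bflat^\bcot)\rarrow \sH^0(B^\cu\bModl_\bflat)/\sH^0(B^\cu\bModl)_\flat$ is fully faithful. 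Together with essential surjectivity, this yields the asserted triangulated equivalence.

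The hard part will be the identification of the right class of the lifted cotorsion pair with the graded-flat graded-cotorsion CDG\+modules, together with the homotopy orthogonality, for this is exactly where a cotorsion-periodicity phenomenon is needed. One must show that a graded-flat CDG\+module which is acyclic for the flat exact structure and has graded-cotorsion underlying components is already a flat (trivial) CDG\+module, so that Becker's coderived and contraderived categories of the flat exact DG\+category coincide. Proving this in the curved setting --- controlling the interaction of the cotorsion resolutions with the nonzero curvature $h$, rather than merely for honest complexes as in the theorem of Bazzoni, Cort\'es-Izurdiaga, and Estrada --- is the crux of the argument.
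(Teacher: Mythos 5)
Your overall strategy is essentially the paper's: the crux is indeed the coincidence of cotorsion and graded-cotorsion CDG\+modules (Theorem~\ref{cotorsion=graded-cotorsion-theorem}), combined with a hereditary complete cotorsion pair in the exact category of graded-flat CDG\+modules whose left class is the flat CDG\+modules, and a semiorthogonal decomposition argument in which $\Ext^1$\+orthogonality is upgraded to homotopy Hom vanishing via Lemma~\ref{Ext-1-homotopy-hom-lemma}. Two of your worries dissolve once one observes that $\sZ^0(B^\cu\bModl)\simeq B^*[\delta]\sModl$: the needed cotorsion pair is simply the flat cotorsion pair of the graded ring $B^*[\delta]$ restricted to graded-flat CDG\+modules by Lemma~\ref{restricted-exact-structure} (no small object argument ``compatible with $d$ and~$h$'' has to be invented), and the periodicity statement reduces to the known theorem of Bazzoni, Cort\'es-Izurdiaga, and Estrada applied to $B^*[\delta]$, by splicing the short exact sequences~\eqref{G-minus-CDG-module-sequence}.

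There is, however, a genuine gap in your essential surjectivity step. You take the special preenvelope sequence $0\rarrow M\rarrow C\rarrow F\rarrow0$ (with $C$ graded-flat graded-cotorsion and $F$ flat) and assert that it is a distinguished triangle in $\sH^0(B^\cu\bModl_\bflat)$. A short exact sequence of CDG\+modules yields a distinguished triangle in the homotopy category only when it is graded-split, and graded-splitness here would require $\Ext^1_{B^*\sModl}(F^*,M^*)=0$, where $F^*$ and $M^*$ are merely graded-flat; this fails in general. Worse, graded-splitness would exhibit $M^*$ as a direct summand of the graded-cotorsion module $C^*$, forcing $M^*$ itself to be graded-cotorsion---so no choice of preenvelope can be graded-split unless $M$ already lies in the right class. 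The paper's proof of Theorem~\ref{graded-flat-coderived-contraderived-theorem} avoids this by using the special \emph{precover} sequence $0\rarrow D^\cu\rarrow F^\cu\rarrow G^\cu\rarrow0$ instead: there the subobject $D^*$ is graded-cotorsion and the quotient $G^*$ is graded-flat, so $\Ext^1_{B^*\sModl}(G^*,D^*)=0$, the sequence is graded-split, and rotating the resulting triangle $D^\cu\rarrow F^\cu\rarrow G^\cu\rarrow D^\cu[1]$ gives the required decomposition with $C^\cu=D^\cu[1]$. (Your preenvelope route can in fact be repaired without changing the sequence: the cone of the closed morphism $M\rarrow C$ is an extension of $F$ by $\cone(\id_M)\cong G^+(M^*[1])$, which is a flat CDG\+module since $M^*$ is graded-flat and $B^*[\delta]$ is a free graded $B^*$\+module; as flat CDG\+modules are closed under extensions, the cone is flat and $M\rarrow C$ still becomes an isomorphism in the Verdier quotient. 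But it is this cone argument, not the unjustified triangle claim, that needs to be supplied.)
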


We can summarize these two theorems in the form of a recollement 
\begin{equation}\label{recollement-for-graded-flat-CDG-modules}
\xymatrix{
\sH^0(B^\cu\bModl)_\flat
\ar@{>->}[rr]|-\hole|-{i_*} &&
\sH^0(B^\cu\bModl_\bflat)
\ar@{->>}[rr]|-\hole|-{j^*}
\ar@{->>}@/_3ex/[ll]_-{i^*}
\ar@{->>}@/^3ex/[ll]^-{i^!} &&
\sD^\bctr(B^\cu\bModl),
\ar@{>->}@/_3ex/[ll]_-{j_!}
\ar@{>->}@/^3ex/[ll]^-{j_*}
}
\end{equation}
where the essential image of $j_!$ is $\sH^0(B^\cu\bModl_\bproj)$ and the essential image of $j_*$ is $\sH^0(B^\cu\bModl_\bflat^\bcot)$.
Moreover, as explained in Section~\ref{the-Quillen-equivalence-secn}, these equivalences are induced by Quillen equivalences of certain Quillen model categories, so we obtain not only triangulated equivalences, but full-fledged equivalences of the corresponding stable homotopy theories.

\smallskip

 Secondly, for any CDG\+ring $B^\cu$, we also construct a natural functor
of \emph{tensor product pairing} of right and left
CDG\+modules~\eqref{derived-tensor-product}
$$
 \ot_{B^*}^\boL\:\sD^\bco(\bModr B^\cu)\times\sD^\bctr(B^\cu\bModl)
 \lrarrow\sD(\Ab)
$$
taking values in the derived category of abelian groups $\sD(\Ab)$.
 Furthermore, assuming that the graded ring $B^*$ is graded right
coherent, we construct a contravariant triangulated
functor~\eqref{functor-Xi-on-absolute-derived-category}
$$
 \Xi_{B^\cu}\:\sD^\abs(\bmodr B^\cu)^\sop\lrarrow
 \sD^\bctr(B^\cu\bModl)
$$
from the absolute derived category of finitely presented right
CDG\+modules over $B^\cu$ to the contraderived category of left
CDG\+modules.
 Here is our second main theorem.

\begin{thm}[Theorems~\ref{image-of-Xi-compact-generators-thm}
and~\ref{Xi-fully-faithful-theorem}, and
Corollary~\ref{summary-on-compact-generators-cor}]
\label{introd-compact-generators-theorem}
 Let $B^\cu=(B^*,d,h)$ be a CDG\+ring such that the graded ring $B^*$
is graded right coherent.
 Then the contraderived category of left CDG\+modules\/
$\sD^\bctr(B^\cu\bModl)$ is compactly generated.
 The contravariant triangulated functor\/~$\Xi_{B^\cu}$ is fully
faithful, its image consists of compact objects in\/
$\sD^\bctr(B^\cu\bModl)$ and generates this triangulated category.

 Furthermore, the absolute derived category\/ $\sD^\abs(\bmodr B^\cu)$
is naturally a full triangulated subcategory of the coderived category\/
$\sD^\bco(\bModr B^\cu)$, consisting of compact objects and generating
the coderived category.
 Restricting the pairing\/ ${-}\ot_{B^*}^\boL{-}$ above to
the corresponding subcategories of compact objects and identifying
them with\/ $\sD^\abs(\bmodr B^\cu)^\sop$ and\/
$\sD^\abs(\bmodr B^\cu)$, respectively, one simply obtains
the derived Hom-functor
$$
 \boR\Hom_{B^*}^\bu(-,-)\:
 \sD^\abs(\bmodr B^\cu)^\sop\times\sD^\abs(\bmodr B^\cu)
 \lrarrow\sD(\Ab).
$$
\end{thm}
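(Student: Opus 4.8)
The plan is to deduce all three assertions from a single co-representability property of the tensor pairing, combined with the already-known description of the coderived side. I take for granted the identification of Becker's coderived category $\sD^\bco(\bModr B^\cu)$ with the homotopy category $\sH^0(\bModrinj B^\cu)$ of graded-injective right CDG\+modules, and the result imported from \cite{PS5} that, under the coherence hypothesis, $\sH^0(\bModrinj B^\cu)$ is compactly generated with its full subcategory of compact objects equivalent to the idempotent completion of $\sD^\abs(\bmodr B^\cu)$. The natural functor $\sD^\abs(\bmodr B^\cu)\rarrow\sD^\bco(\bModr B^\cu)$ (finitely presented modules are modules, and absolutely acyclic CDG\+modules are coacyclic) is then fully faithful, lands in compact objects, and generates; this is exactly the content of the second paragraph of the theorem, and it only requires checking that the embedding identifies $\sD^\abs(\bmodr B^\cu)$ with the subcategory of compact objects up to idempotent completion.

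For the contraderived side, I would first record the key adjunction linking $\Xi_{B^\cu}$ to the pairing: for every finitely presented right CDG\+module $N$ and every $X\in\sD^\bctr(B^\cu\bModl)$ there is a natural isomorphism
$$
 \Hom_{\sD^\bctr(B^\cu\bModl)}(\Xi_{B^\cu}(N),X)\cong H^0(N\ot_{B^*}^\boL X).
$$
This is proved by choosing a resolution of $N$ by finitely generated graded-projective right CDG\+modules (available by graded right coherence of $B^*$), so that $\Xi_{B^\cu}(N)$ is the graded-projective left CDG\+module obtained by applying the graded duality $\Hom_{B^*}^\bu(-,B^\cu)$ to this resolution; the displayed isomorphism then reduces to the standard adjunction between $\Hom_{B^*}^\bu(-,B^\cu)$ and $-\ot_{B^*}-$ for finitely generated projectives, computed in the homotopy category of graded-projectives, that is, in $\sD^\bctr(B^\cu\bModl)$ by Theorem~\ref{introd-projective-and-flat-contraderived-theorem}. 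Two consequences are immediate. First, since $-\ot_{B^*}^\boL X$ and hence $X\mapsto H^0(N\ot_{B^*}^\boL X)$ commutes with coproducts (coproducts of graded-projective CDG\+modules represent coproducts in $\sD^\bctr(B^\cu\bModl)$), the object $\Xi_{B^\cu}(N)$ is compact. Second, taking $X=\Xi_{B^\cu}(N')$ and computing $N\ot_{B^*}^\boL\Xi_{B^\cu}(N')\cong\boR\Hom_{B^*}^\bu(N',N)$ from the projective case, we obtain
$$
 \Hom_{\sD^\bctr}(\Xi_{B^\cu}(N),\Xi_{B^\cu}(N'))\cong
 H^0(\boR\Hom_{B^*}^\bu(N',N))\cong
 \Hom_{\sD^\abs(\bmodr B^\cu)}(N',N),
$$
so that $\Xi_{B^\cu}$ is fully faithful; this same computation yields the stated compatibility of the restricted pairing with $\boR\Hom_{B^*}^\bu$.

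The main obstacle is the generation statement: I must show that the compact objects $\Xi_{B^\cu}(N)$, with $N$ ranging over finitely presented right CDG\+modules, generate $\sD^\bctr(B^\cu\bModl)$, equivalently that an object $X$ with $H^*(N\ot_{B^*}^\boL X)=0$ for all such $N$ vanishes in $\sD^\bctr(B^\cu\bModl)$. By the adjunction this is the conservativity of the tensor pairing along the compact generators of the coderived side. Here the subtlety of contraderived categories is genuine: testing against $N=B^*$ alone only shows that $X$ is acyclic, and an acyclic graded-projective CDG\+module need not be contractible. The plan is to represent $X$ by a graded-flat graded-cotorsion CDG\+module via Theorem~\ref{introd-graded-flat-coderived-contraderived-theorem}, and to use the coherence hypothesis to transport the vanishing problem along the pairing to the coderived side, where $\sD^\abs(\bmodr B^\cu)$ is already known to generate. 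This is the step paralleling \cite{Neem}, and I expect it to require the explicit cotorsion-pair machinery underlying Theorems~\ref{introd-projective-and-flat-contraderived-theorem} and~\ref{introd-graded-flat-coderived-contraderived-theorem} rather than formal arguments alone.

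Finally I would assemble the pieces. Full faithfulness of $\Xi_{B^\cu}$, compactness of its image, and generation together show that $\{\Xi_{B^\cu}(N)\}$ is a set of compact generators, so $\sD^\bctr(B^\cu\bModl)$ is compactly generated and its subcategory of compact objects is the idempotent completion of the essential image of $\Xi_{B^\cu}$, anti-equivalent to that of $\sD^\abs(\bmodr B^\cu)$. Combined with the coderived side and the compatibility isomorphism above, the restriction of ${-}\ot_{B^*}^\boL{-}$ to compact objects is then identified with the derived Hom-functor $\boR\Hom_{B^*}^\bu\colon\sD^\abs(\bmodr B^\cu)^\sop\times\sD^\abs(\bmodr B^\cu)\rarrow\sD(\Ab)$, completing the proof.
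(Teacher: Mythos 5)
Your plan follows the paper's own route quite closely: the same functor $\Xi_{B^\cu}$ obtained by dualizing finitely generated graded-projective resolutions and totalizing, the same Hom--tensor adjunction $\Hom_{\sD^\bctr(B^\cu\bModl)}(\Xi_{B^\cu}(N),X)\cong H^0(N\ot_{B^*}^\boL X)$ (Proposition~\ref{Hom-from-tilde-Xi-prop}), the same coproduct argument for compactness, and the coderived side imported from~\cite{PS5} (Theorem~\ref{compact-generators-of-coderived-theorem}). The genuine gap is your full-faithfulness step. From the chain of isomorphisms $\Hom_{\sD^\bctr}(\Xi_{B^\cu}(N),\Xi_{B^\cu}(N'))\cong H^0(N\ot_{B^*}^\boL\Xi_{B^\cu}(N'))\cong\Hom_{\sD^\abs(\bmodr B^\cu)}(N',N)$ you conclude ``so that $\Xi_{B^\cu}$ is fully faithful''. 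That does not follow: an abstract isomorphism of Hom groups says nothing about the map $f\mapsto\Xi_{B^\cu}(f)$ unless you also prove the isomorphism is compatible with $\Xi_{B^\cu}$ --- either that the composite isomorphism \emph{is} the map induced by $\Xi_{B^\cu}$, or at least that it is natural in both variables for the $\Xi$-induced functorial structure (from which full faithfulness does follow by a Yoneda-type argument: naturality forces $\phi_{N,K}(f)=\Xi_{B^\cu}(f)\circ u_N$ with $u_N=\phi_{N,N}(\id_N)$ invertible). Establishing exactly this compatibility is what nearly the entire proof of Theorem~\ref{Xi-fully-faithful-theorem} does (the commutativity of the triangular diagram~\eqref{triangular-diagram-of-homs}, checked by an explicit cocycle chase through the construction of Proposition~\ref{Hom-from-Xi-to-Xi-prop}); it is also precisely what the final assertion of the theorem, the identification of the restricted pairing with $\boR\Hom_{B^*}^\bu$, rests on.

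Two further points. First, your claim that $N\ot_{B^*}^\boL\Xi_{B^\cu}(N')\cong\boR\Hom_{B^*}^\bu(N',N)$ comes ``from the projective case'' hides the actual content: applying the tensor formulas to the chosen resolution $R_\bu^\cu$ of $N'$ yields $\Tot^\sqcup\Hom^\bu_{B^\rop{}^*}(R_\bu^\cu,N^\cu)$, whose rows are \emph{not} exact (they compute graded Ext groups over $B^*$ of the underlying graded modules), so this totalization is identified with nothing until one replaces $N^\cu$ by a graded-injective CDG\+module $J^\cu$ with coacyclic cone --- legitimate because coacyclic $\ot$ graded-flat is acyclic --- and invokes Lemma~\ref{co-contra-acyclic-total-complex-lemma}(a); the output is then $\Hom_{\sD^\bco(\bModr B^\cu)}(N',N)$, and passing to $\Hom_{\sD^\abs}$ uses the coderived-side full faithfulness you imported. (The dual totalization statement, part~(b) of that lemma, is likewise the hidden ingredient in your adjunction step.) Second, you defer the generation argument, expecting it to need the cotorsion-pair machinery; in fact the paper settles it directly: by Lemma~\ref{flat-cdg-modules-acyclic-tensor-product-lemma}, a graded-flat $F^\cu$ with $N^\cu\ot_{B^*}F^\cu$ acyclic for all finitely presented $N^\cu$ is already flat (a character-module computation), and flat CDG\+modules are contraacyclic by Corollary~\ref{flats-are-contraacyclic-cor}; no graded-cotorsion representative and no transport to the coderived side are needed. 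Your sketched transport can be made to work, via $\Hom^\bu_{B^\rop{}^*}(N^\cu,\Hom_\boZ^\cu(F^\cu,\boQ/\boZ))\cong\Hom_\boZ(N^\cu\ot_{B^*}F^\cu,\boQ/\boZ)$ together with the fact that a contractible graded-injective CDG\+module is an injective graded $B^*[\delta]$\+module, but as written it is a plan, not a proof. Finally, a small conceptual slip: $B^*$ is not a right CDG\+module over $B^\cu$ (the curvature equations do not match), and ``acyclic'' has no meaning for CDG\+modules over a genuinely curved ring, so your motivating remark about testing against $N=B^*$ does not parse in this generality.
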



\subsection{One terminological convention}
 Throughout the rest of this paper, unless otherwise mentioned,
whenever speaking of the coderived or the contraderived category
we always presume derived categories of the second kind \emph{in
the sense of Becker} (and \emph{not} in the sense of Positselski).
 Derived categories of the second kind in the sense of Positselski
will only appear in our exposition in connection with some results
from his preceding work~\cite{Pkoszul,Prel,Pksurv} mentioned in
the examples.
 See~\cite[Remark~9.2]{PS4} and~\cite[Section~7.9]{Pksurv} for
a discussion.

\medskip\noindent
\textbf{Acknowledgement.}
 The authors are grateful to Michal Hrbek for helpful discussions.
 This research is supported by the GA\v CR project 23-05148S.
 The first-named author is also supported by
the Czech Academy of Sciences (RVO~67985840).

\Section{Preliminaries on CDG-Modules, Notation and Conventions}

 The exposition in this section, which contains preliminary material
on CDG\+mod\-ules and DG\+categories of CDG\+modules,
follows~\cite[Sections~1.1, 1.2 and~3.1]{Pkoszul},
\cite[Sections~3.2, 4.2, and~6.1]{Prel}, \cite[Sections~1, 2.2, 3.1,
3.5, and~9.2]{Pedg}, and~\cite[Sections~1, 2.6, 6.2, and~8.3]{PS5}.
 We suggest in particular the book~\cite{Prel} and
the long preprint~\cite{Pedg} as reference sources offering more
detailed discussions than the one below. 

 We start with a very brief discussion of terminology related to
direct limits.

\subsection{Direct limits}
 A nonempty poset $\Xi$ is said to be \emph{directed} if for any two
elements $\xi_1$ and $\xi_2\in\Xi$ there exists an element $\xi\in\Xi$
such that $\xi_1\le\xi$ and $\xi_2\le\xi$.
 Given an infinite regular cardinal~$\aleph$, a poset $\Xi$ is said
to be \emph{$\aleph$\+directed} if for any subset $\Upsilon\subset\Xi$
such that the cardinality of $\Upsilon$ is smaller than~$\aleph$
there exists an element $\xi\in\Xi$ such that $\upsilon\le\xi$
for every $\upsilon\in\Upsilon$.
 We refer to~\cite[Definition~1.4 and Remark~1.21]{AR} for
the definitions of a \emph{filtered category} and
an \emph{$\aleph$\+filtered category}.

 In the category-theoretic terminology of~\cite{AR}, one speaks of
colimits of diagrams indexed by directed posets as ``directed colimits''
and of the colimits of diagrams indexed by $\aleph$\+directed posets as
``$\aleph$\+directed colimits''.
 The colimits of diagrams indexed by filtered (small) categories
are called ``filtered colimits'', and the colimits of diagrams indexed
by $\aleph$\+filtered categories are referred to as ``$\aleph$\+filtered
colimits''.
 Using the construction of~\cite[Theorem~1.5]{AR}, all filtered
colimits can be reduced to directed colimits.
 Similarly, all $\aleph$\+filtered colimits can be reduced to
$\aleph$\+directed colimits~\cite[Remark~1.21]{AR}.
 So $\aleph$\+directed and $\aleph$\+filtered colimits are equivalent
in a suitable sense.

 As this is mostly a paper about modules, we stick to the (perhaps
less developed, but traditional) module-theoretic terminology.
 So we speak of colimits of diagrams indexed by directed posets as
\emph{direct limits} and of the colimits of diagrams indexed by
$\aleph$\+directed posets as \emph{$\aleph$\+direct limits}.

\subsection{Graded modules} \label{prelim-graded-modules}
 In this paper we consider graded rings and modules with the grading
by the abelian group of integers~$\boZ$
(but most results are valid also for a more general grading,
see~\cite[Remark in Section~1.1]{Pkoszul}).
Given a graded ring $B^*=\bigoplus_{i\in\boZ}B^i$, the abelian category
of graded left $B^*$\+modules $M^*=\bigoplus_{i\in\boZ}M^i$ is denoted
by $B^*\sModl$, and the abelian category of graded right $B^*$\+modules
$N^*=\bigoplus_{i\in\boZ}N^i$ is denoted by $\sModr B^*$.

 Given a graded ring $B^*$, the opposite graded ring $B^\rop{}^*$ is
defined by the rules $(B^\rop){}^n=B^n$ for all $n\in\boZ$ and
$a^\rop b^\rop=(-1)^{|a||b|}(ba)^\rop$ for all homogeneous elements
$a$ and $b$ of degrees $|a|$ and $|b|$ in~$B^*$.
 Then the abelian category of graded right $B^*$\+modules is naturally
identified with the abelian category of graded left
$B^\rop{}^*$\+modules and vice versa. 
 We will mostly use the construction of the opposite graded ring
$B^\rop{}^*$ as a notational device (as explained below).

 Given a graded left $B^*$\+module $M^*$ and an integer $n\in\boZ$,
the graded left $B^*$\+module $M^*[n]$ has the graded components
$M^*[n]^i=M^{n+i}$ for all $i\in\boZ$.
 Similarly, for any graded right $B^*$\+module $N^*$, the graded
right $B^*$\+module $N^*[n]$ has the graded components
$N^*[n]^i=N^{n+i}$.
 The action of the graded ring $B^*$ on the grading shift $M^*[n]$
of a graded left $B^*$\+module~$M^*$ has to be twisted by a sign. The
rule is that the action of a homogeneous element $b\in B$ on $m\in M[n]$
is given by $(-1)^{|b|n}bm$, where the formula involves the action of
$b$ on $M$. To the contrary, the right action of $B^*$ on $N[n]$ is 
identical to that on $N$. This convention follows the usual Koszul sign 
rules making the identity maps $\id_M\:M\to M[n]$ and $\id_N\:N\to N[n]$ 
graded homomorphisms of degree $-n$.  We also refer
to~\cite[Section~1.1]{Pkoszul} or~\cite[Section~6.1]{Prel}.

 Given two graded left $B^*$\+modules $L^*$ and $M^*$, the graded
abelian group of morphisms $\Hom_{B^*}^*(L^*,M^*)$ is defined by
the rule that, for every $n\in\boZ$, the grading component
$\Hom_{B^*}^n(L^*,M^*)$ is the group of all graded left $B^*$\+module
morphisms $L^*\rarrow M^*[n]$.
 The definition of the graded abelian group of morphisms
$\Hom_{B^\rop{}^*}^*(R^*,N^*)$ between two graded right $B^*$\+modules
$R^*$ and $N^*$ is similar.
 The opposite graded ring $B^\rop{}^*$ to $B^*$ is mentioned in
this notation.

 Given a graded right $B^*$\+module $M^*$ and a graded left
$B^*$\+module $N^*$, the tensor product of graded modules
$N^*\ot_{B^*}M^*$ is endowed with the induced grading in the usual way.
 So $N^*\ot_{B^*}M^*$ is also a graded abelian group.

 Now let $A^*$, $B^*$, and $C^*$ be three graded rings.
 Then, for any graded $A^*$\+$B^*$\+bi\-mod\-ule $N^*$ and any graded
$B^*$\+$C^*$\+bimodule $M^*$, the graded abelian group $N^*\ot_{B^*}M^*$
has a natural graded $A^*$\+$C^*$\+bimodule structure.
 For any graded $B^*$\+$A^*$\+bimodule $L^*$ and any graded
$B^*$\+$C^*$\+bimodule $M^*$, the graded abelian group
$\Hom_{B^*}^*(L^*,M^*)$ has a natural graded $A^*$\+$C^*$\+bimodule
structure.
 For any graded $A^*$\+$B^*$\+bimodule $R^*$ and any graded
$C^*$\+$B^*$\+bimodule $N^*$, the graded abelian group
$\Hom_{B^\rop{}^*}^*(R^*,N^*)$ has a natural graded
$C^*$\+$A^*$\+bimodule
structure.
 We refer to~\cite[Section~6.1]{Prel} for further details including
the sign rules.

\subsection{CDG-modules} \label{prelim-cdg-modules-subsecn}
 An \emph{odd derivation of degree\/~$1$} on a graded ring $B^*$
is a homogeneous additive map $d\:B^*\rarrow B^*$ with the components
$d_i\:B^i\rarrow B^{i+1}$, \ $i\in\boZ$, satisfying the \emph{Leibniz
rule with signs} $d(bc)=d(b)c+(-1)^{|b|}bd(c)$ for all homogeneous
elements $b$ and $c$ of degrees~$|b|$ and~$|c|$ in~$B^*$.
 An \emph{odd derivation of degree\/~$1$} on a graded left
$B^*$\+module $M^*$ \emph{compatible with} a given odd derivation
of degree~$1$ on $B^*$ is a homogeneous additive map $d_M\:M^*
\rarrow M^*$ with the components $d_{M,i}\:M^i\rarrow M^{i+1}$
satisfying the equation $d_M(bx)=d(b)x+(-1)^{|b|}bd_M(x)$ for all
homogeneous elements $b\in B^*$ and $x\in M^*$ of degrees~$|b|$
and $|x|\in\boZ$.
 Similarly, an \emph{odd derivation of degree\/~$1$} on a graded right
$B^*$\+module $N^*$ \emph{compatible with} a given odd derivation
of degree~$1$ on $B^*$ is a homogeneous additive map $d_N\:N^*
\rarrow N^*$ with the components $d_{N,i}\:N^i\rarrow N^{i+1}$
satisfying the equation $d_N(yb)=d_N(y)b+(-1)^{|y|}yd(b)$ for all
homogeneous elements $y\in N^*$ and $b\in B^*$ of degrees~$|y|$
and~$|b|\in\boZ$.
 
 A \emph{curved DG\+ring} (\emph{CDG\+ring}) $B^\cu=(B^*,d,h)$ is
a triple consisting of
\begin{itemize}
\item a graded ring $B^*=\bigoplus_{i\in\boZ}B^i$,
\item an odd derivation $d\:B^*\rarrow B^*$ of degree~$1$, and
\item an element $h\in B^2$,
\end{itemize}
satisfying two equations
\begin{enumerate}
\renewcommand{\theenumi}{\roman{enumi}}
\item $d^2(b)=hb-bh$ for all $b\in B^*$ and
\item $d(h)=0$.
\end{enumerate}

 The element $h\in B^2$ is called the \emph{curvature element}
of a CDG\+ring $B^\cu=(B^*,d,h)$.
 We refer to~\cite[Section~3.1]{Pkoszul}, \cite[Section~3.2]{Prel},
or~\cite[Section~2.4]{Pedg} for the (nontrivial) definition of
\emph{morphisms CDG\+rings} (which we will not use in this paper). 

 A \emph{left CDG\+module} $M^\cu=(M^*,d_M)$ over a CDG\+ring $B^\cu$
is a pair consisting of
\begin{itemize}
\item a graded left $B^*$\+module $M^*=\bigoplus_{i\in\boZ}M^i$ and
\item an odd derivation $d_M\:M^*\rarrow M^*$ of degree~$1$
compatible with the odd derivation~$d$ on~$B^*$,
\end{itemize}
satisfying the equation
\begin{enumerate}
\renewcommand{\theenumi}{\roman{enumi}}
\setcounter{enumi}{2}
\item $d_M^2(x)=hx$ for all $x\in M^*$.
\end{enumerate}

 A \emph{right CDG\+module} $N^\cu=(N^*,d_M)$ over a CDG\+ring $B^\cu$
is a pair consisting of
\begin{itemize}
\item a graded right $B^*$\+module $N^*=\bigoplus_{i\in\boZ}N^i$ and
\item an odd derivation $d_N\:N^*\rarrow N^*$ of degree~$1$
compatible with the odd derivation~$d$ on~$B^*$,
\end{itemize}
satisfying the equation
\begin{enumerate}
\renewcommand{\theenumi}{\roman{enumi}}
\setcounter{enumi}{3}
\item $d_N^2(y)=-yh$ for all $y\in N^*$.
\end{enumerate}

 The more familiar concept of a \emph{DG\+ring} is a particular case
of that of a CDG\+ring: a DG\+ring $A^\bu=(A^*,d)$ is the same thing
as a CDG\+ring $(A^*,d,h)$ with a vanishing curvature element $h=0$.
 A (left or right) \emph{DG\+module} over a DG\+ring $(A^*,d)$ is
the same thing as a CDG\+module over the CDG\+ring $(A^*,d,0)$.

 Notice that the CDG\+ring $B^\cu$ carries \emph{no} natural structure
of a left or right CDG\+module over itself, because of a mismatch
of the equations for the square of the differential, (i), (iii),
and~(iv) above.
 However, any CDG\+ring $B^\cu$ is naturally a \emph{CDG\+bimodule}
over itself, in the sense of the definition below.

\subsection{CDG-bimodules} \label{prelim-cdg-bimodules-subsecn}
 Let $A^*$ and $B^*$ be two graded rings, each of them endowed with
an odd derivation $d_A\:A^*\rarrow A^*$ and $d_B\:B^*\rarrow B^*$
of degree~$1$.
 Let $K^*$ be a graded $A^*$\+$B^*$\+bimodule.
 By an \emph{odd derivation of degree\/~$1$} on $K^*$ \emph{compatible
with} the odd derivations~$d_A$ on $A^*$ and~$d_B$ on $B^*$ we simply
mean a homogeneous additive map $d_K\:K^*\rarrow K^*$ of degree~$1$ that
is \emph{simultaneously} an odd derivation on the left $A^*$\+module
$K^*$ compatible with the odd derivation~$d_A$ on $A^*$ \emph{and}
an odd derivation on the right $B^*$\+module $K^*$ compatible with
the odd derivation~$d_B$ on~$B^*$.

 Let $A^\cu=(A^*,d_A,h_A)$ and $B^\cu=(B^*,d_B,h_B)$ be two CDG\+rings.
 A \emph{CDG\+bimodule} $K^\cu=(K^*,d_K)$ over $A^\cu$ and $B^\cu$ is
a pair consisting of
\begin{itemize}
\item a graded $A^*$\+$B^*$\+bimodule $K^*$ and
\item an odd derivation $d_K\:K^*\rarrow K^*$ of degree~$1$
compatible with the odd derivations~$d_A$ on $A^*$ and~$d_B$ on~$B^*$,
\end{itemize}  
satisfying the equation
\begin{enumerate}
\renewcommand{\theenumi}{\roman{enumi}}
\setcounter{enumi}{4}
\item $d_K^2(z)=h_Az-zh_B$ for all $z\in K^*$.
\end{enumerate}

 Let $A^\cu=(A^*,d_A,h_A)$, \ $B^\cu=(B^*,d_B,h_B)$, and
$C^\cu=(C^*,d_C,h_C)$ be three CDG\+rings.
 Let $N^\cu=(N^*,d_N)$ be a CDG\+bimodule over $A^\cu$ and $B^\cu$, and
let $M^\cu=(M^*,d_M)$ be a CDG\+bimodule over $B^\cu$ and~$C^\cu$.
 Then the graded $A^*$\+$C^*$\+bimodule $N^*\ot_{B^*}M^*$ has
a natural structure of CDG\+bimodule over $A^\cu$ and $C^\cu$,
with the differential on $N^*\ot_{B^*}M^*$ given by the usual formula
$d(y\ot x)=d_N(y)\ot x+(-1)^{|y|}y\ot d_M(x)$ for all $y\in N^{|y|}$
and $x\in M^{|x|}$.
 We will denote the CDG\+bimodule $(N^*\ot_{B^*}M^*,\>d)$ over
$A^\cu$ and $C^\cu$ by $N^\cu\ot_{B^*}M^\cu$.

 Let $L^\cu=(L^*,d_L)$ be a CDG\+bimodule over $B^\cu$ and $A^\cu$, and
let $M^\cu=(M^*,d_M)$ be a CDG\+bimodule over $B^\cu$ and~$C^\cu$.
 Then the graded $A^*$\+$C^*$\+bimodule $\Hom_{B^*}^*(L^*,M^*)$ has
a natural structure of CDG\+bimodule over $A^\cu$ and $C^\cu$,
with the differential on $\Hom_{B^*}^*(L^*,M^*)$ given by the usual
formula $d(f)(z)=d_M(f(z))-(-1)^{|f|}f(d_L(z))$ for all
$f\in\Hom_{B^*}^{|f|}(L^*,M^*)$ and $z\in L^{|z|}$.
 We will denote the CDG\+bimodule $(\Hom_{B^*}^*(L^*,M^*),\>d)$
over $A^\cu$ and $C^\cu$ by $\Hom_{B^*}^\cu(L^\cu,M^\cu)$.

 Let $R^\cu=(R^*,d_R)$ be a CDG\+bimodule over $A^\cu$ and $B^\cu$, and
let $N^\cu=(N^*,d_N)$ be a CDG\+bimodule over $C^\cu$ and~$B^\cu$.
 Then the graded $C^*$\+$A^*$\+bimodule
$\Hom_{B^\rop{}^*}^*(R^*,N^*)$ has a natural structure of CDG\+bimodule
over $C^\cu$ and $A^\cu$, with the differential on
$\Hom_{B^\rop{}^*}^*(R^*,N^*)$ given by the usual formula 
$d(g)(w)=d_N(g(w))-(-1)^{|g|}g(d_R(w))$ for all
$g\in\Hom_{B^\rop{}^*}^{|g|}(R^*,N^*)$ and $w\in R^{|w|}$.
 We will denote the CDG\+bimodule $(\Hom_{B^\rop{}^*}^*(R^*,N^*),\>d)$
over $C^\cu$ and $A^\cu$ by $\Hom_{B^\rop{}^*}^\cu(R^\cu,N^\cu)$.

\subsection{Three additive categories associated with
a DG-category} \label{prelim-three-additive-categories-subsecn}
 We refer to~\cite[Section~1.2]{Pkoszul}, \cite[Section~1]{Pedg},
or~\cite[Section~1]{PS5} for general introductory discussions
of DG\+categories in the context suitable for our purposes.
 In this section, we only recall the constructions of three
additive categories $\bA^0$, $\sZ^0(\bA)$, and $\sH^0(\bA)$
associated with a DG\+category~$\bA$.

 Let $\bA$ be a DG\+category.
 Then the three additive categories $\bA^0$, $\sZ^0(\bA)$,
and $\sH^0(\bA)$ are constructed as follows.
 The classes of objects of $\bA^0$, $\sZ^0(\bA)$, and $\sH^0(\bA)$
coincide with the class of objects of the DG\+category~$\bA$.
 The abelian groups of morphisms for any given pair of objects
$X$ and $Y$ are:
\begin{itemize}
\item in the additive category $\bA^0$, the group of morphisms is
the group of degree~$0$ \emph{cochains} in the complex of morphisms
in~$\bA$,
$$
 \Hom_{\bA^0}(X,Y)=\Hom_\bA^0(X,Y);
$$
\item in the additive category $\sZ^0(\bA)$, the group of morphisms
is the group of degree~$0$ \emph{cocycles} in the complex of
morphisms in~$\bA$,
$$
 \Hom_{\sZ^0(\bA)}(X,Y)=\sZ^0\Hom_\bA^\bu(X,Y);
$$
\item in the additive category $\sH^0(\bA)$, the group of morphisms
is the group of degree~$0$ \emph{cohomology} of the complex of
morphisms in~$\bA$,
$$
 \Hom_{\sH^0(\bA)}(X,Y)=\sH^0\Hom_\bA^\bu(X,Y).
$$
\end{itemize}
 The compositions of morphisms in $\bA^0$, $\sZ^0(\bA)$, and
$\sH^0(\bA)$ are induced by the composition of morphisms in~$\bA$.

 The additive category $\bA^0$ is usually \emph{not} well-behaved,
for the reason that it does not contain enough objects.
 Typically, $\bA^0$ is a full subcategory of an additive/abelian
category one is really interested in.

 The additive category $\sZ^0(\bA)$ is well-behaved and often abelian.

 The additive category $\sH^0(\bA)$ is called the \emph{homotopy
category} of a DG\+category~$\bA$.
 For any DG\+category $\bA$ with a zero object, shifts, and cones,
the additive category $\sH^0(\bA)$ has a natural triangulated category
structure.

 A morphism in $\bA^0$ is said to be \emph{closed} if it belongs
to $\sZ^0(\bA)$.
 Morphisms in $\sZ^0(\bA)$ that are equal in $\sH^0(\bA)$ are said
to be \emph{homotopic} to each other.
 Objects of $\sZ^0(\bA)$ that vanish in $\sH^0(\bA)$ are said to be
\emph{contractible}.

 In this paper, we are interested in the case when $\bA=B^\cu\bModl$
or $\bModr B^\cu$ is the DG\+category of CDG\+modules.
 We will continue the discussion in the next
Section~\ref{prelim-dg-categories-of-cdg-modules-subsecn}.

\subsection{DG-categories of CDG-modules}
\label{prelim-dg-categories-of-cdg-modules-subsecn}
 Let $B^\cu=(B^*,d_B,h_B)$ be a CDG\+ring.
 The ring of integers $\boZ$ can be viewed as a CDG\+ring
$(\boZ,0,0)$ with the trivial grading, zero differential,
and zero curvature. 
 Clearly, left CDG\+modules over $B^\cu$ are the same things as
CDG\+bimodules over $B^\cu$ and $(\boZ,0,0)$, while right
CDG\+modules over $B^\cu$ can be viewed as CDG\+bimodules over
$(\boZ,0,0)$ and~$B^\cu$.

 Thus, for any left CDG\+modules $L^\cu$ and $M^\cu$ over $B^\cu$,
the graded abelian group $\Hom_{B^*}^*(L^*,M^*)$ endowed with
the natural differential given by the formula from
Section~\ref{prelim-cdg-bimodules-subsecn} becomes a complex of
abelian groups $\Hom_{B^*}^\bu(L^\cu,M^\cu)$.
 Together with the obvious composition structure on the graded
abelian groups of morphisms of graded modules, this defines
the \emph{DG\+category of left CDG\+modules over~$B^\cu$},
which we denote by $B^\cu\bModl$.
 We refer to~\cite[Section~6.1]{Prel} and~\cite[Section~2.2]{Pedg}
for further details.

 Similarly, for any right CDG\+modules $R^\cu$ and $N^\cu$
over $B^\cu$, the graded abelian group $\Hom_{B^\rop{}^*}(R^\cu,N^\cu)$
endowed with the natural differential given by the formula from
Section~\ref{prelim-cdg-bimodules-subsecn} becomes a complex of
abelian groups $\Hom_{B^\rop{}^*}^\bu(R^\cu,N^\cu)$.
 Together with the obvious composition structure, this defines
the \emph{DG\+category of right CDG\+modules over~$B^\cu$},
which we denote by $\bModr B^\cu$.

 One also observes that, for any right CDG\+module $N^\cu$ and
left CDG\+module $M^\cu$ over $B^\cu$, the graded abelian group
$N^*\ot_{B^*}M^*$ endowed with the natural differential given by
the formula from Section~\ref{prelim-cdg-bimodules-subsecn} becomes
a complex of abelian groups $N^\cu\ot_{B^*}M^\cu$.
 Thus we obtain a DG\+functor of two arguments
$$
 \ot_{B^*}\:\bModr B^\cu\times B^\cu\bModl\lrarrow\bC(\Ab),
$$
where $\bC(\Ab)=\boZ\bModl$ denotes the DG\+category of complexes
of abelian groups.

 The constructions of
Section~\ref{prelim-three-additive-categories-subsecn}
assign three additive categories to the DG\+category $B^\cu\bModl$.
 The additive category $(B^\cu\bModl)^0$ is naturally a full subcategory
of the abelian category $B^*\sModl$ of graded $B^*$\+modules.
 The objects of $(B^\cu\bModl)^0$ are precisely those graded left
$B^*$\+modules that admit \emph{at least one} structure of CDG\+module
over~$B^\cu$ (see~\cite[Examples~3.2 and~3.3]{Pedg} for
counterexamples). 

 The additive category $\sZ^0(B^\cu\bModl)$ is abelian; it is called
the \emph{abelian category of left CDG\+modules over~$B^\cu$}.
 The additive category $\sH^0(B^\cu\bModl)$ is triangulated; it is
called the \emph{homotopy category of left CDG\+modules over~$B^\cu$}.
 The DG\+category $B^\cu\bModl$ itself is an \emph{abelian
DG\+category} in the sense of~\cite[Section~4.6]{Pedg}
and~\cite[Section~3.2]{PS5}.

 We refer to~\cite[Sections~2.2 and~3.1, and Examples~3.17
and~4.41]{Pedg} or~\cite[Section~2.6 and Example~3.14]{PS5} for
a further discussion.

\subsection{\texorpdfstring{The graded ring $B^*[\delta]$ and the functor~$G^+$}{The graded ring B*[delta] and the functor G+}}
\label{prelim-delta-and-G-subsecn}
 Let $B^\cu=(B^*,d,h)$ be a CDG\+ring.
 The graded ring $B^*[\delta]$ is obtained by adjoining to
the graded ring $B^*$ a new element~$\delta$ of degree~$1$ subject
to the relations
$$
 \delta b-(-1)^{|b|}b\delta=d(b)
 \qquad\text{for all $b\in B^{|b|}$}
$$
and
$$
 \delta^2=h.
$$

 A left CDG\+module over $B^\cu$ is the same thing as a graded
left $B^*[\delta]$\+module; so, in fact, the abelian category
of CDG\+modules $\sZ^0(B^\cu\bModl)$ is naturally equivalent to
the abelian category of graded modules $B^*[\delta]\sModl$.
 Here the action of~$\delta$ on a left CDG\+module $M^\cu=(M^*,d_M)$
over $B^\cu$ is defined by the obvious rule
$$
 \delta x=d_M(x)
 \qquad\text{for all $x\in M$.}
$$

 Similarly, a right CDG\+module over $B^\cu$ is the same thing as
a graded right $B^*[\delta]$\+module; so the abelian category
of CDG\+modules $\sZ^0(\bModr B^\cu)$ is naturally equivalent to
the abelian category of graded modules $\sModr B^*[\delta]$.
 Here the action of~$\delta$ on a right CDG\+module $N^\cu=(N^*,d_N)$
over $B^\cu$ is defined by the rule
$$
 y\delta=-(-1)^{|y|}d_N(y)
 \qquad\text{for all $y\in N^{|y|}$.}
$$

 Given a CDG\+ring $B^\cu=(B^*,d,h)$ and a (left or right) CDG\+module
$M^\cu=(M^*,d_M)$ over $B^\cu$, we denote by $B^\cu{}^\#=B^*$
the underlying graded ring of $B^\cu$ and by $M^\cu{}^\#=M^*$
the underlying graded $B^*$\+module of~$M^\cu$.

 The forgetful functor $\#\:\sZ^0(B^\cu\bModl)\rarrow B^*\sModl$
assigning to a CDG\+module $M^\cu=(M^*,d_M)$ over $B^\cu$ its
underlying graded module $M^*=M^\cu{}^\#$ can be interpreted
in terms of the graded ring $B^*[\delta]$ as the functor
$B^*[\delta]\sModl\rarrow B^*\sModl$ of restriction of scalars
with respect to the identity inclusion of graded rings
$B^*\rarrow B^*[\delta]$.
 Accordingly, the left and right adjoint functors to
the forgetful functor $\sZ^0(B^\cu\bModl)\rarrow B^*\sModl$ can be
computed as the functors
$$
 M^*\longmapsto G^+(M^*)=B^*[\delta]\ot_{B^*}M^*
$$
and
$$
 M^*\longmapsto G^-(M^*)=\Hom_{B^*}^*(B^*[\delta],M^*).
$$

 For any CDG\+ring $B^\cu=(B^*,d,h)$ and any graded left $B^*$\+module
$M^*$, there are natural short exact sequences of graded left
$B^*$\+modules
\begin{equation} \label{G-plus-short-exact-sequence}
 0\lrarrow M^*\lrarrow G^+(M^*)^\#\lrarrow M^*[-1]\lrarrow0
\end{equation}
and
\begin{equation} \label{G-minus-short-exact-sequence}
 0\lrarrow M^*[1]\lrarrow G^-(M^*)^\#\lrarrow M^*\lrarrow0,
\end{equation}
which are transformed into each other by the shift functors~$[1]$
and~$[-1]$.
 We refer to~\cite[Section~4.2]{Prel}
and~\cite[Proposition~3.1]{Pedg} for a further discussion.

\subsection{\texorpdfstring{$\Ext^1$ and homotopy Hom lemma}{Ext and homotopy Hom lemma}}
 The following lemma goes back, at least, to~\cite[Lemma~2.1]{Gil}.
 For a version applicable to exact DG\+categories,
see~\cite[Lemma~9.41]{Pedg}, and for abelian DG\+categories,
\cite[Lemma~6.1]{PS5}.

\begin{lem} \label{Ext-1-homotopy-hom-lemma}
 Let $B^\cu=(B^*,d,h)$ be a CDG\+ring, and let $M^\cu=(M^*,d_M)$ and
$L^\cu=(L^*,d_L)$ be left CDG\+modules over~$B^\cu$.
 Then the kernel of the abelian group homomorphism
$$
 \Ext^1_{\sZ^0(B^\cu\bModl)}(L^\cu,M^\cu)\lrarrow
 \Ext^1_{B^*\sModl}(L^*,M^*)
$$
induced by the exact forgetful functor\/ $\sZ^0(B^\cu\bModl)\rarrow
B^*\sModl$ between the abelian categories of CDG\+modules over
$B^\cu$ and graded modules over $B^*$ is naturally isomorphic to
the abelian group
$$
 \Hom_{\sH^0(B^\cu\bModl)}(L^\cu,M^\cu[1])
$$
of morphisms $L^\cu\rarrow M^\cu[1]$ in the triangulated homotopy
category of CDG\+modules\/ $\sH^0(B^\cu\bModl)$.
 In particular, if\/ $\Ext^1_{B^*\sModl}(L^*,M^*)=0$, then
$$
 \Ext^1_{\sZ^0(B^\cu\bModl)}(L^\cu,M^\cu)\,\simeq\,
 \Hom_{\sH^0(B^\cu\bModl)}(L^\cu,M^\cu[1]).
$$
\end{lem}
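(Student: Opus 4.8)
The plan is to set up an explicit isomorphism between the kernel in question and the cohomology group $\sH^1\Hom_{B^*}^\bu(L^\cu,M^\cu)$, and then to identify the latter with $\Hom_{\sH^0(B^\cu\bModl)}(L^\cu,M^\cu[1])$. For the identification, I would use the shift conventions of Section~\ref{prelim-graded-modules}, which give $\Hom_{B^*}^n(L^\cu,M^\cu[1])=\Hom_{B^*}^{n+1}(L^\cu,M^\cu)$ for every~$n$; by the definition of $\sH^0$ in Section~\ref{prelim-three-additive-categories-subsecn} this yields $\Hom_{\sH^0(B^\cu\bModl)}(L^\cu,M^\cu[1])=\sH^0\Hom_{B^*}^\bu(L^\cu,M^\cu[1])\cong\sH^1\Hom_{B^*}^\bu(L^\cu,M^\cu)$. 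Since $\sZ^0(B^\cu\bModl)$ is abelian and the forgetful functor~$\#$ is exact, an element of the kernel is the Yoneda class of a short exact sequence $0\rarrow M^\cu\rarrow E^\cu\rarrow L^\cu\rarrow0$ of CDG\+modules whose underlying short exact sequence of graded $B^*$\+modules splits.

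Given such an extension, I would choose a graded $B^*$\+module splitting, i.e.\ an identification $E^*\cong M^*\oplus L^*$ compatible with the inclusion of $M^*$ and the projection onto $L^*$. Because the inclusion $M^\cu\rarrow E^\cu$ and the projection $E^\cu\rarrow L^\cu$ are closed morphisms, in this decomposition the differential $d_E$ acquires the upper-triangular matrix form $\left(\begin{smallmatrix} d_M & \phi \\ 0 & d_L\end{smallmatrix}\right)$ for a single off-diagonal graded map $\phi\:L^*\rarrow M^*$ of degree~$1$; the diagonal entries are forced to be $d_M$ and $d_L$ and the lower-left entry to vanish, exactly because $i$ and $\pi$ commute with the differentials. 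Unwinding the compatibility of $d_E$ with the derivation~$d$ on $B^*$ shows that $\phi$ is precisely a degree~$1$ element of the complex $\Hom_{B^*}^\bu(L^\cu,M^\cu)$, that is, a homomorphism $L^*\rarrow M^*[1]$ of graded $B^*$\+modules; and computing $d_E^2$ and using the equations $d_M^2=h$, $d_L^2=h$, $d_E^2=h$ from Section~\ref{prelim-cdg-modules-subsecn} shows that the vanishing of the off-diagonal entry of $d_E^2-h$ is exactly the cocycle condition $d(\phi)=d_M\phi+\phi d_L=0$ for the Hom\+complex differential of Section~\ref{prelim-cdg-bimodules-subsecn}. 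Thus every choice of splitting produces a $1$\+cocycle, and I take its cohomology class as the image of the extension.

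Next I would check that this class is well defined and that the assignment is an isomorphism. Two graded splittings differ by a degree~$0$ graded homomorphism $\rho\:L^*\rarrow M^*$; conjugating the matrix of $d_E$ by the unipotent automorphism $\left(\begin{smallmatrix}\id&\rho\\0&\id\end{smallmatrix}\right)$ replaces $\phi$ by $\phi-(d_M\rho-\rho d_L)=\phi-d(\rho)$, so the cohomology class is independent of the splitting, and since a Yoneda equivalence carries splittings to splittings with the same off-diagonal term it depends only on the extension class. In the opposite direction, from any $1$\+cocycle $\phi$ I would build $E^\cu=\left(M^*\oplus L^*,\ \left(\begin{smallmatrix}d_M&\phi\\0&d_L\end{smallmatrix}\right)\right)$; the cocycle condition guarantees $d_E^2=h$, so $E^\cu$ is a genuine CDG\+module sitting in a graded\+split short exact sequence, hence in the kernel. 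These two assignments are mutually inverse, a $1$\+coboundary $\phi=d(\rho)$ corresponding to a split extension (the automorphism above trivializes the off-diagonal term), which gives injectivity; compatibility of the construction with Baer sums of extensions on one side and addition of cocycles on the other makes the bijection an isomorphism of abelian groups. The final ``in particular'' assertion is then immediate: when $\Ext^1_{B^*\sModl}(L^*,M^*)=0$ every extension is graded\+split, so the whole of $\Ext^1_{\sZ^0(B^\cu\bModl)}(L^\cu,M^\cu)$ coincides with the kernel.

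The main obstacle I anticipate is purely the sign bookkeeping. With the Koszul conventions fixed in Sections~\ref{prelim-graded-modules}\<\ref{prelim-cdg-bimodules-subsecn}, one must verify that compatibility of $d_E$ with~$d$ is equivalent to $\phi$ being $B^*$\+linear as a map into the shift $M^*[1]$, that the off-diagonal part of $d_E^2-h$ is $+(d_M\phi+\phi d_L)$, and that the change-of-splitting formula produces exactly $\pm d(\rho)$ for the Hom\+complex differential. Once these signs are pinned down, the remaining verifications (naturality of the correspondence and the Baer-sum computation) are routine and formal.
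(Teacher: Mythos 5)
Your proof is correct and is essentially the same argument the paper uses: the paper cites general lemmas from \cite{Pedg} and \cite{PS5} but describes exactly this correspondence, sending a closed morphism $f\:L^\cu\rarrow M^\cu[1]$ to the graded-split extension $0\rarrow M^\cu\rarrow\cone(f)[-1]\rarrow L^\cu\rarrow0$, and your matrix-differential construction $\left(\begin{smallmatrix} d_M & \phi \\ 0 & d_L\end{smallmatrix}\right)$ is precisely this cone construction read in the inverse direction. Your write-up simply supplies in full the well-definedness, inverse, and additivity verifications that the paper delegates to the cited references.
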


\begin{proof}
 This is a particular case of~\cite[Lemma~9.41]{Pedg}
or~\cite[Lemma~6.1]{PS5}.
 The isomorphism of abelian groups assigns to (the cochain homotopy
class of) any closed morphism of CDG\+modules $f\:L^\cu\rarrow M^\cu[1]$
(the extension class of) the natural short exact sequence $0\rarrow
M^\cu\rarrow \cone(f)[-1]\rarrow L^\cu\rarrow0$ in the abelian
category of CDG\+modules $\sZ^0(B^\cu\bModl)$.
 The latter short exact sequence splits in the category of graded
modules $B^*\sModl$; and any graded-split short exact sequence of
CDG\+modules arises in this way.
\end{proof}

\subsection{Projectivity, injectivity, and flatness}
\label{prelim-proj-inj-flatness-subsecn}
 Let $B^\cu=(B^*,d,h)$ be a CDG\+ring.

 A left CDG\+module $P^\cu=(P^*,d_P)$ over $B^\cu$ is said to be
\emph{graded-projective} if the graded left $B^*$\+module $P^*$
is projective.
 Graded-projective CDG\+modules form a full DG\+subcategory
$B^\cu\bModl_\bproj$ closed under coproducts, shifts, twists, and cones
in the DG\+category $B^\cu\bModl$ \,\cite[Lemma~5.4(a)]{Pedg}.
 Accordingly, the notation $\sZ^0(B^\cu\bModl_\bproj)$ stands for
the full subcategory of graded-projective CDG\+modules in
the abelian category of CDG\+modules $\sZ^0(B^\cu\bModl)$;
this means the full subcategory of graded $B^*$\+projective modules
in the abelian category of graded left $B^*[\delta]$\+modules
$B^*[\delta]\sModl=\sZ^0(B^\cu\bModl)$.

 In particular, a left CDG\+module $P^\cu$ over $B^\cu$ is said to be
\emph{graded-free} if $P^*$ is a free graded $B^*$\+module (i.~e.,
the graded $B^*$\+module freely generated by some set of homogeneous
generators of various degrees).
 Graded-free CDG\+modules form a full DG\+subcategory
$B^\cu\bModl_\bfree$ closed under coproducts, shifts, twists, and cones
in the DG\+category $B^\cu\bModl$.
 Accordingly, the notation $\sZ^0(B^\cu\bModl_\bfree)$ stands for
the full subcategory of graded-free CDG\+modules in
the abelian category of CDG\+modules $\sZ^0(B^\cu\bModl)$.

 A left CDG\+module $P^\cu=(P^*,d_P)$ over $B^\cu$ is said to be
\emph{projective} if it is a projective object of the abelian
category of CDG\+modules $\sZ^0(B^\cu\bModl)$.
 The full subcategory of projective CDG\+modules is denoted by
$\sZ^0(B^\cu\bModl)_\proj\subset\sZ^0(B^\cu\bModl)$; this means
the full subcategory of projective graded $B^*[\delta]$\+modules
in the abelian category of graded left $B^*[\delta]$\+modules.
 So $\sZ^0(B^\cu\bModl)_\proj=B^*[\delta]\sModl_\proj$.

 According to~\cite[Lemma~6.3(a)]{PS5}, a CDG\+module $P^\cu$ over
$B^\cu$ is projective if and only if it is graded-projective
\emph{and} contractible.
 It follows that the class of projective CDG\+modules is closed under
coproducts, shifts, and cones in the DG\+category $B^\cu\bModl$.

 A right CDG\+module $J^\cu=(J^*,d_J)$ over $B^\cu$ is said to be
\emph{graded-injective} if the graded right $B^*$\+module $J^*$
is injective.
 Graded-injective CDG\+modules form a full DG\+subcategory
$\bModrinj B^\cu$ closed under products, shifts, twists, and cones
in the DG\+category $\bModr B^\cu$ \,\cite[Lemma~5.4(b)]{Pedg}.
 So the notation $\sZ^0(\bModrinj B^\cu)$ stands for
the full subcategory of graded-injective CDG\+modules in
the abelian category of CDG\+modules $\sZ^0(\bModr B^\cu)$;
this means the full subcategory of graded $B^*$\+injective modules
in the abelian category of graded right $B^*[\delta]$\+modules
$\sModr B^*[\delta]=\sZ^0(\bModr B^\cu)$.

 A right CDG\+module $J^\cu=(J^*,d_J)$ over $B^\cu$ is said to be
\emph{injective} if it is an injective object of the abelian
category of CDG\+modules $\sZ^0(\bModr B^\cu)$.
 The full subcategory of injective CDG\+modules is denoted by
$\sZ^0(\bModr B^\cu)_\inj\subset\sZ^0(\bModr B^\cu)$; this means
the full subcategory of injective graded $B^*[\delta]$\+modules
in the abelian category of graded right $B^*[\delta]$\+modules.
 So $\sZ^0(\bModr B^\cu)_\inj=\sModrinj B^*[\delta]$.

 According to~\cite[Lemma~7.2(a)]{PS5}, a CDG\+module $J^\cu$ over
$B^\cu$ is injective if and only if it is graded-injective
\emph{and} contractible.
 It follows that the class of injective CDG\+modules is closed under
products, shifts, and cones in the DG\+category $\bModr B^\cu$.

 A left CDG\+module $F^\cu=(F^*,d_F)$ over $B^\cu$ is said to be
\emph{graded-flat} if the graded left $B^*$\+module $F^*$ is flat.
 Graded-flat CDG\+modules form a full DG\+subcategory
$B^\cu\bModl_\bflat$ closed under coproducts, shifts, twists, and cones
in the DG\+category $B^\cu\bModl$.
 Accordingly, the notation $\sZ^0(B^\cu\bModl_\bflat)$ stands for
the full subcategory of graded-flat CDG\+modules in
the abelian category of CDG\+modules $\sZ^0(B^\cu\bModl)$;
this means the full subcategory of graded $B^*$\+flat modules
in the abelian category of graded left $B^*[\delta]$\+modules
$B^*[\delta]\sModl=\sZ^0(B^\cu\bModl)$.

 A left CDG\+module $F^\cu=(F^*,d_F)$ over $B^\cu$ is said to be
\emph{flat} if it is flat as a graded left $B^*[\delta]$\+module.
 The full subcategory of flat CDG\+modules is denoted by
$\sZ^0(B^\cu\bModl)_\flat\subset\sZ^0(B^\cu\bModl)$; so
$\sZ^0(B^\cu\bModl)_\flat=B^*[\delta]\sModl_\flat$.
 Notice that the graded left (or right) $B^*$\+module $B^*[\delta]$
is free with two generators~$1$ and~$\delta$
(see~\cite[proof of Theorem~4.7]{Prel} or~\cite[Section~3.1]{Pedg});
in particular, $B^*[\delta]$ is a flat graded left and right
$B^*$\+module.
 Hence the underlying graded $B^*$\+module of any flat graded
$B^*[\delta]$\+module is flat; so any flat CDG\+module is graded-flat.

 According to~\cite[proof of Lemma~4.2]{Pedg} or~\cite[Lemma~3.1]{PS5}
(cf.\ the proof of Lemma~\ref{Ext-1-homotopy-hom-lemma} above),
any cone of a closed morphism in the DG\+category $B^\cu\bModl$ can be
viewed as an extension in the abelian category $\sZ^0(B^\cu\bModl)$.
 Since the class of all flat graded $B^*[\delta]$\+modules is closed
under extensions (as well as direct sums and grading shifts) in
$B^*[\delta]\sModl$, it follows that the class of flat CDG\+modules is
closed under coproducts, shifts, and cones in the DG\+category
$B^\cu\bModl$.

 As particular cases of the notation of
Section~\ref{prelim-three-additive-categories-subsecn} for
the homotopy category of a DG\+category, we denote
by $\sH^0(B^\cu\bModl_\bproj)\subset\sH^0(B^\cu\bModl_\bflat)$
the homotopy categories of graded-projective and graded-flat
left CDG\+modules.
 The full triangulated subcategory of flat CDG\+modules is denoted by
$\sH^0(B^\cu\bModl)_\flat\subset\sH^0(B^\cu\bModl_\bflat)$.
 Similarly, the homotopy category of graded-injective right
CDG\+modules is denoted by $\sH^0(\bModrinj B^\cu)$.

\begin{rem} \label{flat-complexes-remark}
 Let $R$ be an associative ring, viewed as a CDG\+ring concentrated
entirely in cohomological degree~$0$, with zero differential and
zero curvature element.
 Then CDG\+modules over $R$ are the same things as complexes of
$R$\+modules.
 In this case, the flat CDG\+modules over $R$ in the sense of
the definition above are precisely the acyclic complexes of flat
$R$\+modules with flat $R$\+modules of cocycles.

 Indeed, the graded version of the Govorov--Lazard characterization of
flat modules over associative rings tells that the flat graded modules
over $R[\delta]$ are precisely the direct limits of (finitely generated)
projective graded $R[\delta]$\+modules (see~\cite[Theorem~3.2]{OR}
for an even more general characterization of flat modules over rings
with many objects).
 As a particular case of the discussion of projective CDG\+modules
above, the projective graded modules over $R[\delta]$ correspond to
contractible complexes of projective $R$\+modules.
 Thus the flat graded $R[\delta]$\+modules correspond to the direct
limits of contractible complexes of projective $R$\+modules.
 According to~\cite[Theorem~8.6\,(ii)\,$\Leftrightarrow$\,(iii)]{Neem},
the latter are precisely the acyclic complexes of flat $R$\+modules
with flat $R$\+modules of cocycles.

 Alternatively, here is a direct proof of the claim that a complex of
$R$\+modules is acyclic with flat modules of cocycles if and only if
it corresponds to a flat graded $R[\delta]$\+module.
 For the ``if'' implication, it suffices to say that direct limits
of contractible complexes of projective modules are acyclic with
flat modules of cocycles.
 Conversely, for any complex of left $R$\+modules $M^\bu$ and any
complex of right $R$\+modules $N^\bu$, the graded abelian group
$N^\bu\ot_{R[\delta]}M^\bu$ is naturally isomorphic to the cokernel
of the differential on the complex of abelian groups
$N^\bu\ot_RM^\bu$.
 Now, if $F^\bu$ is an acyclic complex of flat left $R$\+modules with
flat $R$\+modules of cocycles, then the complex $N^\bu\ot_RF^\bu$ is
acyclic for any complex of right $R$\+modules $N^\bu$.
 The functor $N^\bu\longmapsto N^\bu\ot_R F^\bu$ is exact, and
the functor assigning to an acyclic complex the (co)kernel of its
differential is exact; hence the functor $N^\bu\longmapsto
N^\bu\ot_{R[\delta]}F^\bu$ is exact.
 
 Quite generally, a similar argument based on the graded version of
the Govorov--Lazard theorem tells that the flat CDG\+modules over
any CDG\+ring $B^\cu=(B^*,d,h)$ are precisely the direct limits of
contractible graded-projective CDG\+modules computed in the abelian
category of CDG\+modules $\sZ^0(B^\cu\bModl)$.
\end{rem}

\subsection{Finitely and countably generated and presented CDG-modules}
\label{prelim-finite-countable-modules-subsecn}
 Let $B^*$ be a graded ring.
 A graded $B^*$\+module is said to be \emph{finitely generated}
if it has a finite generating set (of homogeneous elements in
some degrees), and \emph{countably generated} if it has an at most
countable generating set of elements.
 A graded $B^*$\+module is said to be \emph{finitely presented}
if it is the cokernel of a morphism of finitely generated free
graded modules, and \emph{countably presented} if it is the cokernel 
of a morphism of (at most) countably generated free graded modules.
 The additive category of finitely presented graded right
$B^*$\+modules is denoted by $\smodr B^*$.

\begin{prop} \label{finiteness-countability-for-cdg-modules-prop}
 Let $B^\cu=(B^*,d,h)$ be a CDG\+ring and $M^\cu=(M^*,d_M)$ be
a CDG\+module over~$B^\cu$.  Then \par
\textup{(a)} the graded $B^*[\delta]$\+module $M^*$ is finitely
generated if and only if the graded $B^*$\+module $M^*$ is finitely
generated; \par
\textup{(b)} the graded $B^*[\delta]$\+module $M^*$ is finitely
presented if and only if the graded $B^*$\+module $M^*$ is finitely
presented; \par
\textup{(c)} the graded $B^*[\delta]$\+module $M^*$ is countably
generated if and only if the graded $B^*$\+module $M^*$ is countably
generated; \par
\textup{(d)} the graded $B^*[\delta]$\+module $M^*$ is countably
presented if and only if the graded $B^*$\+module $M^*$ is countably
presented.
\end{prop}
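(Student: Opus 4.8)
The plan is to reduce all four statements to the single structural fact, recalled in Section~\ref{prelim-proj-inj-flatness-subsecn}, that $B^*[\delta]$ is free of rank two on the homogeneous generators $1$ and~$\delta$ as a graded left $B^*$\+module; in particular $B^*[\delta]$ is module\+finite over $B^*$ and is itself finitely presented (hence also countably presented) as a graded $B^*$\+module. Throughout I view $M^*$ as a graded $B^*$\+module by restriction of scalars along the identity inclusion $B^*\hookrightarrow B^*[\delta]$, and I use that restriction of scalars is exact and carries a (finitely or countably generated) free $B^*[\delta]$\+module to a free $B^*$\+module of twice the rank, since $\bigl(B^*[\delta]^{(\Lambda)}\bigr)^\#\cong(B^*)^{(\Lambda)}\oplus(B^*[-1])^{(\Lambda)}$.

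Parts~(a) and~(c), concerning generation, I would settle first and directly. A generating set of $M^*$ over $B^*$ is a fortiori a generating set over $B^*[\delta]$, giving one implication with no hypothesis needed. For the converse, if a family $\{x_i\}$ generates $M^*$ over $B^*[\delta]$, then, writing each element of $B^*[\delta]$ uniquely as $a+b\delta$ with $a,b\in B^*$ and using $\delta x_i=d_M(x_i)$, the enlarged family $\{x_i\}\cup\{d_M(x_i)\}$ generates $M^*$ over $B^*$. This at most doubles the cardinality of the family, so finiteness~(a) and countability~(c) are preserved in both directions.

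For parts~(b) and~(d) one implication is again immediate: a finite (resp.\ countable) free presentation $B^*[\delta]^{m}\rarrow B^*[\delta]^{n}\rarrow M^*\rarrow0$ over $B^*[\delta]$ restricts, by exactness of restriction of scalars, to a finite (resp.\ countable) free presentation of $M^*$ over $B^*$. The opposite implication carries the real content, and here the plan is to couple the module\+finiteness of $B^*[\delta]$ with the graded Schanuel lemma. Suppose $M^*$ is finitely (resp.\ countably) presented over $B^*$. Then $M^*$ is finitely (resp.\ countably) generated over $B^*$, hence over $B^*[\delta]$ by~(a) (resp.~(c)), so there is a surjection $\pi\:B^*[\delta]^{(\Lambda)}\twoheadrightarrow M^*$ of graded $B^*[\delta]$\+modules with $\Lambda$ finite (resp.\ countable); put $K^*=\ker\pi$. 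Viewed over $B^*$, the source is free on a finite (resp.\ countable) homogeneous basis and $M^*$ is finitely (resp.\ countably) presented, so Schanuel's lemma, applied to $\pi$ and to a chosen finite (resp.\ countable) free presentation of $M^*$ over $B^*$, gives $K^*\oplus(\text{free})\cong(\text{f.g.\ resp.\ c.g.})\oplus B^*[\delta]^{(\Lambda)}$ with finitely (resp.\ countably) generated summands on the right. Hence $K^*$ is finitely (resp.\ countably) generated over $B^*$, and therefore also over $B^*[\delta]$, because $B^*[\delta]$ is module\+finite over $B^*$. Thus $\pi$ exhibits $M^*$ as finitely (resp.\ countably) presented over $B^*[\delta]$.

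The only nonformal point, and hence the main obstacle, is precisely this last passage from a $B^*$\+presentation to a $B^*[\delta]$\+presentation in the harder direction of~(b) and~(d): one must bound the relation module $K^*$, and the decisive inputs are the graded Schanuel lemma together with the fact that $B^*[\delta]$ is finite free over $B^*$, so that finite or countable generation descends from $B^*[\delta]$\+modules already known to be finitely or countably generated over $B^*$. Everything else --- exactness of restriction of scalars, closure of finitely and countably generated modules under extensions, and the trivial transfer of generating sets along $B^*\hookrightarrow B^*[\delta]$ --- is routine and passes verbatim to the graded setting, with homogeneous generators and the evident grading shifts.
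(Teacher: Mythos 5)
Your proof is correct. The easy halves (both directions of~(a) and~(c), and the ``only if'' directions of~(b) and~(d)) you handle exactly as the paper does, by exploiting that $B^*[\delta]$ is a finitely generated free graded $B^*$\+module on the homogeneous basis $\{1,\delta\}$. Where you genuinely diverge is in the nontrivial ``if'' halves of~(b) and~(d): the paper does not argue these at all, but disposes of them by citation to general results in the framework of abelian DG\+categories (\cite[Lemmas~9.6 and~9.7]{Pedg} and \cite[Lemma~6.7]{PS5}). Your replacement --- compare the kernel $K^*$ of a graded $B^*[\delta]$\+linear surjection $\pi\:B^*[\delta]^{(\Lambda)}\rarrow M^*$ (with $\Lambda$ finite, resp.\ countable, which exists by parts~(a), resp.~(c)) against the kernel of a chosen $B^*$\+presentation via the graded Schanuel lemma, conclude that $K^*$ is finitely (resp.\ countably) generated over $B^*$, hence over $B^*[\delta]$ --- is elementary and self-contained; it needs nothing beyond the module-finiteness of $B^*[\delta]$ over $B^*$, the fact that restriction of scalars takes finite/countable free $B^*[\delta]$\+modules to finite/countable free $B^*$\+modules, and the abelianness of the category of graded modules, in which Schanuel's lemma holds verbatim. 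What the paper's route buys is brevity and generality (the cited lemmas cover arbitrary abelian DG\+categories, of which CDG\+modules are a special case); what yours buys is a proof readable inside the paper with no external input. Two small points to make explicit in a polished write-up: the kernel $K'$ of the chosen $B^*$\+presentation $F_0\rarrow M^*$ is finitely (countably) generated because it is the image of~$F_1$; and $K^*=\ker\pi$ is a graded $B^*[\delta]$\+submodule (being the kernel of a $B^*[\delta]$\+morphism), so a $B^*$\+generating set for it is in particular a $B^*[\delta]$\+generating set, after which one more surjection from a finite (countable) free $B^*[\delta]$\+module onto $K^*$ yields the desired presentation.
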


\begin{proof}
 Parts~(a) and~(c), as well as the ``only if'' assertions of
parts~(b) and~(d), follow immediately from the fact that
$B^*[\delta]$ is a finitely generated/presented (left or right)
graded $B^*$\+module.
 Parts~(a) and~(b) can be also obtained as a particular case
of~\cite[Lemmas~9.6 and~9.7]{Pedg}.
 Parts~(b) and~(d) can be obtained as particular cases
of~\cite[Lemma~6.7]{PS5}.
\end{proof}

 A CDG\+module $M^\cu$ over $B^\cu$ is said to be \emph{finitely
generated} if it satisfies the equivalent conditions of
Proposition~\ref{finiteness-countability-for-cdg-modules-prop}(a),
and \emph{finitely presented} if it satisfies the equivalent
conditions of
Proposition~\ref{finiteness-countability-for-cdg-modules-prop}(b).
 A CDG\+module $M^\cu$ over $B^\cu$ is said to be \emph{countably
generated} if it satisfies the equivalent conditions of
Proposition~\ref{finiteness-countability-for-cdg-modules-prop}(c),
and \emph{countably presented} if it satisfies the equivalent
conditions of
Proposition~\ref{finiteness-countability-for-cdg-modules-prop}(d).

 Finitely presented CDG\+modules form a full DG\+subcategory 
$\bmodr B^\cu$ closed under finite direct sums, shifts, twists,
and cones in the DG\+category $\bModr B^\cu$.
 Accordingly, the notation $\sZ^0(\bmodr B^\cu)$ stands for
the full subcategory of finitely presented CDG\+modules in
the abelian category of CDG\+modules $\sZ^0(\bModr B^\cu)$;
so $\sZ^0(\bmodr B^\cu)=\smodr B^*[\delta]$.
 Similarly, the homotopy category of the DG\+category of finitely
presented CDG\+modules is denoted by $\sH^0(\bmodr B^\cu)$.

\begin{cor} \label{graded-projective-generated-presented-cor}
 Let $B^\cu$ be a CDG\+ring.  Then \par
\textup{(a)} a graded-projective CDG\+module over $B^\cu$ is
finitely generated if and only if it is finitely presented; \par
\textup{(b)} a graded-projective CDG\+module over $B^\cu$ is
countably generated if and only if it is countably presented.
\end{cor}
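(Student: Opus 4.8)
The plan is to reduce both parts to elementary facts about projective graded modules by means of Proposition~\ref{finiteness-countability-for-cdg-modules-prop}. Let $P^\cu=(P^*,d_P)$ be a graded-projective CDG\+module over $B^\cu$; by definition the underlying graded $B^*$\+module $P^*$ is projective. Unwinding the definitions of finitely and countably generated and presented CDG\+modules together with Proposition~\ref{finiteness-countability-for-cdg-modules-prop}(a)--(d), the CDG\+module $P^\cu$ is finitely generated, finitely presented, countably generated, or countably presented if and only if the graded $B^*$\+module $P^*$ has the respective property. Hence it suffices to prove the following purely module-theoretic assertions: for a projective graded $B^*$\+module $P^*$, finite generation is equivalent to finite presentation, and countable generation is equivalent to countable presentation.

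In each equivalence, one implication is formal and valid for every graded module: a finitely (resp.\ countably) presented graded module is, by its very definition, a quotient of a finitely (resp.\ countably) generated free graded module, and is therefore finitely (resp.\ countably) generated. The substance of the statement lies in the converse implications, which is where I would use the projectivity of~$P^*$.

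To this end, suppose $P^*$ is generated by a set $S$ of homogeneous elements, and let $F$ be the free graded $B^*$\+module on~$S$, equipped with the evident surjection $F\rarrow P^*$. Since $P^*$ is projective, this surjection splits, so $P^*$ is a direct summand of~$F$; let $e\:F\rarrow F$ be the corresponding idempotent endomorphism with image~$P^*$. A routine check shows that the image of $\id_F-e$ coincides with the kernel of~$e$, so that $P^*\cong\operatorname{coker}(\id_F-e)$ and the sequence $F\rarrow F\rarrow P^*\rarrow0$, whose first map is $\id_F-e$, is a presentation of $P^*$ by free graded modules, each free on the set~$S$. Choosing $S$ finite when $P^*$ is finitely generated and $S$ countable when $P^*$ is countably generated produces a finite, respectively countable, presentation of $P^*$, which finishes both parts.

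I do not anticipate a genuine obstacle: the corollary is a direct consequence of Proposition~\ref{finiteness-countability-for-cdg-modules-prop} together with the standard theory of projective modules, and the passage to the graded setting costs nothing, since the splitting of a surjection onto a projective module and the idempotent cokernel presentation are available verbatim for graded modules. The only point demanding a little attention is to carry the cardinality of the chosen generating set~$S$ through the construction, so that both free modules in the presentation remain finitely, respectively countably, generated.
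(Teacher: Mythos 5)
Your proof is correct and takes essentially the same route as the paper: reduce via Proposition~\ref{finiteness-countability-for-cdg-modules-prop} to the statement that a projective graded $B^*$\+module is finitely (resp.\ countably) generated if and only if it is finitely (resp.\ countably) presented, and then invoke that module-theoretic fact. The only difference is that the paper states this fact as an observation, whereas you supply its standard proof via the split surjection and the presentation $F\xrightarrow{\id_F-e}F\rarrow P^*\rarrow0$, which is a harmless (and correct) elaboration.
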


\begin{proof}
 One observes that, for any graded ring $B^*$, a projective
graded $B^*$\+module is finitely generated if and only if it is
finitely presented (for part~(a)), and a projective graded
$B^*$\+module is countably generated if and only if it is
countably presented (for part~(b)).
 Then it remains to recall
Proposition~\ref{finiteness-countability-for-cdg-modules-prop}(a--b)
(for part~(a) of the corollary) or
Proposition~\ref{finiteness-countability-for-cdg-modules-prop}(c--d)
(for part~(b) of the corollary).
\end{proof}

\Section{Preliminaries on Cotorsion Pairs and Model Structures}

 This section contains a reminder of fairly standard material.
 We refer to a number of sources, such as the books~\cite{Hov-book,GT}
and the papers~\cite{ET,BBE,Hov,Bec,Sto-ICRA,Gil2,PR,PS4}, etc.

\subsection{Cotorsion pairs in exact categories}
 Le $\sE$ be an exact category (in Quillen's sense).
 We denote by $\Ext^*_{\sE}({-},{-})$ the Yoneda Ext functor
in the exact category~$\sE$.

 Let $\sL$ and $\sR\subset\sE$ be two classes of objects in~$\sE$.
 Then the notation $\sL^{\perp_1}\subset\sE$ stands for the class
of all objects $X\in\sE$ such that $\Ext^1_\sE(L,X)=0$ for all
$L\in\sL$.
 Dually, ${}^{\perp_1}\sR\subset\sE$ is the class of all objects
$Y\in\sE$ such that $\Ext^1_\sE(Y,R)=0$ for all $R\in\sR$.

 A pair of classes of objects $(\sL,\sR)$ in $\sE$ is said to be
a \emph{cotorsion pair} if $\sR=\sL^{\perp_1}$ and
$\sL={}^{\perp_1}\sR$.
 For any class of objects $\sS\subset\sE$, the pair of classes
$\sR=\sS^{\perp_1}$ and $\sL={}^{\perp_1}\sR$ is a cotorsion pair
in~$\sE$.
 The cotorsion pair (${}^{\perp_1}(\sS^{\perp_1})$, $\sS^{\perp_1}$)
in $\sE$ is said to be \emph{generated} by the class~$\sS$.
 Dually, for any class of objects $\sT\subset\sE$, the pair of classes
$\sL={}^{\perp_1}\sT$ and $\sR=\sL^{\perp_1}$ is a cotorsion pair
in~$\sE$.
 The cotorsion pair (${}^{\perp_1}\sT$, $({}^{\perp_1}\sT)^{\perp_1})$
in $\sE$ is said to be \emph{cogenerated} by the class~$\sT$.

 The intersection $\sL\cap\sR$ is called the \emph{core} (or in
another terminology, the \emph{kernel}) of a cotorsion pair $(\sL,\sR)$
in an exact category~$\sE$.

 A cotorsion pair $(\sL,\sR)$ is $\sE$ is said to be \emph{complete}
if for every object $E\in\sE$ there exist (admissible) short exact
sequences in~$\sE$ of the form
\begin{gather}
 0\lrarrow R'\lrarrow L\lrarrow E\lrarrow0,
 \label{special-precover-sequence} \\
 0\lrarrow E\lrarrow R\lrarrow L'\lrarrow0
 \label{special-preenvelope-sequence}
\end{gather}
with objects $L$, $L'\in\sL$ and $R$, $R'\in\sR$.

 A short exact sequence~\eqref{special-precover-sequence} is called
a \emph{special precover sequence}.
 A short exact sequence~\eqref{special-preenvelope-sequence} is called
a \emph{special preenvelope sequence}.
 The short exact sequences~(\ref{special-precover-sequence}--%
\ref{special-preenvelope-sequence}) are also collectively referred to
as \emph{approximation sequences}.

 A class of objects $\sL\subset\sE$ is said to be \emph{generating} if
for every object $E\in\sE$ there exists an object $L\in\sL$ together
with an admissible epimorphism $L\rarrow E$ in~$\sE$.
 Dually, a class of objects $\sR\subset\sE$ is said to be
\emph{cogenerating} if for every object $E\in\sE$ there exists
an object $R\in\sR$ together with an admissible monomorphism
$E\rarrow R$ in~$\sE$.
 Obviously, in any complete cotorsion pair $(\sL,\sR)$ is $\sE$,
the class $\sL$ is generating and the class $\sR$ is cogenerating.


\begin{lem} \label{hereditary-cotorsion-lemma}
 Let $(\sL,\sR)$ be a cotorsion pair in
an exact category\/~$\sE$.
 Assume that the class\/ $\sL$ is generating and the class\/ $\sR$ is
cogenerating in\/~$\sE$.
 Then the following conditions are equivalent:
\begin{enumerate}
\item the class\/ $\sL$ is closed under kernels of admissible
epimorphisms in\/~$\sE$;
\item the class\/ $\sR$ is closed under cokernels of admissible
monomorphisms in\/~$\sE$;
\item $\Ext^2_\sE(L,R)=0$ for all objects $L\in\sL$ and $R\in\sR$;
\item $\Ext^n_\sE(L,R)=0$ for all objects $L\in\sL$, \ $R\in\sR$,
and all integers $n\ge1$.
\end{enumerate}
\end{lem}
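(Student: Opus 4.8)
The plan is to establish the four-way equivalence by one-step dimension shifting in the Yoneda Ext groups of $\sE$, organized around the implications $(4)\Rightarrow(3)$, $(3)\Rightarrow(1)$, $(3)\Rightarrow(2)$, $(1)\Rightarrow(4)$, and $(2)\Rightarrow(4)$. These close the cycle: e.g.\ $(1)\Rightarrow(4)\Rightarrow(3)\Rightarrow(2)\Rightarrow(4)$, with $(3)\Rightarrow(1)$ supplying the last missing arrow and $(2)\Rightarrow(4)$ giving the symmetric route. The implication $(4)\Rightarrow(3)$ is trivial, being the special case $n=2$.

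For $(3)\Rightarrow(1)$ I would take an admissible short exact sequence $0\rarrow K\rarrow L_1\rarrow L_0\rarrow 0$ with $L_0,L_1\in\sL$ and show $K\in\sL={}^{\perp_1}\sR$. For any $R\in\sR$, the contravariant long exact sequence of $\Ext^*_\sE(-,R)$ contains the fragment $\Ext^1_\sE(L_1,R)\rarrow\Ext^1_\sE(K,R)\rarrow\Ext^2_\sE(L_0,R)$; the left term vanishes because $L_1\in\sL$ and the right term vanishes by~(3), so $\Ext^1_\sE(K,R)=0$ and hence $K\in\sL$. The implication $(3)\Rightarrow(2)$ is dual: for an admissible sequence $0\rarrow R_0\rarrow R_1\rarrow C\rarrow 0$ with $R_0,R_1\in\sR$, the covariant fragment $\Ext^1_\sE(L,R_1)\rarrow\Ext^1_\sE(L,C)\rarrow\Ext^2_\sE(L,R_0)$ forces $\Ext^1_\sE(L,C)=0$ for every $L\in\sL$, so $C\in\sL^{\perp_1}=\sR$.

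For the substantial direction $(1)\Rightarrow(4)$, I would resolve a given $L\in\sL$ by a resolution $\cdots\rarrow P_1\rarrow P_0\rarrow L\rarrow 0$ whose terms lie in $\sL$, supplied by the generating hypothesis; condition~(1) then guarantees that each syzygy $\Omega^iL=\ker(P_{i-1}\rarrow P_{i-2})$ again lies in $\sL$. The goal is the dimension shift $\Ext^n_\sE(L,R)\simeq\Ext^1_\sE(\Omega^{n-1}L,R)$ for $n\ge 1$, whose right-hand side vanishes since $\Omega^{n-1}L\in\sL$ and $R\in\sR$, yielding $\Ext^n_\sE(L,R)=0$ for all $n\ge 1$. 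The implication $(2)\Rightarrow(4)$ is strictly dual, using the cogenerating hypothesis to coresolve $R$ by $0\rarrow R\rarrow I^0\rarrow I^1\rarrow\cdots$ with cosyzygies kept in $\sR$ by~(2), and shifting $\Ext^n_\sE(L,R)\simeq\Ext^1_\sE(L,\Sigma^{n-1}R)=0$.

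The main obstacle is precisely the legitimacy of these dimension shifts, that is, producing resolutions whose terms are acyclic for $\Ext^{\ge 1}_\sE(-,R)$ (resp.\ $\Ext^{\ge 1}_\sE(L,-)$): membership of an object in $\sL$ or $\sR$ only yields vanishing of $\Ext^1$, never of the higher groups, so one cannot simply resolve inside the classes $\sL$ and $\sR$ and hope to compute absolute Ext. This is where the generating and cogenerating hypotheses do the essential work — they allow the bottom (resp.\ top) of a Yoneda $n$-extension to be replaced by genuinely projective (resp.\ injective) objects, which automatically lie in $\sL$ (resp.\ $\sR$) because projectives are contained in ${}^{\perp_1}\sR$ (resp.\ injectives in $\sL^{\perp_1}$) and whose syzygies (resp.\ cosyzygies) are dragged back into $\sL$ (resp.\ $\sR$) by~(1) (resp.\ (2)). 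Checking that such acyclic terms are always available in the ambient exact category, and that the shift terminates at the single $\Ext^1$ group controlled by the defining property of the cotorsion pair, is the technical heart of the argument and the step I expect to require the most care.
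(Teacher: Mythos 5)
Your implications $(4)\Rightarrow(3)$, $(3)\Rightarrow(1)$ and $(3)\Rightarrow(2)$ are correct as written (and note the latter two need neither the generating nor the cogenerating hypothesis). The genuine gap is in $(1)\Rightarrow(4)$ and its dual $(2)\Rightarrow(4)$. You correctly diagnose that naive dimension shifting along a resolution with terms in $\sL$ is illegitimate, since membership in $\sL$ only controls $\Ext^1$ against $\sR$; but your proposed repair---replacing the resolution terms by ``genuinely projective (resp.\ injective) objects''---is not available. The hypotheses of the lemma supply admissible epimorphisms from objects of $\sL$ and admissible monomorphisms into objects of $\sR$, nothing more; a general exact category $\sE$ need not contain \emph{any} nonzero projective or injective objects. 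For a concrete failure: the flat cotorsion pair in the category of quasi-coherent sheaves on $\mathbb{P}^1$ satisfies all hypotheses of the lemma (flats are generating, cotorsion sheaves are cogenerating, flats are closed under kernels of epimorphisms), yet this category has no nonzero projective objects, so your resolution for $(1)\Rightarrow(4)$ cannot even be started. Your argument does become correct verbatim under the additional assumption that $\sE$ has enough projectives and enough injectives, but that is strictly weaker than the lemma as stated, and the paper needs the stated generality (``hereditary'' is \emph{defined} via this lemma and then transported to exact subcategories through Lemma~\ref{restricted-exact-structure}). For the record, the paper does not reprove the lemma at all: it cites~\cite[Lemma~6.17]{Sto-ICRA}, whose argument works without any projectives or injectives.

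Here is the repair, which is the essential content of the cited proof: argue element-wise, using the Yoneda description of $\Ext^n$, by induction on~$n$ (base case $n=1$ is the definition of a cotorsion pair). Let $n\ge2$ and $\xi\in\Ext^n_\sE(L,R)$. Splitting off the last arrow of a Yoneda $n$-extension representing~$\xi$ gives an admissible short exact sequence $E\:0\rarrow M\rarrow X\rarrow L\rarrow0$ and a class $\eta\in\Ext^{n-1}_\sE(M,R)$ with $\xi=\delta_E(\eta)$, where $\delta_E$ is the connecting homomorphism of~$E$. Now choose, by the generating hypothesis, an admissible epimorphism $q\:P\rarrow X$ with $P\in\sL$; the composite $p\:P\rarrow X\rarrow L$ is an admissible epimorphism, yielding $E'\:0\rarrow K\rarrow P\rarrow L\rarrow0$, and $q$ induces a morphism of short exact sequences $E'\rarrow E$ which is the identity on $L$ and some $f\:K\rarrow M$ on kernels. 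Yoneda associativity (naturality of connecting maps) gives $\xi=\delta_E(\eta)=\delta_{E'}(f^*\eta)$. Condition~(1) applies to $E'$, because \emph{both} $P$ and $L$ lie in $\sL$; hence $K\in\sL$, so $f^*\eta\in\Ext^{n-1}_\sE(K,R)=0$ by the induction hypothesis, and $\xi=0$. The crucial point you are missing is that the cover $P\rarrow L$ is chosen \emph{through the extension representing}~$\xi$ (so that the pulled-back sequence splits), which is why no acyclicity of $P$ beyond $\Ext^1_\sE(P,{-})|_{\sR}=0$ is ever needed; a single fixed resolution of $L$ cannot achieve this. The implication $(2)\Rightarrow(4)$ is proved dually, using the cogenerating hypothesis.
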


\begin{proof}
 This is~\cite[Lemma~6.17]{Sto-ICRA}.
 The statement there mentions an additional assumption on $\sE$
(the so-called weak idempotent completeness) which is, however,
not necessary in the argument.
\end{proof}

 A cotorsion pair $(\sL,\sR)$ in
an exact
category $\sE$ is said to be \emph{hereditary} if it satisfies
the assumptions and any one of the equivalent conditions~(1--4)
of Lemma~\ref{hereditary-cotorsion-lemma}.

 Let $\sA$ be an exact category and $\sE\subset\sA$ be a full
subcategory closed under extensions.
 Then the \emph{inherited exact category structure} on $\sE$ is
defined by the rule that the (admissible) short exact sequences
in $\sE$ are the short exact sequences in $\sA$ with the terms
belonging to~$\sE$.

 Let $(\sL,\sR)$ be a complete cotorsion pair in
the exact category~$\sA$.
 We will say $(\sL,\sR)$ \emph{restricts to} (\emph{a complete
cotorsion pair in}) the subcategory $\sE\subset\sA$ if the pair
of classes ($\sE\cap\sL$, $\sE\cap\sR$) is a complete cotorsion
pair in the exact category $\sE$ (where the exact structure on $\sE$
is inherited from~$\sA$).

\begin{lem} \label{restricted-exact-structure}
 Let\/ $\sA$ be an exact category, $\sE\subset\sA$ be a full subcategory
closed under extensions, and $(\sL,\sR)$ be a complete cotorsion pair
in\/~$\sA$.
 Assume that the full subcategory\/ $\sE$ is closed under kernels of
admissible epimorphisms in\/ $\sA$, and the class\/ $\sL$ is contained
in\/~$\sE$.
 Then the complete cotorsion pair $(\sL,\sR)$ in\/ $\sA$ restricts to
a complete cotorsion pair $(\sL$, $\sE\cap\sR)$ in\/~$\sE$.
 Furthermore, if the cotorsion pair $(\sL,\sR)$ is hereditary in\/
$\sA$, then the restricted cotorsion pair $(\sL$, $\sE\cap\sR)$
is hereditary in\/~$\sE$.
\end{lem}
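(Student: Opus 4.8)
The plan is to verify the cotorsion-pair axioms and completeness for $(\sL,\sE\cap\sR)$ directly, transferring everything between $\sE$ and $\sA$ by means of the following basic observation. Since $\sE$ is closed under extensions in $\sA$, a short exact sequence in $\sE$ (with the inherited exact structure) is nothing but a short exact sequence in $\sA$ all of whose terms lie in $\sE$; conversely, any short exact sequence $0\rarrow Y\rarrow Z\rarrow X\rarrow 0$ in $\sA$ with $X$, $Y\in\sE$ automatically has $Z\in\sE$. Consequently, for any two objects $X$, $Y\in\sE$ one has $\Ext^1_\sE(X,Y)=\Ext^1_\sA(X,Y)$, so that $\Ext^1_\sE(X,Y)=0$ if and only if $\Ext^1_\sA(X,Y)=0$.

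Granting this, the identity $\sE\cap\sR=\sL^{\perp_1}$ computed inside $\sE$ is immediate: as $\sL\subset\sE$, for $X\in\sE$ the condition $\Ext^1_\sE(L,X)=0$ for all $L\in\sL$ is the same as $\Ext^1_\sA(L,X)=0$ for all $L\in\sL$, which characterizes $\sE\cap\sR$. The inclusion $\sL\subseteq{}^{\perp_1}(\sE\cap\sR)$ (left-perp taken in $\sE$) is equally clear from $\sL={}^{\perp_1}\sR$ and $\sE\cap\sR\subseteq\sR$. The substantive step, which I expect to carry the weight of the argument, is the reverse inclusion ${}^{\perp_1}(\sE\cap\sR)\subseteq\sL$. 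Here I would take $Y\in\sE$ with $\Ext^1_\sE(Y,R)=0$ for all $R\in\sE\cap\sR$, apply completeness of $(\sL,\sR)$ in $\sA$ to get a special precover sequence $0\rarrow R'\rarrow L\rarrow Y\rarrow 0$ in $\sA$ with $L\in\sL$ and $R'\in\sR$, and observe that $R'=\ker(L\rarrow Y)$ lies in $\sE$ since $L$, $Y\in\sE$ and $\sE$ is closed under kernels of admissible epimorphisms. Thus $R'\in\sE\cap\sR$ and the sequence represents a class in $\Ext^1_\sE(Y,R')=0$, hence splits; so $Y$ is a direct summand of $L\in\sL$, and since $\Ext^1_\sA(Y,R)$ is then a summand of $\Ext^1_\sA(L,R)=0$ for every $R\in\sR$, we conclude $Y\in{}^{\perp_1}\sR=\sL$.

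For completeness of $(\sL,\sE\cap\sR)$ in $\sE$, I would start from the two approximation sequences for an object $E\in\sE$ supplied by completeness in $\sA$ and show that both remain short exact in $\sE$. In the special precover sequence $0\rarrow R'\rarrow L\rarrow E\rarrow 0$, the term $R'$ again lies in $\sE$ by closure under kernels of admissible epimorphisms (as $L$, $E\in\sE$), so $R'\in\sE\cap\sR$. In the special preenvelope sequence $0\rarrow E\rarrow R\rarrow L'\rarrow 0$, the middle term $R$ is an extension of $L'\in\sL\subset\sE$ by $E\in\sE$, hence lies in $\sE$ by closure under extensions, so $R\in\sE\cap\sR$. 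Both sequences thereby become the required approximation sequences in the inherited exact structure.

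Finally, for the hereditary assertion I would invoke Lemma~\ref{hereditary-cotorsion-lemma}, whose hypotheses (that $\sL$ be generating and $\sE\cap\sR$ cogenerating in $\sE$) are guaranteed by the completeness just established. It then suffices to check condition~(1): that $\sL$ is closed under kernels of admissible epimorphisms in $\sE$. But a short exact sequence $0\rarrow K\rarrow L_1\rarrow L_0\rarrow 0$ in $\sE$ with $L_0$, $L_1\in\sL$ is in particular such a sequence in $\sA$, and since $(\sL,\sR)$ is hereditary in $\sA$ the class $\sL$ is closed under such kernels in $\sA$, whence $K\in\sL$. I do not anticipate any real obstacle beyond the reverse-inclusion step above; the only points requiring genuine care are that $R'$ stays in $\sE$ (using closure under kernels of admissible epimorphisms) and that $R$ stays in $\sE$ (using closure under extensions), which is exactly where the two standing hypotheses on $\sE$ are consumed.
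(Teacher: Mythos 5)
Your proof is correct and complete. Note that the paper does not actually prove this lemma internally: its entire ``proof'' is a citation to \cite[Lemma~2.2]{Pgen}, so there is no in-paper argument to compare against, and your write-up supplies the self-contained proof that the paper outsources. All the delicate points are handled properly: the identification $\Ext^1_\sE(X,Y)\simeq\Ext^1_\sA(X,Y)$ for $X,Y\in\sE$ (valid precisely because closure under extensions makes the admissible short exact sequences of the inherited structure coincide with the ambient ones having all terms in~$\sE$); the observation that the special precover sequence of an object of $\sE$ stays in $\sE$ by closure under kernels of admissible epimorphisms, while the special preenvelope sequence stays in $\sE$ by closure under extensions; and the only substantive step, ${}^{\perp_1}(\sE\cap\sR)\subseteq\sL$, which you settle by splitting the special precover sequence of $Y$ inside $\sE$ and then passing back to $\sA$ via additivity of $\Ext^1_\sA({-},R)$ --- in effect the fact that the left class of a cotorsion pair is closed under direct summands, phrased so as to remain self-contained; this is exactly where both hypotheses on $\sE$ and completeness in $\sA$ are consumed. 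The hereditary part is also fine: completeness of the restricted pair yields the generating/cogenerating assumptions of Lemma~\ref{hereditary-cotorsion-lemma} in $\sE$, and condition~(1) of that lemma restricts verbatim from $\sA$ to~$\sE$.
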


\begin{proof}
 This is~\cite[Lemma~2.2]{Pgen}.
\end{proof}

\subsection{Filtrations and the Eklof--Trlifaj theorem}
 We continue the discussion in the context of exact categories before
passing to the special case of Grothendieck abelian categories with
enough projective objects.

 Let $\sC$ be a category and $\alpha$~be an ordinal.
 A direct system $(C_i\to C_j)_{0\le i<j<\alpha}$ of objects in $\sC$
indexed by the well-ordered set~$\alpha$ is said to be a \emph{smooth
chain} if $C_j=\varinjlim_{i<j}C_i$ for all limit ordinals $j<\alpha$.
 Assuming that the direct limit $\varinjlim_{i<\alpha}C_i$ exists,
we will denote it by $C_\alpha=\varinjlim_{i<\alpha}C_i$.

 Let $\sE$ be an exact category, and let
$(F_i\to F_j)_{0\le i<j\le\alpha}$ be a smooth chain of objects in
$\sE$ with a direct limit $F_\alpha=\varinjlim_{i<\alpha}F_i$.
 One says that the smooth chain $(F_i)_{0\le i\le\alpha}$ is
a \emph{filtration} of the object $F=F_\alpha$ in $\sE$
if the following two conditions hold:
\begin{itemize}
\item $F_0=0$;
\item for every ordinal $0\le i<i+1<\alpha$, the transition morphism
$F_i\rarrow F_{i+1}$ is an admissible monomorphism in~$\sE$.
\end{itemize}

 An object $F\in\sE$ endowed with an ordinal-indexed filtration
$(F_i)_{0\le i\le\alpha}$ is said to be \emph{filtered by}
the cokernels of the admissible monomorphisms $F_i\rarrow F_{i+1}$,
\ $0\le i<i+1<\alpha$.
 In an alternative terminology, the object $F$ is said to be
a \emph{transfinitely iterated extension} of the objects
$(F_{i+1}/F_i)_{0\le i<i+1<\alpha}$ (\emph{in the sense of
the direct limit}).

 Given a class of objects $\sS\subset\sE$, the class of all objects
in $\sE$ filtered by (objects isomorphic to) the objects from $\sS$
is denoted by $\Fil(\sS)$.
 A class of objects $\sF\subset\sE$ is said to be \emph{deconstructible}
if there exists a \emph{set} of objects $\sS\subset\sE$ such that
$\sF=\Fil(\sS)$.

 The following result is known classically as
the \emph{Eklof lemma}~\cite[Lemma~1]{ET}, \cite[Lemma~6.2]{GT}.

\begin{lem} \label{eklof-lemma}
 For any class of objects\/ $\sR$ in an exact category\/ $\sE$,
the left\/ $\Ext^1$\+or\-thog\-o\-nal class\/ ${}^{\perp_1}\sR$ is
closed under transfinitely iterated extensions (in the sense of
the direct limit) in\/~$\sE$.
 In other words, $\Fil({}^{\perp_1}\sR)={^{\perp_1}\sR}$.
\end{lem}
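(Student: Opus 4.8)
The plan is to prove the displayed reformulation $\Fil({}^{\perp_1}\sR)={}^{\perp_1}\sR$. The inclusion ${}^{\perp_1}\sR\subseteq\Fil({}^{\perp_1}\sR)$ is immediate, since any object is its own one-step filtration, so the content is the reverse inclusion $\Fil({}^{\perp_1}\sR)\subseteq{}^{\perp_1}\sR$. Accordingly I fix an object $R\in\sR$ and an object $F\in\sE$ equipped with a filtration $(F_i)_{0\le i\le\alpha}$ whose successive cokernels $S_i=F_{i+1}/F_i$ (for $i+1<\alpha$) all lie in ${}^{\perp_1}\sR$, and I must show $\Ext^1_\sE(F,R)=0$. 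It suffices to take an arbitrary admissible short exact sequence $0\rarrow R\rarrow E\rarrow F\rarrow0$, with admissible epimorphism $\pi\:E\rarrow F$, and to produce a section $s\:F\rarrow E$ with $\pi s=\id_F$; then the sequence splits, and as it was arbitrary we conclude $\Ext^1_\sE(F,R)=0$.

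The core of the argument is a transfinite recursion producing a coherent family of partial sections $s_i\:F_i\rarrow E$ with $\pi s_i=u_i$, where $u_i\:F_i\rarrow F$ denotes the structure morphism of the filtration, and with $s_j|_{F_i}=s_i$ whenever $i<j$. It is convenient to pull the sequence back along $u_i$ to an admissible short exact sequence $0\rarrow R\rarrow E_i\rarrow F_i\rarrow0$, so that $s_i$ becomes an honest section of $E_i\rarrow F_i$. The base case $s_0=0$ uses $F_0=0$. At every stage $j\le\alpha$ for which $F_j=\varinjlim_{i<j}F_i$ --- that is, at each interior limit ordinal by smoothness, and at $j=\alpha$ by hypothesis --- the compatible cocone $(s_i)_{i<j}$ extends uniquely to $s_j\:F_j\rarrow E$ by the universal property of the direct limit, and $\pi s_j=u_j$ holds by uniqueness. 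In particular $s_\alpha$ is the desired section of $\pi$, since $u_\alpha=\id_F$.

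The one substantial step, and the place where the orthogonality hypothesis enters, is the passage from $s_i$ to $s_{i+1}$ at successor ordinals $i+1<\alpha$. Here I invoke the long exact sequence obtained by applying the contravariant Yoneda functor $\Ext^*_\sE(-,R)$ to the admissible short exact sequence $0\rarrow F_i\rarrow F_{i+1}\rarrow S_i\rarrow0$. The pullback class $[E_{i+1}]\in\Ext^1_\sE(F_{i+1},R)$ restricts along $F_i\rarrow F_{i+1}$ to $[E_i]\in\Ext^1_\sE(F_i,R)$, which vanishes because $s_i$ already splits $E_i\rarrow F_i$; since $\Ext^1_\sE(S_i,R)=0$, exactness of $\Ext^1_\sE(S_i,R)\rarrow\Ext^1_\sE(F_{i+1},R)\rarrow\Ext^1_\sE(F_i,R)$ makes the restriction injective and forces $[E_{i+1}]=0$, so some section $\sigma\:F_{i+1}\rarrow E_{i+1}$ exists. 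This $\sigma$ need not restrict to $s_i$ on $F_i$, but the discrepancy $\sigma|_{F_i}-s_i$ factors through the kernel $R$, giving a morphism $F_i\rarrow R$; the same vanishing $\Ext^1_\sE(S_i,R)=0$ makes the restriction $\Hom_\sE(F_{i+1},R)\rarrow\Hom_\sE(F_i,R)$ surjective, so this discrepancy extends to a morphism $F_{i+1}\rarrow R$. Subtracting its image in $E_{i+1}$ from $\sigma$ yields the coherent extension $s_{i+1}$ with $s_{i+1}|_{F_i}=s_i$.

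The only external input is the standard calculus of Yoneda $\Ext$ in an exact category: the long exact sequences in each variable and their naturality under pullback of admissible short exact sequences. I expect the delicate point to be not the bare existence of a splitting over $F_{i+1}$ but the coherence requirement $s_{i+1}|_{F_i}=s_i$, because it is precisely this compatibility that is preserved by the direct limits at the limit stages and that allows the terminal colimit $s_\alpha$ to be assembled. Both halves of the successor step --- existence of $\sigma$ and the subsequent correction --- rest on the single hypothesis $\Ext^1_\sE(S_i,R)=0$ supplied by the filtration.
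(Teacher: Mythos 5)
Your proof is correct and is essentially the argument the paper relies on: the paper's own proof of this lemma is just a citation of \cite[Lemma~7.5]{PS6} and \cite[Lemma~4.5]{PR}, and the argument there is precisely your transfinite induction producing a coherent family of partial sections, with the two consequences of $\Ext^1_\sE(S_i,R)=0$ (splitting the pulled-back extension, then correcting the splitting via surjectivity of $\Hom_\sE(F_{i+1},R)\rarrow\Hom_\sE(F_i,R)$) at successor steps and the universal property of the direct limit at limit steps. All the ingredients you invoke are available in an arbitrary exact category --- pullbacks of admissible short exact sequences, the six-term $\Hom$--$\Ext^1$ exact sequence, and the existence of the colimits at limit stages, which is built into the paper's definition of a filtration --- so there is no gap.
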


\begin{proof}
 This is~\cite[Lemma~7.5]{PS6}.
 The argument from~\cite[Lemma~4.5]{PR} is applicable.
\end{proof}

 Given a class of objects $\sF\subset\sE$, we denote by $\sF^\oplus
\subset\sE$ the class of all direct summands of objects from~$\sF$.
 The following theorem is due to Eklof and Trlifaj~\cite[Theorems~2
and~10]{ET}, \cite[Theorem~6.11 and Corollary~6.14]{GT}.
 We formulate it for certain abelian categories only.

\begin{thm} \label{eklof-trlifaj-theorem}
 Let\/ $\sA$ be a Grothendieck category with a projective generator
and $(\sL,\sR)$ be the cotorsion pair generated by a \emph{set} of
objects\/ $\sS\subset\sA$.  Then \par
\textup{(a)} the cotorsion pair $(\sL,\sR)$ is complete; \par
\textup{(b)} if\/ $\sS$ contains a projective generator of\/ $\sA$,
then\/ $\sL=\Fil(\sS)^\oplus$.
\end{thm}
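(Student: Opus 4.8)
The plan is to reduce both parts to one transfinite construction of special preenvelopes, and then to read off everything else formally. Since $(\sL,\sR)$ is generated by $\sS$, we have $\sR=\sS^{\perp_1}$ and $\sL={}^{\perp_1}\sR$, so in particular $\sS\subseteq\sL$. As a left $\Ext^1$\+orthogonal class, $\sL$ is automatically closed under direct summands, and by the Eklof Lemma~\ref{eklof-lemma} it is closed under transfinitely iterated extensions; hence $\Fil(\sS)^\oplus\subseteq\sL$, which already yields one inclusion of part~(b). The entire content therefore lies in completeness and in the reverse inclusion $\sL\subseteq\Fil(\sS)^\oplus$.

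First I would construct, for an arbitrary object $A\in\sA$, a special preenvelope sequence $0\rarrow A\rarrow R\rarrow L\rarrow0$ with $R\in\sR$ and $L\in\Fil(\sS)$. This is the Eklof--Trlifaj transfinite pushout construction: I build a smooth chain $(M_\beta)_{\beta\le\kappa}$ of admissible monomorphisms with $M_0=A$, continuous at limit ordinals (where $M_\lambda=\varinjlim_{\beta<\lambda}M_\beta$ is again monic with all previous terms, direct limits being exact in the Grothendieck category $\sA$). At a successor step I form the set $\Phi_\beta$ of all pairs $(S,\xi)$ with $S\in\sS$ and $\xi\in\Ext^1_\sA(S,M_\beta)$, pick a representing extension $0\rarrow M_\beta\rarrow N_\phi\rarrow S_\phi\rarrow0$ for each $\phi\in\Phi_\beta$, take the coproduct of these extensions, and push it out along the codiagonal map $\nabla\colon\bigoplus_\phi M_\beta\rarrow M_\beta$. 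The resulting sequence $0\rarrow M_\beta\rarrow M_{\beta+1}\rarrow\bigoplus_\phi S_\phi\rarrow0$ has a coproduct of objects of $\sS$ as its cokernel, and a diagram chase shows that the induced map $\Ext^1_\sA(S,M_\beta)\rarrow\Ext^1_\sA(S,M_{\beta+1})$ vanishes for every $S\in\sS$ (each chosen $N_\phi$ maps compatibly into $M_{\beta+1}$, splitting the pushout of its own class). By construction $L:=M_\kappa/A$ acquires a filtration by objects of $\sS$, so $L\in\Fil(\sS)\subseteq\sL$.

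The hard part will be to choose the length $\kappa$ so that $R:=M_\kappa$ lands in $\sR=\sS^{\perp_1}$. Here I would use that a Grothendieck category is locally presentable, so each $S\in\sS$ is $\lambda$\+presentable for some regular cardinal~$\lambda$; taking $\kappa>\lambda$ regular (and exceeding the relevant cardinality bounds, which is possible because $\sS$ is a set), the standard smallness argument shows that the natural map $\varinjlim_{\beta<\kappa}\Ext^1_\sA(S,M_\beta)\rarrow\Ext^1_\sA(S,M_\kappa)$ is surjective. Every class in $\Ext^1_\sA(S,M_\kappa)$ then comes from some stage $\beta<\kappa$, where it is annihilated one step later, so it is already zero; thus $R\in\sR$ and $0\rarrow A\rarrow R\rarrow L\rarrow0$ is the desired sequence~\eqref{special-preenvelope-sequence}. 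I expect the bookkeeping behind this cofinality/presentability estimate to be the most delicate point of the write-up.

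Finally I would deduce the rest formally. For the special precover half of part~(a), the projective generator provides enough projectives: choose an admissible epimorphism $Q\rarrow A$ with $Q$ a coproduct of copies of the generator (hence projective, so $Q\in\sL$), let $K$ be its kernel, apply the preenvelope construction to $K$ to get $0\rarrow K\rarrow R\rarrow L\rarrow0$ with $R\in\sR$, $L\in\Fil(\sS)$, and push out along $K\rarrow Q$. The resulting object $D$ fits into $0\rarrow Q\rarrow D\rarrow L\rarrow0$ and $0\rarrow R\rarrow D\rarrow A\rarrow0$; the first sequence gives $D\in\sL$ (an extension of $L\in\sL$ by the projective $Q$), so the second is the special precover sequence~\eqref{special-precover-sequence}. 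This proves completeness. For the reverse inclusion in part~(b) I now assume $\sS$ contains a projective generator; then the $Q$ above lies in $\Fil(\sS)$, and since $\Fil(\sS)$ is closed under extensions we get $D\in\Fil(\sS)$. Applying this to an arbitrary $L'\in\sL$ in place of $A$ yields $0\rarrow R\rarrow D\rarrow L'\rarrow0$ with $R\in\sR$ and $D\in\Fil(\sS)$; as $\Ext^1_\sA(L',R)=0$ this sequence splits, exhibiting $L'$ as a direct summand of $D$. Hence $\sL\subseteq\Fil(\sS)^\oplus$, which combined with the first paragraph gives $\sL=\Fil(\sS)^\oplus$.
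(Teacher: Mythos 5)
Your proof is correct and is essentially the same argument the paper relies on: the paper's own ``proof'' is only a citation to \cite{PR}, \cite{PS4}, and \cite{Sto-ICRA}, whose proofs are based on the small object argument, and your transfinite pushout construction (together with Salce's lemma via the projective generator for the precover half of~(a), and the splitting trick for part~(b)) is precisely that argument written out in its classical Eklof--Trlifaj form. The smallness step you flag as delicate does go through in this setting: lift a class in $\Ext^1_\sA(S,M_\kappa)$ along a projective presentation $0\rarrow K\rarrow P\rarrow S\rarrow 0$ with $P$ a $<\kappa$-presentable coproduct of copies of the generator and $K$ therefore $<\kappa$-generated, and use that any $<\kappa$-generated subobject of the $\kappa$-directed union $M_\kappa=\bigcup_{\beta<\kappa}M_\beta$ lands in some $M_\beta$, so the classifying map $K\rarrow M_\kappa$ factors through a stage.
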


\begin{proof}
 This result, properly stated, holds in any locally presentable abelian
category~\cite[Corollary~3.6 and Theorem~4.8]{PR},
\cite[Theorems~3.3 and~3.4]{PS4}, as well as in any efficient
exact category~\cite[Theorem~5.16]{Sto-ICRA}.
 The proofs are based on the small object
argument~\cite[Theorem~2.1.14]{Hov-book}.
\end{proof}

\begin{ex} \label{flat-cotorsion-pair-example}
 The following example of a hereditary complete cotorsion pair
is thematic.
 Let $R$ be an associative ring.
 A left $R$\+module $C$ is said to be \emph{cotorsion} if
$\Ext^1_R(F,C)=0$ for all flat left $R$\+modules~$F$.
 The pair of classes (flat left $R$\+modules, cotorsion left
$R$\+modules) is a cotorsion pair in the abelian category of left
$R$\+modules $\sA=R\rModl$; it is known as
the \emph{flat cotorsion pair}.
 The class of all flat left $R$\+modules is
deconstructible~\cite[Lemma~1 and Proposition~2]{BBE},
\cite[Lemma~6.23]{GT}; so Lemma~\ref{eklof-lemma} implies that
the flat cotorsion pair is generated by a set (of flat modules).
 By Theorem~\ref{eklof-trlifaj-theorem}, it follows that the flat
cotorsion pair is complete~\cite{BBE}.
 The class of flat $R$\+modules is also obviously closed under
kernels of epimorphisms; so the flat cotorsion pair in $R\rModl$
is hereditary by Lemma~\ref{hereditary-cotorsion-lemma}(1).
\end{ex}

\subsection{Pure-injective graded modules}
\label{pure-injective-subsecn}
 In this section we specialize the discussion of cotorsion pairs
even further and consider the abelian category $\sA=B^*\sModl$ of
graded modules over a graded ring~$B^*$.

 A short exact sequence of graded left $B^*$\+modules $0\rarrow K^*
\rarrow L^*\rarrow M^*\rarrow0$ is said to be \emph{pure} if
the functor of tensor product over $B^*$ with any graded right
$B^*$\+module takes it to a short exact sequence of graded abelian
groups.
 If this is the case, the morphism of graded $B^*$\+modules
$K^*\rarrow L^*$ is said to be a \emph{pure monomorphism} and
the morphism $L^*\rarrow M^*$ is said to be a \emph{pure epimorphism}.
 The graded module $K^*$ is also said to be a \emph{pure homogeneous
$B^*$\+submodule} of $L^*$, and the graded module $M^*$ is said to be
a \emph{pure homogeneous quotient} of~$L^*$.

 A graded left $B^*$\+module $J^*$ is said to be \emph{pure-injective}
if the graded Hom functor $\Hom^*_{B^*}({-},J^*)$ takes any pure short
exact sequence of graded left $B^*$\+modules to an exact sequence of
graded abelian groups.
 Equivalently, this means that graded $B^*$\+module morphisms into
$J^*$ can be extended from any pure homogeneous $B^*$\+submodule $K^*$
to the ambient graded $B^*$\+module~$L^*$.

 For any graded left $B^*$\+module $M^*$, any graded right
$B^*$\+module $N^*$, and any integer $n\ge0$, there is a natural
isomorphism of abelian groups
\begin{equation} \label{character-module-Ext-Tor-homological-formula}
 \Ext^n_{B^*\sModl}(M^*,\Hom_\boZ^*(N^*,\boQ/\boZ))\,\simeq\,
 \Hom_\boZ(\Tor_n^{B^*}(N^*,M^*)^0,\>\boQ/\boZ).
\end{equation}
 Here $\Tor_n^{B^*}(N^*,M^*)^0$ denotes the degree~$0$ component of
the graded abelian group $\Tor_n^{B^*}(N^*,M^*)$ (in the grading
induced by the gradings of $B^*$, $N^*$, and~$M^*$).

 In particular,
the isomorphism~\eqref{character-module-Ext-Tor-homological-formula}
for $n=0$, together with exactness of the functor
$\Hom_\boZ({-},\boQ/\boZ)$, imply the fact that the graded left
$B^*$\+module $\Hom^*_\boZ(N^*,\boQ/\boZ)$ is pure-injective for
all graded right $B^*$\+modules~$N^*$.
 Conversely, all pure-in\-jec\-tive graded left $B^*$\+modules are
direct summands of the character modules $\Hom^*_\boZ(N^*,\boQ/\boZ)$
(see, e.~g., \cite[Corollary~2.21(b) or Theorem~2.27(d)]{GT}).

\begin{thm} \label{cogenerated-by-pure-injectives}
 Let $B^*$ be a graded ring.
 For any class\/ $\sT$ of pure-injective graded left $B^*$\+modules,
the cotorsion pair $(\sL,\sR)$ cogenerated by\/ $\sT$ in
the abelian category $B^*\sModl$ is complete and generated by a set
(of graded modules).
\end{thm}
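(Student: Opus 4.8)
The plan is to deduce the whole statement from Theorem~\ref{eklof-trlifaj-theorem}: it suffices to produce a \emph{set} $\sS$ of graded $B^*$-modules, containing a projective generator of $B^*\sModl$, such that the cotorsion pair generated by $\sS$ coincides with $(\sL,\sR)$. Write $\sL={}^{\perp_1}\sT$ and $\sR=\sL^{\perp_1}$, and recall from Lemma~\ref{eklof-lemma} that $\sL=\Fil(\sL)$. The target is then to find such an $\sS\subseteq\sL$ with $\sL\subseteq\Fil(\sS)$. Indeed, since $\sS\subseteq\sL$ gives $\Fil(\sS)\subseteq\Fil(\sL)=\sL$, this forces $\sL=\Fil(\sS)$, and hence $\sS^{\perp_1}=\Fil(\sS)^{\perp_1}=\sL^{\perp_1}=\sR$ (the first equality by Lemma~\ref{eklof-lemma}). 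Thus the cotorsion pair generated by $\sS$ is exactly $(\sL,\sR)$, and Theorem~\ref{eklof-trlifaj-theorem}(a) yields completeness while $\sS$ being a set is the generation statement. In short, everything follows once we show that $\sL$ is \emph{deconstructible}.

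The first key step is that $\sL$ is closed under direct limits, and this is where pure-injectivity enters. Given a directed system $(L_\xi^\cu)$ of CDG\+modules with all $L_\xi^*\in\sL$, its canonical presentation $0\to K^*\to\bigoplus_\xi L_\xi^*\to\varinjlim_\xi L_\xi^*\to0$ is a \emph{pure} short exact sequence of graded $B^*$-modules, being a direct limit of split sequences. For any $T^*\in\sT$, the functor $\Hom_{B^*}^*(-,T^*)$ is exact on pure short exact sequences (this is the defining property of pure-injectivity), so the map $\Hom_{B^*}^*(\bigoplus_\xi L_\xi^*,T^*)\to\Hom_{B^*}^*(K^*,T^*)$ is surjective; since moreover $\Ext^1_{B^*\sModl}(\bigoplus_\xi L_\xi^*,T^*)=\prod_\xi\Ext^1_{B^*\sModl}(L_\xi^*,T^*)=0$, the long exact sequence of $\Ext$ forces $\Ext^1_{B^*\sModl}(\varinjlim_\xi L_\xi^*,T^*)=0$. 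As $T^*\in\sT$ was arbitrary, we conclude $\varinjlim_\xi L_\xi^*\in\sL$. (Only the exactness property of pure-injectives is used here; the character-module description~\eqref{character-module-Ext-Tor-homological-formula} merely reconfirms the phenomenon through the commutation of $\Tor$ with direct limits, and would suffice to treat the special case $\sT=$ all character modules, for which $\sL$ is the class of graded-flat modules.)

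The main obstacle is the passage from closure under direct limits to deconstructibility. Here I would invoke the theory of accessible categories: since $\sL$ is a full subcategory of the locally finitely presentable Grothendieck category $B^*\sModl$ closed under direct limits, there is a regular cardinal $\lambda$ such that $\sL$ is closed under the relevant $\lambda$-pure subobjects and every object of $\sL$ is a $\lambda$-directed union of $\lambda$-presentable subobjects lying in $\sL$ (cf.~\cite{AR}). Using this cardinal bound, a Hill-lemma/Löwenheim--Skolem filtration argument (in the spirit of~\cite{Sto-ICRA}) produces, for each $L^\cu\in\sL$, a continuous well-ordered filtration of $L^*$ whose successive quotients are $\lambda$-presentable and again belong to $\sL$. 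Taking $\sS$ to be a set of representatives of the isomorphism classes of $\lambda$-presentable objects of $\sL$, together with a projective generator of $B^*\sModl$, we obtain $\sL\subseteq\Fil(\sS)$, which completes the argument as set up in the first paragraph. The delicate point — the step I expect to require the most care — is arranging the filtration so that the successive quotients stay inside $\sL$ rather than merely inside $B^*\sModl$; this is exactly where closure under direct limits (hence under $\lambda$-pure subobjects) is essential, and it constitutes the technical heart of the proof.
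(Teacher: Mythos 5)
Your first two paragraphs are sound: the reduction of the whole statement to deconstructibility of $\sL={}^{\perp_1}\sT$ via Lemma~\ref{eklof-lemma} and Theorem~\ref{eklof-trlifaj-theorem} is correct, and so is your purity argument that $\sL$ is closed under direct limits (a cosmetic point: the objects here are plain graded $B^*$\+modules; no CDG\+structure is present in this theorem). For the record, the paper proves the theorem simply by citing \cite[Lemmas~6.17--6.18 and Theorem~6.19]{GT}, and closure under direct limits is indeed part of that cited argument. The genuine gap is in your third paragraph, exactly at the step you call the technical heart. Your claim that, since $\sL$ is a full subcategory of the locally finitely presentable category $B^*\sModl$ closed under direct limits, ``there is a regular cardinal $\lambda$ such that $\sL$ is closed under $\lambda$\+pure subobjects and every object of $\sL$ is a $\lambda$\+directed union of $\lambda$\+presentable subobjects lying in $\sL$'' amounts to asserting that $\sL$ is accessible and accessibly embedded. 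This does \emph{not} follow in ZFC from closure under directed colimits: implications of the form ``accessibly embedded implies accessible'' for full subcategories of locally presentable categories belong to the circle of statements around Vop\v enka's principle (cf.\ \cite[Chapter~6]{AR}) and cannot simply be invoked. The danger is real here because $\sT$ may be a proper \emph{class}, so $\sL$ is an Ext-orthogonality class with respect to a proper class of objects --- precisely the situation where such cardinal bounds fail to be available in general. In particular, your parenthetical inference ``closure under direct limits (hence under $\lambda$\+pure subobjects)'' is a non sequitur, and nothing in your proposal supplies the bound that your Hill/L\"owenheim--Skolem filtration requires.

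What pure-injectivity actually provides, beyond closure under direct limits, is closure of $\sL$ under \emph{pure submodules} and \emph{pure-epimorphic images}, and this has to be proved directly from pure-injectivity, not inferred from direct limits. Sketch: every pure-injective $T$ is a direct summand of a character module $\Hom_\boZ^*(N^*,\boQ/\boZ)$; for a pure short exact sequence $0\to A^*\to B^*\to C^*\to0$, all connecting maps in the long exact sequence of $\Tor_*^{B^*}(N^*,{-})$ vanish (purity kills the degree-one connecting map for every first argument, and dimension shifting in the first argument reduces the higher connecting maps to that case), so by the formula~\eqref{character-module-Ext-Tor-homological-formula} the long exact sequence of $\Ext^*_{B^*\sModl}({-},T)$ also breaks into short exact sequences; hence $\Ext^1_{B^*\sModl}(B^*,T)=0$ forces $\Ext^1_{B^*\sModl}(A^*,T)=0=\Ext^1_{B^*\sModl}(C^*,T)$. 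Once this closure is known, deconstructibility follows by the standard purity filtration: every graded module is the union of a continuous chain of pure graded submodules whose consecutive quotients have at most $\kappa=|B^*|+\aleph_0$ elements, and each consecutive quotient is a pure submodule of a pure quotient of the ambient module, hence lies in $\sL$ whenever the ambient module does. Your paragraph one then finishes the proof. This purity-specific route (and not accessible-category theory) is the substance of the result the paper cites from \cite{GT}; without some such argument, your proposal does not close.
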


\begin{proof}
 This is~\cite[Lemmas~6.17--6.18 and Theorem~6.19]{GT}.
\end{proof}

\begin{ex}
 The cotorsion pair cogenerated by the class of \emph{all}
pure-injective graded modules in $B^*\sModl$ is the (graded version of)
the flat cotorsion pair defined in
Example~\ref{flat-cotorsion-pair-example}.
 So, in this cotorsion pair, the left class $\sL$ is the class of all
flat graded left $B^*$\+modules, while the right class $\sR$ is the class
of all \emph{cotorsion} graded left $B^*$\+modules (which will be
discussed below in Section~\ref{cotorsion-graded-modules-subsecn}).
 This assertion follows from the fact that the pure-injective graded
modules are the direct summands of the character modules and
the isomorphism~\eqref{character-module-Ext-Tor-homological-formula}
for $n=1$.
\end{ex}

\subsection{Weak factorization systems}
 Let $\sC$ be a category, and let $l\:A\rarrow B$ and $r\:X\rarrow Y$
be two morphisms in~$\sC$.
 One says that the morphism~$l$ has \emph{left lifting property}
with respect to~$r$, or equivalently, the morphism~$r$ has \emph{right
lifting property} with respect to~$l$ if any commutative square diagram
as below admits a diagonal filling making both the triangles
commutative:
$$
 \xymatrix{
 A \ar[r] \ar[d]_l &  X \ar[d]^r \\
 B \ar[r] \ar@{..>}[ru] & Y
 }
$$

 Given two classes of morphisms $\cL$ and $\cR$ in $\sC$, one denotes
by $\cL^\square$ the class of all morphisms having right lifting
property with respect to all morphisms from $\cL$, and by
${}^\square\cR$ the class of all morphisms having left lifting property
with respect to all morphisms from~$\cR$.
 A pair of classes of morphisms $(\cL,\cR)$ in $\sC$ is said to be
a \emph{weak factorization system} if the following conditions hold:
\begin{itemize}
\item $\cR=\cL^\square$ and ${}^\square\cR=\cL$;
\item every morphism~$f$ in $\sC$ can be factorized as $f=rl$
with $r\in\cR$ and $l\in\cL$.
\end{itemize}

 A weak factorization system $(\cL,\cR)$ in $\sC$ is said to be
\emph{cofibrantly generated} if there exists a \emph{set} of morphisms
$\cS$ in $\sC$ such that $\cR=\cS^\square$.

 Let $\sL$ and $\sR$ be two classes of objects in an abelian
category~$\sA$.
 A morphism in $\sA$ is said to be an \emph{$\sL$\+monomorphism} if
it is a monomorphism and its cokernel belongs to~$\sL$.
 Dually, a morphism in $\sA$ is said to be an \emph{$\sR$\+epimorphism}
if it is an epimorphism and its cokernel belongs to~$\sR$.

 A weak factorization system $(\cL,\cR)$ in an abelian category $\sA$
is said to be \emph{abelian} if there exists a pair of classes of
objects $(\sL,\sR)$ in $\sA$ such that $\cL$ is the class of all
$\sL$\+monomorphisms and $\cR$ is the class of all $\sR$\+epimorphisms
in~$\sA$.

\begin{thm}
 Let $(\sL,\sR)$ be a pair of classes of objects in an abelian
category~$\sA$.
 Denote by $\cL$ the class of all\/ $\sL$\+monomorphisms and by $\cR$
the class of all\/ $\sR$\+epimorphisms in\/~$\sA$.
 Then $(\cL,\cR)$ is a weak factorization system if and only if
$(\sL,\sR)$ is a complete cotorsion pair in\/~$\sA$.
 So abelian weak factorization systems correspond bijectively to
complete cotorsion pairs in\/~$\sA$.
\end{thm}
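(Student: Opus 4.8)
The plan is to prove the two implications separately and then observe that the assignment is a bijection because the pair $(\sL,\sR)$ can be read off from $(\cL,\cR)$. The engine of the whole argument is a single computation: for a monomorphism $i$ with cokernel $C$ and an epimorphism $p$ with kernel $K$, the map $i$ has the left lifting property with respect to $p$ as soon as $\Ext^1_\sA(C,K)=0$. Since in a cotorsion pair $\sR=\sL^{\perp_1}$, this applies verbatim to $i\in\cL$ and $p\in\cR$. To recover $(\sL,\sR)$ from $(\cL,\cR)$ at the end, I would note that $L\in\sL$ exactly when $(0\to L)\in\cL$ (its cokernel is $L$) and $R\in\sR$ exactly when $(R\to0)\in\cR$; this makes the correspondence injective, and the two implications then make it a bijection between complete cotorsion pairs and abelian weak factorization systems.

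For the direction ``complete cotorsion pair $\Rightarrow$ weak factorization system'', I would first establish the lifting $\cL\subseteq{}^\square\cR$. Given a square with left edge $i\colon A\to B$ in $\cL$ and right edge $p\colon X\to Y$ in $\cR$, I would pull $p$ back along $B\to Y$ to reduce to finding a section of the resulting epimorphism $q\colon P\to B$ (whose kernel is still $K=\ker p\in\sR$) agreeing on $A$ with the tautological map $A\to P$. Applying $\Hom_\sA(-,K)$ to $0\to A\xrightarrow{i}B\to C\to0$ with $C=\operatorname{coker}(i)\in\sL$ and using $\Ext^1_\sA(C,K)=0$ both splits $q$ and extends the discrepancy map $A\to K$ over $B$, producing the required section. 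This is the classical lifting lemma, so I would cite \cite{Hov-book,GT,Sto-ICRA} if a fully detailed verification is undesirable.

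The heart of this direction is the factorization axiom, and this is where I expect the real work to be. I would first reduce an arbitrary $f\colon X\to Y$ to an epimorphism by a mapping-cylinder step: choosing a special precover sequence~\eqref{special-precover-sequence} $0\to R'\to L\xrightarrow{\varepsilon}Y\to0$ with $L\in\sL$ and $R'\in\sR$, the map $(f,\varepsilon)\colon X\oplus L\to Y$ is an epimorphism and $f$ is its composite with the split inclusion $X\to X\oplus L$, whose cokernel $L$ lies in $\sL$; so it suffices to factor epimorphisms. To factor an epimorphism $g\colon U\to V$ with kernel $K_0$, I would take a special preenvelope sequence~\eqref{special-preenvelope-sequence} $0\to K_0\xrightarrow{a}R_0\to L_0\to0$ of $K_0$ and form the pushout $Z$ of $K_0\hookrightarrow U$ and~$a$. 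The induced map $U\to Z$ is then a monomorphism with cokernel $L_0\in\sL$, while the map $Z\to V$ determined by $g$ and the zero map $R_0\to V$ is an epimorphism whose kernel is exactly the image of $R_0$, hence lies in $\sR$; this is a factorization with left factor in $\cL$ and right factor in $\cR$. Composing the two steps and using that $\sL$ is closed under extensions (Lemma~\ref{eklof-lemma}), so that composites of $\cL$\+monomorphisms again lie in $\cL$, yields the desired $f=rl$. Finally, the reverse inclusions $\cL^\square\subseteq\cR$ and ${}^\square\cR\subseteq\cL$ follow from the factorization together with the lifting just proved by the standard retract argument, using that $\sL$ and $\sR$ are closed under direct summands (being $\Ext^1$\+orthogonal classes), so that $\cL$ and $\cR$ are closed under retracts in the arrow category.

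For the converse, I would extract completeness of the cotorsion pair directly from the factorization axiom: factoring $E\to0$ as an $\cL$\+monomorphism followed by an $\cR$\+epimorphism gives a special preenvelope sequence~\eqref{special-preenvelope-sequence} (the middle object maps onto $0$, hence lies in $\sR$), and factoring $0\to E$ gives a special precover sequence~\eqref{special-precover-sequence}. It remains to see that $(\sL,\sR)$ is a cotorsion pair. The lifting property $\cL\perp\cR$, applied to an extension $0\to R\to E\to L\to0$ regarded as a map $E\to L$ in $\cR$ tested against $0\to L$ in $\cL$, forces it to split, giving $\Ext^1_\sA(\sL,\sR)=0$; conversely, testing $M\to0$ against all $\cL$\+monomorphisms (respectively $0\to N$ against all $\cR$\+epimorphisms) and using $\cR=\cL^\square$, $\cL={}^\square\cR$ shows $\sL^{\perp_1}\subseteq\sR$ and ${}^{\perp_1}\sR\subseteq\sL$. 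Hence $\sR=\sL^{\perp_1}$ and $\sL={}^{\perp_1}\sR$, so $(\sL,\sR)$ is a complete cotorsion pair, which completes the bijection.
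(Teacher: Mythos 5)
Your proof is correct. Note that the paper gives no argument of its own for this theorem: its proof consists of citations to Hovey~\cite[Theorem~2.2]{Hov} and to~\cite[Theorem~2.4]{PS4}, and what you have written is essentially the standard argument from that literature, correctly reproduced --- the Ext-vanishing lifting lemma via pullback, the factorization obtained by first reducing to an epimorphism using a special precover of the codomain and then pushing out along a special preenvelope of the kernel, the retract argument (using closure of $\sL$ and $\sR$ under direct summands) to upgrade the lifting inclusions to the equalities $\cR=\cL^\square$ and $\cL={}^\square\cR$, and, for the converse, reading off the approximation sequences from factorizations of $E\to0$ and $0\to E$ together with the orthogonality extracted from degenerate lifting squares. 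One step you compress: the splitting of $q\:P\to B$ does not follow from $\Ext^1_\sA(C,K)=0$ alone, since the class of $q$ lives in $\Ext^1_\sA(B,K)$; one needs that this class restricts to zero in $\Ext^1_\sA(A,K)$ (which the tautological map $A\to P$ witnesses) and then the injectivity of $\Ext^1_\sA(B,K)\rarrow\Ext^1_\sA(A,K)$ supplied by $\Ext^1_\sA(C,K)=0$. This is standard and not a gap, but it should be spelled out if you write the lifting lemma in full; likewise, for the composite $X\to X\oplus L\to Z$ only closure of $\sL={}^{\perp_1}\sR$ under finite extensions is needed, so invoking the Eklof lemma is stronger than necessary.
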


\begin{proof}
 This result is essentially due to Hovey~\cite[Theorem~2.2]{Hov}.
 For another proof, see~\cite[Theorem~2.4]{PS4}.
\end{proof}

 An abelian weak factorization system $(\cL,\cR)$ in $\sA$ is said
to be \emph{hereditary} if the related complete cotorsion pair
$(\sL,\sR)$ in $\sA$ is hereditary.

 An abelian weak factorization system $(\cL,\cR)$ in a locally
presentable abelian category $\sA$ is cofibrantly generated if and
only if the cotorsion pair $(\sL,\sR)$ in $\sA$ is generated by
a set of objects~\cite[Lemma~3.7]{PS4}.

\subsection{Abelian model structures}
\label{abelian-model-structures-subsecn}
 Let $\sC$ be a category.
 A \emph{model structure} on $\sC$ is a triple of classes of morphisms
$(\cL,\cW,\cR)$ in $\sC$ satisfying the following conditions:
\begin{itemize}
\item the pair of classes ($\cL\cap\cW$, $\cR$) is a weak factorization
system in~$\sC$;
\item the pair of classes ($\cL$, $\cW\cap\cR$) is a weak factorization
system in~$\sC$;
\item the class of morphisms $\cW$ is closed under retracts (in
the category of morphisms in~$\sC$);
\item the class of morphisms $\cW$ satisfies the $2$-out-of-$3$
property: if $(f,g)$ is a composable pair of morphisms in $\sC$ and
two of the three morphisms $f$, $g$, and $fg$ belong to $\cW$, then
the third morphism also belongs to~$\cW$.
\end{itemize}

 Morphisms from the class $\cL$ are called \emph{cofibrations},
morphisms from the class $\cR$ are called \emph{fibrations}, and
morphisms from the class $\cW$ are called \emph{weak equivalences}
in~$\sC$.
 Furthermore, morphisms from the class $\cL\cap\cW$ are called
\emph{trivial cofibrations}, and morphisms from the class
$\cW\cap\cR$ are called \emph{trivial fibrations}.
It is a standard fact that two of the classes in a model structure determine the third one.

 Given a model structure $(\cL,\cW,\cR)$ on a category $\sC$ with
an initial object $\varnothing\in\sC$, an object $L\in\sC$ is said
to be \emph{cofibrant} if the morphism $\varnothing\rarrow L$ is
a cofibration.
 Dually, if ${*}\in\sC$ is a terminal object, then an object $R\in\sC$
is said to be \emph{fibrant} if the morphism $R\rarrow{*}$ is
a fibration.

 Under the stronger assumption that the category $\sC$ has a zero
object $0\in\sC$ (i.~e., $\varnothing=0={*}$), an object $W\in\sC$
is said to be \emph{weakly trivial} if $0\rarrow W$ is a weak
equivalence, or equivalently, $W\rarrow0$ is a weak equivalence
in~$\sC$.

 A model structure $(\cL,\cW,\cR)$ on a category $\sC$ is said to be
\emph{cofibrantly generated} if both the weak factorization systems
($\cL\cap\cW$, $\cR$) and ($\cL$, $\cW\cap\cR$) are cofibrantly
generated.
 A model structure on an abelian category $\sA$ is said to be
\emph{abelian} if both ($\cL\cap\cW$, $\cR$) and ($\cL$, $\cW\cap\cR$)
are abelian weak factorization systems.

 A class of objects $\sW$ in an abelian category $\sA$ is said to be
\emph{thick} if it is closed under direct summands, extensions,
kernels of epimorphisms, and cokernels of monomorphisms.
 The following theorem is one the main results of the paper~\cite{Hov}.

\begin{thm} \label{abelian-model-structures-thm}
 Let\/ $\sA$ be an abelian category.
 Then abelian model structures on\/ $\sA$ correspond bijectively to
triples of classes of objects $(\sL,\sW,\sR)$ in\/ $\sA$ such that
\begin{itemize}
\item the pair of classes\/ $(\sL\cap\sW,\sR)$ is a complete
cotorsion pair in\/~$\sA$;
\item the pair of classes\/ $(\sL,\sW\cap\sR)$ is a complete
cotorsion pair in\/~$\sA$;
\item the class of objects\/ $\sW$ is thick in\/~$\sA$.
\end{itemize}
  The correspondence assigns to a triple of classes of objects
$(\sL,\sW,\sR)$ the triple of classes of morphisms $(\cL,\cW,\cR)$,
where $\cL$ is the class of all\/ $\sL$\+monomorphisms and
$\cR$ is the class of all\/ $\sR$\+epimorphisms.
 The class $\cL\cap\cW$ consists precisely of all
$(\sL\cap\sW)$\+monomorphisms, and the class $\cW\cap\cR$ consists
precisely of all $(\sW\cap\sR)$\+epimorphisms.
 The class $\cW$ consists of all morphisms~$w$ decomposable as
$w=rl$, where
$r$ is a $\sW$\+epimorphism and $l$ is a $\sW$\+monomorphism.

 Conversely, if $(\cL,\cW,\cR)$ is an abelian model structure on\/
$\sA$, then\/ $\sL$ is recovered as the class of all cofibrant objects
and\/ $\sR$ is the class of all fibrant objects in\/~$\sA$, while\/
$\sW$ is the class of all weakly trivial objects.
\end{thm}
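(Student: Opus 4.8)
The plan is to assemble the model structure from the two complete cotorsion pairs by invoking the preceding theorem, which identifies complete cotorsion pairs with abelian weak factorization systems. Concretely, I would set $\cL$ to be the class of all $\sL$\+monomorphisms and $\cR$ the class of all $\sR$\+epimorphisms. Completeness of the cotorsion pair $(\sL,\sW\cap\sR)$ then yields the abelian weak factorization system whose left class is $\cL$ and whose right class is the class of all $(\sW\cap\sR)$\+epimorphisms; completeness of $(\sL\cap\sW,\sR)$ yields the abelian weak factorization system whose left class is the class of all $(\sL\cap\sW)$\+monomorphisms and whose right class is $\cR$. These are exactly the two weak factorization systems $(\cL\cap\cW,\cR)$ and $(\cL,\cW\cap\cR)$ required by the definition of a model structure, provided one can match the names: that $\cL\cap\cW$ really is the class of $(\sL\cap\sW)$\+monomorphisms and $\cW\cap\cR$ really is the class of $(\sW\cap\sR)$\+epimorphisms, for $\cW$ defined as in the statement (morphisms $w=rl$ with $r$ a $\sW$\+epimorphism and $l$ a $\sW$\+monomorphism). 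The rest of the work is to verify that this $\cW$ is closed under retracts and satisfies the $2$-out-of-$3$ property.

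The first substantive step, which must be carried out \emph{before} $2$-out-of-$3$ is available and so cannot appeal to it, is a compatibility lemma: a cofibration lying in $\cW$ is automatically a \emph{trivial} cofibration, i.e.\ an $\sL$\+monomorphism whose cokernel lies not merely in $\sW$ but in $\sL\cap\sW$; and dually a fibration lying in $\cW$ is a $(\sW\cap\sR)$\+epimorphism. One inclusion is immediate, since an $(\sL\cap\sW)$\+monomorphism has cokernel in $\sW$ and is therefore a $\sW$\+monomorphism, hence lies in $\cL\cap\cW$. For the reverse inclusion I would start from a factorization $f=rl$ with $l$ a $\sW$\+monomorphism and $r$ a $\sW$\+epimorphism, together with the hypothesis that $f$ itself is an $\sL$\+monomorphism, and show directly that its cokernel lands in $\sW$; here I use completeness of the two cotorsion pairs to produce approximation sequences and the thickness of $\sW$ (closure under extensions, kernels of epimorphisms, and cokernels of monomorphisms) to keep the kernels and cokernels occurring in these factorizations inside $\sW$. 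This lemma simultaneously gives the intrinsic characterization that $f\in\cW$ if and only if $f$ factors as a trivial cofibration followed by a trivial fibration.

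The main obstacle is the $2$-out-of-$3$ property for $\cW$, and this is precisely where the thickness hypothesis is indispensable. Using the characterization just established, I would reduce each of the three implications to a diagram of short exact sequences: given a composable pair, factor the relevant morphisms as cofibrations followed by trivial fibrations (or dually), and form the induced pullback and pushout squares so that the kernels and cokernels appearing for $f$, $g$, and $gf$ are linked by admissible short exact sequences. Closure of $\sW$ under direct summands, extensions, kernels of epimorphisms, and cokernels of monomorphisms then forces these objects into $\sW$ whenever two of the three morphisms are weak equivalences, which yields the third. Closure of $\cW$ under retracts is handled the same way, invoking closure of $\sW$ under direct summands. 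I expect the careful bookkeeping of these exact sequences, rather than any isolated conceptual difficulty, to be the most laborious part of the argument.

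Finally, for the reverse assignment I would read $\sL$, $\sR$, and $\sW$ off a given abelian model structure as the classes of cofibrant, fibrant, and weakly trivial objects respectively: $0\to X$ being an $\sL$\+monomorphism says exactly $X\in\sL$, the morphism $X\to 0$ being an $\sR$\+epimorphism says $X\in\sR$, and $0\to W$ (equivalently $W\to 0$) being a weak equivalence says $W\in\sW$, since by the factorization characterization such a $W$ sits in a short exact sequence with the two outer terms in $\sW$ and thickness then places $W$ in $\sW$ as well. The two complete cotorsion pairs and the thickness of $\sW$ follow from the two weak factorization systems of the model structure together with the closure properties of weak equivalences, and checking that the two constructions are mutually inverse is then a formal comparison of these descriptions, completing the bijection.
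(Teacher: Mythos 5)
Your proposal is correct in outline, but it takes a genuinely different route from the paper, because the paper does not prove this theorem at all: its proof is a citation to \cite[Theorem~2.2]{Hov} (summarized in \cite[Theorem~4.2]{PS4}), together with \cite[Lemma~5.8]{Hov} (see also \cite[Lemma~6.14]{Sto-ICRA}) for the equivalence between the description of $\cW$ used here (morphisms factoring as a $\sW$\+monomorphism followed by a $\sW$\+epimorphism) and Hovey's original description (trivial cofibration followed by trivial fibration). What you sketch is essentially a reconstruction of Hovey's own proof of the cited result, with the Lemma~5.8 characterization taken as the \emph{definition} of $\cW$: the two weak factorization systems come from the two complete cotorsion pairs via the preceding theorem, and the substance is the identification of $\cL\cap\cW$ and $\cW\cap\cR$ with the $(\sL\cap\sW)$\+monomorphisms and $(\sW\cap\sR)$\+epimorphisms, plus retract closure and $2$\+out-of-$3$ for $\cW$. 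Your toolkit is the right one and the plan does go through; for instance, your compatibility lemma follows from the exact sequence $0\rarrow\ker r\rarrow\operatorname{coker}l\rarrow\operatorname{coker}(rl)\rarrow0$, valid whenever $rl$ is a monomorphism, together with closure of $\sW$ under cokernels of monomorphisms --- note that completeness is not actually needed for that step, only thickness. Two points deserve flagging, though neither is fatal. First, the characterization ``$f\in\cW$ if and only if $f$ is a trivial cofibration followed by a trivial fibration'' is not quite ``simultaneous'' with the compatibility lemma: given $f=rl\in\cW$, one must first factor $f=qj$ through the weak factorization system $(\cL,\cW\cap\cR)$ and then show $j\in\cW$ by forming the pullback of $q$ against $r$ and chasing the two resulting kernel--cokernel exact sequences, so this step already needs the pullback bookkeeping you defer to $2$\+out-of-$3$. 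Second, the crux of $2$\+out-of-$3$ (and already of closure of $\cW$ under composition) is the ``swap'': a trivial fibration followed by a trivial cofibration must be rewritten as a trivial cofibration followed by a trivial fibration, which is done by factoring the composite through a weak factorization system and proving triviality of the resulting piece by a pushout (dually, pullback) argument plus thickness; your sketch gestures at exactly this but does not isolate it, and it is where most of the labor of Hovey's Lemmas~5.2--5.8 lies. In sum, the paper's citation buys brevity for what it treats as standard background, while your route buys a self-contained proof at the cost of several pages of diagram chases, all of which can indeed be carried out with the ingredients you name.
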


\begin{proof}
 This is a part of~\cite[Theorem~2.2]{Hov} (see~\cite[Theorem~4.2]{PS4}
for a quick summary with specific references).
Our statement differs from the references in the description of weak equivalences in terms of weakly trivial objects, but it is equivalent by \cite[Lemma 5.8]{Hov} (see also~\cite[Lemma 6.14]{Sto-ICRA}).
\end{proof}

 With the result of Theorem~\ref{abelian-model-structures-thm} in mind,
one usually identifies abelian model structures $(\cL,\cW,\cR)$ with
the related triples of classes of objects $(\sL,\sW,\sR)$ and speaks
of the triple $(\sL,\sW,\sR)$ as ``an abelian model structure''.
 We will follow this conventional abuse of terminology.

 Let $(\sL,\sW,\sR)$ be an abelian model structure on an abelian
category~$\sA$.
 Comparing the assertions of
Theorem~\ref{abelian-model-structures-thm}
and Lemma~\ref{hereditary-cotorsion-lemma}, one can easily see
that the cotorsion pair ($\sL\cap\sW$, $\sR$) is hereditary if and
only if the cotorsion pair $(\sL$, $\sW\cap\sR$) is hereditary.
 If this is the case, one says that the abelian model structure
$(\sL,\sW,\sR)$ is hereditary.

 Given two cotorsion pairs $(\widetilde\sL,\sR)$ and
$(\sL,\widetilde\sR)$ in $\sA$, one can immediately see that
the inclusion $\widetilde\sL\subset\sL$ holds
if and only if the inclusion $\widetilde\sR\subset\sR$ holds.
 If this is the case, we say that the cotorsion pairs
$(\widetilde\sL,\sR)$ and $(\sL,\widetilde\sR)$ are \emph{nested}.
 The next theorem is the main result of the paper~\cite{Gil2}.

\begin{thm} \label{gillespie-theorem}
 Let\/ $\sA$ be an abelian category.
 Then there is a bijective correspondence between hereditary abelian
model structures on\/ $\sA$ and nested pairs of hereditary complete
cotorsion pairs with a common core, i.~e., pairs of hereditary
complete cotorsion pairs $(\widetilde\sL,\sR)$ and
$(\sL,\widetilde\sR)$ such that\/
$\widetilde\sL\subset\sL$, \ $\widetilde\sR\subset\sR$, and\/
$\widetilde\sL\cap\sR=\sL\cap\widetilde\sR$.
 To a hereditary abelian model structure $(\sL,\sW,\sR)$ on\/ $\sA$,
the pair of cotorsion pairs $(\widetilde\sL,\sR)$ and
$(\sL,\widetilde\sR)$ with\/ $\widetilde\sL=\sL\cap\sW$ and\/
$\widetilde\sR=\sW\cap\sR$ is assigned.

 Conversely, given two cotorsion pairs $(\widetilde\sL,\sR)$ and
$(\sL,\widetilde\sR)$ satisfying the conditions above, the class of
all weakly trivial objects\/ $\sW$ is recovered as the class of
all objects $W\in\sA$ admitting a short exact sequence\/
$0\lrarrow W\rarrow R\rarrow L'\rarrow0$ with
$R\in\widetilde\sR$ and $L'\in\widetilde\sL$, or equivalently,
admitting a short exact sequence\/ $0\rarrow R'\rarrow L\rarrow W
\rarrow0$ with $L\in\widetilde\sL$ and $R'\in\widetilde\sR$.
\end{thm}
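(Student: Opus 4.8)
The plan is to run everything through Hovey's bijection (Theorem~\ref{abelian-model-structures-thm}), which already identifies abelian model structures on $\sA$ with ``Hovey triples'' $(\sL,\sW,\sR)$ in which $(\sL\cap\sW,\sR)$ and $(\sL,\sW\cap\sR)$ are complete cotorsion pairs and $\sW$ is thick, the class $\sW$ being exactly the class of weakly trivial objects. By the definition of a \emph{hereditary} abelian model structure recalled just before the theorem, together with Lemma~\ref{hereditary-cotorsion-lemma}, a Hovey triple is hereditary precisely when both of these cotorsion pairs are hereditary. Thus it suffices to set up a bijection between hereditary Hovey triples $(\sL,\sW,\sR)$ and nested pairs of hereditary complete cotorsion pairs with a common core, sending $(\sL,\sW,\sR)$ to the pair $(\widetilde\sL,\sR)$, $(\sL,\widetilde\sR)$ with $\widetilde\sL=\sL\cap\sW$ and $\widetilde\sR=\sW\cap\sR$.

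The forward passage is bookkeeping. For a hereditary Hovey triple, Theorem~\ref{abelian-model-structures-thm} gives that $(\widetilde\sL,\sR)$ and $(\sL,\widetilde\sR)$ are complete cotorsion pairs, and they are hereditary by hypothesis. The inclusion $\widetilde\sL=\sL\cap\sW\subseteq\sL$ is immediate (equivalently $\widetilde\sR\subseteq\sR$), and both candidate cores coincide with $\sL\cap\sW\cap\sR$, since $\widetilde\sL\cap\sR=(\sL\cap\sW)\cap\sR=\sL\cap(\sW\cap\sR)=\sL\cap\widetilde\sR$; so the output is indeed a nested pair with a common core $\omega$.

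All the content is in the reverse passage. Starting from nested hereditary complete cotorsion pairs $(\widetilde\sL,\sR)$, $(\sL,\widetilde\sR)$ with common core $\omega=\widetilde\sL\cap\sR=\sL\cap\widetilde\sR$, I let $\sW$ be the class defined by the displayed short exact sequence condition and must check that $(\sL,\sW,\sR)$ is a hereditary Hovey triple restoring the two given cotorsion pairs. The cleanest facts to record first are $\sL\cap\sW=\widetilde\sL$ and $\sR\cap\sW=\widetilde\sR$. For $\sL\cap\sW\subseteq\widetilde\sL$: given $W\in\sL\cap\sW$ I take a sequence $0\to W\to R\to L'\to0$ of the first type, note that extension-closure of the left class $\sL$ forces the middle term into $\sL\cap\widetilde\sR=\omega\subseteq\widetilde\sL$, and then hereditariness of $(\widetilde\sL,\sR)$, i.e.\ closure of $\widetilde\sL$ under kernels of admissible epimorphisms (Lemma~\ref{hereditary-cotorsion-lemma}(1)), yields $W\in\widetilde\sL$; the reverse inclusion uses a split sequence of the second type to get $\widetilde\sL\subseteq\sW$, together with $\widetilde\sL\subseteq\sL$. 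The identity $\sR\cap\sW=\widetilde\sR$ is dual, invoking Lemma~\ref{hereditary-cotorsion-lemma}(2).

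The main obstacle is the thickness of $\sW$, which simultaneously delivers the equivalence of the two short exact sequence descriptions and the inverse-bijection statement. The heart is a ``two-out-of-three for short exact sequences'' property: for an exact sequence $0\to A\to B\to C\to0$, if two of $A,B,C$ lie in $\sW$ then so does the third. This is exactly where the \emph{hereditary} hypothesis is indispensable: I would replace the relevant terms by first- or second-type approximation sequences produced by completeness, assemble them by a $3\times3$ (horseshoe) construction, and use the vanishing $\Ext^2_\sA(\widetilde\sL,\sR)=0=\Ext^2_\sA(\sL,\widetilde\sR)$ from Lemma~\ref{hereditary-cotorsion-lemma}(3) to splice the resulting sequences without obstruction, controlling the outer terms via closure of $\widetilde\sL$ and $\widetilde\sR$ under kernels of epimorphisms and cokernels of monomorphisms. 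Closure of $\sW$ under direct summands needs a separate but similar argument, building the defining sequence for a retract out of one for the ambient object and again leaning on the same Ext-vanishing. The equivalence of the two descriptions of $\sW$ then drops out of the two-out-of-three property: a first-type sequence for $W$ together with a special $(\widetilde\sL,\sR)$-precover and the identity $\sR\cap\sW=\widetilde\sR$ produces a second-type sequence, and dually. With $\sW$ thick and the intersection identities in hand, $(\sL,\sW,\sR)$ is a hereditary Hovey triple, and Theorem~\ref{abelian-model-structures-thm} converts it back to a hereditary abelian model structure. Finally the two passages are mutually inverse: in one direction the intersection identities return the original cotorsion pairs, and in the other one checks that every weakly trivial $W$ of a Hovey triple admits a first-type sequence with $R\in\sW\cap\sR$ and $L'\in\sL\cap\sW$ (take a special $(\sL,\sW\cap\sR)$-preenvelope and use thickness of $\sW$ to place the cokernel in $\sL\cap\sW$), so the reconstructed class coincides with the original~$\sW$.
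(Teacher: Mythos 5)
The paper offers no argument for this theorem at all---its proof is the single line ``This is \cite[Main Theorem~1.2]{Gil2}''---so what you are really attempting is a reconstruction of Gillespie's proof. Your reduction to Hovey triples via Theorem~\ref{abelian-model-structures-thm}, the forward passage, and the inclusion $\sL\cap\sW\subseteq\widetilde\sL$ are all correct. The genuine gap is a circular dependency in the reverse passage. Fix, as you do, the definition of $\sW$ by first-type sequences $0\to W\to R\to L'\to 0$ with $R\in\widetilde\sR$, $L'\in\widetilde\sL$. Then your proof of $\widetilde\sL\subseteq\sW$ ``by a split sequence of the second type'' already presupposes that second-type sequences detect membership in $\sW$, which is precisely the equivalence of the two descriptions that you establish only at the very end. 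Worse, $\sR\cap\sW=\widetilde\sR$ is \emph{not} ``dual'' to $\sL\cap\sW=\widetilde\sL$, because this definition of $\sW$ is not self-dual. The argument that works for $\sR\cap\sW\subseteq\widetilde\sR$ must start from a \emph{second}-type sequence $0\to R'\to L\to W\to 0$: extension-closure of $\sR$ forces $L\in\widetilde\sL\cap\sR=\sL\cap\widetilde\sR\subseteq\widetilde\sR$, and then closure of $\widetilde\sR$ under cokernels of monomorphisms (Lemma~\ref{hereditary-cotorsion-lemma}(2) for $(\sL,\widetilde\sR)$) gives $W\in\widetilde\sR$. If instead you start from a first-type sequence with $W\in\sR$, you can place $L'$ in the core, but then $W\in\widetilde\sR$ amounts to the assertion that every morphism $L\to L'$ with $L\in\sL$ lifts along $R\to L'$, and the obstruction to such a lifting lies exactly in the group $\Ext^1_\sA(L,W)$ whose vanishing you are trying to prove. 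So this intersection identity requires the equivalence of the two descriptions; but in your plan that equivalence is deduced from two-out-of-three together with $\sR\cap\sW=\widetilde\sR$, and your two-out-of-three argument itself proposes to replace members of $\sW$ by ``first- or second-type approximation sequences'' interchangeably. Every arrow in this scheme points at another arrow; there is no base case.

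The missing idea---and the actual crux of \cite{Gil2}---is that the equivalence of the two descriptions must be proved \emph{first}, by a direct argument; this is exactly where hereditariness and the common core earn their keep. Given a first-type sequence $0\to W\to R\to L'\to 0$, use completeness of $(\sL,\widetilde\sR)$ to choose a special precover sequence $0\to\widetilde R_1\to L_1\to R\to 0$ of the middle term, with $L_1\in\sL$ and $\widetilde R_1\in\widetilde\sR$. Since $R$ and $\widetilde R_1$ both lie in $\widetilde\sR$, extension-closure gives $L_1\in\sL\cap\widetilde\sR=\widetilde\sL\cap\sR\subseteq\widetilde\sL$. Pulling back $L_1\to R$ along the monomorphism $W\to R$ yields exact sequences $0\to\widetilde R_1\to P\to W\to 0$ and $0\to P\to L_1\to L'\to 0$. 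By Lemma~\ref{hereditary-cotorsion-lemma}(1) for the hereditary pair $(\widetilde\sL,\sR)$, the class $\widetilde\sL$ is closed under kernels of epimorphisms, so the second sequence forces $P\in\widetilde\sL$, and the first sequence is then the desired second-type sequence for $W$; the converse implication is dual. Once this equivalence is in hand, the two intersection identities really are dual to one another, $\widetilde\sL\subseteq\sW$ follows from the split sequence you invoked, and your outline of the thickness proof and of the bijectivity can be carried through in essentially the order you proposed.
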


\begin{proof}
 This is~\cite[Main Theorem~1.2]{Gil2}.
\end{proof}

 A \emph{model category} is a category with set-indexed limits and
colimits endowed with a model structure~\cite{Hov-book}.
 An \emph{abelian model category} is a complete, cocomplete abelian
category endowed with an abelian model structure.

 Given a model category $\sC$, the category $\sC[\cW^{-1}]$ obtained
by inverting formally all the weak equivalences in $\sC$ is called
the \emph{homotopy category} of~$\sC$.
 For any hereditary abelian model structure on an abelian category
$\sA$, the homotopy category $\sA[\cW^{-1}]$ is naturally
triangulated~\cite[Corollary~1.1.15 and preceding discussion]{Bec},
\cite[Section~5.7]{PS5}.

 The homotopy categories of many abelian model categories naturally
occurring in the context of complexes, DG\+modules, CDG\+modules etc.\
are various kinds of derived categories.
 So the terminology ``the homotopy category of a model category''
is misleadingly clashing with the terminology ``the homotopy category
of complexes'' or ``the homotopy category of (C)DG\+modules''. 

\subsection{Quillen adjunctions and equivalences}
\label{quillen-adjunctions-equivalences-subsecn}
 Let $\sC$ and $\sD$ be two categories endowed with weak factorization
systems $(\cL_\sC,\cR_\sC)$ and $(\cL_\sD,\cR_\sD)$.
 Let $F\:\sC\rarrow\sD$ and $G\:\sD\rarrow\sC$ be a pair of adjoint
functors, with the functor $G$ right adjoint to~$F$.
 Then one can easily see that the inclusion $F(\cL_\sC)\subset\cL_\sD$
holds if and only if the inclusion $G(\cR_\sD)\subset\cR_\sC$ holds.

 Let $\sC$ and $\sD$ be two categories endowed with model structures,
and let $F\:\sC\rarrow\sD$ and $G\:\sD\rarrow\sC$ be a pair of adjoint
functors as above.
 One says that $(F,G)$ is a \emph{Quillen adjunction} if the following
two conditions hold:
\begin{itemize}
\item the functor $F$ takes cofibrations to cofibrations, or 
equivalently, the functor $G$ takes trivial fibrations to trivial
fibrations;
\item the functor $F$ takes trivial cofibrations to trivial
cofibrations, or equivalently, the functor $G$ takes fibrations to
fibrations.
\end{itemize}
 If this is the case, then $F$ is called a \emph{left Quillen functor}
and $G$ is called a \emph{right Quillen functor}.
 The following result is known as \emph{Ken Brown's lemma}.

\begin{lem} \label{ken-brown-lemma}
 Let\/ $\sC$ and\/ $\sD$ be two categories with finite limits and
colimits, endowed with model structures.
 Then any left Quillen functor $F\:\sC\rarrow\sD$ takes all weak
equivalences between cofibrant objects in\/ $\sC$ to weak
equivalences in\/~$\sD$.
 Dually, any right Quillen functor $G\:\sD\rarrow\sC$ takes all weak
equivalences between fibrant objects in\/ $\sD$ to weak equivalences
in\/~$\sC$.
\end{lem}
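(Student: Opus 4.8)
The plan is to prove the first assertion, concerning a left Quillen functor $F\:\sC\rarrow\sD$, and then deduce the statement about right Quillen functors by passing to opposite categories: a right Quillen functor $G\:\sD\rarrow\sC$ becomes a left Quillen functor $\sD^\rop\rarrow\sC^\rop$ for the opposite model structures (in which cofibrations and fibrations are interchanged while the weak equivalences are unchanged), and the fibrant objects of $\sD$ become the cofibrant objects of $\sD^\rop$; so the second assertion is literally the first one read in the opposite categories. For the first assertion, I would begin by noting that a left Quillen functor sends trivial cofibrations to trivial cofibrations, hence in particular to weak equivalences. Thus it suffices to prove the more flexible statement that \emph{any} functor sending trivial cofibrations between cofibrant objects to weak equivalences automatically sends all weak equivalences between cofibrant objects to weak equivalences.

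The heart of the argument is a factorization trick. Let $f\:A\rarrow B$ be a weak equivalence between cofibrant objects. Using that $\sC$ has finite coproducts, form $A\sqcup B$ together with its coproduct inclusions $i_A\:A\rarrow A\sqcup B$ and $i_B\:B\rarrow A\sqcup B$, and consider the morphism $(f,\id_B)\:A\sqcup B\rarrow B$ restricting to $f$ on $A$ and to $\id_B$ on $B$. Factor this morphism as a cofibration $j\:A\sqcup B\rarrow C$ followed by a trivial fibration $p\:C\rarrow B$. Setting $j_A=ji_A$ and $j_B=ji_B$, one then has $pj_A=f$ and $pj_B=\id_B$.

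Next I would verify that $j_A\:A\rarrow C$ and $j_B\:B\rarrow C$ are trivial cofibrations between cofibrant objects, and conclude by pushing forward through $F$. Since the left class of a weak factorization system is closed under cobase change, the inclusions $i_A$ and $i_B$ are cofibrations, being pushouts of $\varnothing\rarrow B$ and $\varnothing\rarrow A$ (which are cofibrations because $A$ and $B$ are cofibrant); composing with the cofibration $j$ shows that $j_A$ and $j_B$ are cofibrations and that $C$ is cofibrant. Because $p$ is a weak equivalence and $pj_B=\id_B$, the $2$-out-of-$3$ property makes $j_B$ a weak equivalence; because $pj_A=f$ with both $p$ and $f$ weak equivalences, the same property makes $j_A$ a weak equivalence. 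Hence $j_A$ and $j_B$ are trivial cofibrations between cofibrant objects, so $F(j_A)$ and $F(j_B)$ are weak equivalences by hypothesis. From $F(p)F(j_B)=\id_{F(B)}$ and $2$-out-of-$3$ we obtain that $F(p)$ is a weak equivalence, and then $F(f)=F(p)F(j_A)$ is a weak equivalence, as required.

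The one genuinely non-obvious step, and the crux of the argument, is the choice of the object and morphism to factor: the map $(f,\id_B)\:A\sqcup B\rarrow B$ packages $f$ together with a split copy of $\id_B$, so that after factoring and applying $F$ the identity on $B$ forces $F(p)$ to be a weak equivalence, which then transfers weak-equivalence status to $F(f)$. Everything else is routine bookkeeping with the $2$-out-of-$3$ property and the stability of cofibrations under pushout. The hypotheses that $\sC$ and $\sD$ possess finite limits and colimits enter only to guarantee the existence of the coproduct $A\sqcup B$ and of the required factorization; no further completeness is needed.
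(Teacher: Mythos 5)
Your proof is correct: the paper itself gives no argument but simply cites \cite[Lemma~1.1.12]{Hov-book}, and your factorization of $(f,\id_B)\:A\sqcup B\rarrow B$ followed by the $2$-out-of-$3$ bookkeeping is precisely the standard argument given in that reference. All the steps check out, including the reduction of the right Quillen case to the left one by passing to opposite categories and the verification that $i_A$, $i_B$ are cofibrations as pushouts of $\varnothing\rarrow B$ and $\varnothing\rarrow A$.
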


\begin{proof}
 This is~\cite[Lemma~1.1.12]{Hov-book}.
\end{proof}

 Let $\sC$ be a category with model structure.
 Inverting formally weak equivalences in the full subcategory of
cofibrant objects (or fibrant objects, or objects that are
simultaneously fibrant and cofibrant) in~$\sC$, one obtains
a category naturally equivalent to $\sC[\cW^{-1}]$
\,\cite[Proposition~1.2.3]{Hov-book}.

 Now let $\sC$ and $\sD$ be two categories with finite limits and
colimits, endowed with model structures $(\cL_\sC,\cW_\sC,\cR_\sC)$
and $(\cL_\sD,\cW_\sD,\cR_\sD)$.
 Let $(F,G)$ be a Quillen adjunction between $\sC$ and~$\sD$.
 Then one constructs the \emph{left derived functor}
$\boL F\:\sC[\cW_\sC^{-1}]\rarrow\sD[\cW_\sD^{-1}]$ by applying
the functor $F$ to cofibrant objects in~$\sC$.
 Dually, the \emph{right derived functor} $\boR G\:\sD[\cW_\sD^{-1}]
\rarrow\sC[\cW_\sC^{-1}]$ is constructed by applying the functor $G$
to fibrant objects in~$\sD$.
 Lemma~\ref{ken-brown-lemma} implies that the derived functors
$\boL F$ and $\boR G$ are well-defined.
 The derived functor $\boL F$ is left adjoint to the derived
functor~$\boR G$ \,\cite[Lemma~1.3.10]{Hov-book}.

\begin{lem} \label{quillen-equivalence-lemma}
 Let $(F,G)$ be a Quillen adjunction between two categories\/ $\sC$
and\/ $\sD$ with finite limits and colimits endowed with model
structures.
 Then the following two conditions are equivalent:
\begin{enumerate}
\item For any cofibrant object $L$ in\/ $\sC$ and any fibrant object
$R$ in\/ $\sD$, a morphism $F(L)\rarrow R$ is a weak equivalence in\/
$\sD$ if and only if the corresponding morphism $L\rarrow G(R)$ is
a weak equivalence in\/~$\sC$.
\item The adjoint functors\/ $\boL F\:\sC[\cW_\sC^{-1}]\rarrow
\sD[\cW_\sD^{-1}]$ and\/ $\boR G\:\sD[\cW_\sD^{-1}]
\rarrow\sC[\cW_\sC^{-1}]$ are (mutually inverse) equivalences
of categories.
\end{enumerate}
\end{lem}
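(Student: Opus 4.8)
The plan is to prove the standard characterization of Quillen equivalences by showing that condition (1) implies that the unit and counit of the derived adjunction $(\boL F,\boR G)$ are isomorphisms, and conversely. I would first fix convenient functorial (co)fibrant replacement functors $Q$ and $P$: for every object $X\in\sC$ a trivial fibration $QX\rarrow X$ with $QX$ cofibrant, and for every object $Y\in\sD$ a trivial cofibration $Y\rarrow PY$ with $PY$ fibrant. The derived functors are then computed by $\boL F(X)=F(QX)$ and $\boR G(Y)=G(PY)$, and the derived adjunction isomorphism $\Hom_{\sD[\cW_\sD^{-1}]}(\boL F(X),Y)\simeq\Hom_{\sC[\cW_\sC^{-1}]}(X,\boR G(Y))$ follows from the ordinary adjunction $(F,G)$ applied to the (co)fibrant replacements, as recorded in \cite[Lemma~1.3.10]{Hov-book}.

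The core of the argument is the translation between condition (1) and invertibility of the derived unit/counit, for which I would invoke the adjoint characterization of weak equivalences directly. For a cofibrant $L\in\sC$, the derived unit at $L$ is represented (up to the weak equivalences $L\rarrow QL$ etc.) by the composite $L\rarrow G(PF(L))$ adjoint to the trivial cofibration $F(L)\rarrow PF(L)$; since $PF(L)$ is fibrant and $F(L)\rarrow PF(L)$ is in particular a weak equivalence, condition (1) says exactly that its adjoint $L\rarrow G(PF(L))$ is a weak equivalence in $\sC$, i.e.\ that the derived unit is an isomorphism on cofibrant objects. Dually, for fibrant $R\in\sD$ the derived counit is represented by the map $F(QG(R))\rarrow R$ adjoint to the weak equivalence $QG(R)\rarrow G(R)$, and (1) forces this to be a weak equivalence in $\sD$, so the derived counit is an isomorphism on fibrant objects. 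Because every object of the homotopy category is isomorphic to a (co)fibrant one, invertibility of unit and counit on these objects suffices to conclude that $\boL F$ and $\boR G$ are mutually inverse equivalences, giving (2).

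For the reverse implication, assuming (2) I would run the same representations backwards: the hypotheses of (1) involve a morphism $F(L)\rarrow R$ with $L$ cofibrant and $R$ fibrant, and under the ordinary adjunction this corresponds to $L\rarrow G(R)$. Using that the derived unit and counit are isomorphisms, a morphism $F(L)\rarrow R$ becomes an isomorphism in $\sD[\cW_\sD^{-1}]$ if and only if its image under $\boR G$, composed with the derived unit at $L$, is an isomorphism in $\sC[\cW_\sC^{-1}]$; unwinding the adjunction identifies this image with the class of $L\rarrow G(R)$. Since a morphism between (co)fibrant objects is a weak equivalence precisely when it becomes an isomorphism in the homotopy category (by \cite[Proposition~1.2.3]{Hov-book} and the saturation of weak equivalences in a model category), the equivalence of the two invertibility statements yields condition~(1).

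The main obstacle I anticipate is the careful bookkeeping in identifying the derived unit and counit with the adjoint maps through the chosen (co)fibrant replacements: one must verify that the various comparison weak equivalences (between $F(L)$, $QF(L)$, $PF(L)$ and between $G(R)$, $PG(R)$, $QG(R)$) fit into commuting triangles in the homotopy categories and do not interfere with invertibility. Ken~Brown's lemma (Lemma~\ref{ken-brown-lemma}) is the technical tool that guarantees $F$ and $G$ descend to well-defined functors on the homotopy categories and that these comparison maps are genuine weak equivalences, so I would lean on it repeatedly; the rest is the triangle-identity chase, which is routine once the representatives are pinned down.
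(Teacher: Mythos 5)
Your proposal is correct and matches the paper's approach: the paper proves this lemma simply by citing \cite[Proposition~1.3.13]{Hov-book}, and your derived-unit/counit argument (identifying condition~(1) with invertibility of the derived unit on cofibrant objects and of the derived counit on fibrant objects, then using that these detect equivalences) is essentially a reconstruction of that cited proof. The only blemish is a citation slip---the saturation fact you need (a morphism is a weak equivalence if and only if it becomes an isomorphism in the homotopy category) is \cite[Theorem~1.2.10(iv)]{Hov-book} rather than Proposition~1.2.3, which is the statement about computing the homotopy category via cofibrant or fibrant objects---but the fact itself is true and you use it correctly.
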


\begin{proof}
 This is~\cite[Proposition~1.3.13]{Hov-book}.
\end{proof}

 A Quillen adjunction $(F,G)$ is called a \emph{Quillen equivalence}
if it satisfies any one of the equivalent conditions of
Lemma~\ref{quillen-equivalence-lemma}.

\Section{All Graded-Cotorsion CDG-Modules are Cotorsion}
\label{cotorsion-cdg-modules-secn}

 In this section we prove a CDG\+module generalization of
the \emph{cotorsion periodicity theorem}~\cite[Theorem~1.2(2),
Proposition~4.8(2), or Theorem~5.1(2)]{BCE}.
 More precisely, our result is a direct generalization of
the claim that ``all complexes of cotorsion modules are dg-cotorsion''
\cite[Theorem~5.3]{BCE}.

\subsection{Cotorsion graded modules}
\label{cotorsion-graded-modules-subsecn}
 The definition of a \emph{cotorsion module} over a ring $R$ was given
in Example~\ref{flat-cotorsion-pair-example}.
 Let us spell out the graded version.

 Let $B^*$ be a graded ring.
 A graded left $B^*$\+module $C^*$ is said to be \emph{cotorsion} if
$\Ext^1_{B^*\sModl}(F^*,C^*)=0$ for all flat graded left
$B^*$\+modules~$F^*$.
 By the graded version of~\cite[Lemma~1 and Proposition~2]{BBE}
or~\cite[Lemma~6.23]{GT}, the class of flat graded $B^*$\+modules is
deconstructible in $B^*\sModl$.
 By Lemma~\ref{eklof-lemma} and Theorem~\ref{eklof-trlifaj-theorem},
it follows that the pair of classes (flat graded left $B^*$\+modules,
cotorsion graded left $B^*$\+modules) is a complete cotorsion pair
in $B^*\sModl$.
 Since the class of flat graded $B^*$\+modules is closed under
kernels of epimorphisms, this cotorsion pair is also hereditary.
 It follows that the class of all cotorsion graded modules is closed
under infinite products, extensions, and cokernels of monomorphisms
in $B^*\sModl$.

 In particular, a graded left $B^*$\+module $F^*$ is flat if and only
if $\Ext^1_{B^*\sModl}(F^*,C^*)=0$ for all cotorsion graded
$B^*$\+modules~$C^*$.
 The latter assertion admits an elementary proof avoiding any use
of the deconstructibility and small object argument machinery.
 It suffices to observe that, for any graded right $B^*$\+module $N^*$,
the graded left $B^*$\+module $\Hom_\boZ^*(N^*,\boQ/\boZ)$ is
cotorsion.
 In fact, all pure-injective graded $B^*$\+modules (as defined in
Section~\ref{pure-injective-subsecn}) are cotorsion, as one can
easily see.

It is also a general fact that the restriction functor associated with any homomorphism of graded rings and the coinduction functor associated with a left flat homomorphism of graded rings both preserve cotorsionness.

\begin{lem} \label{restriction-of-scalars-preserves-cotorsion}
 For any homomorphism of graded rings $B^*\rarrow A^*$ and any
cotorsion graded $A^*$\+module $C^*$, the underlying graded
$B^*$\+module of $C^*$ is also cotorsion.
\end{lem}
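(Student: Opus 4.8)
The plan is to deduce the cotorsion property over $B^*$ from the one over $A^*$ by means of the extension/restriction-of-scalars adjunction. Write $\phi\:B^*\rarrow A^*$ for the given homomorphism and let $\phi_*\:A^*\sModl\rarrow B^*\sModl$ denote restriction of scalars, so that $\phi_*C^*$ is the underlying graded $B^*$-module in question. The left adjoint to $\phi_*$ is the extension functor $A^*\ot_{B^*}({-})$, and there is a natural isomorphism $\Hom_{B^*}({-},\phi_*C^*)\simeq\Hom_{A^*}(A^*\ot_{B^*}({-}),C^*)$. I want to show that $\Ext^1_{B^*\sModl}(F^*,\phi_*C^*)=0$ for every flat graded left $B^*$-module~$F^*$.

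Two ingredients are needed. The first is that base change preserves flatness: if $F^*$ is flat over $B^*$, then $A^*\ot_{B^*}F^*$ is flat over $A^*$, since for every graded right $A^*$-module $N^*$ one has $N^*\ot_{A^*}(A^*\ot_{B^*}F^*)\simeq N^*\ot_{B^*}F^*$ (with $N^*$ restricted to $B^*$), and $({-})\ot_{B^*}F^*$ is exact. The second, and the crux of the argument, is the change-of-rings isomorphism
$$
 \Ext^1_{B^*\sModl}(F^*,\phi_*C^*)\;\simeq\;
 \Ext^1_{A^*\sModl}(A^*\ot_{B^*}F^*,\,C^*)
$$
valid for flat~$F^*$. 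To establish it, I would choose a short exact sequence $0\rarrow K^*\rarrow P^*\rarrow F^*\rarrow0$ of graded left $B^*$-modules with $P^*$ free. Flatness of $F^*$ gives $\Tor^{B^*}_1(A^*,F^*)=0$, so applying $A^*\ot_{B^*}({-})$ yields an exact sequence $0\rarrow A^*\ot_{B^*}K^*\rarrow A^*\ot_{B^*}P^*\rarrow A^*\ot_{B^*}F^*\rarrow0$ in which $A^*\ot_{B^*}P^*$ is a free, in particular projective, graded $A^*$-module. Comparing the long exact $\Ext$-sequences obtained by applying $\Hom_{B^*}({-},\phi_*C^*)$ to the first sequence and $\Hom_{A^*}({-},C^*)$ to the second, and using the adjunction isomorphism on the $\Hom$-terms together with the vanishing of $\Ext^1$ against the projective modules $P^*$ and $A^*\ot_{B^*}P^*$, identifies both $\Ext^1$ groups with the same cokernel, whence the displayed isomorphism.

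With these in hand the conclusion is immediate: since $A^*\ot_{B^*}F^*$ is flat over $A^*$ and $C^*$ is cotorsion over $A^*$ by hypothesis, the right-hand side of the displayed isomorphism vanishes, hence so does $\Ext^1_{B^*\sModl}(F^*,\phi_*C^*)$. As $F^*$ ranges over all flat graded left $B^*$-modules, this shows $\phi_*C^*$ is cotorsion over~$B^*$. The one place requiring genuine care is the exactness of the base-changed presentation, which is exactly where flatness of $F^*$ enters; the remaining steps are the formal change-of-rings bookkeeping and proceed just as in the ungraded case.
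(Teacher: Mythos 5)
Your proof is correct, and it is essentially the paper's own argument: both reduce the vanishing of $\Ext^1_{B^*\sModl}(F^*,C^*)$ for flat graded $F^*$ to cotorsionness of $C^*$ over $A^*$ by means of the extension/restriction-of-scalars adjunction together with the fact that base change takes flat graded $B^*$-modules (and, thanks to flatness of the quotient term, short exact sequences of them) to flat graded $A^*$-modules. The only difference is packaging: the paper phrases the reduction as an extension property of morphisms along short exact sequences of flat modules, whereas you make the change-of-rings isomorphism $\Ext^1_{B^*\sModl}(F^*,C^*)\simeq\Ext^1_{A^*\sModl}(A^*\ot_{B^*}F^*,C^*)$ explicit via a free presentation.
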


\begin{proof}
 Notice that a graded left $B^*$\+module $C^*$ is cotorsion if and only
if, for any short exact sequence of flat graded left $B^*$\+modules
$0\rarrow H^*\rarrow G^*\rarrow F^*\rarrow0$, any morphism of
graded $B^*$\+modules $H^*\rarrow C^*$ can be extended to a morphism
$G^*\rarrow C^*$.
 The assertion of the lemma now follows from the fact that
$0\rarrow A^*\ot_{B^*}H^*\rarrow A^*\ot_{B^*}G^*\rarrow A^*\ot_{B^*}F^*
\rarrow0$ is a short exact sequence of flat graded $A^*$\+modules.
\end{proof}

\begin{lem} \label{flat-extension-of-scalars-preserves-cotorsion}
 Let $B^*\rarrow A^*$ be a homomorphism of graded rings making
$A^*$ a flat left $B^*$\+module.
 Then, for any cotorsion graded left $B^*$\+module $C^*$,
the graded left $A^*$\+module\/ $\Hom_{B^*}^*(A^*,C^*)$ is cotorsion.
\end{lem}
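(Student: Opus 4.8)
The plan is to follow closely the pattern of the proof of Lemma~\ref{restriction-of-scalars-preserves-cotorsion}, but with the restriction functor replaced by the coinduction functor $\Hom_{B^*}^*(A^*,{-})$, which is right adjoint to restriction of scalars along the homomorphism $B^*\rarrow A^*$. The two ingredients I would combine are this coinduction--restriction adjunction and the transitivity of flatness. Cotorsionness over $A^*$ is tested against short exact sequences of flat $A^*$\+modules; the adjunction converts such a test into one over $B^*$, and transitivity of flatness guarantees that the resulting sequences are sequences of flat $B^*$\+modules, where the hypothesis on $C^*$ applies.

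First I would record the adjunction isomorphism
$$
 \Hom_{A^*}^*(N^*,\Hom_{B^*}^*(A^*,C^*))\,\simeq\,
 \Hom_{B^*}^*(A^*\ot_{A^*}N^*,\>C^*)\,\simeq\,\Hom_{B^*}^*(N^*,C^*),
$$
natural in the graded left $A^*$\+module $N^*$, where on the right-hand side $N^*$ is regarded as a graded $B^*$\+module by restriction of scalars. Here $A^*$ carries its tautological structure of graded $B^*$\+$A^*$\+bimodule, the left $A^*$\+module structure on $\Hom_{B^*}^*(A^*,C^*)$ being the one induced from the right $A^*$\+action on $A^*$, and the isomorphism is the usual Hom-tensor adjunction together with $A^*\ot_{A^*}N^*\simeq N^*$. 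I would check that the Koszul sign conventions fixed in Section~\ref{prelim-graded-modules} make this a genuine isomorphism of graded abelian groups, natural in~$N^*$.

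Next I would reformulate cotorsionness exactly as in the proof of Lemma~\ref{restriction-of-scalars-preserves-cotorsion}: the graded left $A^*$\+module $\Hom_{B^*}^*(A^*,C^*)$ is cotorsion if and only if, for every short exact sequence of flat graded left $A^*$\+modules $0\rarrow H^*\rarrow G^*\rarrow F^*\rarrow0$, every morphism $H^*\rarrow\Hom_{B^*}^*(A^*,C^*)$ extends to $G^*\rarrow\Hom_{B^*}^*(A^*,C^*)$. By naturality of the adjunction above, this extension problem is transported isomorphically to the extension problem for graded $B^*$\+module morphisms $H^*\rarrow C^*$ along the restricted sequence $0\rarrow H^*\rarrow G^*\rarrow F^*\rarrow0$ of graded left $B^*$\+modules.

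The key remaining point is that this restricted sequence consists of flat graded $B^*$\+modules. This is precisely transitivity of flatness: since $A^*$ is flat as a graded left $B^*$\+module, the functor ${-}\ot_{B^*}A^*$ on graded right $B^*$\+modules is exact, hence so is the composite ${-}\ot_{B^*}M^*\simeq({-}\ot_{B^*}A^*)\ot_{A^*}M^*$ for any flat graded left $A^*$\+module $M^*$; thus $H^*$, $G^*$, and $F^*$ are flat over $B^*$. Since $C^*$ is cotorsion over $B^*$, the characterization recalled in the proof of Lemma~\ref{restriction-of-scalars-preserves-cotorsion} guarantees that every $B^*$\+morphism $H^*\rarrow C^*$ extends to $G^*\rarrow C^*$, and transporting this extension back through the adjunction yields the required extension over $A^*$. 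I expect the only delicate point to be the careful verification of the graded adjunction isomorphism with correct signs and its naturality in~$N^*$; the flatness transitivity and the reformulation of cotorsionness are then formal.
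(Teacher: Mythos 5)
Your proposal is correct and follows essentially the same route as the paper: the paper's proof says to ``argue as in the previous proof'' (i.e., use the extension-of-morphisms characterization of cotorsionness together with the restriction--coinduction adjunction) and to observe that the restriction of a short exact sequence of flat graded $A^*$\+modules is a short exact sequence of flat graded $B^*$\+modules, which is exactly your transitivity-of-flatness step. The only difference is expository: you spell out the adjunction isomorphism and the transport of the extension problem explicitly, which the paper leaves implicit.
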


\begin{proof}
 Argue as in the previous proof and use the observation that, for
any short exact sequence of flat left $A^*$\+modules $0\rarrow H^*
\rarrow G^*\rarrow F^*\rarrow0$, the underlying sequence of
$B^*$\+modules $0\rarrow H^*\rarrow G^*\rarrow F^*\rarrow0$ is a short
exact sequence of flat left $B^*$\+modules.
\end{proof}

\subsection{Graded-cotorsion CDG-modules}
\label{graded-cotorsion-CDG-modules-subsecn}
 Let $B^\cu=(B^*,d,h)$ be a CDG\+ring.
 A left CDG\+module $C^\cu=(C^*,d_C)$ over $B^\cu$ is said to be
\emph{graded-cotorsion} if the graded left $B^*$\+module $C^*$ is
cotorsion.
 Graded-cotorsion CDG\+modules form a full DG\+subcategory
$B^\cu\bModl^\bcot$ closed under products, shifts, twists, and
cones in the DG\+category $B^\cu\bModl$.
 Accordingly, the notation $\sZ^0(B^\cu\bModl^\bcot)$ stands for
the full subcategory of graded-cotorsion CDG\+modules in the abelian
category of CDG\+modules $\sZ^0(B^\cu\bModl)$.

 A left CDG\+module $C^\cu=(C^*,d_C)$ over $B^\cu$ is said to be
\emph{cotorsion} if it is cotorsion as a graded left
$B^*[\delta]$\+module.
 The full subcategory of cotorsion CDG\+modules is denoted by
$\sZ^0(B^\cu\bModl)^\cot\subset\sZ^0(B^\cu\bModl)$.
 However, we will see that this terminology and notation is redundant:
\emph{a CDG\+module is cotorsion if and only if it is graded-cotorsion},
so $\sZ^0(B^\cu\bModl^\bcot)=\sZ^0(B^\cu\bModl)^\cot$.
 This is the result of Theorem~\ref{cotorsion=graded-cotorsion-theorem}
below.
 The following lemma is the easier implication.

\begin{lem} \label{cotorsion-are-graded-cotorsion}
 Any cotorsion CDG\+module is graded-cotorsion.
\end{lem}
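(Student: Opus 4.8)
The plan is to deduce the statement immediately from Lemma~\ref{restriction-of-scalars-preserves-cotorsion}, applied to the canonical inclusion of graded rings $B^*\rarrow B^*[\delta]$. Recall from Section~\ref{prelim-delta-and-G-subsecn} that the abelian category $\sZ^0(B^\cu\bModl)$ of left CDG\+modules is naturally identified with the category $B^*[\delta]\sModl$ of graded $B^*[\delta]$\+modules, and that under this identification the forgetful functor sending a CDG\+module $C^\cu$ to its underlying graded $B^*$\+module $C^\cu{}^\#=C^*$ is precisely the restriction of scalars along the identity inclusion $B^*\rarrow B^*[\delta]$.

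By definition, $C^\cu$ is cotorsion exactly when $C^*$ is a cotorsion graded $B^*[\delta]$\+module, and $C^\cu$ is graded-cotorsion exactly when $C^*$ is a cotorsion graded $B^*$\+module. Thus the two notions differ only in which of the rings $B^*[\delta]$ or $B^*$ we regard $C^*$ as a module over, the passage between them being restriction of scalars. First I would invoke Lemma~\ref{restriction-of-scalars-preserves-cotorsion} with $A^*=B^*[\delta]$: it asserts that the underlying graded $B^*$\+module of any cotorsion graded $B^*[\delta]$\+module is again cotorsion. Taking $C^*$ to be the graded module underlying our cotorsion CDG\+module yields at once that $C^*$ is cotorsion over $B^*$, i.e.\ that $C^\cu$ is graded-cotorsion.

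There is essentially no obstacle here, since Lemma~\ref{restriction-of-scalars-preserves-cotorsion} carries the entire load; the only thing to verify is the already-recorded fact that the forgetful functor is restriction of scalars along a genuine graded ring homomorphism $B^*\rarrow B^*[\delta]$. For completeness one could alternatively unwind the cited lemma in this special case, using that $B^*[\delta]$ is free, hence flat, as a left $B^*$\+module (on the generators $1$ and~$\delta$), so that $B^*[\delta]\ot_{B^*}{-}$ sends short exact sequences of flat graded $B^*$\+modules to short exact sequences of flat graded $B^*[\delta]$\+modules, and then appealing to the extension-of-morphisms characterization of cotorsionness together with the tensor--restriction adjunction; but citing the lemma directly is the cleaner route.
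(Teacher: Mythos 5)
Your proposal is correct and coincides with the paper's own proof: Lemma~\ref{cotorsion-are-graded-cotorsion} is obtained there precisely by applying Lemma~\ref{restriction-of-scalars-preserves-cotorsion} to the natural inclusion of graded rings $B^*\rarrow B^*[\delta]=A^*$, exactly as you do. The optional unwinding you sketch at the end (using that $B^*[\delta]$ is free over $B^*$ on the generators $1$ and~$\delta$) is also consistent with how that cited lemma is proved in the paper.
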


\begin{proof}
 This is Lemma~\ref{restriction-of-scalars-preserves-cotorsion}
applied to the natural injective homomorphism of graded rings
$B^*\rarrow B^*[\delta]=A^*$.
\end{proof}

 We denote by $B^\cu\bModl_\bflat^\bcot=B^\cu\bModl_\bflat\cap
B^\cu\bModl^\bcot\subset B^\cu\bModl$ the full DG\+subcategory
of graded-flat graded-cotorsion CDG\+modules (i.~e., CDG\+modules
that are simultaneously graded-flat and graded-cotorsion).
 Accordingly, the notation $\sH^0(B^\cu\bModl_\bflat^\bcot)$ stands
for the homotopy category of graded-flat graded-cotorsion CDG\+modules,
as per Section~\ref{prelim-three-additive-categories-subsecn}.

\subsection{Two-out-of-three lemma for graded-flat CDG-modules}
 The following lemma will be useful below in this
Section~\ref{cotorsion-cdg-modules-secn}.

\begin{lem} \label{two-out-of-three-orthogonal-graded-flats}
 Let $B^\cu$ be a CDG\+ring and\/ $\sT\subset\sZ^0(B^\cu\bModl^\bcot)$
be a class of graded-cotorsion left CDG\+modules over $B^\cu$ closed
under cohomological degree shifts in the DG\+category $B^\cu\bModl$. 
 Consider the left\/ $\Ext^1$\+orthogonal class\/ ${}^{\perp_1}\sT$
to\/ $\sT$ in the abelian category\/ $\sZ^0(B^\cu\bModl)$.
 In a short exact sequence of graded-flat CDG\+modules over $B^\cu$
(and closed morphisms between them), if two of the CDG\+modules belong
to the class\/ ${}^{\perp_1}\sT\subset\sZ^0(B^\cu\bModl)$, then so does
the third CDG\+module.
\end{lem}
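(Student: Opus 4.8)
The plan is to translate membership in ${}^{\perp_1}\sT$ into a vanishing condition inside the triangulated homotopy category, and then to realize the given short exact sequence as a short exact sequence of $\Hom$\+complexes whose long exact cohomology sequence delivers the two-out-of-three property uniformly in all three cases at once. Throughout, fix an object $S^\cu\in\sT$; all statements below should be understood for every such $S^\cu$ and every shift $S^\cu[n]$, which again lies in $\sT$ by the shift-closure hypothesis.

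First I would reformulate the orthogonality condition. For any graded-flat CDG\+module $M^\cu$, the underlying graded module $M^*$ is flat and $S^*$ is cotorsion, so $\Ext^1_{B^*\sModl}(M^*,S^*)=0$ by the very definition of a cotorsion graded module. Lemma~\ref{Ext-1-homotopy-hom-lemma} then yields a natural isomorphism $\Ext^1_{\sZ^0(B^\cu\bModl)}(M^\cu,S^\cu)\simeq\Hom_{\sH^0(B^\cu\bModl)}(M^\cu,S^\cu[1])$. Since $\sT$ is closed under cohomological degree shifts, I can conclude that a graded-flat $M^\cu$ lies in ${}^{\perp_1}\sT$ if and only if $\Hom_{\sH^0(B^\cu\bModl)}(M^\cu,S^\cu[n])=0$ for all $S^\cu\in\sT$ and all $n\in\boZ$.

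Next I would produce the short exact sequence of complexes. Let $0\rarrow X^\cu\rarrow Y^\cu\rarrow Z^\cu\rarrow0$ be the given sequence; its underlying sequence $0\rarrow X^*\rarrow Y^*\rarrow Z^*\rarrow0$ of graded $B^*$\+modules is exact with $Z^*$ flat. Applying $\Hom^*_{B^*}(-,S^*)$ is left exact, and the connecting obstruction to right exactness lands in $\Ext^1_{B^*\sModl}(Z^*,S^*)$, which vanishes because $Z^*$ is flat and each shift of $S^*$ is cotorsion. Equipping these graded $\Hom$\+groups with the CDG differential turns this into a short exact sequence of complexes of abelian groups
$$
 0\rarrow\Hom^\bu_{B^*}(Z^\cu,S^\cu)\rarrow\Hom^\bu_{B^*}(Y^\cu,S^\cu)
 \rarrow\Hom^\bu_{B^*}(X^\cu,S^\cu)\rarrow0.
$$
Passing to the long exact cohomology sequence and using $H^n\Hom^\bu_{B^*}(M^\cu,S^\cu)=\Hom_{\sH^0(B^\cu\bModl)}(M^\cu,S^\cu[n])$, I obtain
$$
 \cdots\rarrow\Hom_{\sH^0}(Z^\cu,S^\cu[n])\rarrow\Hom_{\sH^0}(Y^\cu,S^\cu[n])
 \rarrow\Hom_{\sH^0}(X^\cu,S^\cu[n])\rarrow\Hom_{\sH^0}(Z^\cu,S^\cu[n+1])\rarrow\cdots.
$$
With this sequence in hand the conclusion is immediate in every case: if two of the three modules have all these $\Hom$\+groups vanishing (for all $S^\cu$ and $n$, by the reformulation of the first step), then exactness squeezes each $\Hom$\+group of the third module between two zeros, forcing it to vanish as well; hence the third module also lies in ${}^{\perp_1}\sT$.

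The step I expect to be the real obstacle — and the reason the straightforward route fails — is the surjectivity underlying the short exact sequence of $\Hom$\+complexes. One is tempted to argue directly from the long exact $\Ext$\+sequence in $\sZ^0(B^\cu\bModl)$ together with the isomorphism $\Ext^n_{\sZ^0}\simeq\Hom_{\sH^0}(-,-[n])$; but that isomorphism is available only in \emph{positive} degrees, whereas $\Ext^0_{\sZ^0}=\Hom_{\sZ^0}$ records \emph{closed} morphisms rather than homotopy classes. In the case where the two ``lower'' terms $X^\cu$ and $Y^\cu$ are orthogonal, this mismatch leaves $\Ext^1_{\sZ^0}(Z^\cu,S^\cu)$ as an uncontrolled cokernel of a map between groups of closed morphisms, and the argument stalls. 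Working at the level of the $\Hom$\+complexes \emph{before} taking cohomology — which is legitimate precisely because $\Ext^1_{B^*\sModl}(Z^*,S^*)=0$ makes the relevant sequence of complexes short exact — is exactly what repairs this, since the resulting cohomology long exact sequence is uniform across all degrees.
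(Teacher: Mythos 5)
Your proof is correct and takes essentially the same route as the paper's: both arguments convert membership in ${}^{\perp_1}\sT$ into acyclicity of the complexes $\Hom^\bu_{B^*}({-},S^\cu)$ for all $S^\cu\in\sT$ (via Lemma~\ref{Ext-1-homotopy-hom-lemma} together with the shift-closure of $\sT$), and both then apply the two-out-of-three property for acyclicity to the short exact sequence of Hom complexes, which is exact precisely because the three CDG\+modules are graded-flat and $S^\cu$ is graded-cotorsion. The only difference is expository: you spell out the surjectivity of the Hom-complex sequence and the uniform degree bookkeeping that the paper's proof leaves implicit.
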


\begin{proof}
 Let $0\rarrow F^\cu\rarrow G^\cu\rarrow H^\cu\rarrow0$ be a short
exact sequence of graded-flat left CDG\+modules over $B^\cu$, and
let $C^\cu$ be a CDG\+module from the class~$\sT$.
 Since $C^\cu$ is a graded-cotorsion CDG\+module over $B^\cu$,
we have a short exact sequence of complexes of abelian groups
$$
 0\lrarrow\Hom_{B^*}^\bu(H^\cu,C^\cu)
 \lrarrow\Hom_{B^*}^\bu(G^\cu,C^\cu)
 \lrarrow\Hom_{B^*}^\bu(F^\cu,C^\cu)\lrarrow0.
$$
 Thus if two of the three complexes of abelian groups
$\Hom_{B^*}(H^\cu,C^\cu)$, \ $\Hom_{B^*}(G^\cu,C^\cu)$, and
$\Hom_{B^*}(F^\cu,C^\cu)$ are acyclic, then so is the third one.
 It remains to take into account the natural isomorphism
$$
 \Ext^1_{\sZ^0(B^\cu\bModl)}(E^\cu,C^\cu[n-1])
 \simeq H^n\Hom_{B^*}^\bu(E^\cu,C^\cu)
 \qquad\text{for all $n\in\boZ$}
$$
provided by Lemma~\ref{Ext-1-homotopy-hom-lemma} for any
graded-flat left CDG\+module $E^\cu$ and any graded-cotorsion
left CDG\+module $C^\cu$ over~$B^\cu$,
and to note that, consequently, the complex $\Hom_{B^*}^\bu(F^\cu,C^\cu)$
is acyclic for all $C^\cu\in\sT$ if and only if $F^\cu\in{}^{\perp_1}\sT$
(and analogously for $G^\cu$ and~$H^\cu$).
\end{proof}

 The following particular case of
Lemma~\ref{two-out-of-three-orthogonal-graded-flats} will be useful
in the next Section~\ref{projective-and-flat-contraderived-secn}.

\begin{lem} \label{two-flat-out-of-three-graded-flats}
 In a short exact sequence of graded-flat CDG\+modules (and closed
morphisms between them), if two of the CDG\+modules are flat, then
so is the third.
\end{lem}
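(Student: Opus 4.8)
The plan is to deduce this as a direct specialization of Lemma~\ref{two-out-of-three-orthogonal-graded-flats}, the whole point being to choose the auxiliary class $\sT$ correctly. Since flatness of a CDG\+module means flatness of the underlying graded $B^*[\delta]$\+module, and the abelian category $\sZ^0(B^\cu\bModl)$ is nothing but $B^*[\delta]\sModl$, everything takes place inside the category of graded $B^*[\delta]$\+modules. The guiding observation is that the flat CDG\+modules constitute precisely the left $\Ext^1$\+orthogonal class to the cotorsion graded $B^*[\delta]$\+modules.

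Concretely, I would take $\sT$ to be the class of all cotorsion CDG\+modules over $B^\cu$, that is, the cotorsion graded $B^*[\delta]$\+modules. First I would verify the two hypotheses of Lemma~\ref{two-out-of-three-orthogonal-graded-flats}. By Lemma~\ref{cotorsion-are-graded-cotorsion}, every cotorsion CDG\+module is graded-cotorsion, so indeed $\sT\subset\sZ^0(B^\cu\bModl^\bcot)$. That $\sT$ is closed under cohomological degree shifts follows because the shift functor $[n]$ is an exact auto-equivalence of $B^*[\delta]\sModl$ preserving flat modules, and hence preserving cotorsion modules (if $C$ is cotorsion and $F$ is flat, then $\Ext^1_{B^*[\delta]\sModl}(F,C[n])\simeq\Ext^1_{B^*[\delta]\sModl}(F[-n],C)=0$ since $F[-n]$ is again flat).

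The key identification is that ${}^{\perp_1}\sT$, computed in $\sZ^0(B^\cu\bModl)=B^*[\delta]\sModl$, is exactly the class of flat CDG\+modules. This is precisely the characterization of the flat cotorsion pair recalled in Section~\ref{cotorsion-graded-modules-subsecn}, applied to the graded ring $B^*[\delta]$: a graded $B^*[\delta]$\+module $F$ is flat if and only if $\Ext^1_{B^*[\delta]\sModl}(F,C)=0$ for every cotorsion graded $B^*[\delta]$\+module $C$. With this identification in hand, Lemma~\ref{two-out-of-three-orthogonal-graded-flats} reads verbatim as the desired statement: in a short exact sequence of graded-flat CDG\+modules, if two of the three terms are flat, then so is the third.

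I do not anticipate a genuine obstacle, as this is a straight specialization. The one point deserving a moment's care is that we use the \emph{subclass} $\sT$ of cotorsion (rather than all graded-cotorsion) CDG\+modules; this is exactly what guarantees that ${}^{\perp_1}\sT$ equals the flat class and not something larger, and it is legitimate precisely because a graded $B^*[\delta]$\+module is flat exactly when it is left $\Ext^1$\+orthogonal to all cotorsion graded modules. It is also worth emphasizing that the graded-flat hypothesis on all three terms is essential: among the three two-out-of-three implications for flat modules, the implication ``$F^\cu$ and the middle term flat $\Rightarrow$ the quotient flat'' is the one that genuinely fails without it, and it is supplied precisely by the lemma we invoke.
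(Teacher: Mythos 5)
Your proof is correct and takes essentially the same route as the paper's: the paper likewise sets $\sT=\sZ^0(B^\cu\bModl)^\cot$, invokes Lemma~\ref{cotorsion-are-graded-cotorsion} and closure under shifts to apply Lemma~\ref{two-out-of-three-orthogonal-graded-flats}, and concludes by recalling that a CDG\+module is flat if and only if it is left $\Ext^1$\+orthogonal to all cotorsion CDG\+modules. Your extra justifications (the shift-closure of $\sT$ via $\Ext^1(F,C[n])\simeq\Ext^1(F[-n],C)$, and the remark on which two-out-of-three implication genuinely requires the graded-flat hypothesis) are accurate elaborations of steps the paper treats as obvious.
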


\begin{proof}
 Let $\sT=\sZ^0(B^\cu\bModl)^\cot$ be the class of all cotorsion
left CDG\+modules over~$B^\cu$.
 Then all CDG\+modules from $\sT$ are graded-cotorsion by
Lemma~\ref{cotorsion-are-graded-cotorsion}, and the class $\sT$ is
obviously closed under shifts;
so Lemma~\ref{two-out-of-three-orthogonal-graded-flats} is applicable.
 It remains to recall that a left CDG\+module $F^\cu$ over $B^\cu$ is
flat if and only if $\Ext^1_{\sZ^0(B^\cu\bModl)}(F^\cu,C^\cu)=0$ for
all cotorsion left CDG\+modules~$C^\cu$ over~$B^\cu$.
\end{proof}

\subsection{Transfinitely iterated extensions and direct limits}
 Let $\sE$ be an exact category in which all set-indexed direct limits
exist.
 We say that $\sE$ \emph{has exact direct limits} if direct limits of
admissible short exact sequences are admissible short exact sequences
in~$\sE$.
 The following result, going back to~\cite[proofs of Lemma~5.2 and
Proposition~5.3]{Sto2}, was stated for abelian categories
in~\cite[Proposition~7.16]{PS5} and for exact categories
in~\cite[Proposition~8.1]{PS6}.

\begin{prop} \label{transfinite-extensions-and-direct-limits}
 Let\/ $\sE$ be an exact category with exact direct limits, and let\/
$\sF\subset\sE$ be a class of objects closed under extensions and
the cokernels of admissible monomorphisms in\/~$\sE$.
 Then the class\/ $\sF$ is closed under transfinitely iterated
extensions if and only if it is closed under direct limits in\/~$\sE$.
\end{prop}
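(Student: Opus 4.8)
The plan is to prove the two implications separately. The forward implication is a routine transfinite induction, whereas the reverse one carries the weight and is where both the exactness of direct limits and the closure of $\sF$ under cokernels of admissible monomorphisms get used. Recall that $\sF$ is, throughout, assumed closed under extensions and under cokernels of admissible monomorphisms.

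\emph{Direct limits $\Rightarrow$ transfinitely iterated extensions.} Let $(F_i)_{0\le i\le\alpha}$ be a filtration with $F_0=0$ and all cokernels $F_{i+1}/F_i\in\sF$; I claim by transfinite induction that every $F_i\in\sF$. The zero object lies in $\sF$, being the cokernel of an identity admissible monomorphism. At a successor step, the admissible short exact sequence $0\rarrow F_i\rarrow F_{i+1}\rarrow F_{i+1}/F_i\rarrow 0$ together with closure under extensions gives $F_{i+1}\in\sF$. At a limit ordinal $j$ one has $F_j=\varinjlim_{i<j}F_i$, a directed colimit of objects already known to be in $\sF$, so closure under direct limits applies. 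Thus $F_\alpha\in\sF$; note this direction does not use the exactness of direct limits.

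\emph{Transfinitely iterated extensions $\Rightarrow$ direct limits.} I would first note that $\sF$ is closed under arbitrary coproducts: well-ordering an index set $S=\{s_\xi\}_{\xi<\lambda}$ and setting $F_\xi=\bigoplus_{\eta<\xi}M_{s_\eta}$ exhibits $\bigoplus_S M_s$ as a transfinitely iterated extension, the transition $F_\xi\rarrow F_{\xi+1}$ being a split admissible monomorphism with cokernel $M_{s_\xi}\in\sF$ and the chain being smooth since coproducts are the directed colimits of their finite sub-biproducts. (Here coproducts exist and are exact because $\sE$ is additive with exact direct limits.) Now let $(M_i)_{i\in I}$ be a directed system of objects of $\sF$. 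The strategy is to realize the canonical presentation
\[
 0\lrarrow K\lrarrow\bigoplus_{i\in I}M_i\lrarrow\varinjlim_{i\in I}M_i\lrarrow 0
\]
as an admissible short exact sequence and to show that its kernel $K$ is filtered by the objects $M_i$, that is, $K\in\Fil(\{M_i\}_{i\in I})$. Granting this, $K\in\sF$ by closure under transfinitely iterated extensions and $\bigoplus_I M_i\in\sF$ by the preceding remark, so $\varinjlim_I M_i$, being the cokernel of the admissible monomorphism $K\rarrow\bigoplus_I M_i$, lies in $\sF$ by closure under cokernels of admissible monomorphisms.

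The main obstacle is exhibiting this filtration of the kernel. Concretely, I would well-order $I$ and build, along the well-ordering, a smooth chain of admissible subobjects of $\bigoplus_I M_i$ whose successive quotients are (isomorphic to) the individual $M_i$ and which terminates in $K$, simultaneously making the quotient map onto $\varinjlim_I M_i$ an admissible epimorphism; the directedness of $I$ is exactly what forces the successive relations to organize into single copies of the $M_i$. For a sequential system this is nothing but the mapping telescope: the admissible monomorphism $1-\sigma\:\bigoplus_n M_n\rarrow\bigoplus_n M_n$ assembled from the transition maps has cokernel $\varinjlim_n M_n$, so that $K\cong\bigoplus_n M_n$. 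Checking that each stage of the general construction is an \emph{admissible} monomorphism with the expected cokernel, and not merely injective on underlying data, is precisely the point at which the exactness of direct limits is indispensable. For this combinatorial and homological bookkeeping I would follow the explicit constructions in the proofs of \cite[Lemma~5.2 and Proposition~5.3]{Sto2}, \cite[Proposition~7.16]{PS5}, and \cite[Proposition~8.1]{PS6}.
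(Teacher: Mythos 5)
Your forward implication is fine, but your reverse implication hinges on a claim that is false in general: for a directed system $(M_i)_{i\in I}$, the kernel $K$ of the canonical presentation $0\rarrow K\rarrow\bigoplus_{i\in I}M_i\rarrow\varinjlim_{i\in I}M_i\rarrow0$ need \emph{not} lie in $\Fil(\{M_i\}_{i\in I})$. Counterexample: let $R=\prod_{n\in\omega}\boQ$, a von Neumann regular ring whose global dimension is at least~$2$ by Osofsky's theorem, and pick an $R$\+module $M$ with $\mathrm{pd}_R\,M\ge2$. Since $R$ is von Neumann regular, $M$ is flat, so by Lazard's theorem $M=\varinjlim_{i\in I}M_i$ for a directed system of finitely generated free modules~$M_i$. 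If $K$ were filtered by copies of the $M_i$, it would be filtered by projectives, hence projective (split the successor inclusions inductively and pass to unions at limit ordinals), and then the canonical presentation would force $\mathrm{pd}_R\,M\le1$ --- a contradiction. In fact this rules out \emph{any} presentation of $\varinjlim_I M_i$ as the cokernel of an admissible monomorphism from an object of $\Fil(\{M_i\})$ into a coproduct of the $M_i$'s. The telescope you invoke is a genuinely countable phenomenon: ``sequential colimit $=$ cokernel of a monomorphism between coproducts of the $M_i$'' is exactly the reason countably presented flat modules have projective dimension $\le1$, and the existence of flat modules of projective dimension $\ge2$ shows that no analogous two-term presentation exists for uncountable directed systems.

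Consequently your overall scheme --- coproducts, then a \emph{single} application of closure under cokernels of admissible monomorphisms at the very end --- cannot succeed, and deferring the ``bookkeeping'' to the proofs of \cite[Lemma~5.2 and Proposition~5.3]{Sto2}, \cite[Proposition~7.16]{PS5}, and \cite[Proposition~8.1]{PS6} does not repair it, because those arguments do not (and, by the counterexample, cannot) establish your intermediate claim; they follow a different route. (For what it is worth, the paper's own ``proof'' is just the citation of \cite[Proposition~8.1]{PS6}.) The cited arguments use the hypothesis that $\sF$ is closed under cokernels of admissible monomorphisms \emph{repeatedly}, inside a transfinite induction on the cardinality of the index poset: the countable case is handled by the telescope, and at an uncountable cardinal one writes the poset as a smooth union of smaller directed subposets, applies the induction hypothesis to place the subcolimits (and auxiliary objects built from them) in $\sF$, and only then invokes the closure properties again. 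The fact that in your proposal the cokernel-closure hypothesis is consumed exactly once is precisely the red flag that the counterexample above turns into a genuine obstruction.
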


\begin{proof}
 This is~\cite[Proposition~8.1]{PS6}.
\end{proof}

\subsection{Graded-cotorsion is equivalent to cotorsion}
 The following theorem is the main result of
Section~\ref{cotorsion-cdg-modules-secn}.

\begin{thm} \label{cotorsion=graded-cotorsion-theorem}
 Over any CDG\+ring, the classes of cotorsion and graded-cotorsion
CDG\+modules coincide.
 In other words, $\sZ^0(B^\cu\bModl^\bcot)=\sZ^0(B^\cu\bModl)^\cot$.
\end{thm}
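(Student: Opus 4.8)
The plan is to prove the nontrivial inclusion $\sZ^0(B^\cu\bModl^\bcot)\subseteq\sZ^0(B^\cu\bModl)^\cot$, the reverse one being Lemma~\ref{cotorsion-are-graded-cotorsion}. Write $\sT=\sZ^0(B^\cu\bModl^\bcot)$ for the class of all graded-cotorsion CDG\+modules. Since $\sZ^0(B^\cu\bModl)^\cot$ is by definition the right-hand class of the flat cotorsion pair in $\sZ^0(B^\cu\bModl)=B^*[\delta]\sModl$, i.e.\ the class of all $C^\cu$ with $\Ext^1_{\sZ^0(B^\cu\bModl)}(F^\cu,C^\cu)=0$ for every flat CDG\+module $F^\cu$, the inclusion to be proved is equivalent to the assertion that every flat CDG\+module lies in the left\+$\Ext^1$\+orthogonal class ${}^{\perp_1}\sT$. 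Thus the whole theorem reduces to checking that $\Ext^1_{\sZ^0(B^\cu\bModl)}(F^\cu,C^\cu)=0$ for every flat CDG\+module $F^\cu$ and every graded-cotorsion CDG\+module $C^\cu$.

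First I would work in the exact category $\sE=\sZ^0(B^\cu\bModl_\bflat)$ of graded-flat CDG\+modules, with the exact structure inherited from $\sZ^0(B^\cu\bModl)$, and consider the class $\sF={}^{\perp_1}\sT\cap\sE$. The key structural input is the two-out-of-three Lemma~\ref{two-out-of-three-orthogonal-graded-flats}, applied to $\sT$ itself (which is closed under cohomological degree shifts, as required): in any short exact sequence of graded-flat CDG\+modules with two terms in ${}^{\perp_1}\sT$, the third term lies in ${}^{\perp_1}\sT$ as well. Consequently $\sF$ is closed under extensions and under cokernels of admissible monomorphisms in~$\sE$.

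Next I would verify that $\sF$ is closed under transfinitely iterated extensions in~$\sE$. The Eklof Lemma~\ref{eklof-lemma} yields closure of ${}^{\perp_1}\sT$ under such extensions in the ambient abelian category $\sZ^0(B^\cu\bModl)$, while the class of graded-flat CDG\+modules is closed under transfinite extensions (this is flatness over $B^*$, seen through the exact forgetful functor to $B^*\sModl$); hence the direct limit of a smooth chain with successive quotients in $\sF$ remains in both classes, so in~$\sF$. As $\sE$ has exact direct limits, Proposition~\ref{transfinite-extensions-and-direct-limits} then upgrades this to closure of $\sF$ under direct limits in~$\sE$. Finally, every projective $B^*[\delta]$\+module is graded-flat and lies in ${}^{\perp_1}\sT$ (projectives have vanishing $\Ext^1$), hence belongs to~$\sF$; and by Remark~\ref{flat-complexes-remark} every flat CDG\+module is a direct limit of contractible graded-projective — equivalently, projective — CDG\+modules. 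Closure of $\sF$ under direct limits therefore forces every flat CDG\+module into $\sF\subseteq{}^{\perp_1}\sT$, which is precisely what was needed.

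I expect the main obstacle to be the passage from closure under transfinite extensions to closure under direct limits, encoded by Proposition~\ref{transfinite-extensions-and-direct-limits}; this is where the cotorsion-periodicity phenomenon enters. Its applicability rests entirely on first establishing that $\sF$ is closed under cokernels of admissible monomorphisms, for which the two-out-of-three Lemma~\ref{two-out-of-three-orthogonal-graded-flats} is indispensable.
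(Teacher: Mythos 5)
Your proposal is correct and follows essentially the same route as the paper's first proof: the same reduction to showing flat CDG\+modules lie in ${}^{\perp_1}\sT$, the same use of Lemma~\ref{two-out-of-three-orthogonal-graded-flats}, the Eklof lemma, Proposition~\ref{transfinite-extensions-and-direct-limits}, and the Govorov--Lazard description of flat CDG\+modules as direct limits of projective ones. The only cosmetic difference is that you take $\sT$ to be the whole class of graded-cotorsion CDG\+modules, whereas the paper takes the set of shifts of a single graded-cotorsion module $C^\cu$; both satisfy the hypotheses of the two-out-of-three lemma, so the arguments coincide.
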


We will in fact present two proofs of this result.
The first of them is direct in our setting, while the second one
shows how to reduce the problem to a previously known cotorsion
periodicity theorem from~\cite{BCE} for cotorsion modules
over ordinary rings.

\begin{proof}[First proof of Theorem~\ref{cotorsion=graded-cotorsion-theorem}]
 Lemma~\ref{cotorsion-are-graded-cotorsion} tells us that all
cotorsion CDG\+modules are graded-cotorsion.
 To prove the converse implication, let $C^\cu$ be a graded-cotorsion
CDG\+module.

 Denote by $\sE=\sZ^0(B^\cu\bModl_\bflat)$ the category of
graded-flat CDG\+modules over $B^\cu$, endowed with the exact category
structure inherited from (the abelian exact structure on) the abelian
category of CDG\+modules $\sA=\sZ^0(B^\cu\bModl)$.
 Then $\sE$ is an exact category with exact direct limits.

 Denote by $\sT=\{C^\cu[n]\mid n\in\boZ\}$ the set of all cohomological
degree shifts of the CDG\+module~$C^\cu$.
 Let $\sF\subset\sE$ be the class of all graded-flat CDG\+modules
$F^\cu$ such that the Ext group
$\Ext^1_{\sZ^0(B^\cu\bModl)}(F^\cu,C^\cu[n])$ vanishes for all
$n\in\boZ$.
 So $\sF=\sE\cap{}^{\perp_1}\sT$.

 By Lemma~\ref{two-out-of-three-orthogonal-graded-flats}, the class
$\sF$ is closed under the cokernels of admissible monomorphisms
in~$\sE$.
 By Lemma~\ref{eklof-lemma}, the class $\sF$ is also closed under
transfinitely iterated extensions in\/~$\sE$.
 Applying Proposition~\ref{transfinite-extensions-and-direct-limits},
we conclude that the class $\sF$ is closed under direct limits
in~$\sE$.
 As projective CDG\+modules obviously belong to $\sF$, it follows
that all flat left CDG\+modules over $B^\cu$ belong to~$\sF$.

 Thus $\Ext^1_{\sZ^0(B^\cu\bModl)}(F^\cu,C^\cu)=0$ for all flat
left CDG\+modules $F^\cu$ over~$B^\cu$.
 So $C^\cu$ is a cotorsion CDG\+module, as desired.
\end{proof}

\begin{proof}[Second proof of Theorem~\ref{cotorsion=graded-cotorsion-theorem}]
 Let $C^\cu=(C^*,d_C)$ be a graded-cotorsion CDG\+module over~$B^\cu$.
 It was mentioned in Section~\ref{prelim-proj-inj-flatness-subsecn}
that the graded ring $A^*=B^*[\delta]$, viewed as a left (or right)
graded $B^*$\+module, is a free graded module with two generators.
 Following the discussion in Section~\ref{prelim-delta-and-G-subsecn}
and Lemma~\ref{flat-extension-of-scalars-preserves-cotorsion}, we see
that the graded left $B^*[\delta]$\+module
$\Hom^*_{B^*}(B^*[\delta],C^*)$ is cotorsion.
 So $G^-(C^*)=\Hom^*_{B^*}(B^*[\delta],C^*)$ is a cotorsion
CDG\+module over~$B^\cu$.

 Similarly to the short exact
sequences~(\ref{G-plus-short-exact-sequence}--%
\ref{G-minus-short-exact-sequence}) from
Section~\ref{prelim-delta-and-G-subsecn}, for any CDG\+module $M^\cu$
over a CDG\+ring $B^\cu$ there are natural short exact sequences
of CDG\+modules (and closed morphisms of degree~$0$)
\begin{equation} \label{G-plus-CDG-module-sequence}
 0\lrarrow M^\cu[-1]\lrarrow G^+(M^*)\lrarrow M^\cu\lrarrow0
\end{equation}
and
\begin{equation} \label{G-minus-CDG-module-sequence}
 0\lrarrow M^\cu\lrarrow G^-(M^*)\lrarrow M^\cu[1]\lrarrow0,
\end{equation}
which are transformed into each other by the shift functors~$[1]$
and~$[-1]$.

 In particular, in the situation at hand, splicing up a doubly
unbounded sequence of shifts of the short exact
sequences~\eqref{G-minus-CDG-module-sequence}, we obtain an acyclic
complex
\begin{equation} \label{shift-periodic-acyclic-complex-of-cotorsion}
 \dotsb\lrarrow G^-(C^*)[-1]\lrarrow G^-(C^*)\lrarrow G^-(C^*)[1]
 \lrarrow\dotsb
\end{equation}
in the abelian category $\sZ^0(B^\cu\bModl)\simeq B^*[\delta]\sModl$.
 The CDG\+module $C^\cu$ is one of the objects of cocycles of
the acyclic complex~\eqref{shift-periodic-acyclic-complex-of-cotorsion}.
 As $G^-(C^*)[n]$ are cotorsion $B^*[\delta]$\+modules for all
$n\in\boZ$, the graded module version of the cotorsion periodicity
theorem~\cite[Theorem~5.1(2)]{BCE} implies that $C^\cu$ is a cotorsion
$B^*[\delta]$\+module.
\end{proof}

\begin{ex}
 Let $R$ be an associative ring, viewed as a CDG\+ring concentrated
in cohomological degree~$0$ (with zero differential and zero
curvature element), as in Remark~\ref{flat-complexes-remark}.
 Following the terminology of~\cite[Definition~3.3(4)]{Gil},
complexes of left $R$\+modules right $\Ext^1$\+orthogonal to
acyclic complexes of flat left $R$\+modules with flat modules
of cocycles are called ``dg-cotorsion''.

 As the acyclic complexes of flat $R$\+modules with flat modules
of cocycles are precisely the flat $R[\delta]$\+modules (according
to Remark~\ref{flat-complexes-remark}), the dg-cotorsion complexes
of $R$\+modules are precisely the cotorsion graded
$R[\delta]$\+modules.
 So what is called ``dg-cotorsion complexes of modules'' in
the terminology of~\cite{Gil,BCE} become the cotorsion CDG\+modules
(as defined in Section~\ref{graded-cotorsion-CDG-modules-subsecn})
in our CDG\+module context.

 On the other hand, what we call graded-cotorsion CDG\+modules become,
in the case of the CDG\+ring $(R,0,0)$, just graded
$R[\delta]$\+modules that are cotorsion as graded $R$\+modules;
this means arbitrary complexes of cotorsion $R$\+modules.
 Thus the result of~\cite[Theorem~5.3]{BCE}, which claims that all
complexes of cotorsion modules are dg-cotorsion, is a particular
case of our Theorem~\ref{cotorsion=graded-cotorsion-theorem}.
 Specifically, Theorem~\ref{cotorsion=graded-cotorsion-theorem}
is a generalization of~\cite[Theorem~5.3]{BCE} from the case of
ungraded rings viewed as CDG\+rings concentrated in degree~$0$,
with zero differential and zero curvature, to arbitrary CDG\+rings.
\end{ex}

\Section{Projective and Flat Contraderived Categories}
\label{projective-and-flat-contraderived-secn}

\subsection{Posing the problem}
\label{three-contraderived-posing-the-problem-subsecn}
 Let $B^\cu=(B^*,d,h)$ be a CDG\+ring.
 Consider the following three triangulated categories of left
CDG\+modules over~$B^\cu$:
\begin{enumerate}
\renewcommand{\theenumi}{\roman{enumi}}
\item the homotopy category of graded-projective CDG\+modules
$\sH^0(B^\cu\bModl_\bproj)$, which we will call the \emph{projective contraderived category} of $B^\cu$;
\item the triangulated Verdier quotient category
$$
 \sH^0(B^\cu\bModl_\bflat)/\sH^0(B^\cu\bModl)_\flat
$$
of the homotopy category of graded-flat CDG\+modules
$\sH^0(B^\cu\bModl_\bflat)$ by its full triangulated subcategory
of flat CDG\+modules $\sH^0(B^\cu\bModl)_\flat$, which we call the \emph{flat contraderived category} of $B^\cu$;
\item the homotopy category of graded-flat graded-cotorsion CDG\+modules
$$
 \sH^0(B^\cu\bModl_\bflat^\bcot)=
 \sH^0(B^\cu\bModl_\bflat\cap B^\cu\bModl^\bcot).
$$
\end{enumerate}
 See Sections~\ref{prelim-proj-inj-flatness-subsecn}
and~\ref{graded-cotorsion-CDG-modules-subsecn} for the notation and terminology.

 The main aim of this
Section~\ref{projective-and-flat-contraderived-secn}
is to show that the natural triangulated functor from the category~(i)
to the category~(ii) is a triangulated equivalence.
 In fact, we will see that the full triangulated subcategories
$\sH^0(B^\cu\bModl)_\flat$ and $\sH^0(B^\cu\bModl_\bproj)$ form
a semiorthogonal decomposition of the homotopy category of graded-flat
CDG\+modules $\sH^0(B^\cu\bModl_\bflat)$.
 Thus we will obtain a generalization of~\cite[Theorem~8.6]{Neem}
(which is a theorem about complexes of flat modules) to
the CDG\+module case.

 Furthermore, we will prove that the natural triangulated functor
from the category~(iii) to the category~(ii) is also a triangulated
equivalence.
 In fact, we will see that the full triangulated subcategories
$\sH^0(B^\cu\bModl_\bflat^\bcot)$ and $\sH^0(B^\cu\bModl)_\flat$
form another semiorthogonal decomposition of the homotopy category
$\sH^0(B^\cu\bModl_\bflat)$,
which generalizes~\cite[Corollary 5.8]{Sto2} (a finitely accessible additive
category mentioned there is just another name for a category of flat modules
over a ring with several objects).
 Thus, all the three triangulated categories~(i--iii)
are naturally equivalent, and we obtain
the recollement~\eqref{recollement-for-graded-flat-CDG-modules}
from Section~\ref{introd-description-of-results-subsecn}.

\subsection{Graded-projective and graded-free CDG-modules}
 The aim of this section is to prove the following lemma.

\begin{lem} \label{graded-projective-direct-summand-of-graded-free}
 Let $B^\cu=(B^*,d,h)$ be a CDG\+ring.
 Then, in the abelian category of CDG\+modules\/ $\sZ^0(B^\cu\bModl)$,
any graded-projective CDG\+module over $B^\cu$ is a direct summand
of a graded-free one.
\end{lem}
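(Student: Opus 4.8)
The plan is to reduce the statement to a purely module-theoretic assertion and then settle it by an Eilenberg-swindle argument built on the functor $G^+$ from Section~\ref{prelim-delta-and-G-subsecn}. First I would observe that it suffices to produce a single CDG\+module $X^\cu=(X^*,d_X)$ over $B^\cu$ whose underlying graded $B^*$\+module $X^*$ complements $P^*$ to a free graded module, i.e.\ such that $P^*\oplus X^*$ is free. Indeed, granting this, the direct sum $P^\cu\oplus X^\cu$ is an object of $\sZ^0(B^\cu\bModl)$ whose underlying graded $B^*$\+module $P^*\oplus X^*$ is free, hence it is a graded-free CDG\+module, and $P^\cu$ is visibly a direct summand of it.

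Next, since $P^\cu$ is graded-projective, the graded $B^*$\+module $P^*$ is projective, so I may choose a graded $B^*$\+module $Q^*$ with $L^*:=P^*\oplus Q^*$ free. The naive attempt to take $X^\cu=(Q^*,d_Q)$ fails at once, because $Q^*$ need not admit any CDG\+module structure (a differential squaring to multiplication by~$h$); this is exactly the point where the argument must depart from the elementary ``projective is a summand of free'' reasoning, and it is the main obstacle. To circumvent it, I would exploit that $G^+(M^*)=B^*[\delta]\ot_{B^*}M^*$ is a bona fide CDG\+module for an \emph{arbitrary} graded $B^*$\+module $M^*$, while by the short exact sequence~\eqref{G-plus-short-exact-sequence} its underlying graded module $G^+(M^*)^\#$ is an extension of $M^*[-1]$ by $M^*$; this extension splits whenever $M^*$ is projective, since then $\Ext^1_{B^*\sModl}(M^*[-1],M^*)=0$, giving $G^+(M^*)^\#\cong M^*\oplus M^*[-1]$.

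I would then set $X^\cu:=\bigoplus_{n\ge0}G^+(M_n^*)$, where $M_n^*:=Q^*[-n]$ for even~$n$ and $M_n^*:=P^*[-n]$ for odd~$n$; this is an object of $\sZ^0(B^\cu\bModl)$, and, since the forgetful functor is a restriction of scalars and so preserves coproducts, its underlying graded module is $\bigoplus_{n\ge0}(M_n^*\oplus M_n^*[-1])$ (each $M_n^*$ being projective, the splittings of the previous paragraph can be chosen and summed into one graded isomorphism). The remaining step is a bookkeeping check that the shifted summands telescope: the lone $P^*$ pairs with the $Q^*$ coming from $M_0$ to give $L^*$ in degree~$0$, and for each $k\ge1$ the two summands landing in shift degree $[-k]$ (one from $M_{k-1}$, one from $M_k$, of opposite parity, hence one a shift of $P^*$ and the other a shift of $Q^*$) assemble into $L^*[-k]$, so that $P^*\oplus X^*\cong\bigoplus_{k\ge0}L^*[-k]$ is free. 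This completes the reduction and hence the proof. I expect the only delicate points to be the alternating indexing that makes the telescope close up, and the coherence of the identifications $G^+(M_n^*)^\#\cong M_n^*\oplus M_n^*[-1]$; both are routine once the swindle is set up.
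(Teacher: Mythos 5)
Your proof is correct and rests on exactly the same mechanism as the paper's: the left adjoint $G^+$ manufactures a CDG\+module structure on $B^*[\delta]\ot_{B^*}M^*$ for an arbitrary graded module $M^*$, the extension~\eqref{G-plus-short-exact-sequence} splits whenever $M^*$ is graded-projective, and an Eilenberg swindle then makes the underlying graded module free. The only difference is in the bookkeeping of the swindle: the paper first absorbs $P^*$ in the graded category (choosing $E^*=H^*{}^{(\aleph_0)}$ for a free module $H^*$ containing $P^*$ as a summand, so that $P^*\oplus E^*\simeq E^*$) and then applies $G^+$ a single time to $E^*$, whereas you apply $G^+$ to countably many alternating shifts of $P^*$ and its complement $Q^*$ and let the summands telescope into $\bigoplus_{k\ge0}L^*[-k]$ --- both variants are equally valid.
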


\begin{proof}
 Let $P^\cu=(P^*,d_P)$ be a graded-projective left CDG\+module
over~$B^\cu$.
 Consider the free graded left $B^*$\+module
$F^*=\bigoplus_{n\in\boZ}B^*[n]$.
 Then there exists a cardinal~$\mu$ such that $P^*$ is a direct
summand of the direct sum $F^*{}^{(\mu)}$ of $\mu$~copies of~$F^*$.

 Put $H^*=F^*{}^{(\mu)}$ and $E^*=H^*{}^{(\aleph_0)}=
F^*{}^{(\mu\times\aleph_0)}$ (where $\aleph_0$~is
the countable cardinal).
 As mentioned in Section~\ref{prelim-delta-and-G-subsecn}, there is
a short exact sequence of graded $B^*$\+modules $0\rarrow E^*
\rarrow G^+(E^*)^\#\rarrow E^*[-1]\rarrow0$
\,\eqref{G-plus-short-exact-sequence}.
 Since $E^*[-1]$ is a free graded $B^*$\+module, this short exact
sequence splits; so $G^+(E^*)^\#\simeq E^*\oplus E^*[-1]$.

 Now $P^\cu\oplus G^+(E^*)$ is a graded-free CDG\+module over~$B^\cu$.
 Indeed, the graded $B^*$\+module $P^*\oplus H^*{}^{(\aleph_0)}$ is
isomorphic to $H^*{}^{(\aleph_0)}$ by the cancellation trick (since
$P^*$ is a direct summand of~$H^*$).
 Hence $(P^\cu\oplus G^+(E^*))^\#\simeq P^*\oplus E^*\oplus E^*[-1]
\simeq E^*\oplus E^*[-1]$ is a free graded $B^*$\+module
(cf.~\cite[beginning of second proof of Proposition~6.15]{PS5}).
\end{proof}

\subsection{Deconstruction of graded-free CDG-modules}
 The aim of this section is to prove the next lemma.

\begin{lem} \label{deconstruction-of-graded-free}
 Let $B^\cu=(B^*,d,h)$ be a CDG\+ring.
 Then, in the abelian category of CDG\+modules\/ $\sZ^0(B^\cu\bModl)$,
any graded-free CDG\+module over $B^\cu$ is filtered by countably
generated graded-free ones.
\end{lem}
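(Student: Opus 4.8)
The plan is to build the filtration directly from a homogeneous free basis, correcting for the fact that the differential does not respect an arbitrary subfamily of that basis. Write the underlying graded module of our graded-free CDG\+module $F^\cu=(F^*,d_F)$ as $F^*=\bigoplus_{s\in S}B^*[n_s]$ with a homogeneous free basis $(e_s)_{s\in S}$. The naive idea of filtering $F^\cu$ by submodules spanned by subfamilies of the $e_s$ fails, because such a subfamily need not span a $d_F$\nobreakdash-stable (hence CDG\nobreakdash-) submodule. To repair this, call a subset $T\subseteq S$ \emph{$d$-closed} if $d_F(e_t)$ lies in the $B^*$\+linear span of $\{e_{t'}:t'\in T\}$ for every $t\in T$.

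For a $d$-closed $T$, the graded submodule $F_T^*=\bigoplus_{t\in T}B^*[n_t]$ is stable under $d_F$, so it underlies a graded-free CDG\+submodule $F_T^\cu\subseteq F^\cu$; moreover the quotient $F^\cu/F_T^\cu$ is graded-free on the images of $\{e_s:s\in S\setminus T\}$ with the induced differential. Thus an inclusion of $d$-closed subsets induces an (admissible) monomorphism of CDG\+modules whose cokernel is again graded-free, and is countably generated precisely when the difference of the two subsets is countable. The crucial finiteness input is that each $d_F(e_s)$ is a $B^*$\+combination of only finitely many basis vectors; hence, closing an arbitrary $U\subseteq S$ under the operation sending $u$ to the (finite) set of basis vectors occurring in $d_F(e_u)$ and iterating $\aleph_0$ many times yields a smallest $d$-closed set $\bar U\supseteq U$ with $|\bar U|\le\max(|U|,\aleph_0)$. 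In particular, the $d$-closure of a finite or countable set is countable.

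Next I would assemble these closed subsets into a smooth chain. Fix a well-ordering $S=\{s_\xi:\xi<\kappa\}$ and define an increasing continuous chain $(T_i)$ of $d$-closed subsets by transfinite recursion: set $T_0=\varnothing$; at a successor step, if $T_i\ne S$, let $s$ be the least element of $S\setminus T_i$ and put $T_{i+1}=\overline{T_i\cup\{s\}}$; at a limit step, take unions. Since an increasing union of $d$-closed sets is $d$-closed, every $T_i$ is $d$-closed, and because $T_i$ is already closed one checks $\overline{T_i\cup\{s\}}=T_i\cup\overline{\{s\}}$, so the increment $T_{i+1}\setminus T_i\subseteq\overline{\{s\}}$ is countable. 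A routine transfinite induction gives $s_\xi\in T_{\xi+1}$ for all $\xi$, so the chain exhausts $S$; truncating it once it first reaches $S$ yields $(T_i)_{0\le i\le\alpha}$ with $T_0=\varnothing$ and $T_\alpha=S$.

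Passing to CDG\+modules, the chain $(F_{T_i}^\cu)_{0\le i\le\alpha}$ is then a filtration of $F^\cu$ in $\sZ^0(B^\cu\bModl)$: it starts at $0$, its transition maps are admissible monomorphisms, it is smooth because direct limits of CDG\+modules along the chain are computed as unions while $T_j=\bigcup_{i<j}T_i$ at limit ordinals, and each successive quotient $F_{T_{i+1}}^\cu/F_{T_i}^\cu$ is graded-free on the countable set $T_{i+1}\setminus T_i$, i.e.\ a countably generated graded-free CDG\+module. This exhibits $F^\cu$ as filtered by countably generated graded-free CDG\+modules. The one step demanding genuine care is the identity $\overline{T_i\cup\{s\}}=T_i\cup\overline{\{s\}}$ together with the countability of $\overline{\{s\}}$, since this is exactly what keeps the successive quotients countably generated; the rest is standard ``closing up along a well-ordering'' bookkeeping.
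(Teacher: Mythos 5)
Your proof is correct and takes essentially the same route as the paper's: one filters the set of homogeneous free generators by a transfinite, continuous chain of subsets closed under the (finitary) operation ``take the generators occurring in the differential of a given generator,'' with countable increments produced by iterating that operation $\aleph_0$ times, and then passes to the corresponding chain of graded-free CDG\+submodules. The only cosmetic difference is that you package the countable-increment step as a closure operator and the identity $\overline{T_i\cup\{s\}}=T_i\cup\overline{\{s\}}$, whereas the paper constructs the countable set $\bigcup_{n\ge0}Z_n$ directly at each successor stage.
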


\begin{proof}
 Let $P^\cu=(P^*,d_P)$ be a graded-free left CDG\+module over~$B^\cu$,
and let $X$ be a set of homogeneous generators of the free graded
$B^*$\+module~$P^*$; so $P^*=\bigoplus_{x\in X}B^*x$, where
$x\in P^{|x|}$ for every $x\in X$.
 Proceeding by transfinite induction, we construct a filtration
$X=\bigcup_{i=0}^\alpha X_i$ of the set $X$, indexed by some
ordinal~$\alpha$, with the following properties:
\begin{itemize}
\item $X_0=\varnothing$ and $X_\alpha=X$;
\item $X_i\subset X_j$ for all $0\le i\le j\le\alpha$;
\item $X_j=\bigcup_{i<j}X_i$ for all limit ordinals $j\le\alpha$;
\item the set $X_{i+1}\setminus X_i$ is (at most) countable for
every ordinal $0\le i<\alpha$;
\item one has $d_P(x)\subset\bigoplus_{y\in X_i}B^*y$ for every
$x\in X_i$ and $0\le i\le\alpha$.
\end{itemize}

 Let $0\le j\le\alpha$ be an ordinal.
 Assuming that the subsets $X_i\subset X$ have been already constructed
for all $i<j$, the subset $X_j\subset X$ is constructed as follows.
 For $j=0$ we put $X_0=\varnothing$; and if~$j$ is a limit ordinal,
we simply put $X_j=\bigcup_{i<j}X_i$.

 For a successor ordinal $j=i+1$, we consider two cases.
 If $X_i=X$, then the construction is over; put $\alpha=i$.
 The nontrivial case occurs when $X_i\ne X$.

 Then we pick an element $x\in X\setminus X_i$ and proceed by induction
on nonnegative integers $n\ge0$, constructing finite subsets $Z_0$, 
$Z_1$, $Z_2$,~\dots~$\subset X$.
 Put $Z_0=\{x\}$.
 Assuming that a finite subset $Z_n\subset X$, \,$n\ge0$, has been
constructed already, let $z\in Z_n$ be an element.
 Then there exists a finite subset $W_z\subset X$ such that
$d_P(z)\in\bigoplus_{w\in W_z}B^*w\subset P^*$.
 Put $Z_{n+1}=\bigcup_{z\in Z_n} W_z$.

 After the subset $Z_n\subset X$ has been constructed for every
integer $n\ge0$, put $X_{i+1}=X_i\cup\bigcup_{n\ge0}Z_n$.
 This finishes the construction of the ordinal-indexed filtration
$X=\bigcup_{i=0}^\alpha X_\alpha$ on the set~$X$.

 Finally, the rules $P^*_i=\bigoplus_{x\in X_i}B^*x$ and
$P^\cu_i=(P^*_i,d_P|_{P^*_i})$ for all $0\le i\le\alpha$ define
an $\alpha$\+indexed filtration by CDG\+submodules on the graded-free
CDG\+module $P^\cu$ over $B^\cu$ with countably generated graded-free
quotient CDG\+modules, as desired.
 (Cf.~\cite[main argument in the second proof of
Proposition~6.15]{PS5}).
\end{proof}

\subsection{Countably presented graded-free CDG-modules}
 In this section we prove the following proposition.

\begin{prop} \label{countable-graded-free-orthogonal-to-flats}
 Let $B^\cu=(B^*,d,h)$ be a CDG\+ring, $Q^\cu=(Q^*,d_Q)$ be
a countably generated graded-free left CDG\+module over $B^\cu$,
and $F^\cu=(F^*,d_F)$ be a flat left CDG\+module over~$B^\cu$.
 Then any closed morphism of CDG\+modules $Q^\cu\rarrow F^\cu$ is
homotopic to zero.
 In other words, the complex of abelian groups\/
$\Hom^\bu_{B^*}(Q^\cu,F^\cu)$ is acyclic.
\end{prop}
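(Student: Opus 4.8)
The plan is to prove that any countably generated graded-free CDG\+module $Q^\cu$ is a homotopy-projective object relative to the class of flat CDG\+modules, by combining a resolution-type argument with the flat periodicity phenomenon. The central idea is that a countably generated graded-free CDG\+module can be written as a countable direct limit (in fact an $\aleph_1$\+presentable colimit) of finitely generated graded-projective CDG\+modules, and finitely generated graded-projectives are particularly well-behaved when mapping into flat CDG\+modules.

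First I would recall from Remark~\ref{flat-complexes-remark} the key structural fact that the flat CDG\+module $F^\cu$ is a direct limit, computed in $\sZ^0(B^\cu\bModl)$, of contractible graded-projective CDG\+modules. This is the Govorov--Lazard characterization applied to the graded ring $B^*[\delta]$. The plan is to exploit this together with a countability bound: since $Q^\cu$ is countably generated (hence countably presented as a $B^*[\delta]$\+module by Corollary~\ref{graded-projective-generated-presented-cor}(b)), any closed morphism $f\colon Q^\cu\rarrow F^\cu$ factors through one of the contractible graded-projective terms $P^\cu_\xi$ in the direct system. Concretely, because $Q^\cu$ is an $\aleph_1$\+presentable object and $F^\cu=\varinjlim P^\cu_\xi$ is a directed colimit, the map $f$ lifts to some $g\colon Q^\cu\rarrow P^\cu_\xi$ with $f$ equal to the composite $Q^\cu\xrightarrow{g}P^\cu_\xi\rarrow F^\cu$.

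Once $f$ factors through a contractible CDG\+module $P^\cu_\xi$, I would finish using the fact that any closed morphism factoring through a contractible object is homotopic to zero. Indeed, if $P^\cu_\xi$ is contractible, then $\id_{P^\cu_\xi}$ is homotopic to zero, so the composite $Q^\cu\rarrow P^\cu_\xi\rarrow F^\cu$ is homotopic to zero as well. This would immediately give that $\Hom^\bu_{B^*}(Q^\cu,F^\cu)$ is acyclic, since acyclicity at each degree is precisely the statement that every degree-$n$ cocycle (a closed morphism $Q^\cu\rarrow F^\cu[n]$) is a coboundary (homotopic to zero), and the shifts $F^\cu[n]$ are again flat.

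The main obstacle I anticipate is the factorization step: one must ensure that the direct limit presentation of $F^\cu$ can be taken over an $\aleph_1$\+directed (or at least countably directed) system, so that the countably presented object $Q^\cu$ genuinely factors through a single term. The naive Govorov--Lazard direct limit need not be countably directed, so I would need to either invoke a standard refinement making the index poset $\aleph_1$\+directed, or argue more carefully by building the factorization degree by degree over a countable generating and relating set of $Q^\cu$, using the Leibniz-rule compatibility of $d_Q$ to control finitely many generators and their differentials at each stage. Handling the interaction between the module generators and the differential $d_Q$ simultaneously—so that the lift $g$ is genuinely a \emph{closed} morphism of CDG\+modules rather than merely a graded one—is the delicate point, and I expect the countable presentability established in Proposition~\ref{finiteness-countability-for-cdg-modules-prop}(d) to be exactly what makes this controllable.
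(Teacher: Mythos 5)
Your strategy has a genuine gap at exactly the point you flag, and the gap is not a technical nuisance that a ``standard refinement'' can repair: it is fatal to the whole approach. A countably presented (equivalently, $\aleph_1$\+presentable) object factors through a term of a colimit only when that colimit is $\aleph_1$\+directed; the Govorov--Lazard presentation of $F^\cu$ as a direct limit of contractible graded-projective CDG\+modules (i.~e., projective graded $B^*[\delta]$\+modules) is merely directed, and it can \emph{never} be improved to an $\aleph_1$\+directed one while keeping the terms contractible graded-projective. Indeed, if $F^\cu$ were an $\aleph_1$\+directed colimit of projective objects of $\sZ^0(B^\cu\bModl)$, then for any \emph{countably presented} flat CDG\+module $F^\cu$ the identity morphism $\id_{F^\cu}$ would factor through a term, exhibiting $F^\cu$ as a retract of a projective, hence projective. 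But whenever the graded ring $B^*[\delta]$ is not graded perfect (e.~g.\ already for $B^\cu=(k[x],0,0)$), there exist countably presented flat graded $B^*[\delta]$\+modules that are not projective (Bass-type modules), i.~e.\ countably presented flat CDG\+modules that are not contractible graded-projective. The only available refinement (as in Lemma~\ref{direct-limit=aleph-1-direct-of-countables} or Remark~\ref{remark-graded-flats-via-pseudopullback}) makes the index poset $\aleph_1$\+directed at the cost of replacing the terms by countably presented \emph{flat} CDG\+modules; then your factorization $Q^\cu\rarrow F^\cu_T\rarrow F^\cu$ goes through a countably presented flat module rather than a contractible one, which is circular --- proving that closed morphisms into such a module are null-homotopic is essentially the original problem.

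The paper's proof sidesteps factorization of~$f$ entirely by decomposing $Q^\cu$ instead of $F^\cu$. Writing $Q^*=\bigoplus_{x\in X}B^*x$ with a countable generating set $x_0,x_1,\dotsc$, one defines finitely presented graded $B^*[\delta]$\+modules $Q_n^*$ on the generators $x_0,\dotsc,x_n$, imposing the relation $\delta x_i=\sum_{z\in Z_i}b_{i,z}x_z$ only when the support $Z_i$ lies in $\{0,\dotsc,n\}$; then $Q^*=\varinjlim_n Q_n^*$, and the telescope short exact sequence $0\rarrow\bigoplus_n Q_n^*\rarrow\bigoplus_n Q_n^*\rarrow Q^*\rarrow0$ is \emph{graded-split} because $Q^*$ is graded-free, hence yields a distinguished triangle in $\sH^0(B^\cu\bModl)$. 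For each finitely presented $Q_n^\cu$, the Lazard factorization property \emph{is} applicable (finitely presented sources factor through terms of arbitrary directed colimits), so every closed morphism $Q_n^\cu\rarrow F^\cu$ factors through a finitely generated projective graded $B^*[\delta]$\+module, which is contractible; thus $\Hom^\bu_{B^*}(Q_n^\cu,F^\cu)$ is acyclic, and applying the cohomological functor $\Hom({-},F^\cu)$ to the telescope triangle concludes the proof. Note that your ``argue degree by degree over a countable generating and relating set'' instinct is realized here, but in service of approximating $Q^\cu$ by finitely presented (not graded-free!) CDG\+modules and then running a homological (triangle) argument, not in service of producing a closed lift of~$f$ through a single term of a colimit presentation of~$F^\cu$.
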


\begin{proof}
 Let us first consider the trivial case when $Q^\cu$ is actually
a finitely generated graded-free left CDG\+module over~$B^\cu$.
 In this case, by
Corollary~\ref{graded-projective-generated-presented-cor}(a),
the graded $B^*[\delta]$\+module $Q^*$ is finitely presented.
 By the Govorov--Lazard theorem, it follows that any morphism
of $B^*[\delta]$\+modules from $Q^*$ to the flat $B^*[\delta]$\+module
$F^*$ factorizes through a (finitely generated) projective
$B^*[\delta]$\+module.
 Following the discussion in
Section~\ref{prelim-proj-inj-flatness-subsecn}, all projective
$B^*[\delta]$\+modules are contractible as CDG\+modules over~$B^\cu$.
 So any morphism $Q^\cu\rarrow F^\cu$ in $\sZ^0(B^\cu\bModl)\simeq
B^*[\delta]\sModl$ factorizes through a contractible CDG\+module;
consequently, any such morphism is homotopic to zero.

 Now we deal with the interesting case when the $B^*$\+module $Q^*$ has
a countably infinite set of homogeneous free generators.
 Let $x_0$, $x_1$, $x_2$,~\dots~$\in Q^*$ be a sequence of such
generators.
 For every integer $n\ge0$, we define a finitely presented graded left
$B^*[\delta]$\+module $Q_n^*$ by generators and relations as follows.

 The $(n+1)$\+element set $\{x_0,x_1,\dotsc,x_n\}$ is the set of
generators of the graded $B^*[\delta]$\+module~$Q_n^*$.
 Here $x_i\in Q_n^*$, \ $0\le i\le n$, are homogeneous elements of
the same degrees as the similarly denoted free generators of
the graded $B^*$\+module~$Q^*$.

 To describe the relations imposed on these generators in $Q_n^*$,
consider a nonnegative integer $i\in\boZ_{\ge0}$, and denote by
$Z_i\subset\boZ_{\ge0}$ the (say, minimal) finite set of indices
such that $d_Q(x_i)\in\bigoplus_{z\in Z_i}B^*x_z\subset Q^*$.
 Let us write $d_Q(x_i)=\sum_{z\in Z_i}b_{i,z}x_z\in Q^*$, where
$b_{i,z}\in B^*$ are some homogeneous elements.
 For every index $0\le i\le n$ such that $Z_i\subset\{0,\dotsc,n\}$,
the relation $\delta x_i=\sum_{z\in Z_i}b_{i,z}x_z$ is imposed
in the graded $B^*[\delta]$\+module~$Q_n^*$.
 If $Z_i\not\subset\{0,\dotsc,n\}$, then no such relation is imposed.

 Clearly, the graded $B^*[\delta]$\+module $Q^*$ is the direct limit
of the sequence of finitely presented graded $B^*[\delta]$\+modules
$Q_n^*$, that is $Q^*=\varinjlim_{n\ge0}Q_n^*$ in $B^*[\delta]\sModl$.
 Hence we have the telescope short exact sequence
\begin{equation} \label{telescope-sequence}
 0\lrarrow\bigoplus\nolimits_{n\ge0}Q_n^*\lrarrow
 \bigoplus\nolimits_{n\ge0}Q_n^*\lrarrow
 \varinjlim\nolimits_{n\ge0}Q_n^*=Q\lrarrow0
\end{equation}
in $B^*[\delta]\sModl$.

 Moreover, the short exact sequence~\eqref{telescope-sequence} splits
in the additive category of graded $B^*$\+modules $B^*\sModl$, as
the graded $B^*$\+module $Q^*$ is free.
 Consequently, denoting by $Q_n^\cu\in B^\cu\bModl$ the CDG\+module
corresponding to the graded $B^*[\delta]$\+module $Q_n^*$, we have
a distinguished triangle in the homotopy category $\sH^0(B^\cu\bModl)$
\begin{equation} \label{telescope-triangle}
 \bigoplus\nolimits_{n\ge0}Q_n^\cu\lrarrow
 \bigoplus\nolimits_{n\ge0}Q_n^\cu\lrarrow
 Q^\cu\lrarrow\bigoplus\nolimits_{n\ge0}Q_n^\cu[1].
\end{equation}

 Following the argument in the first paragraph of this proof, any
closed morphism of CDG\+modules $Q_n^\cu\rarrow F^\cu$ is homotopic
to zero (since the $B^*[\delta]$\+module $Q_n^*$ is finitely
presented, while the $B^*[\delta]$\+module $F^*$ is flat).
 The same holds for all shifts of the CDG\+module~$F^\cu$; so
the complex of morphisms $\Hom_{B^*}^\bu(Q_n^\cu,F^\cu)$ is acyclic
for every $n\ge0$.
 Applying the cohomological functor $\Hom({-},F^\cu)$ in the homotopy
category $\sH^0(B^\cu\bModl)$ to the distinguished
triangle~\eqref{telescope-triangle}, we conclude that all closed
morphisms of CDG\+modules $Q^\cu\rarrow F^\cu$ are homotopic to zero.
\end{proof}

\subsection{The contraderived category and contraderived cotorsion pair}
\label{contraderived-subsecn}
 Let $B^\cu=(B^*,d,h)$ be a CDG\+ring.
 A left CDG\+module $X^\cu$ over $B^\cu$ is said to be
\emph{contraacyclic} (\emph{in the sense of
Becker}~\cite[Proposition~1.3.8(1)]{Bec}) if, for any graded-projective
left CDG\+module $P^\cu$ over $B^\cu$, any closed morphism of
CDG\+modules $P^\cu\rarrow X^\cu$ is homotopic to zero.
 Equivalently (in view of Lemma~\ref{Ext-1-homotopy-hom-lemma}),
this means that $\Ext^1_{\sZ^0(B^\cu\bModl)}(P^\cu,X^\cu)=0$ for all
graded-projective CDG\+modules~$P^\cu$.

 The full subcategory of contraacyclic CDG\+modules is denoted by
$\sH^0(B^\cu\bModl)_\ac^\bctr\allowbreak\subset\sH^0(B^\cu\bModl)$ or
$\sZ^0(B^\cu\bModl)_\ac^\bctr\subset\sZ^0(B^\cu\bModl)$.
 The class of contraacyclic CDG\+modules is closed under extensions
and infinite products in the abelian category $\sZ^0(B^\cu\bModl)$
\,\cite[Lemma~6.13]{PS5}, and under shifts, cones, and direct
summands in the triangulated category $\sH^0(B^\cu\bModl)$.

 The triangulated Verdier quotient category
$$
 \sD^\bctr(B^\cu\bModl)=\sH^0(B^\cu\bModl)/\sH^0(B^\cu\bModl)_\ac^\bctr
$$
is called the \emph{contraderived category} (\emph{in the sense of
Becker}) of left CDG\+modules over~$B^\cu$.
 The following theorem is due to
Becker~\cite[Proposition~1.3.6(1)]{Bec}.

\begin{thm} \label{contraderived-cotorsion-pair-category-theorem}
 Let $B^\cu=(B^*,d,h)$ be a CDG\+ring.  Then \par
\textup{(a)} the pair of classes of objects\/
$\big(\sZ^0(B^\cu\bModl_\bproj)$, $\sZ^0(B^\cu\bModl)_\ac^\bctr\big)$ is
a hereditary complete cotorsion pair in the abelian category\/
$\sZ^0(B^\cu\bModl)$; \par
\textup{(b)} the composition of the fully faithful inclusion of
triangulated categories\/ $\sH^0(B^\cu\bModl_\bproj)\rarrow
\sH^0(B^\cu\bModl)$ and the triangulated Verdier quotient functor\/
$\sH^0(B^\cu\bModl)\rarrow\sD^\bctr(B^\cu\bModl)$ is a triangulated
equivalence
$$
 \sH^0(B^\cu\bModl_\bproj)\simeq\sD^\bctr(B^\cu\bModl).
$$
\end{thm}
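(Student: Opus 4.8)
The plan is to obtain (a) from the Eklof--Trlifaj theorem by deconstructing the graded-projective CDG\+modules, and then to deduce (b) by assembling an abelian model structure whose cofibrant objects are the graded-projectives and whose weakly trivial objects are the contraacyclics. Throughout I identify $\sZ^0(B^\cu\bModl)$ with $B^*[\delta]\sModl$, a Grothendieck category with projective generator $G=\bigoplus_{n\in\boZ}B^*[\delta][n]$, noting that $G$ is countably generated and graded-free. Let $\sS$ be the set of isomorphism classes of countably generated graded-free CDG\+modules (so $G\in\sS$). First I would prove $\sZ^0(B^\cu\bModl_\bproj)=\Fil(\sS)^\oplus$. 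The inclusion ``$\subseteq$'' combines Lemma~\ref{graded-projective-direct-summand-of-graded-free} (a graded-projective is a summand of a graded-free) with Lemma~\ref{deconstruction-of-graded-free} (a graded-free is filtered by countably generated graded-frees). For ``$\supseteq$'', the forgetful functor to $B^*\sModl$ is exact and preserves direct limits, so any object of $\Fil(\sS)$ is, over $B^*$, a transfinitely iterated extension of projective graded modules; as the projective graded $B^*$\+modules are exactly ${}^{\perp_1}(B^*\sModl)$, the Eklof lemma~\ref{eklof-lemma} (applied with $\sR=B^*\sModl$) shows such an extension is again projective, whence the object is graded-projective, and graded-projectivity is closed under summands.

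Since $\sS$ contains the projective generator $G$, Theorem~\ref{eklof-trlifaj-theorem} shows that the cotorsion pair generated by the set $\sS$ is complete and has left class $\Fil(\sS)^\oplus=\sZ^0(B^\cu\bModl_\bproj)$. Its right class is $\bigl(\sZ^0(B^\cu\bModl_\bproj)\bigr)^{\perp_1}$, which is by definition the class $\sZ^0(B^\cu\bModl)_\ac^\bctr$ of contraacyclic CDG\+modules; this gives completeness in (a). The pair is hereditary by Lemma~\ref{hereditary-cotorsion-lemma}(1): in any short exact sequence $0\to K\to P'\to P''\to0$ of CDG\+modules with $P',P''$ graded-projective, the underlying sequence of graded $B^*$\+modules splits because $(P'')^{\#}$ is projective, so $K^{\#}$ is a direct summand of $(P')^{\#}$ and hence projective, i.e.\ $K$ is graded-projective.

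For (b), write $\sK=\sH^0(B^\cu\bModl)$ and let $\sK_\ac=\sH^0(B^\cu\bModl)_\ac^\bctr$, so that $\sD^\bctr(B^\cu\bModl)=\sK/\sK_\ac$. By the very definition of contraacyclicity, $\Hom_\sK(P,X)=0$ whenever $P$ is graded-projective and $X$ is contraacyclic. Fully faithfulness of $\sH^0(B^\cu\bModl_\bproj)\to\sK/\sK_\ac$ is then the standard Verdier-localization argument built on this orthogonality: a morphism out of a graded-projective $P$ in the quotient is a co-roof $P\to Y\xleftarrow{t}Q$ with $\cone(t)\in\sK_\ac$, and applying $\Hom_\sK(P,-)$ to the triangle $Q\xrightarrow{t}Y\to\cone(t)\to Q[1]$ together with $\Hom_\sK(P,\cone(t))=0$ lifts it to an honest morphism $P\to Q$; while any morphism $P\to Q$ that dies in the quotient factors through an object of $\sK_\ac$ and hence already vanishes.

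The remaining, and main, point is essential surjectivity: every $M$ must admit a triangle $N\to P\to M\to N[1]$ in $\sK$ with $P$ graded-projective and $N$ contraacyclic, for then $M\simeq P$ in $\sK/\sK_\ac$. Completeness of the cotorsion pair from (a) supplies a short exact sequence $0\to N\to P\to M\to0$ in $\sZ^0(B^\cu\bModl)$ with $P$ graded-projective and $N$ contraacyclic, but this sequence need not split over $B^*$, so it does \emph{not} directly yield a distinguished triangle in $\sK$; this is the crux of the proof. To bridge the gap I would pass to model categories: besides the pair of (a), the category $B^*[\delta]\sModl$ carries the tautological hereditary complete cotorsion pair $(\sZ^0(B^\cu\bModl)_\proj,\sZ^0(B^\cu\bModl))$, these two pairs are nested with common core the projective CDG\+modules (the graded-projective contraacyclic modules are exactly the contractible graded-projective ones), so Gillespie's Theorem~\ref{gillespie-theorem} yields a hereditary abelian model structure $(\sL,\sW,\sR)$ with $\sL=\sZ^0(B^\cu\bModl_\bproj)$ and $\sR=\sZ^0(B^\cu\bModl)$. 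A short check using the recovery formula of Theorem~\ref{gillespie-theorem} identifies $\sW$ with the contraacyclic modules (the sequence $0\to W\to R\to L'\to0$ splits as $L'$ is projective, so $W$ is a summand of the contraacyclic $R$; conversely take $L'=0$). The homotopy category of this model structure is triangulated, is computed on the cofibrant--fibrant objects (the graded-projectives) modulo those morphisms factoring through a trivial object, i.e.\ through a contractible graded-projective; since a morphism between graded-projectives factors through a contractible CDG\+module exactly when it is null-homotopic, this quotient is $\sH^0(B^\cu\bModl_\bproj)$. On the other hand, inverting the weak equivalences coincides with the Verdier quotient $\sK/\sK_\ac$ defining $\sD^\bctr$. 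Chaining these identifications proves (b); the delicate step is precisely this comparison of the model-categorical localization with the triangulated Verdier quotient, which is what frees us from needing the abelian approximation sequence to be homotopy-split.
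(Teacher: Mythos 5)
Your part~(a) is correct and follows the same route the paper indicates: Lemmas~\ref{graded-projective-direct-summand-of-graded-free} and~\ref{deconstruction-of-graded-free} reduce graded-projective CDG\+modules to filtrations by countably generated graded-free ones, the Eklof--Trlifaj Theorem~\ref{eklof-trlifaj-theorem} then yields the complete cotorsion pair, Lemma~\ref{Ext-1-homotopy-hom-lemma} identifies its right class with the contraacyclic CDG\+modules, and your graded-splitting argument for heredity is fine. The full-faithfulness half of part~(b), via semiorthogonality and the standard Verdier calculus, is also correct.

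The gap is in essential surjectivity. Your detour through Gillespie's Theorem~\ref{gillespie-theorem} does produce the Hovey triple $\big(\sZ^0(B^\cu\bModl_\bproj),\,\sZ^0(B^\cu\bModl)_\ac^\bctr,\,\sZ^0(B^\cu\bModl)\big)$ (this is Theorem~\ref{projective-contraderived-model-structure} of the paper), and Becker's Frobenius-category description of its homotopy category does give $\sH^0(B^\cu\bModl_\bproj)$. But the chain then ends with the bare assertion that inverting the weak equivalences ``coincides with the Verdier quotient'' $\sH^0(B^\cu\bModl)/\sH^0(B^\cu\bModl)_\ac^\bctr$. This is not a formality: the model-categorical homotopy category is a localization of the abelian category $\sZ^0(B^\cu\bModl)$, whereas $\sD^\bctr(B^\cu\bModl)$ is a Verdier quotient of the triangulated category $\sH^0(B^\cu\bModl)$, and identifying the two compatibly with the inclusion of the graded-projectives (which is what the theorem actually asserts) requires, for instance, first proving that a morphism of CDG\+modules is a weak equivalence if and only if its cone is contraacyclic, and then a two-sided universal-property argument. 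That comparison is essentially as strong as part~(b) itself, so calling it ``the delicate step'' and skipping it leaves the proof incomplete.

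The irony is that the missing step has a two-line repair using only part~(a), and it is exactly the trick the paper uses in the proofs of Theorems~\ref{projective-and-flat-contraderived-theorem} and~\ref{graded-flat-coderived-contraderived-theorem}: of the two approximation sequences, use the special \emph{preenvelope} sequence rather than the precover sequence. Given $M^\cu$, completeness provides a short exact sequence $0\rarrow M^\cu\rarrow X^\cu\rarrow Q^\cu\rarrow0$ \eqref{special-preenvelope-sequence} in $\sZ^0(B^\cu\bModl)$ with $X^\cu$ contraacyclic and $Q^\cu$ graded-projective. Since $Q^*$ is a projective graded $B^*$\+module, this sequence is automatically graded-split, hence by (the proof of) Lemma~\ref{Ext-1-homotopy-hom-lemma} it yields a distinguished triangle $M^\cu\rarrow X^\cu\rarrow Q^\cu\rarrow M^\cu[1]$ in $\sH^0(B^\cu\bModl)$; rotating, the morphism $Q^\cu[-1]\rarrow M^\cu$ has contraacyclic cone $X^\cu$ and so becomes an isomorphism in $\sD^\bctr(B^\cu\bModl)$. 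Your observation that the precover sequence need not be graded-split is correct, but it simply means you picked the wrong one of the two sequences: the useful approximation sequence is always the one whose third term lies in the left class, because that one is graded-split for free.
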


\begin{proof}
 The assertions of parts~(a) and~(b) are closely related to one
another; in fact, part~(b) is a corollary of part~(a).
 Both the assertions are implied by the formulation in~\cite{Bec};
they can be obtained from the main result of~\cite[Section~6]{PS5}
by specializing it from locally presentable abelian DG\+categories
with enough projective objects to the DG\+categories of CDG\+modules.
 Part~(a) is a particular case of~\cite[Theorem~6.16]{PS5},
and part~(b) is a particular case of~\cite[Corollary~6.14]{PS5}.

 Notice that the proofs in both~\cite{Bec} and~\cite{PS5} are based on
a deconstructibility result for graded-projective
CDG\+modules~\cite[Proposition~A.10]{Bec}, \cite[Proposition~6.15]{PS5}.
 The argument in~\cite[second proof of Proposition~6.15]{PS5} is
covered by Lemmas~\ref{graded-projective-direct-summand-of-graded-free}
and~\ref{deconstruction-of-graded-free} above.
 So part~(a) can be deduced from
Lemmas~\ref{graded-projective-direct-summand-of-graded-free}--%
\ref{deconstruction-of-graded-free},
Lemma~\ref{Ext-1-homotopy-hom-lemma}, and
Theorem~\ref{eklof-trlifaj-theorem};
while part~(b) follows from part~(a) as explained
in~\cite[proof of Corollary~6.14 in Section~6.5]{PS5}.
\end{proof}

The hereditary complete cotorsion pair $\big(\sZ^0(B^\cu\bModl_\bproj)$,
$\sZ^0(B^\cu\bModl)_\ac^\bctr\big)$
formed by the classes of graded-projective CDG\+modules and of
contraacyclic CDG\+modules provided by
Theorem~\ref{contraderived-cotorsion-pair-category-theorem}(a) is called
the \emph{contraderived cotorsion pair} in the abelian category of
CDG\+modules $\sZ^0(B^\cu\bModl)$.

\subsection{Restricted contraderived cotorsion pair in graded-flat
CDG-modules} \label{restricted-contraderived-cotorsion-pair-subsecn}
 Denote by $\sA$ the abelian category of CDG\+modules
$\sA=\sZ^0(B^\cu\bModl)$, by $\sE\subset\sA$ the full subcategory of
graded-flat CDG\+modules $\sE=\sZ^0(B^\cu\bModl_\bflat)$, by
$\sL\subset\sA$ the class of all graded-projective CDG\+modules
$\sL=\sZ^0(B^\cu\bModl_\bproj)$, and by $\sR\subset\sA$ the class of
all contraacyclic CDG\+modules $\sR=\sZ^0(B^\cu\bModl)_\ac^\bctr$.
 Then the assumptions of Lemma~\ref{restricted-exact-structure} are
satisfied for the hereditary complete cotorsion pair $(\sL,\sR)$ in
the abelian category $\sA$ and the full subcategory $\sE\subset\sA$.

 Therefore, the cotorsion pair $(\sL,\sR)$ restricts to a hereditary
complete cotorsion pair ($\sL$, $\sE\cap\sR$) in the exact
category~$\sE$.
 In other words, the pair of classes $\big(\sZ^0(B^\cu\bModl_\bproj)$,
$\sZ^0(B^\cu\bModl_\bflat)_\ac^\bctr\big)$
formed by the classes of graded-projective CDG\+modules and graded-flat
contraacyclic CDG\+modules is a hereditary complete
cotorsion pair in the exact category of graded-flat CDG\+modules
$\sZ^0(B^\cu\bModl_\bflat)$.

 What are the graded-flat contraacyclic CDG\+modules, is there a simpler
description of this class?
 Using Proposition~\ref{countable-graded-free-orthogonal-to-flats},
we can already prove the following assertion.

\begin{cor} \label{flats-are-contraacyclic-cor}
 Over any CDG\+ring $B^\cu$, all flat CDG\+modules are contraacyclic.
\end{cor}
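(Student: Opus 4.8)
The plan is to verify the $\Ext$-theoretic reformulation of contraacyclicity recorded in Section~\ref{contraderived-subsecn}: a left CDG\+module $F^\cu$ is contraacyclic exactly when $\Ext^1_{\sZ^0(B^\cu\bModl)}(P^\cu,F^\cu)=0$ for every graded-projective CDG\+module $P^\cu$. So, fixing a flat CDG\+module $F^\cu$ and working in the abelian category $\sZ^0(B^\cu\bModl)$, I would show that $\sZ^0(B^\cu\bModl_\bproj)\subseteq{}^{\perp_1}\{F^\cu\}$, proceeding by the usual ``deconstruct and reassemble'' pattern: first the countably generated graded-free case, then arbitrary graded-free modules via the Eklof lemma, and finally graded-projective modules via passage to direct summands.

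First I would dispose of the countably generated graded-free case, which is precisely the content already supplied by Proposition~\ref{countable-graded-free-orthogonal-to-flats}. For a countably generated graded-free CDG\+module $Q^\cu$ that proposition gives acyclicity of $\Hom^\bu_{B^*}(Q^\cu,F^\cu)$; since $Q^*$ is a free, hence projective, graded $B^*$\+module, we have $\Ext^1_{B^*\sModl}(Q^*,F^*)=0$, so Lemma~\ref{Ext-1-homotopy-hom-lemma} identifies
$$
 \Ext^1_{\sZ^0(B^\cu\bModl)}(Q^\cu,F^\cu)\simeq
 \Hom_{\sH^0(B^\cu\bModl)}(Q^\cu,F^\cu[1])=
 H^1\Hom^\bu_{B^*}(Q^\cu,F^\cu)=0.
$$
Hence every countably generated graded-free $Q^\cu$ lies in ${}^{\perp_1}\{F^\cu\}$.

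Next I would pass to arbitrary graded-free modules. By Lemma~\ref{deconstruction-of-graded-free}, any graded-free CDG\+module $P^\cu$ admits a filtration (in the sense of the direct limit) by countably generated graded-free CDG\+modules; writing $\sS$ for the class of the latter, this says $P^\cu\in\Fil(\sS)$. The Eklof lemma (Lemma~\ref{eklof-lemma}) asserts that the left orthogonal class ${}^{\perp_1}\{F^\cu\}$ is closed under transfinitely iterated extensions, that is $\Fil({}^{\perp_1}\{F^\cu\})={}^{\perp_1}\{F^\cu\}$. Since $\sS\subseteq{}^{\perp_1}\{F^\cu\}$ by the previous step, we obtain $\Fil(\sS)\subseteq\Fil({}^{\perp_1}\{F^\cu\})={}^{\perp_1}\{F^\cu\}$, so $P^\cu\in{}^{\perp_1}\{F^\cu\}$ for every graded-free $P^\cu$.

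Finally I would remove the freeness hypothesis: a left orthogonal class ${}^{\perp_1}\{F^\cu\}$ is automatically closed under direct summands, and by Lemma~\ref{graded-projective-direct-summand-of-graded-free} every graded-projective CDG\+module is a direct summand of a graded-free one, so every graded-projective $P^\cu$ belongs to ${}^{\perp_1}\{F^\cu\}$. This gives $\Ext^1_{\sZ^0(B^\cu\bModl)}(P^\cu,F^\cu)=0$ for all graded-projective $P^\cu$, which is exactly contraacyclicity of $F^\cu$. I do not anticipate a genuine obstacle here: all the substance has been front-loaded into Proposition~\ref{countable-graded-free-orthogonal-to-flats}, and the corollary is an assembly of the three preceding lemmas through the Eklof lemma. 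The only points demanding care are the bookkeeping translation between acyclicity of the Hom-complex and $\Ext^1$\+vanishing through Lemma~\ref{Ext-1-homotopy-hom-lemma}, and confirming that the filtration produced by Lemma~\ref{deconstruction-of-graded-free} has exactly the shape to which the Eklof lemma applies.
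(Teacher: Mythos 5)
Your proof is correct and follows essentially the same route as the paper's: both rest on Proposition~\ref{countable-graded-free-orthogonal-to-flats} for the countably generated graded-free case, the translation of Lemma~\ref{Ext-1-homotopy-hom-lemma}, the deconstruction Lemma~\ref{deconstruction-of-graded-free} combined with the Eklof lemma, and passage to direct summands via Lemma~\ref{graded-projective-direct-summand-of-graded-free}. The only difference is presentational: you assemble the argument bottom-up (countably generated $\Rightarrow$ graded-free $\Rightarrow$ graded-projective), whereas the paper phrases it as a top-down reduction, and the paper carries a harmless shift $F^\cu[-1]$ to match the exact form of Lemma~\ref{Ext-1-homotopy-hom-lemma}.
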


\begin{proof}
 Let $F^\cu$ be a flat left CDG\+module over~$B^\cu$.
 We have to show that the complex of abelian groups
$\Hom_{B^*}^\bu(P^\cu,F^\cu)$ is acyclic for all graded-projective
left CDG\+modules $P^\cu$ over~$B^\cu$.
 In view of Lemma~\ref{Ext-1-homotopy-hom-lemma},
this equivalent to showing that the Ext group
$\Ext^1_{\sZ^0(B^\cu\bModl)}(P^\cu,F^\cu[-1])$ vanishes.
 By Lemmas~\ref{graded-projective-direct-summand-of-graded-free}
and~\ref{deconstruction-of-graded-free}, the object $P^\cu\in
\sZ^0(B^\cu\bModl)$ is a direct summand of a CDG\+module filtered
by countably generated graded-free CDG\+modules.
 Using the Eklof lemma (Lemma~\ref{eklof-lemma}), the question reduces
to showing that $\Ext^1_{\sZ^0(B^\cu\bModl)}(Q^\cu,F^\cu[-1])=0$ for
all countably generated graded-free left CDG\+modules $Q^\cu$
over~$B^\cu$.
 Applying Lemma~\ref{Ext-1-homotopy-hom-lemma} again, we finally reduce
the question to
Proposition~\ref{countable-graded-free-orthogonal-to-flats}.
\end{proof}

 One of the main aims of the rest of
Section~\ref{projective-and-flat-contraderived-secn} is to prove
the following converse assertion to
Corollary~\ref{flats-are-contraacyclic-cor}: \emph{all contraacyclic
graded-flat CDG\+modules are flat}.
 This is the result of
Corollary~\ref{flat=graded-flat-and-contraacyclic-cor} below.

\subsection{\texorpdfstring{Graded-flat CDG-modules as $\aleph_1$-direct limits}{Graded-flat CDG-modules as aleph1-direct limits}}
 We start with a lemma which is certainly not new; see, e.~g.,
\cite[proof of Theorem~2.11\,(iv)\,$\Rightarrow$\,(i)]{AR}.

\begin{lem} \label{direct-limit=aleph-1-direct-of-countables}
 Let $B^*$ be a graded ring.
 Then any flat graded $B^*$\+module is an $\aleph_1$\+direct limit of
countably presented flat graded $B^*$\+modules.
\end{lem}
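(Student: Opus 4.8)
The plan is to start from the graded Govorov--Lazard theorem and then reorganize the resulting directed colimit into an $\aleph_1$\+directed one, following the idea of~\cite[proof of Theorem~2.11]{AR}. First I would invoke the graded version of Govorov--Lazard (as already used in Remark~\ref{flat-complexes-remark}) to write the flat graded module $F^*$ as a direct limit $F^*=\varinjlim_{i\in I}P_i$ of finitely generated projective graded $B^*$\+modules $P_i$ over a directed poset~$I$; note that each $P_i$ is finitely presented and flat.

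Next I would consider the poset $\mathcal D$ whose elements are the (at most) countable directed subsets $J\subseteq I$, ordered by inclusion. The key combinatorial observation is that every countable subset $S\subseteq I$ is contained in a countable directed subset: starting from $S=S_0$, one enlarges $S_n$ to $S_{n+1}$ by adjoining, for each of the countably many finite subsets of $S_n$, one upper bound in $I$ (which exists since $I$ is directed), and one sets $J=\bigcup_n S_n$. This $J$ is countable and directed. It follows at once that $\mathcal D$ is $\aleph_1$\+directed: given countably many $J_m\in\mathcal D$, the union $\bigcup_m J_m$ is countable and hence lies below some $J\in\mathcal D$. The poset $\mathcal D$ is nonempty, as it contains all singletons $\{i\}$.

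For each $J\in\mathcal D$ I would set $F_J=\varinjlim_{j\in J}P_j$, with the evident transition maps $F_J\rarrow F_{J'}$ for $J\subseteq J'$. Each $F_J$ is flat, being a direct limit of flat modules. Moreover $F_J$ is countably presented: a countable directed poset $J$ admits a cofinal chain of order type at most~$\omega$ (built inductively by choosing successive upper bounds), so $F_J$ is the direct limit of a countable chain of finitely presented modules and is therefore countably presented. Thus $(F_J)_{J\in\mathcal D}$ is an $\aleph_1$\+directed system of countably presented flat graded $B^*$\+modules.

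Finally I would identify $\varinjlim_{J\in\mathcal D}F_J$ with $F^*$. The compatible maps $F_J\rarrow F^*$ induce a morphism $\varinjlim_{J}F_J\rarrow F^*$, and since the forgetful functor to graded abelian groups preserves direct limits, it suffices to check bijectivity on homogeneous elements. For surjectivity, any $x\in F^*$ comes from some $P_i$, and $i$ lies in $\{i\}\in\mathcal D$, so $x$ is hit by $F_{\{i\}}$. For injectivity, an element of some $F_J$ is represented by $y\in P_j$ with $j\in J$; if it maps to $0$ in $F^*=\varinjlim_I P_i$, then the image of $y$ already vanishes in some $P_{i'}$ with $j\le i'$, whence it vanishes in $F_{J'}$ for any $J'\in\mathcal D$ containing $J\cup\{i'\}$. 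Hence the map is an isomorphism, and the lemma follows. The only mildly delicate point is this last double-colimit bookkeeping, but it reduces, as indicated, to an elementary check on elements.
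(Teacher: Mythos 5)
Your proof is correct and follows essentially the same route as the paper's: both invoke the graded Govorov--Lazard theorem and then reorganize the resulting directed colimit into an $\aleph_1$\+directed colimit indexed by countable directed subposets. The only differences are cosmetic: you order these subposets by inclusion and spell out the $\aleph_1$\+directedness (via the countable closing-off construction) and the double-colimit identity element-wise, whereas the paper preorders them via their downward closures $Y_T$ and asserts these facts abstractly for an arbitrary diagram in a category with direct limits.
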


\begin{proof}
 Let $F^*$ be a flat graded $B^*$\+module.
 By the graded version of the Govorov--Lazard theorem, $F^*$ is
a direct limit of finitely generated projective (or even free)
graded $B^*$\+modules.
 Clearly, any countable direct limit of finitely generated projective
graded modules is a countably presented flat graded module.
 To prove the lemma, we need to transform a direct limit into
an $\aleph_1$\+direct limit of countable direct limits.

 So, quite generally, let $\sC$ be a category with direct limits,
let $\Xi$ be a directed poset, and let $(C_\xi)_{\xi\in\Xi}$ be
a $\Xi$\+indexed diagram in~$\sC$.
 Denote by $\Theta$ the set of all countable directed subposets
in~$\Xi$.
 To any element $T\in\Theta$, we assign the subset $Y_T\subset\Xi$
consisting of all elements $\upsilon\in\Xi$ for which there exists
$\theta\in T$ with $\upsilon\le\theta$ in~$\Xi$.
 Clearly, $Y_T$ is also a directed subposet in $\Xi$, and
$\varinjlim_{\upsilon\in Y_T}C_\upsilon=
\varinjlim_{\theta\in T}C_\theta$.
 Introduce a partial preorder on $\Theta$ by the rule that
$T'\le T''$ in $\Theta$ if $Y_{T'}\subset Y_{T''}$.
 Then (the poset of equivalence classes in) the partially
preordered set $\Theta$ is $\aleph_1$\+directed and
$\varinjlim_{\xi\in\Xi}C_\xi=\varinjlim_{T\in\Theta}
(\varinjlim_{\theta\in T}C_\theta)$.
\end{proof}

 The aim of this section is to prove the following proposition.

\begin{prop} \label{graded-flats-as-aleph-1-direct-limits}
 Let $B^\cu=(B^*,d,h)$ be a CDG\+ring.
 Then any graded-flat CDG\+module over $B^\cu$ can be obtained as
an\/ $\aleph_1$\+direct limit of countably presented graded-flat
CDG\+modules over $B^\cu$ in the abelian category of CDG\+modules\/
$\sZ^0(B^\cu\bModl)$.
\end{prop}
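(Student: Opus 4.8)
The plan is to reduce the statement to a relative form of the Govorov--Lazard theorem for the ring extension $B^*\hookrightarrow B^*[\delta]$, and then to feed it into the categorical construction underlying Lemma~\ref{direct-limit=aleph-1-direct-of-countables}. Working in the abelian category $\sZ^0(B^\cu\bModl)\simeq B^*[\delta]\sModl$, I first observe that a finitely generated graded-free CDG\+module is the same thing as a finitely presented CDG\+module whose underlying graded $B^*$\+module is finitely generated free; by Proposition~\ref{finiteness-countability-for-cdg-modules-prop} and Corollary~\ref{graded-projective-generated-presented-cor}, any countable direct limit of such objects is countably presented, and it is graded-flat since flatness of the underlying graded $B^*$\+module is preserved by directed colimits. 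Hence it suffices to exhibit an arbitrary graded-flat CDG\+module $F^\cu$ as a \emph{directed} colimit, taken in $\sZ^0(B^\cu\bModl)$, of finitely generated graded-free CDG\+modules: once this is done, the poset $\Theta$ of countable directed subposets of the indexing poset is $\aleph_1$\+directed, the partial subcolimits over its elements are countably presented graded-flat CDG\+modules, and their $\aleph_1$\+directed colimit recovers $F^\cu$, exactly as in the proof of Lemma~\ref{direct-limit=aleph-1-direct-of-countables}.

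It then remains to prove the relative Govorov--Lazard statement: every graded-flat CDG\+module over $B^\cu$ is a directed colimit of finitely generated graded-free ones. One inclusion is clear. For the other, I would start from the graded Govorov--Lazard presentation of the underlying module, writing $F^*=\varinjlim_{\xi}C_\xi$ as a directed colimit of finitely generated free graded $B^*$\+modules with structure maps $c_\xi\:C_\xi\to F^*$ (as in Remark~\ref{flat-complexes-remark}). These $C_\xi$ carry no action of $\delta$, so the task is to lift the differential. For each $\xi$ I would choose $\xi'\ge\xi$ together with a map $\partial_\xi\:C_\xi\to C_{\xi'}$ defined on free homogeneous generators by lifting $d_Fc_\xi$ along $c_{\xi'}$ (possible after passing to a larger index, since $\operatorname{im}(d_Fc_\xi)$ is finitely generated and $F^*=\varinjlim C_\xi$) and extended by the Leibniz rule, so that $\partial_\xi$ is an odd derivation of degree~$1$ with $c_{\xi'}\partial_\xi=d_Fc_\xi$. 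Assembling countable telescopes $C_{\xi_0}\to C_{\xi_1}\to\cdots$ of such data, their colimit $Q$ is a countably generated graded-free $B^*$\+module equipped with the induced odd derivation $d_Q=\varinjlim\partial_{\xi_n}$ (a derivation, being a colimit of derivations) and an augmentation $q\:Q\to F^*$ satisfying $q\,d_Q=d_F\,q$.

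The main obstacle is that $(Q,d_Q)$ need not be a CDG\+module on the nose: one only obtains $q\circ(d_Q^2-h)=0$, so the curvature defect $d_Q^2-h$ is a $B^*$\+linear endomorphism of $Q$ landing in $\ker q$, which is in general nonzero because the augmentation $q$ cannot be taken injective (a graded-flat CDG\+module need not be covered by free sub-CDG\+modules, as already the case of $\boQ$ over $\boZ$ shows). Overcoming this is the heart of the argument, and is where the curvature $h$ makes the CDG\+setting genuinely harder than the DG\+one. The plan is to correct the chosen lifts $\partial_{\xi_n}$ by odd derivations valued in the augmentation kernels $\ker c_{\xi_n}$—which does not disturb the identity $c_{\xi'}\partial=d_Fc$—and to interleave these corrections with the telescope construction so as to solve, in the colimit, the Maurer--Cartan-type equation forcing $d_Q^2=h$ exactly; the defects can be pushed to vanish further along the diagram precisely because they are already realized as coboundaries inside the honest CDG\+module $F^\cu$. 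Finally I would check that the resulting closed morphisms $Q^\cu\to F^\cu$, as $Q^\cu$ ranges over all such countably generated graded-free CDG\+modules, form a directed system whose colimit is $F^\cu$ (joint surjectivity coming from the Govorov--Lazard presentation, and every relation being absorbed at a later stage), which yields the relative Govorov--Lazard statement and hence the proposition.
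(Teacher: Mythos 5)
Your reduction hinges on what you call the relative Govorov--Lazard statement: that every graded-flat CDG\+module is a directed colimit, in $\sZ^0(B^\cu\bModl)$, of \emph{finitely generated} graded-free CDG\+modules. This statement is false, and it fails already in the DG case where the curvature problem you identify as ``the heart of the argument'' is absent. Take $B^\cu=(R,0,0)$ with $R=k[\epsilon]/(\epsilon^2)$, so that CDG\+modules are complexes of $R$\+modules and finitely generated graded-free CDG\+modules are bounded complexes of finitely generated free $R$\+modules. Every bounded complex of flat modules is K\+flat, and K\+flatness is preserved by directed colimits of complexes (tensor products commute with direct limits, and direct limits of acyclic complexes are acyclic); hence any directed colimit of finitely generated graded-free CDG\+modules is a K\+flat complex. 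But the doubly unbounded complex
$$
 \dotsb\overset{\epsilon}{\lrarrow} R\overset{\epsilon}{\lrarrow}
 R\overset{\epsilon}{\lrarrow} R\overset{\epsilon}{\lrarrow}\dotsb
$$
is a graded-free CDG\+module that is acyclic with non-flat cocycles $\ker\epsilon\simeq k$, and it is not K\+flat: if it were, then for a projective resolution $P^\bu\rarrow k$ the map $P^\bu\ot_RF^\cu\rarrow k\ot_RF^\cu$ would be a quasi-isomorphism with $P^\bu\ot_RF^\cu$ acyclic (as $P^\bu$ is K\+flat and $F^\cu$ is acyclic), whereas $k\ot_RF^\cu$ is the complex with components $k$ and zero differential. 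So this $F^\cu$ is not a directed colimit of finitely generated graded-free CDG\+modules. In categorical terms, your claim asserts that $\sZ^0(B^\cu\bModl_\bflat)$ is \emph{finitely} accessible with the finitely generated graded-free CDG\+modules as a dense subcategory, while the proposition only asserts $\aleph_1$\+accessibility; this weakening is essential, cf.\ Remark~\ref{remark-graded-flats-via-pseudopullback}, where the statement is identified as an instance of the Pseudopullback Theorem, which in general raises the accessibility index from $\aleph_0$ to~$\aleph_1$.

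A secondary symptom of the same problem is internal to your second and third paragraphs: the objects $Q$ produced by your telescope construction are \emph{countably} generated, not finitely generated, so even if the Maurer--Cartan correction (which you only sketch) could be completed, it would not prove the finite statement your first paragraph reduces to. The paper's actual proof takes a route that avoids putting CDG\+structures on Lazard approximations altogether: by the $\kappa=\aleph_1$ version of Lenzing's criterion \cite[Proposition~6.1]{Pgen}, it suffices to factor any closed morphism $M^\cu\rarrow F^\cu$ from a countably presented CDG\+module through a countably presented graded-flat CDG\+module, and this is achieved by an $\omega$\+indexed iteration of pushouts along $G^+$ applied to graded-level Lazard factorizations $M_n^*\rarrow K_n^*\rarrow F^*$. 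The intermediate CDG\+modules $M_n^\cu$ are honest objects of $\sZ^0(B^\cu\bModl)$ (so no curvature equation ever needs solving) and are countably presented but \emph{not} graded-flat; only the colimit $H^\cu=\varinjlim_n M_n^\cu$ is graded-flat, because the transition maps factor through flat graded $B^*$\+modules. The counterexample above shows that this ``flat only in the limit'' feature is not a removable defect of the paper's argument but a necessity.
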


\begin{proof}
 In view of~\cite[Proposition~6.1]{Pgen} for $\kappa=\aleph_1$
(which is the countably presented version of
the classical~\cite[Proposition~2.1]{Len} or~\cite[Section~4.1]{CB}),
it suffices to show that any closed morphism from a countably presented
CDG\+module to a graded-flat one factorizes through a countably
presented graded-flat one in $\sZ^0(B^\cu\bModl)$.

 Indeed, let $M^\cu=(M^*,d_M)$ be a countably presented left CDG\+module
over $B^\cu$ and $F^\cu=(F^*,d_F)$ be a graded-flat left CDG\+module
over~$B^\cu$.
 Suppose given a closed morphism of CDG\+modules $M^\cu\rarrow F^\cu$.
 Put $M^\cu_0=M^\cu$.
 Then, by Lemma~\ref{direct-limit=aleph-1-direct-of-countables},
the underlying morphism of graded $B^*$\+modules $M^*_0\rarrow F^*$
factorizes through a countably presented flat graded left
$B^*$\+module~$K^*_0$,
\begin{equation} \label{graded-factorization-through-countable-flat}
 M_0^*\lrarrow K_0^*\lrarrow F^*.
\end{equation}

 Applying the functor $G^+$ to the morphism of graded $B^*$\+modules
$M^*_0\rarrow K^*_0$, we obtain a morphism of CDG\+modules
$G^+(M^*_0)\rarrow G^+(K^*_0)$ over~$B^\cu$.
 We also have the adjunction morphism of CDG\+modules
$G^+(M^*_0)\rarrow M^\cu_0$.
 Consider the pushout $M_1^\cu$ of these two morphisms in the abelian
category $\sZ^0(B^\cu\bModl)\simeq B^*[\delta]\sModl$,
\begin{equation} \label{pushout-factorization-diagram}
\begin{gathered}
 \xymatrix{
  G^+(M_0^*) \ar[r] \ar[d] & G^+(K_0^*) \ar[r] \ar[d]
  & G^+(F^*) \ar[d] \\
  M_0^\cu \ar[r] & M_1^\cu \ar[r] & F^\cu
 }
\end{gathered}
\end{equation}
 Here the upper line of
the diagram~\eqref{pushout-factorization-diagram} is obtained by
applying the functor $G^+$ to the morphisms $M_0^*\rarrow K_0^*
\rarrow F^*$ \,\eqref{graded-factorization-through-countable-flat}.
 The leftmost and rightmost vertical arrows are the adjunction morphisms.
 The morphism $M_1^\cu\rarrow F^\cu$ is defined by the conditions
that the composition $M_0^\cu\rarrow M_1^\cu\rarrow F^\cu$ is
the original morphism $M_0^\cu\rarrow F^\cu$ and the rightmost square
of the diagram is commutative.
 The CDG\+module $M_1^\cu$ is countably presented, since
the CDG\+modules $G^+(M_0^*)$, \ $G^+(K_0^*)$, and $M_0^\cu$ are
countably presented and pushouts preserve countable presentability.
 (Notice that the functor $G^+$ preserves countable presentability
in view of the short exact
sequence~\eqref{G-plus-short-exact-sequence} from
Section~\ref{prelim-delta-and-G-subsecn}.)

 Applying the forgetful functor $N\longmapsto N^\#$ to
the diagram~\eqref{pushout-factorization-diagram}, we obtain
a commutative diagram in the category of graded $B^*$\+modules,
which can be complemented by the adjunction morphism of graded
$B^*$\+modules $M_0^*\rarrow G^+(M_0^*)^\#$ as follows:
\begin{equation} \label{pushout-graded-factorization-diagram}
\begin{gathered}
 \xymatrix{
  M_0^* \ar[d] \\
  G^+(M_0^*)^\# \ar[r] \ar[d] & G^+(K_0^*)^\# \ar[r] \ar[d]
  & G^+(F^*)^\# \ar[d] \\
  M_0^* \ar[r] & M_1^* \ar[r] & F^*
 }
\end{gathered}
\end{equation}
 By a general property of adjoint functors, the vertical composition
$M_0^*\rarrow G^+(M_0^*)\rarrow M_0^*$ is the identity map.

 Thus the morphism of graded $B^*$\+modules $M_0^*\rarrow M_1^*$
underlying the morphism of CDG\+modules $M_0^\cu\rarrow M_1^\cu$
factorizes through the underlying graded module $G^+(K_0^*)^\#$
of the CDG\+module $G^+(K_0^*)$ over~$B^\cu$.
 Furthermore, the graded $B^*$\+module $G^+(K_0^*)^\#$ is countably
presented and flat in view of the short exact sequence
$0\rarrow K_0^*\rarrow G^+(K_0)^\#\rarrow K_0^*[-1]\rarrow0$
\,\eqref{G-plus-short-exact-sequence}
from Section~\ref{prelim-delta-and-G-subsecn}.

 Applying the same construction to the morphism of CDG\+modules
$M_1^\cu\rarrow F^\cu$ and proceeding by induction over nonnegative
integers, we construct a sequence of closed morphisms of CDG\+modules
over~$B^\cu$
\begin{equation} \label{sequence-of-factorizations-cdg}
 M_0^\cu\lrarrow M_1^\cu\lrarrow M_2^\cu\lrarrow\dotsb\lrarrow F^\cu
\end{equation}
together with a sequence of factorizations in the category of
graded $B^*$\+modules
\begin{equation} \label{sequence-of-factorizations-graded}
\begin{gathered}
 \xymatrixcolsep{1.2em}
 \xymatrix{
  M_0^* \ar[rr] \ar[rd] && M_1^* \ar[rr] \ar[rd]
  &&  M_2^* \ar[rr] \ar[rd] && \dotsb \ar[rr] && F^* \\
  & G^+(K_0^*)^\# \ar[ru] && G^+(K_1^*)^\# \ar[ru] && 
  G^+(K_2^*)^\# \ar[ru] \\
 }
\end{gathered}
\end{equation}

 Finally, taking the direct limits
in~\eqref{sequence-of-factorizations-cdg}
and~\eqref{sequence-of-factorizations-graded} and setting
$H^\cu=\varinjlim_{n\ge0}M_n^\cu$, we obtain a factorization
in the category of CDG\+modules $\sZ^0(B^\cu\bModl)$
for the original morphism $M^\cu\rarrow F^\cu$,
$$
 M^\cu=M_0^\cu\lrarrow H^\cu\lrarrow F^\cu,
$$
and an isomorphism in the category of graded $B^*$\+modules
$$
 H^*\,\simeq\,\varinjlim\nolimits_{\ge0}G^+(K_n^*)^\#.
$$
 The graded $B^*$\+module $H^*$ is countably presented and flat
as a countable direct limit of countably presented flat graded
$B^*$\+modules $G^+(K_n^*)^\#$, as desired. 
\end{proof}

\begin{rem}\label{remark-graded-flats-via-pseudopullback}
In more fancy terms, the conclusion of Proposition~\ref{graded-flats-as-aleph-1-direct-limits} is a special case of the Pseudopullback Theorem from~\cite[Section 2.2]{RR} applied to the pseudopullback of categories
$$
\xymatrix{
\sZ^0(B^\cu\bModl_\bflat) \ar[r] \ar[d] & \sZ^0(B^\cu\bModl) \ar[d] \\
B^*\sModl_\flat \ar[r] & B^*\sModl.
}
$$
Here, the horizontal functors are the obvious full embeddings, while the 
vertical ones are the forgetful functors of taking the underlying graded 
$B^*$\+module. All the functors preserve direct limits and in each of the three 
categories $\sZ^0(B^\cu\bModl)\simeq B[\delta]\sModl$, $B^*\sModl_\flat$,
and $B^*\sModl$ every  object is an $\aleph_1$\+direct limit of countably
presented ones (using  variants of 
Lemma~\ref{direct-limit=aleph-1-direct-of-countables}). The 
Pseudopullback Theorem implies that then also every object of $
\sZ^0(B^\cu\bModl_\bflat)$ is an $\aleph_1$\+direct limit of countably 
presented ones.
 For a historical account and a considerable generalization of
the Pseudopullback Theorem, see also~\cite[Introduction and
Corollary~5.1]{Plimacc}.
\end{rem}

\subsection{Resolution of a countably presented graded-flat CDG-module}
 The aim of this section is to prove the following lemma.

\begin{lem} \label{resolution-of-countably-presented-graded-flat}
 Let $B^\cu=(B^*,d,h)$ be a CDG\+ring. \par
\textup{(a)} For any CDG\+module $G^\cu=(G^*,d_G)$ over $B^\cu$
whose underlying graded $B^*$\+module $G^*$ is flat of projective
dimension~$\le1$, there is a short exact sequence in the abelian
category\/ $\sZ^0(B^\cu\bModl)$
$$
 0\lrarrow Q^\cu\lrarrow P^\cu\lrarrow G^\cu\lrarrow0,
$$
depending functorially on $G^\cu$, with a graded-projective CDG\+module
$P^\cu$ and a projective CDG\+module $Q^\cu$ over~$B^\cu$.
 In particular, this applies to all countably presented
graded-flat CDG\+modules $G^\cu$ over~$B^\cu$. \par
\textup{(b)} For any countably presented graded-flat CDG\+module
$G^\cu=(G^*,d_G)$ over $B^\cu$, there is a (nonfunctorial) short
exact sequence in the abelian category\/ $\sZ^0(B^\cu\bModl)$
$$
 0\lrarrow Q^\cu\lrarrow P^\cu\lrarrow G^\cu\lrarrow0
$$
with a countably generated (or equivalently, countably presented)
graded-projective CDG\+module
$P^\cu$ and a countably generated projective CDG\+module $Q^\cu$
over~$B^\cu$.
\end{lem}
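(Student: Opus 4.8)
The plan is to transport a length-one projective resolution of the underlying graded module $G^*$ into the category of CDG\+modules, perturbing its differential so that it squares to the curvature~$h$. I first treat part~(a). Since $G^*$ has projective dimension $\le1$, any surjection $\pi\colon P_0^*\rarrow G^*$ from a projective graded $B^*$\+module has projective kernel $Z^*=\ker\pi$, giving a resolution $0\rarrow Z^*\rarrow P_0^*\xrightarrow{\pi}G^*\rarrow0$ in $B^*\sModl$. To secure functoriality I would take $P_0^*=F(G^*)$ to be the free graded $B^*$\+module on the set of homogeneous elements of $G^*$, with $\pi$ the canonical surjection; then $P_0^*$ and $Z^*=\ker\pi$ depend functorially on~$G^\cu$. (Only the hypothesis ``projective dimension $\le1$'' is used here; flatness enters merely through the ``in particular'' clause.)

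Next I lift the differential. As $P_0^*$ is free, there is a canonical compatible odd derivation $\tilde d$ of degree~$1$ on $P_0^*$ lifting $d_G$, determined functorially by $\tilde d[x]=[d_G x]$ on the generators $[x]$, so that $\pi\tilde d=d_G\pi$. Two computations, each invoking the CDG axioms, form the technical core. First, $\sigma:=\tilde d^2-h\cdot\id$ is $B^*$\+linear of degree~$2$ (this uses $d^2(b)=hb-bh$), and $\pi\sigma=d_G^2\pi-h\pi=0$ by equation~(iii), so $\sigma$ factors through $Z^*$. Second, $\pi\tilde d=d_G\pi$ forces $\tilde d(Z^*)\subseteq Z^*$, so $\partial:=\tilde d|_{Z^*}$ is a compatible odd derivation on $Z^*$; moreover $d(h)=0$ gives $\tilde d h=h\tilde d$, whence the identity $\sigma\tilde d=\partial\sigma$.

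With these in hand I endow the graded-projective module $P^*=P_0^*\oplus Z^*[-1]$ with the degree-one operator $d_P$ whose $(1,1)$ and $(2,2)$ entries are $\tilde d$ and $\partial$, whose $(1,2)$ entry is the inclusion $Z^*[-1]\rarrow P_0^*$, and whose $(2,1)$ entry is~$\sigma$. A sign-careful calculation using exactly the four identities above shows $d_P^2=h\cdot\id_{P^*}$, so $P^\cu=(P^*,d_P)$ is a graded-projective CDG\+module, and $\rho=(\pi,0)\colon P^\cu\rarrow G^\cu$ is a closed surjection with $\rho d_P=d_G\rho$. Its kernel $Q^\cu$ has underlying module $Z^*\oplus Z^*[-1]$, and the upper-right entry restricts to the canonical identification $Z^*[-1]\xrightarrow{\ \sim\ }Z^*$, which furnishes an explicit contracting homotopy. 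Thus $Q^\cu$ is graded-projective and contractible, hence projective by \cite[Lemma~6.3(a)]{PS5}. As all choices are functorial, the sequence $0\rarrow Q^\cu\rarrow P^\cu\rarrow G^\cu\rarrow0$ depends functorially on $G^\cu$, proving~(a); the final clause follows because a countably presented flat module has projective dimension $\le1$ (classical; see, e.g., \cite{GT}). For part~(b) I run the same construction from a \emph{countable} resolution: as $G^*$ is countably presented (Proposition~\ref{finiteness-countability-for-cdg-modules-prop}(d)) and flat of projective dimension $\le1$, I choose $P_0^*$ countably generated free and $Z^*=\ker\pi$ countably generated projective. Functoriality is relinquished, so $\tilde d$ is produced non-canonically: pick any compatible odd derivation $\partial_0$ on $P_0^*$, note that $d_G\pi-\pi\partial_0$ is a degree-one $B^*$\+linear map, lift it along $\pi$ using projectivity of $P_0^*$, and add the lift to $\partial_0$. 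The rest is identical and yields $P^\cu$ countably generated graded-projective and $Q^\cu$ countably generated projective, the equivalence of countable generation and presentation being Corollary~\ref{graded-projective-generated-presented-cor}(b).

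The main obstacle is verifying that the perturbed operator $d_P$ squares to the curvature~$h$: this is precisely where the CDG\+ring axioms $d^2(b)=hb-bh$ and $d(h)=0$, together with the module equation $d_G^2(x)=hx$, all enter, and where the Koszul signs in the off-diagonal compositions must be tracked with care. The secondary delicate point is exhibiting the kernel as genuinely contractible (not merely acyclic), so that the cited characterization of projective CDG\+modules applies.
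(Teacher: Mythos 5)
Your proposal is correct, and it takes a genuinely different route from the paper's proof. The paper never lifts a differential by hand: it works in the abelian category $\sZ^0(B^\cu\bModl)\simeq B^*[\delta]\sModl$, covers $G^\cu$ by a free graded $B^*[\delta]$\+module $F^\cu$ (functorially in part~(a), countably generated in part~(b)), so that the CDG\+module structure on the cover is automatic; it then observes that the kernel $H^\cu$ of $F^\cu\rarrow G^\cu$ is graded-projective, because $F^\cu$ is graded-projective and $G^*$ has projective dimension at most~$1$ over $B^*$, and finishes at once by setting $P^\cu=\cone(H^\cu\rarrow F^\cu)$ and $Q^\cu=\cone(\id_{H^\cu})$ --- the cone formalism absorbs all Koszul signs. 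Your perturbed complex built on $P_0^*$ and a shifted copy of $Z^*$ is in effect a hand-made cone over the smaller resolution $0\rarrow Z^*\rarrow P_0^*\rarrow G^*\rarrow0$; it buys a more economical $P^\cu$ (the paper's has underlying graded module $P_0^*\oplus P_0^*[-1]\oplus H^*[1]$) and makes visible exactly where each CDG axiom enters, at the price of the differential-lifting step (trivial in the paper's approach) and of the sign-sensitive verification that you assert but do not carry out. The skeleton shared by both arguments is the same: projective dimension $\le1$ gives a projective kernel, contractible graded-projective CDG\+modules are projective by \cite[Lemma~6.3(a)]{PS5}, countably presented flat graded modules have projective dimension $\le1$ by the graded version of \cite[Corollary~2.23]{GT}, and countably generated graded-projective CDG\+modules are countably presented by Corollary~\ref{graded-projective-generated-presented-cor}(b).

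Two corrections of the kind you flagged are needed, and both are harmless. First, in the paper's shift convention $M^*[n]^i=M^{n+i}$ (Section~\ref{prelim-graded-modules}) the shifted summand must be $Z^*[1]$, not $Z^*[-1]$: your ``inclusion'' $Z^*[-1]\rarrow P_0^*$ has degree $-1$ in this convention. Second, the all-positive matrix fails already the Leibniz rule, not only $d_P^2=h\cdot\id$. Writing a degree~$i$ element of $P^*=P_0^*\oplus Z^*[1]$ as $(p,z)$ with $p\in P_0^i$ and $z\in Z^{i+1}$, and recalling the twisted action $b\cdot(p,z)=(bp,(-1)^{|b|}bz)$ on the shifted summand, a correct formula is $d_P(p,z)=\bigl(\tilde d(p)+z,\>-\sigma(p)-\partial(z)\bigr)$: the Leibniz rule then holds, and $d_P^2=h\cdot\id_{P^*}$ follows from your identities in the form $\tilde d^2=h+\sigma$, \ $\tilde d|_{Z^*}=\partial$, \ $\sigma\tilde d=\partial\sigma$, and $\partial^2=(h+\sigma)|_{Z^*}$. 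The map $(p,z)\mapsto\pi(p)$ is then a closed surjection $P^\cu\rarrow G^\cu$, its kernel $Q^\cu$ has underlying graded module $Z^*\oplus Z^*[1]$, and the degree~$-1$ map $t(p,z)=(0,p)$ satisfies $d_Qt+td_Q=\id$, so $Q^\cu$ is contractible and hence projective. Everything else in your argument --- the canonical lift on generators in part~(a), and in part~(b) the lift $\tilde d=\partial_0+\tau$ via projectivity of $P_0^*$ together with Schanuel's lemma guaranteeing that $Z^*$ is countably generated --- is correct as stated.
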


\begin{proof}
 Any countably presented flat graded module over a graded ring $B^*$
has projective dimension~$\le1$ in $B^*\sModl$ by the graded version
of~\cite[Corollary~2.23]{GT}.
 To prove part~(a), we start with some functorial construction of
an epimorphism $F^\cu\rarrow G^\cu$ onto an arbitrary graded
$B^*[\delta]$\+module $G^\cu$ from a projective graded
$B^*[\delta]$\+module~$F^\cu$.
 For example, one can take $F^\cu$ to be the free graded
$B^*[\delta]$\+module spanned by the set of all homogeneous
elements of~$G^\cu$.
 In the case of part~(b), we similarly start with choosing
an epimorphism $F^\cu\rarrow G^\cu$ onto the countably presented
graded $B^*[\delta]$\+module $G^\cu$ from a countably generated
projective graded $B^*[\delta]$\+module~$F^\cu$.

 In both cases, we have a short exact sequence of CDG\+modules and
closed morphisms
$$
 0\lrarrow H^\cu\overset h\lrarrow F^\cu\lrarrow G^\cu\lrarrow0
$$
with a graded-projective CDG\+module $H^\cu$ and a projective
CDG\+module $F^\cu$ over~$B^\cu$.
 In the case~(b), the CDG\+module $H^\cu$ is countably generated,
hence it is actually countably presented by
Corollary~\ref{graded-projective-generated-presented-cor}(b).
 It remains to apply the construction of the cone of a closed morphism
in the DG\+category of CDG\+modules in order to pass to the short
exact sequence
$$
 0\lrarrow\cone(\id_{H^\cu})\lrarrow\cone(h)\lrarrow G^\cu\lrarrow0
$$
and set $Q^\cu=\cone(\id_{H^\cu})$ and $P^\cu=\cone(h)\in
\sZ^0(B^\cu\bModl)$.
 Then $Q^\cu$ is a contractible graded-projective (hence projective, as
per the discussion in Section~\ref{prelim-proj-inj-flatness-subsecn})
CDG\+module and $P^\cu$ is graded-projective CDG\+module over~$B^\cu$.
\end{proof}

\subsection{Imperfect resolution of a graded-flat CDG-module}
 The aim of this section is to prove the following corollary.

\begin{cor} \label{imperfect-resolution-of-graded-flat}
 Let $B^\cu=(B^*,d,h)$ be a CDG\+ring and $F^\cu$ be a graded-flat
CDG\+mod\-ule over~$B^\cu$.
 Then there exists a short exact sequence in the abelian category
of CDG\+modules\/ $\sZ^0(B^\cu\bModl)$
$$
 0\lrarrow H^\cu\lrarrow L^\cu\lrarrow F^\cu\lrarrow0
$$
where $H^\cu$ is a flat CDG\+module and $L^\cu$ is
an\/ $\aleph_1$\+direct limit of graded-projective CDG\+modules
over~$B^\cu$.
\end{cor}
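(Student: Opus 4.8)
The plan is to construct the desired short exact sequence as an $\aleph_1$\+direct limit of the \emph{functorial} short exact sequences furnished by Lemma~\ref{resolution-of-countably-presented-graded-flat}(a). First I would apply Proposition~\ref{graded-flats-as-aleph-1-direct-limits} to present the given graded-flat CDG\+module as an $\aleph_1$\+direct limit
$$
 F^\cu=\varinjlim\nolimits_{\xi\in\Xi}G_\xi^\cu
$$
of countably presented graded-flat CDG\+modules $G_\xi^\cu$, indexed by some $\aleph_1$\+directed poset~$\Xi$.

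Next, to each $G_\xi^\cu$\,---\,which, being countably presented and graded-flat, has underlying graded $B^*$\+module flat of projective dimension~$\le1$\,---\,I would apply the functorial short exact sequence of Lemma~\ref{resolution-of-countably-presented-graded-flat}(a), obtaining
$$
 0\lrarrow Q_\xi^\cu\lrarrow P_\xi^\cu\lrarrow G_\xi^\cu\lrarrow0
$$
with $P_\xi^\cu$ graded-projective and $Q_\xi^\cu$ projective. The functoriality is exactly the ingredient that makes these three-term sequences assemble into a $\Xi$\+indexed direct system: each transition morphism $G_\xi^\cu\to G_\eta^\cu$ of the diagram representing $F^\cu$ induces a compatible morphism of the whole short exact sequences.

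Then I would pass to the $\aleph_1$\+direct limit over~$\Xi$. Since $\sZ^0(B^\cu\bModl)\simeq B^*[\delta]\sModl$ is a Grothendieck abelian category, direct limits are exact, so the limit sequence
$$
 0\lrarrow H^\cu\lrarrow L^\cu\lrarrow F^\cu\lrarrow0,
 \qquad H^\cu=\varinjlim\nolimits_\xi Q_\xi^\cu,\quad
 L^\cu=\varinjlim\nolimits_\xi P_\xi^\cu,
$$
is again short exact, its rightmost term being $\varinjlim_\xi G_\xi^\cu=F^\cu$ from the first step. By construction $L^\cu$ is an $\aleph_1$\+direct limit of the graded-projective CDG\+modules $P_\xi^\cu$, as required. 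Finally, each $Q_\xi^\cu$ is a projective, hence flat, graded $B^*[\delta]$\+module, and the class of flat graded $B^*[\delta]$\+modules is closed under direct limits (as recalled in Section~\ref{prelim-proj-inj-flatness-subsecn} via the Govorov--Lazard characterization); therefore $H^\cu$ is a flat graded $B^*[\delta]$\+module, that is, a flat CDG\+module.

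Since the substantive work is already contained in the cited preliminaries, I do not expect a serious obstacle here; the one point to handle with care is the bookkeeping of functoriality, so that the resolutions of the individual $G_\xi^\cu$ form a genuine $\Xi$\+indexed diagram of short exact sequences rather than an unrelated family. Once that is in place, the conclusion follows purely formally from exactness of direct limits in the Grothendieck category $\sZ^0(B^\cu\bModl)$ together with the closure of flatness under direct limits.
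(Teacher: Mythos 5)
Your proposal is correct and coincides with the paper's own proof: the paper likewise writes $F^\cu$ as an $\aleph_1$\+direct limit of countably presented graded-flat CDG\+modules via Proposition~\ref{graded-flats-as-aleph-1-direct-limits}, uses the functoriality of the resolutions from Lemma~\ref{resolution-of-countably-presented-graded-flat}(a) to extend this to a direct system of short exact sequences, and passes to the (exact) direct limit. The flatness of $H^\cu$ as a direct limit of projective CDG\+modules is left implicit in the paper but is exactly the observation you spell out.
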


\begin{proof}
 By Proposition~\ref{graded-flats-as-aleph-1-direct-limits},
any graded-flat CDG\+module $F^\cu$ is an $\aleph_1$\+direct limit
of countably presented graded-flat CDG\+modules.
 So we have $F^\cu=\varinjlim_{\xi\in\Xi}F_\xi^\cu$, where
$\Xi$ is an $\aleph_1$\+directed poset and $F_\xi^\cu$ are
countably presented graded-flat CDG\+modules over~$B^\cu$.
 By Lemma~\ref{resolution-of-countably-presented-graded-flat}(a),
the direct system $(F_\xi^\cu)_{\xi\in\Xi}$ can be extended to
a direct system of short exact sequences $0\rarrow Q_\xi^\cu
\rarrow P_\xi^\cu\rarrow F_\xi^\cu\rarrow0$ in $\sZ^0(B^\cu\bModl)$
with projective CDG\+modules $Q_\xi^\cu$ and graded-projective
CDG\+modules $P_\xi^\cu$ over~$B^\cu$.
 It remains to consider the induced short exact sequence of direct
limits
$$
 0\lrarrow\varinjlim\nolimits_{\xi\in\Xi}Q_\xi^\cu\lrarrow
 \varinjlim\nolimits_{\xi\in\Xi}P_\xi^\cu\lrarrow
 \varinjlim\nolimits_{\xi\in\Xi}F_\xi^\cu=F^\cu\lrarrow0
$$
and put $H^\cu=\varinjlim_{\xi\in\Xi}Q_\xi^\cu$ and
$L^\cu=\varinjlim_{\xi\in\Xi}P_\xi^\cu$.
\end{proof}

%

\subsection{\texorpdfstring{Contraacyclic $\aleph_1$-direct limits of graded-projective CDG-modules}{Contraacyclic aleph1-direct limits of graded-projective CDG-modules}}
 The aim of this section is to prove the following lemma.

\begin{lem} \label{aleph-1-graded-flat-contraacyclic}
 Let $B^\cu=(B^*,d,h)$ be a CDG\+ring and $L^\cu$ be a CDG\+module that
can be obtained as an\/ $\aleph_1$\+direct limit of graded-projective
CDG\+modules over~$B^\cu$.
 Assume that the CDG\+module $L^\cu$ over $B^\cu$ is contraacyclic,
i.~e., any closed morphism into $L^\cu$ from a graded-projective
CDG\+module is homotopic to zero.
 Then $L^\cu$ is a flat CDG\+module over~$B^\cu$.
\end{lem}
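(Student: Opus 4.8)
The plan is to show directly that $L^\cu$ is a filtered direct limit of \emph{projective} CDG\+modules over $B^\cu$ (that is, contractible graded-projective ones, cf.\ Section~\ref{prelim-proj-inj-flatness-subsecn}); since a direct limit of projective $B^*[\delta]$\+modules is a flat $B^*[\delta]$\+module, this yields that $L^\cu$ is flat. Write $L^\cu=\varinjlim_{\xi\in\Xi}P^\cu_\xi$ as an $\aleph_1$\+direct limit, where $\Xi$ is an $\aleph_1$\+directed poset and each $P^\cu_\xi$ is a countably generated graded-projective CDG\+module (this is the form in which the lemma gets applied, via Corollary~\ref{imperfect-resolution-of-graded-flat}; by Corollary~\ref{graded-projective-generated-presented-cor}(b) these are also countably presented).

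The crucial point is that contraacyclicity lets one replace graded-projectives mapping to $L^\cu$ by genuine projectives. Indeed, for any graded-projective $P^\cu$ and any closed morphism $q\:P^\cu\rarrow L^\cu$, contraacyclicity of $L^\cu$ says that $q$ is homotopic to zero, and a null-homotopic closed morphism factors through the canonical closed map $P^\cu\rarrow\cone(\id_{P^\cu})$; here $\cone(\id_{P^\cu})$ is contractible and graded-projective, hence a projective CDG\+module, and it is countably generated whenever $P^\cu$ is. First I would record this factorization principle.

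Next I would consider the comma category $\mathcal{J}$ whose objects are pairs $(Q^\cu,q\:Q^\cu\rarrow L^\cu)$ with $Q^\cu$ a countably generated projective CDG\+module and $q$ a closed morphism, and whose morphisms are closed morphisms over~$L^\cu$. The plan is to verify that $\mathcal{J}$ is filtered and that the canonical morphism $\varinjlim_{\mathcal{J}}Q^\cu\rarrow L^\cu$ is an isomorphism. Binary upper bounds of $(Q^\cu,q)$ and $(Q'^\cu,q')$ are produced by applying the factorization principle to $(q,q')\:Q^\cu\oplus Q'^\cu\rarrow L^\cu$. Coequalizers of a parallel pair $f,g\:(Q^\cu,q)\rightrightarrows(Q'^\cu,q')$ are the delicate step: since $q'(f-g)=0$ and $Q'^\cu$ is countably presented, $q'$ factors through some~$P^\cu_\xi$; because $L^\cu=\varinjlim P^\cu_\eta$ and the image of $f-g$ is countably generated, the composite $Q'^\cu\rarrow P^\cu_\xi$ can be pushed along a transition map so that $f-g$ becomes zero already at some stage~$P^\cu_\eta$ (this is where countability of the obstruction and $\aleph_1$\+directedness of $\Xi$ are used); applying the factorization principle to $P^\cu_\eta\rarrow L^\cu$ then gives a projective $\cone(\id_{P^\cu_\eta})$ receiving $Q'^\cu$ and coequalizing $f$ and~$g$.

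Finally, to identify the colimit I would use the interleaving of the two families: every structure map $P^\cu_\xi\rarrow L^\cu$ factors through the projective $\cone(\id_{P^\cu_\xi})\in\mathcal{J}$, so that the canonical map onto $L^\cu$ is surjective, while every object $(Q^\cu,q)$ of $\mathcal{J}$ has $Q^\cu$ countably presented and hence $q$ factors through some~$P^\cu_\xi$; a routine chase in the resulting filtered colimit then gives injectivity as well. Thus $L^\cu=\varinjlim_{\mathcal{J}}Q^\cu$ is a direct limit of projective CDG\+modules, so it is a flat $B^*[\delta]$\+module, i.e.\ a flat CDG\+module. I expect the main obstacle to be the verification of coequalizers in~$\mathcal{J}$, where contraacyclicity (to pass from graded-projectives to projectives) and the $\aleph_1$\+directedness of $\Xi$ (to annihilate the countably generated obstruction $f-g$ at a single stage) have to be combined carefully.
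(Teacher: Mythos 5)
There is a genuine gap at the very first step of your argument: you assume that the graded-projective CDG\+modules $P^\cu_\xi$ in the $\aleph_1$\+direct system are countably generated, asserting that this is the form in which the lemma gets applied. That is not the case. The lemma is stated for an $\aleph_1$\+direct limit of \emph{arbitrary} graded-projective CDG\+modules, and in the actual application (Corollary~\ref{flat=graded-flat-and-contraacyclic-cor} via Corollary~\ref{imperfect-resolution-of-graded-flat}) the modules $P^\cu_\xi$ come from the \emph{functorial} resolution of Lemma~\ref{resolution-of-countably-presented-graded-flat}(a), whose graded-projective middle term is a cone built on the free graded $B^*[\delta]$\+module spanned by \emph{all} homogeneous elements of the given module; these are typically very far from countably generated. (The countably generated resolutions of Lemma~\ref{resolution-of-countably-presented-graded-flat}(b) cannot be used there, precisely because they are not functorial and hence do not extend to a direct system.) Your proof uses countable generation of the $P^\cu_\xi$ at both critical points: in the surjectivity step, where the structure maps must factor through $\cone(\id_{P^\cu_\xi})$, which has to be countably generated in order to lie in $\mathcal{J}$, and in the coequalizer step, where $\cone(\id_{P^\cu_\eta})$ must again lie in $\mathcal{J}$. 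So, as written, you prove only a special case of the lemma, and not the case that is actually needed.

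The missing ingredient is exactly what the paper supplies at this point: given a closed morphism from a countably presented CDG\+module $T^\cu$ into $L^\cu$ that factors through an arbitrary graded-projective $P^\cu_\xi$, one first realizes $P^\cu_\xi$ as a direct summand of a graded-free CDG\+module (Lemma~\ref{graded-projective-direct-summand-of-graded-free}), so that the morphism factors through that graded-free module, and then, by the construction in the proof of Lemma~\ref{deconstruction-of-graded-free} (or by the Hill lemma), one finds a \emph{countably generated} graded-free CDG\+submodule $Q^\cu$ containing the image of $T^\cu$; only then can contraacyclicity be applied to the closed morphism $Q^\cu\rarrow L^\cu$ to produce a factorization through the countably generated projective CDG\+module $\cone(\id_{Q^\cu})$. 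With this repair, your comma-category argument does go through; note, however, that it then amounts to reproving by hand the Lenzing-type criterion which the paper imports as a black box from~\cite[Proposition~6.1]{Pgen}, while the remaining ideas (factoring maps from graded-projectives through their cones via contraacyclicity, and killing the countably generated obstruction $f-g$ at a single stage using $\aleph_1$\+directedness) coincide with the paper's proof.
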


\begin{proof}
 In view of~\cite[Proposition~6.1]{Pgen}, it suffices to show that
any closed morphism into $L^\cu$ from a countably presented
CDG\+module $T^\cu$ factorizes through a countably generated projective
CDG\+module in $\sZ^0(B^\cu\bModl)$.
 Indeed, by the $\aleph_1$\+direct limit assumption on $L^\cu$,
the closed morphism $T^\cu\rarrow L^\cu$ factorizes through
a graded-projective CDG\+module $P^\cu$ over~$B^\cu$.
 By Lemma~\ref{graded-projective-direct-summand-of-graded-free},
$P^\cu$ is a direct summand of a graded-free CDG\+module $F^\cu$
in $\sZ^0(B^\cu\bModl)$; so the morphism $T^\cu\rarrow L^\cu$
factorizes as $T^\cu\rarrow F^\cu\rarrow L^\cu$.
 Finally, following the proof of
Lemma~\ref{deconstruction-of-graded-free}, one can pick a countably
generated graded-free CDG\+submodule $Q^\cu\subset F^\cu$ such that
the image of the morphism $T^\cu\rarrow F^\cu$ is contained in~$Q^\cu$.

 Alternatively, if one prefers, one could refer
to the assertion of Lemma~\ref{deconstruction-of-graded-free} together
with (the graded version of) the Hill lemma~\cite[Theorem~7.10]{GT}
instead of the concrete construction of graded-free CDG\+submodules
in the proof of Lemma~\ref{deconstruction-of-graded-free}.
 It is helpful to keep in mind that any countably generated
graded-projective CDG\+module is countably presented by
Corollary~\ref{graded-projective-generated-presented-cor}(b).

 We have factorized our closed morphism $T^\cu\rarrow L^\cu$ as
$T^\cu\rarrow Q^\cu\rarrow L^\cu$, where $Q^\cu$ is a countably
presented graded-projective CDG\+module.
 By the contraacyclicity assumption on $L^\cu$, the closed morphism
$Q^\cu\rarrow L^\cu$ is homotopic to zero.
 So it factorizes as $Q^\cu\rarrow\cone(\id_{Q^\cu})\rarrow L^\cu$,
where $\cone(\id_{Q^\cu})\rarrow L^\cu$ is some closed morphism
of CDG\+modules.
 Now the CDG\+module $\cone(\id_{Q^\cu})$ is graded-projective and
contractible; so it is a projective CDG\+module as explained in
Section~\ref{prelim-proj-inj-flatness-subsecn}.
 The CDG\+module $\cone(\id_{Q^\cu})$ is also countably
generated/presented, since so is the CDG\+module~$Q^\cu$.
\end{proof}

\subsection{Projective and flat contraderived categories identified}
 Now we can prove the main results of
Section~\ref{projective-and-flat-contraderived-secn}.
 The following corollary provides the description of graded-flat
contraacyclic CDG\+modules promised in
Section~\ref{restricted-contraderived-cotorsion-pair-subsecn}.

\begin{cor} \label{flat=graded-flat-and-contraacyclic-cor}
 A CDG\+module is flat if and only if it is graded-flat and
contraacyclic.
\end{cor}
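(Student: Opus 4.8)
The plan is to observe that the ``only if'' direction is already available, so the real content lies in the converse. For the forward implication I would simply recall that any flat CDG\+module is graded-flat (as noted in Section~\ref{prelim-proj-inj-flatness-subsecn}) and is contraacyclic by Corollary~\ref{flats-are-contraacyclic-cor}. Thus the work is to prove that every graded-flat contraacyclic CDG\+module is flat.

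For that substantial direction, let $F^\cu$ be graded-flat and contraacyclic. The key move is to feed $F^\cu$ into the resolution supplied by Corollary~\ref{imperfect-resolution-of-graded-flat}, which produces a short exact sequence
$$
 0\lrarrow H^\cu\lrarrow L^\cu\lrarrow F^\cu\lrarrow0
$$
in $\sZ^0(B^\cu\bModl)$ with $H^\cu$ flat and $L^\cu$ an $\aleph_1$\+direct limit of graded-projective CDG\+modules. The first step is to upgrade $L^\cu$ to a flat CDG\+module. Since $H^\cu$ is flat, it is contraacyclic by Corollary~\ref{flats-are-contraacyclic-cor}; as $F^\cu$ is contraacyclic by hypothesis and the class of contraacyclic CDG\+modules is closed under extensions (Section~\ref{contraderived-subsecn}), the middle term $L^\cu$ is contraacyclic as well. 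Being simultaneously an $\aleph_1$\+direct limit of graded-projective CDG\+modules and contraacyclic, $L^\cu$ is then flat by Lemma~\ref{aleph-1-graded-flat-contraacyclic}.

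It remains to transport flatness from $H^\cu$ and $L^\cu$ to $F^\cu$. All three terms of the above short exact sequence are graded-flat ($H^\cu$ and $L^\cu$ because they are flat, and $F^\cu$ by hypothesis), while two of them, $H^\cu$ and $L^\cu$, are now known to be flat. Hence Lemma~\ref{two-flat-out-of-three-graded-flats} applies and yields that $F^\cu$ is flat, completing the argument. I do not anticipate a genuine obstacle: all the difficulty has been front-loaded into Corollary~\ref{imperfect-resolution-of-graded-flat} and Lemma~\ref{aleph-1-graded-flat-contraacyclic}, so this corollary is essentially their bookkeeping combination. The only point demanding minor care is verifying that the hypotheses of the two-out-of-three principle (graded-flatness of \emph{all three} terms together with flatness of \emph{two}) are genuinely in place before invoking Lemma~\ref{two-flat-out-of-three-graded-flats}.
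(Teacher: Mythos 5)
Your proof is correct and follows exactly the paper's own argument: the easy direction via Corollary~\ref{flats-are-contraacyclic-cor}, then the resolution of Corollary~\ref{imperfect-resolution-of-graded-flat}, closure of contraacyclics under extensions plus Lemma~\ref{aleph-1-graded-flat-contraacyclic} to get flatness of the middle term, and finally Lemma~\ref{two-flat-out-of-three-graded-flats}. Nothing to add.
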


\begin{proof}
 It was explained in Section~\ref{prelim-proj-inj-flatness-subsecn}
that all flat CDG\+modules are graded-flat.
 By Corollary~\ref{flats-are-contraacyclic-cor}, all flat
CDG\+modules are contraacyclic.

 Conversely, let $F^\cu$ be a graded-flat contraacyclic CDG\+module
over a CDG\+ring~$B^\cu$.
 By Corollary~\ref{imperfect-resolution-of-graded-flat}, there
exists a short exact sequence of CDG\+modules $0\rarrow H^\cu
\rarrow L^\cu\rarrow F^\cu\rarrow0$ such that $H^\cu$ is a flat
CDG\+module and $L^\cu$ is an $\aleph_1$\+direct limit of
graded-projective CDG\+modules over~$B^\cu$.

 Now the CDG\+module $H^\cu$ is contraacyclic (by
Corollary~\ref{flats-are-contraacyclic-cor}) and the CDG\+module
$F^\cu$ is contraacyclic (by assumption).
 Since the class of contraacyclic CDG\+modules is closed under
extensions (see Section~\ref{contraderived-subsecn}), it follows
that $L^\cu$ is also a contraacyclic CDG\+module.
 Using Lemma~\ref{aleph-1-graded-flat-contraacyclic}, we conclude
that $L^\cu$ is a flat CDG\+module over~$B^\cu$.

 Finally, both the CDG\+modules $H^\cu$ and $L^\cu$ are flat, while
the CDG\+module $F^\cu$ is graded-flat.
 Therefore, Lemma~\ref{two-flat-out-of-three-graded-flats} tells us
that the CDG\+module $F^\cu$ is also flat.
\end{proof}

 The contraderived category $\sD^\bctr(B^\cu\bModl)$ of left
CDG\+modules over $B^\cu$, as defined
in Section~\ref{contraderived-subsecn}, is identified
by Theorem~\ref{contraderived-cotorsion-pair-category-theorem}(b)
with the projective contraderived category from
Section~\ref{three-contraderived-posing-the-problem-subsecn}.
 We remind the reader that we wish to compare this
category with the triangulated Verdier quotient category
of the homotopy category of graded-flat CDG\+modules by the homotopy
category of flat CDG\+modules,
$$
 \sH^0(B^\cu\bModl_\bflat)/\sH^0(B^\cu\bModl)_\flat,
$$
as in item~(ii) of
Section~\ref{three-contraderived-posing-the-problem-subsecn}.
We called the quotient the flat contraderived category
of~$B^\cu$.

\begin{thm} \label{projective-and-flat-contraderived-theorem}
 Let $B^\cu=(B^*,d,h)$ be a CDG\+ring.
 Then the composition of the fully faithful inclusion of triangulated
categories\/ $\sH^0(B^\cu\bModl_\bproj)\rarrow\sH^0(B^\cu\bModl_\bflat)$
and the triangulated Verdier quotient functor\/
$\sH^0(B^\cu\bModl_\bflat)\rarrow
\sH^0(B^\cu\bModl_\bflat)/\sH^0(B^\cu\bModl)_\flat$ is a triangulated
equivalence
$$
 \sH^0(B^\cu\bModl_\bproj)\simeq
 \sH^0(B^\cu\bModl_\bflat)/\sH^0(B^\cu\bModl)_\flat.
$$
 In other words, the projective and flat contraderived categories of
CDG\+modules over any CDG\+ring are naturally equivalent.
\end{thm}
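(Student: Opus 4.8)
The plan is to exhibit $\sU := \sH^0(B^\cu\bModl_\bproj)$ and $\sS := \sH^0(B^\cu\bModl)_\flat$ as a semiorthogonal decomposition of $\sT := \sH^0(B^\cu\bModl_\bflat)$; the asserted equivalence $\sU \simeq \sT/\sS$ induced by the Verdier quotient functor then follows from the standard formalism of semiorthogonal decompositions (Bousfield colocalization). Concretely, I would establish two facts: (1) $\Hom_\sT(P^\cu, X^\cu[n]) = 0$ for all graded-projective $P^\cu$, all flat $X^\cu$, and all $n \in \boZ$; and (2) every graded-flat $F^\cu$ fits into a distinguished triangle in $\sT$ with one vertex graded-projective and the other flat.

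For (1), flat CDG\+modules are contraacyclic by Corollary~\ref{flats-are-contraacyclic-cor}, and the flat class is closed under shifts, so every $X^\cu[n]$ is contraacyclic; by the definition of contraacyclicity (equivalently, Lemma~\ref{Ext-1-homotopy-hom-lemma}) every closed morphism $P^\cu \to X^\cu[n]$ is null-homotopic. This semiorthogonality is what makes the restricted functor fully faithful: via the calculus of fractions, a morphism between graded-projective objects that dies in $\sT/\sS$ must factor through an object of $\sS$, hence through a zero Hom\+group, and every fraction can be straightened for the same reason.

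For (2), I would invoke the restricted contraderived cotorsion pair from Section~\ref{restricted-contraderived-cotorsion-pair-subsecn}, whose right-hand class is precisely the flat CDG\+modules by Corollary~\ref{flat=graded-flat-and-contraacyclic-cor}: the pair (graded-projective, flat) is a complete cotorsion pair in the exact category $\sZ^0(B^\cu\bModl_\bflat)$. Completeness yields a special preenvelope sequence $0 \to F^\cu \to R^\cu \to L'^\cu \to 0$ with $R^\cu$ flat and $L'^\cu$ graded-projective. Rotating the resulting triangle places $F^\cu$ between $L'^\cu[-1] \in \sU$ and $R^\cu \in \sS$, giving $Q(F^\cu) \simeq Q(L'^\cu[-1])$ in $\sT/\sS$ and hence essential surjectivity.

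The delicate point --- and the reason I would use the \emph{preenvelope} rather than the special precover --- is that a short exact sequence of CDG\+modules induces a distinguished triangle in the homotopy category $\sT$ only when it is graded-split. A special precover sequence, whose cokernel is the given graded-flat module, need not split over $B^*$; but the cokernel $L'^\cu$ of the preenvelope sequence is graded-projective, so $\Ext^1_{B^*}(L'^*, F^*) = 0$ and the sequence automatically splits over the underlying graded ring. I expect this graded-splitness to be the crux of the argument; once it is secured, the semiorthogonal decomposition, and therefore the theorem, is formal.
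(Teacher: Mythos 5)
Your proposal is correct and follows essentially the same route as the paper's own proof: semiorthogonality via Corollary~\ref{flats-are-contraacyclic-cor}, and approximation triangles obtained from the restricted contraderived cotorsion pair of Section~\ref{restricted-contraderived-cotorsion-pair-subsecn} combined with Corollary~\ref{flat=graded-flat-and-contraacyclic-cor}, using the special \emph{preenvelope} sequence precisely because its graded-projective cokernel forces graded-splitness and hence a distinguished triangle in the homotopy category. The point you single out as delicate is exactly the one the paper's proof turns on, including the final rotation $P^\cu=Q^\cu[-1]$.
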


\begin{proof}
 We will show that the full subcategories $\sH^0(B^\cu\bModl)_\flat$
and $\sH^0(B^\cu\bModl_\bproj)$ form a semiorthogonal decomposition
of the triangulated category $\sH^0(B^\cu\bModl_\bflat)$.
 Indeed, Corollary~\ref{flats-are-contraacyclic-cor} provides
the semiorthogonality claim: for any graded-projective left 
CDG\+module $P^\cu$ and any flat left CDG\+module $F^\cu$ over $B^\cu$,
all closed morphisms of CDG\+modules $P^\cu\rarrow F^\cu$ are
homotopic to zero.
 It remains to construct, for any graded-flat left CDG\+module $G^\cu$
over $B^\cu$, a distinguished triangle $P^\cu\rarrow G^\cu\rarrow F^\cu
\rarrow P^\cu[1]$ in the homotopy category $\sH^0(B^\cu\bModl)$ with
a graded-projective CDG\+module $P^\cu$ and a flat CDG\+module~$F^\cu$.

 For this purpose, we recall from the discussion in
Section~\ref{restricted-contraderived-cotorsion-pair-subsecn} that the pair
$\big(\sZ^0(B^\cu\bModl_\bproj)$, $\sZ^0(B^\cu\bModl_\bflat)_\ac^\bctr\big)$
of the classes of graded-projective CDG\+mo\-du\-les and of
graded-flat contraacyclic CDG\+modules
is a hereditary complete cotorsion pair in the exact category of
graded-flat CDG\+mod\-ules $\sZ^0(B^\cu\bModl_\bflat)$.
 By Corollary~\ref{flat=graded-flat-and-contraacyclic-cor}, the class
of graded-flat contraacyclic CDG\+modules coincides with the class of
flat CDG\+modules, so the cotorsion pair in fact takes the form
$\big(\sZ^0(B^\cu\bModl_\bproj)$, $\sZ^0(B^\cu\bModl)_\flat\big)$.

 Given a graded-flat CDG\+module $G^\cu$, consider a special preenvelope
exact sequence $0\rarrow G^\cu\rarrow F^\cu\rarrow Q^\cu\rarrow0$
\,\eqref{special-preenvelope-sequence} in $\sZ^0(B^\cu\bModl_\bflat)$
with a flat CDG\+module $F^\cu$ and a graded-projective
CDG\+module~$Q^\cu$.
 Then the underlying short exact sequence of graded $B^*$\+modules
$0\rarrow G^*\rarrow F^*\rarrow Q^*\rarrow0$ splits (as $Q^*$ is
a projective graded $B^*$\+module), so we have a distinguished triangle
$G^\cu\rarrow F^\cu\rarrow Q^\cu\rarrow G^\cu[1]$ in the homotopy
category (cf.\ the discussion in the proof of
Lemma~\ref{Ext-1-homotopy-hom-lemma}).

 It remains to put $P^\cu=Q^\cu[-1]$ and rotate the distinguished
triangle $G^\cu\rarrow F^\cu\rarrow Q^\cu\rarrow G^\cu[1]$ that
we have constructed in order to obtain the desired distinguished
triangle $P^\cu\rarrow G^\cu\rarrow F^\cu\rarrow P^\cu[1]$.
\end{proof}

\subsection{The coderived category and coderived cotorsion pair}
\label{coderived-subsecn}
 Let $B^\cu=(B^*,d,h)$ be a CDG\+ring.
 A right CDG\+module $Y^\cu$ over $B^\cu$ is said to be \emph{coacyclic}
(\emph{in the sense of Becker}~\cite[Proposition~1.3.8(2)]{Bec}) if,
for any graded-injective right CDG\+module $J^\cu$ over $B^\cu$,
any closed morphism of CDG\+modules $Y^\cu\rarrow J^\cu$ is homotopic
to zero.
 Equivalently (in view of Lemma~\ref{Ext-1-homotopy-hom-lemma}),
this means that $\Ext^1_{\sZ^0(\bModr B^\cu)}(Y^\cu,J^\cu)=0$ for all
graded-injective CDG\+modules~$J^\cu$.

 The full subcategory of coacyclic CDG\+modules is denoted by
$\sH^0(\bModr B^\cu)_\ac^\bco\allowbreak\subset\sH^0(\bModr B^\cu)$ or
$\sZ^0(\bModr B^\cu)_\ac^\bco\subset\sZ^0(\bModr B^\cu)$.
 The class of coacyclic CDG\+modules is closed under extensions and
infinite direct sums in the abelian category $\sZ^0(\bModr B^\cu)$
\,\cite[Lemma~7.9]{PS5}, and under shifts, cones, and direct
summands in the triangulated category $\sH^0(\bModr B^\cu)$.
 Moreover, the full subcategory $\sZ^0(\bModr B^\cu)_\ac^\bco$ is
precisely the closure of the full subcategory of contractible
CDG\+modules under extensions and direct limits in
$\sZ^0(\bModr B^\cu)$ \,\cite[Corollary~7.17]{PS5}.

 The triangulated Verdier quotient category
$$
 \sD^\bco(\bModr B^\cu)=\sH^0(\bModr B^\cu)/\sH^0(\bModr B^\cu)_\ac^\bco
$$
is called the \emph{coderived category} (\emph{in the sense of Becker})
of right CDG\+modules over~$B^\cu$.
 The following theorem is due to
Becker~\cite[Proposition~1.3.6(2)]{Bec}.

\begin{thm} \label{coderived-cotorsion-pair-category-theorem}
 Let $B^\cu=(B^*,d,h)$ be a CDG\+ring.  Then \par
\textup{(a)} the pair of classes of objects\/
$\big(\sZ^0(\bModr B^\cu)_\ac^\bco$, $\sZ^0(\bModrinj B^\cu)\big)$ is
a hereditary complete cotorsion pair in the abelian category\/
$\sZ^0(\bModr B^\cu)$; \par
\textup{(b)} the composition of the fully faithful inclusion of
triangulated categories\/ $\sH^0(\bModrinj B^\cu)\rarrow
\sH^0(\bModr B^\cu)$ and the triangulated Verdier quotient functor\/
$\sH^0(\bModr B^\cu)\rarrow\sD^\bco(\bModr B^\cu)$ is a triangulated
equivalence
$$
 \sH^0(\bModrinj B^\cu)\simeq\sD^\bco(\bModr B^\cu).
$$
\end{thm}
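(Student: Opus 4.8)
The plan is to prove part~(a) by exhibiting the coderived cotorsion pair as one that is \emph{generated by a set}, and then to deduce part~(b) by a standard localization argument, entirely dual to the passage from Theorem~\ref{contraderived-cotorsion-pair-category-theorem}(a) to~(b). Throughout I identify $\sZ^0(\bModr B^\cu)$ with the Grothendieck category $\sModr A^*$ of graded right modules over $A^*=B^*[\delta]$, which carries the projective generator $\bigoplus_{n\in\boZ}A^*[n]$; here the right class $\sR=\sZ^0(\bModrinj B^\cu)$ consists of those $J$ whose underlying graded $B^*$\+module $J^\#$ is injective, while the left class is forced to be $\sL={}^{\perp_1}\sR$, which is the class of coacyclic CDG\+modules by the very definition in Section~\ref{coderived-subsecn}. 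The essential new point, where the coderived case diverges from the deconstructibility argument used for graded\+projectives, is that graded\+injectivity is a priori tested against a proper class of modules; nevertheless the coderived cotorsion pair turns out to be generated by a set, so that Theorem~\ref{eklof-trlifaj-theorem} becomes applicable.

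To see this, let $G^+$ denote the left adjoint to the forgetful functor $\sZ^0(\bModr B^\cu)\rarrow\sModr B^*$, given by induction $N^*\longmapsto N^*\ot_{B^*}A^*$ (the right\+module analogue of the functor of Section~\ref{prelim-delta-and-G-subsecn}, with its short exact sequence~\eqref{G-plus-short-exact-sequence}). Since $A^*$ is free of rank two over $B^*$, the functor $G^+$ is exact and preserves projectives, whence there is a natural isomorphism $\Ext^1_{\sModr A^*}(G^+(N^*),J)\simeq\Ext^1_{\sModr B^*}(N^*,J^\#)$ for all $N^*\in\sModr B^*$ and $J\in\sModr A^*$. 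By the graded form of Baer's criterion, $J^\#$ is injective over $B^*$ if and only if $\Ext^1_{\sModr B^*}(B^*/I,J^\#)=0$ for every graded right ideal $I\subseteq B^*$. Hence $\sR=\sS^{\perp_1}$, where $\sS=\{G^+(B^*/I)\mid I\text{ a graded right ideal of }B^*\}\cup\{\bigoplus_{n}A^*[n]\}$ is a \emph{set} containing a projective generator. Theorem~\ref{eklof-trlifaj-theorem} then yields that the cotorsion pair $(\sL,\sR)={}({}^{\perp_1}\sR,\sR)$ generated by $\sS$ is complete, with $\sL=\Fil(\sS)^\oplus$; and $\sL$ is exactly the coacyclic class, as noted. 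Finally, if $0\rarrow J'\rarrow J\rarrow J''\rarrow0$ is short exact in $\sModr A^*$ with $J',J\in\sR$, then over $B^*$ the inclusion $J'^\#\rarrow J^\#$ splits because $J'^\#$ is injective, so $J''^\#$ is injective too; thus $\sR$ is closed under cokernels of admissible monomorphisms, and the pair is hereditary by Lemma~\ref{hereditary-cotorsion-lemma}(2). This proves part~(a).

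For part~(b), the composite $\sH^0(\bModrinj B^\cu)\rarrow\sH^0(\bModr B^\cu)\rarrow\sD^\bco(\bModr B^\cu)$ is fully faithful because every graded\+injective object lies in the right orthogonal to the coacyclic subcategory inside $\sH^0(\bModr B^\cu)$: one has $\Hom_{\sH^0(\bModr B^\cu)}(Y^\cu,J^\cu)=0$ whenever $Y^\cu$ is coacyclic and $J^\cu$ is graded\+injective, which is precisely the defining property of coacyclicity, so the Verdier localization does not alter morphism groups out of such objects. For essential surjectivity I would, given any $M^\cu$, take the special precover sequence~\eqref{special-precover-sequence} $0\rarrow J^\cu\rarrow Y^\cu\rarrow M^\cu\rarrow0$ furnished by the complete cotorsion pair of part~(a), with $J^\cu$ graded\+injective and $Y^\cu$ coacyclic. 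Its underlying sequence of graded $B^*$\+modules splits, since $J^\cu{}^\#$ is injective; it is therefore a graded\+split short exact sequence of CDG\+modules and so yields a distinguished triangle $J^\cu\rarrow Y^\cu\rarrow M^\cu\rarrow J^\cu[1]$ in $\sH^0(\bModr B^\cu)$ (as in the proof of Lemma~\ref{Ext-1-homotopy-hom-lemma}). Since $Y^\cu$ becomes zero in $\sD^\bco(\bModr B^\cu)$, this triangle exhibits $M^\cu\simeq J^\cu[1]$ in the coderived category, with $J^\cu[1]$ graded\+injective.

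The hard part is entirely concentrated in the middle of part~(a): transporting the detection of graded\+injectivity through the induction functor $G^+$ and the graded Baer criterion so as to realize $\sR$ as $\sS^{\perp_1}$ for a genuine set $\sS$. Once this is in place, completeness is automatic from Theorem~\ref{eklof-trlifaj-theorem}, and heredity together with part~(b) are formal, the latter being the exact dual of the deduction of Theorem~\ref{contraderived-cotorsion-pair-category-theorem}(b) from~(a), cf.\ \cite[proof of Corollary~6.14]{PS5}. Alternatively, both parts are the specializations to $\bModr B^\cu$ of the coderived half of \cite[Proposition~1.3.6]{Bec} and of the results of \cite[Section~7]{PS5}.
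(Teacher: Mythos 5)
Your proposal takes a genuinely different route from the paper, which proves Theorem~\ref{coderived-cotorsion-pair-category-theorem} purely by citation (to \cite[Proposition~1.3.6(2)]{Bec} and to \cite[Theorem~7.11 and Corollary~7.10]{PS5}). You instead give a self-contained argument inside the paper's own toolkit, running on the coderived side the same strategy the paper uses on the contraderived side: exhibit the cotorsion pair as generated by a set, invoke Theorem~\ref{eklof-trlifaj-theorem}, verify heredity directly, and deduce part~(b) formally. Most of this is sound. Your part~(b) is correct: full faithfulness holds because graded-injectives lie in the right orthogonal of the coacyclic subcategory of $\sH^0(\bModr B^\cu)$ (note, though, that right orthogonality means the Verdier localization preserves morphism groups \emph{into} graded-injective objects, not ``out of'' them; that is what full faithfulness of the composite needs), and essential surjectivity follows from the graded-split special precover sequence and rotation of the resulting triangle. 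Your heredity argument is also fine, since completeness supplies the generating/cogenerating hypotheses of Lemma~\ref{hereditary-cotorsion-lemma}.

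There is, however, a genuine error exactly at the point you identified as the crux: the graded Baer criterion is false as you state it. Vanishing of $\Ext^1_{\sModr B^*}(B^*/I,J^\#)$ for all graded right ideals $I$ does \emph{not} imply graded-injectivity of $J^\#$; one must test against all shifts $(B^*/I)[n]$, $n\in\boZ$, because the extension produced in the Zorn's lemma step of Baer's argument lands in a shift of $J^\#$ determined by the degree of the element one extends over. Concretely, let $B^\cu=(k[x],0,0)$ with $|x|=1$, and let $J^\cu=(k[x],0)$ be the free module of rank one with zero differential. Every degree-zero extension $0\rarrow k[x]\rarrow E^*\rarrow B^*/(x^n)\rarrow0$ of graded modules splits: lift $\bar1$ to a degree-zero element $e\in E^*$, note that $ex^n\in k[x]$ has degree~$n$, so $ex^n=\lambda x^n$ with $\lambda\in k$, and send $\bar1\mapsto e-\lambda$. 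Hence $J^\cu$ lies in your $\sS^{\perp_1}$. But $k[x]$ is not graded-injective, since for $n\ge1$ the degree-zero extension
$$
 0\lrarrow k[x]\overset{x^n}{\lrarrow} k[x][n]\lrarrow (k[x]/(x^n))[n]\lrarrow 0
$$
does not split ($k[x][n]$ is indecomposable). So your $\sS^{\perp_1}$ strictly contains $\sZ^0(\bModrinj B^\cu)$, and Theorem~\ref{eklof-trlifaj-theorem} applied to your $\sS$ produces the wrong cotorsion pair. The repair is immediate and costs nothing: take $\sS=\{G^+((B^*/I)[n])\mid I\ \text{a graded right ideal of }B^*,\ n\in\boZ\}\cup\{\bigoplus_{n\in\boZ}A^*[n]\}$, still a set; with the correctly stated Baer criterion (testing against all shifted cyclic graded modules), your adjunction isomorphism gives $\sR=\sS^{\perp_1}$, and the remainder of your argument goes through unchanged.
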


\begin{proof}
 The assertions of part~(a) and~(b) are closely related; in fact,
part~(b) is a corollary of part~(a).
 Both the assertions are implied by the formulation in~\cite{Bec};
they can be also obtained by specializing the main result
of~\cite[Sections~7.1--7.4]{PS5} from Grothendieck
abelian DG\+categories to CDG\+modules.
 Part~(a) is a particular case of~\cite[Theorem~7.11]{PS5}, and
part~(b) is a particular case of~\cite[Corollary~7.10]{PS5}.
\end{proof}

 The hereditary complete cotorsion pair 
$\big(\sZ^0(\bModr B^\cu)_\ac^\bco$, $\sZ^0(\bModrinj B^\cu)\big)$
of the classes of coacyclic CDG\+modules and graded-injective
CDG\+modules provided by
Theorem~\ref{coderived-cotorsion-pair-category-theorem} is called
the \emph{coderived cotorsion pair} in the abelian category of
CDG\+modules $\sZ^0(\bModr B^\cu)$.

\subsection{Coderived category of graded-flat CDG-modules}
 Let $B^\cu$ be a CDG\+ring.
 By a \emph{graded-flat graded-cotorsion CDG\+module} over $B^\cu$
we mean a CDG\+module that is simultaneously graded-flat and
graded-cotorsion.
 The graded-flat graded-cotorsion CDG\+modules form a full
DG\+subcategory {\emergencystretch=1em \hfuzz=3pt
$$
 B^\cu\bModl_\bflat^\bcot=
 B^\cu\bModl_\bflat\cap B^\cu\bModl^\bcot
$$
closed under finite direct sums, shifts, twists, and cones in
the DG\+category of CDG\+modules $B^\cu\bModl$.
 Accordingly, the notation $\sZ^0(B^\cu\bModl_\bflat^\bcot)$ stands for
the full subcategory of graded-flat graded-cotorsion CDG\+modules in
the abelian category of CDG\+modules $\sZ^0(B^\cu\bModl)$, while
$\sH^0(B^\cu\bModl_\bflat^\bcot)$ is the full triangulated subcategory
of graded-flat graded-cotorsion CDG\+modules in the homotopy category
of CDG\+modules $\sH^0(B^\cu\bModl)$. }

 Notice that the flat cotorsion graded $B^*$\+modules are the injective
objects of the exact category of flat graded $B^*$\+modules (and there
are enough such injective objects in this exact category).
 For this reason, the homotopy category of graded-flat graded-cotorsion
CDG\+modules
$$
 \sH^0(B^\cu\bModl_\bflat^\bcot)
$$
can be thought of as the \emph{coderived category} (\emph{in the sense
of Becker}~\cite[Section~1.3]{Bec}) \emph{of the exact DG\+category of
graded-flat CDG\+modules} $B^\cu\bModl_\bflat$.
 From this point of view, the following theorem (together with
Theorem~\ref{projective-and-flat-contraderived-theorem}) tells us that,
for graded-flat CDG\+modules, the coderived and the contraderived
categories agree.

\begin{thm} \label{graded-flat-coderived-contraderived-theorem}
 Let $B^\cu=(B^*,d,h)$ be a CDG\+ring.
 Then the composition of the fully faithful inclusion of triangulated
categories\/ $\sH^0(B^\cu\bModl_\bflat^\bcot)\rarrow
\sH^0(B^\cu\bModl_\bflat)$ and the triangulated Verdier quotient
functor\/ $\sH^0(B^\cu\bModl_\bflat)\rarrow
\sH^0(B^\cu\bModl_\bflat)/\sH^0(B^\cu\bModl)_\flat$ is a triangulated
equivalence \hbadness=1325
$$
 \sH^0(B^\cu\bModl_\bflat^\bcot)\simeq
 \sH^0(B^\cu\bModl_\bflat)/\sH^0(B^\cu\bModl)_\flat.
$$
\end{thm}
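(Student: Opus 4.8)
The plan is to show that the full triangulated subcategories $\sH^0(B^\cu\bModl)_\flat$ and $\sH^0(B^\cu\bModl_\bflat^\bcot)$ form a semiorthogonal decomposition of the homotopy category $\sH^0(B^\cu\bModl_\bflat)$, with the flat CDG\+modules constituting the subcategory that is annihilated by the Verdier quotient functor. This runs parallel to the proof of Theorem~\ref{projective-and-flat-contraderived-theorem}, except that the orientation of the decomposition is reversed and, crucially, the reason why the relevant approximation sequence is graded-split is entirely different.

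First I would record the cotorsion pair in play. By the flat cotorsion pair in the abelian category $\sA=\sZ^0(B^\cu\bModl)\simeq B^*[\delta]\sModl$ (as in Section~\ref{cotorsion-graded-modules-subsecn}, applied to the graded ring $B^*[\delta]$) together with Theorem~\ref{cotorsion=graded-cotorsion-theorem}, the pair $\big(\sZ^0(B^\cu\bModl)_\flat,\,\sZ^0(B^\cu\bModl^\bcot)\big)$ of the classes of flat and graded-cotorsion CDG\+modules is a hereditary complete cotorsion pair in $\sA$. The full subcategory $\sE=\sZ^0(B^\cu\bModl_\bflat)$ of graded-flat CDG\+modules is closed under extensions and under kernels of admissible epimorphisms in $\sA$, and it contains all flat CDG\+modules; hence Lemma~\ref{restricted-exact-structure} shows that this cotorsion pair restricts to a hereditary complete cotorsion pair $\big(\sZ^0(B^\cu\bModl)_\flat,\,\sZ^0(B^\cu\bModl_\bflat^\bcot)\big)$ in the exact category $\sE$.

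Next comes the semiorthogonality. For any flat CDG\+module $F^\cu$ and any graded-flat graded-cotorsion CDG\+module $C^\cu$, I would show $\Hom_{\sH^0(B^\cu\bModl)}(F^\cu,C^\cu)=0$. Since $F^*$ is flat and $C^*[-1]$ is cotorsion over $B^*$, we have $\Ext^1_{B^*\sModl}(F^*,C^*[-1])=0$, so Lemma~\ref{Ext-1-homotopy-hom-lemma} identifies this Hom group with $\Ext^1_{\sZ^0(B^\cu\bModl)}(F^\cu,C^\cu[-1])$. Now $C^\cu[-1]$ is cotorsion as a $B^*[\delta]$\+module by Theorem~\ref{cotorsion=graded-cotorsion-theorem}, while $F^\cu$ is flat as a $B^*[\delta]$\+module, so this $\Ext^1$ vanishes by the flat cotorsion pair in $B^*[\delta]\sModl$.

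Finally I would produce, for an arbitrary graded-flat CDG\+module $G^\cu$, the decomposition triangle. Using completeness of the restricted cotorsion pair, I take a special precover sequence $0\rarrow C^\cu\rarrow F^\cu\rarrow G^\cu\rarrow0$ in $\sE$ with $F^\cu$ flat and $C^\cu$ graded-flat graded-cotorsion. The main obstacle — and the step where the argument genuinely departs from the projective case — is to see that the underlying short exact sequence of graded $B^*$\+modules $0\rarrow C^*\rarrow F^*\rarrow G^*\rarrow0$ splits. This holds precisely because $G^*$ is flat and $C^*$ is cotorsion over $B^*$, so that $\Ext^1_{B^*\sModl}(G^*,C^*)=0$; in the projective proof graded-splitness came for free from graded-projectivity of the cokernel, whereas here it rests exactly on the flat–cotorsion orthogonality over $B^*$, which is why the graded-cotorsion hypothesis on the right-hand class is indispensable. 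The graded-split sequence yields a distinguished triangle $C^\cu\rarrow F^\cu\rarrow G^\cu\rarrow C^\cu[1]$ in $\sH^0(B^\cu\bModl_\bflat)$, and rotating it gives the triangle $F^\cu\rarrow G^\cu\rarrow C^\cu[1]\rarrow F^\cu[1]$ with $F^\cu\in\sH^0(B^\cu\bModl)_\flat$ and $C^\cu[1]\in\sH^0(B^\cu\bModl_\bflat^\bcot)$. Together with the vanishing established above, this exhibits the semiorthogonal decomposition and therefore the asserted triangulated equivalence.
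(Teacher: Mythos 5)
Your proof is correct and takes essentially the same route as the paper's own argument: the same semiorthogonal decomposition of $\sH^0(B^\cu\bModl_\bflat)$ into $\sH^0(B^\cu\bModl)_\flat$ and $\sH^0(B^\cu\bModl_\bflat^\bcot)$, obtained from the flat cotorsion pair in $B^*[\delta]\sModl$ restricted via Lemma~\ref{restricted-exact-structure}, with Theorem~\ref{cotorsion=graded-cotorsion-theorem} supplying the identification of cotorsion and graded-cotorsion CDG\+modules, semiorthogonality checked through Lemma~\ref{Ext-1-homotopy-hom-lemma}, and the special precover sequence split on underlying graded modules by $\Ext^1_{B^*\sModl}(G^*,C^*)=0$ and then rotated. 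The only cosmetic difference is that you invoke Theorem~\ref{cotorsion=graded-cotorsion-theorem} before restricting the cotorsion pair rather than after, which changes nothing.
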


\begin{proof}
 We will show that the full subcategories
$\sH^0(B^\cu\bModl_\bflat^\bcot)$ and $\sH^0(B^\cu\bModl)_\flat$ form
a semiorthogonal decomposition of the triangulated category
$\sH^0(B^\cu\bModl_\bflat)$.
 The argument is based on
Theorem~\ref{cotorsion=graded-cotorsion-theorem}, which tells us that
the classes of cotorsion and graded-cotorsion CDG\+modules coincide.

 To establish the semiorthogonality, let $F^\cu$ be a flat left
CDG\+module and $C^\cu$ be a graded-flat graded-cotorsion left
CDG\+module over~$B^\cu$.
 We need to show that any closed morphism of CDG\+modules $F^\cu
\rarrow C^\cu$ is homotopic to zero.
 In view of Lemma~\ref{Ext-1-homotopy-hom-lemma}, it suffices to
check that $\Ext^1_{\sZ^0(B^\cu\bModl)}(F^\cu[1],C^\cu)=0$.
 Indeed, $F^\cu[1]$ is a flat CDG\+module and $C^\cu$ is a cotorsion
CDG\+module over $B^\cu$, so the $\Ext^1$ group in question vanishes.

 It remains to construct, for any graded-flat CDG\+module $G^\cu$
over $B^\cu$, a distinguished triangle $F^\cu\rarrow G^\cu \rarrow
C^\cu\rarrow F^\cu[1]$ in the homotopy category $\sH^0(B^\cu\bModl)$
with a flat CDG\+module $F^\cu$ and a graded-flat graded-cotorsion
CDG\+module~$C^\cu$.
 With this purpose in mind, we start with another discussion of
a restricted cotorsion pair.

 According to Section~\ref{cotorsion-graded-modules-subsecn},
the classes of flat graded $B^*[\delta]$\+modules and
cotorsion graded $B^*[\delta]$\+modules form a hereditary complete
cotorsion pair in the abelian category $B^*[\delta]\sModl\simeq
\sZ^0(B^\cu\bModl)$.
 In other words, denoting by $\sL=\sZ^0(B^\cu\bModl)_\flat$ the class
of all flat left CDG\+modules and by $\sR=\sZ^0(B^\cu\bModl)^\cot$
the class of all cotorsion left CDG\+modules over $B^\cu$, we have
a hereditary complete cotorsion pair $(\sL,\sR)$ in the abelian
category $\sA=\sZ^0(B^\cu\bModl)$.
 Let $\sE\subset\sA$ be the full subcategory of graded-flat
CDG\+modules, $\sE=\sZ^0(B^\cu\bModl_\bflat)$.

 Then the assumptions of Lemma~\ref{restricted-exact-structure} are
satisfied for the hereditary complete cotorsion pair $(\sL,\sR)$ in
the abelian category $\sA$ and the full subcategory $\sE\subset\sA$.
 Therefore, the cotorsion pair $(\sL,\sR)$ restricts to a hereditary
complete cotorsion pair ($\sL$, $\sE\cap\sR$) in the exact
category~$\sE$.
 In other words, the pair
$\big(\sZ^0(B^\cu\bModl)_\flat$, $\sZ^0(B^\cu\bModl_\bflat)^\cot\big)$
of the classes of flat CDG\+modules and of graded-flat cotorsion
CDG\+modules is a hereditary complete
cotorsion pair in the exact category of graded-flat CDG\+modules
$\sZ^0(B^\cu\bModl_\bflat)$.
 By Lemma~\ref{cotorsion-are-graded-cotorsion}, all cotorsion
CDG\+modules are graded-cotorsion.

 Given a graded-flat CDG\+module $G^\cu$, consider a special precover
exact sequence $0\rarrow D^\cu\rarrow F^\cu\rarrow G^\cu\rarrow0$
\,\eqref{special-precover-sequence} in $\sZ^0(B^\cu\bModl_\bflat)$
with a flat CDG\+module $F^\cu$ and a graded-flat graded-cotorsion
CDG\+module~$D^\cu$.
 Then the underlying short exact sequence of graded $B^*$\+modules
$0\rarrow D^*\rarrow F^*\rarrow G^*\rarrow0$ splits (as
$\Ext^1_{B^*\sModl}(G^*,D^*)=0$), so we have a distinguished triangle
$D^\cu\rarrow F^\cu\rarrow G^\cu\rarrow D^\cu[1]$ in the homotopy
category $\sH^0(B^\cu\bModl)$ (cf.\ the proof of
Lemma~\ref{Ext-1-homotopy-hom-lemma}).

 It remains to put $C^\cu=D^\cu[1]$ and rotate the distinguished
triangle $D^\cu\rarrow F^\cu\rarrow G^\cu\rarrow D^\cu[1]$ that we
have constructed in order to obtain the desired distinguished triangle
$F^\cu\rarrow G^\cu\rarrow C^\cu\rarrow F^\cu[1]$.
\end{proof}

\subsection{Examples}
\label{examples-Dbctr-subsecn}

Now we will present examples of classes of CDG\+rings $B^\cu$ and their (Becker) coderived and contraderived categories.
We will continue most of these examples in Section~\ref{examples-compacts-subsecn}, where we will use results from Section~\ref{compact-generators-of-contraderived-secn} to describe compact generators of $\sD^\bctr(B^\cu\bModl)$.

\begin{ex}[Finite-dimensional CDG\+rings] \label{finite-dimensional-algebra-example}
 Let $k$ be a field and $B^*$ be a graded algebra over~$k$.
 Let $B^\cu=(B^*,d,h)$ be a CDG\+ring structure on $B^*$ with
a $k$\+linear differential~$d$.
 In this case, we will say that $B^\cu$ is a \emph{CDG\+algebra
over~$k$}.

 A graded $k$\+algebra $B^*$ is said to be (totally) finite-dimensional
if $B^i=0$ for all but a finite set of degrees~$i$ and $B^i$ is
a finite-dimensional $k$\+vector space for all degrees $i\in\boZ$.
 In this case, the CDG\+algebra $B^\cu$ is also said to be
finite-dimensional.

 For a finite-dimensional CDG\+algebra $B^\cu$, much of the theory
developed above in this paper becomes trivial.
 All flat graded $B^*$\+modules are projective, and accordingly, all
graded $B^*$\+modules are cotorsion; hence all graded-flat CDG\+modules
over $B^\cu$ are graded-projective, and all flat CDG\+modules over
$B^\cu$ are projective (hence contractible).
 So the projective and flat contraderived categories of CDG\+modules
over $B^\cu$ coincide by the definition.

 On the other hand, there are nontrivial results specific to this more
restrictive context.
 For a finite-dimensional graded $k$\+algebra $B^*$, the graded dual
$k$\+vector space $\cC^*=B^*{}\spcheck=\Hom_k^*(B^*,k)$
is naturally a finite-dimensional graded coalgebra over~$k$.
 For a finite-dimensional CDG\+algebra $B^\cu$, there is natural
structure of CDG\+coalgebra $\cC^\cu=(\cC^*,d,h)=B^\cu{}\spcheck$
on the graded coalgebra $\cC^*=B^*{}\spcheck$ (in the sense of
the definitions in~\cite[Section~4.1]{Pkoszul}
and~\cite[Section~6.3]{Pksurv}).

 More generally, for any CDG\+coalgebra $\cC^\cu=(\cC^*,d,h)$
over~$k$, the graded dual $k$\+vector space $B^*=\cC^*{}\spcheck
=\Hom_k^*(\cC^*,k)$ has a natural structure of CDG\+algebra
$B^\cu=(B^*,d,h)$.
 As mentioned in~\cite[Section~6.3]{Pksurv}, we prefer the convention
in which left $\cC^*$\+comodules become left $B^*$\+modules and
right $\cC^*$\+comodules become right $B^*$\+modules.
 Then left $\cC^*$\+contramodules (in the sense
of~\cite[Section~2.2]{Pkoszul} and~\cite[Sections~8\+-9]{Pksurv})
also become left $B^*$\+modules.

 For a finite-dimensional graded algebra $B^*$, these are
equivalences (in fact, isomorphisms) of categories: the graded
left $\cC^*$\+comodules are the same things as the graded left
$B^*$\+modules and as the graded left $\cC^*$\+contramodules,
while the graded right $\cC^*$\+comodules are the same things
as the graded right $B^*$\+modules.
 For a finite-dimensional CDG\+algebra $B^\cu$, there are isomorphisms
of DG\+categories $\cC^\cu\bComodl\simeq B^\cu\bModl\simeq
\cC^\cu\bContra$ and $\bComodr\cC^\cu\simeq\bModr B^\cu$.

 For any CDG\+coalgebra $\cC^\cu$, the \emph{derived
comodule-contramodule correspondence
theorem}~\cite[Theorem~5.2]{Pkoszul}, \cite[Theorem~9.5]{Pksurv}
claims a triangulated equivalence between the coderived category
of left CDG\+comodules and the contraderived category of left
CDG\+contramodules over~$\cC^\cu$,
\begin{equation} \label{coalgebra-co-contra-corresp}
 \boR\Hom^\cu_{\cC^*}(\cC^\cu,{-})\:
 \sD^\co(\cC^\cu\bComodl)\simeq\sD^\ctr(\cC^\cu\bContra)
 :\!\cC^\cu\ocn_{\cC^*}^\boL{-}.
\end{equation}
 Here $\sD^\co$ and $\sD^\ctr$ is the notation for the coderived
and the contraderived category in the sense of Positselski (as
defined in~\cite[Section~4.2]{Pkoszul}, \cite[Sections~7.7
and~9.3]{Pksurv}, \cite[Section~5.1]{Pedg}).
 The derived comodule-contramodule correspondence theorem was formally
stated in~\cite{Pkoszul} for derived categories of the second kind
in the sense of Positselski.
 However, by~\cite[Theorem~4.4]{Pkoszul}, \cite[Theorems~7.13
and~9.4]{Pksurv}, Positselski's and Becker's derived categories of
the second kind agree for CDG\+coalgebras over a field, so
$\sD^\co(\cC^\cu\bComodl)=\sD^\bco(\cC^\cu\bComodl)$ and
$\sD^\ctr(\cC^\cu\bContra)=\sD^\bctr(\cC^\cu\bContra)$.
 The notation $\boR\Hom^\cu_{\cC^*}$ and $\ocn_{\cC^*}^\boL$ stands for
the right derived functor of the graded left $\cC^*$\+comodule $\Hom$
and the left derived functor of the \emph{contratensor product}
functor~$\ocn_{\cC^*}$ (as defined in~\cite[Section~2.2]{Pkoszul}
and~\cite[Section~8.6]{Pksurv}).

 In particular, for a finite-dimensional CDG\+algebra $B^\cu$,
the triangulated equivalence~\eqref{coalgebra-co-contra-corresp}
specializes to the triangulated equivalence between the coderived
and contraderived categories of one and the same DG\+category of
left CDG\+modules,
\begin{equation} \label{fin-dim-algebra-co-contra-corresp}
 \boR\Hom^\cu_{B^*}(B^\cu{}\spcheck,{-})\:
 \sD^\bco(B^\cu\bModl)\simeq\sD^\bctr(B^\cu\bModl)
 :\!B^\cu{}\spcheck\ot_{B^*}^\boL{-}.
\end{equation}
 Let us emphasize that $B^\cu{}\spcheck$ is the graded dual
$k$\+vector space to the CDG\+bimodule $B^\cu$ over $B^\cu$ here;
so $B^\cu{}\spcheck$ is a left and right graded-injective
CDG\+bimodule over~$B^\cu$.
\end{ex}

\begin{ex}[The Covariant Serre--Grothendieck Duality] \label{covariant-Serre-Grothendieck-example}
As a preparation for the next example, we recall an equivalence between the Becker coderived and contraderived categories observed by Iyengar and Krause~\cite{IK}.
It is sometimes called the covariant Serre--Grothendieck duality as when one restricts it to the compact objects on both sides, one recovers the classical Serre--Grothendieck duality. This is one of the most important sources of inspiration for our Section~\ref{compact-generators-of-contraderived-secn}.

Let $R$ be a commutative Noetherian ring, which we can view as a CDG\+ring $B^\cu=(R,0,0)$ with trivial grading, differential and curvature.
Then
\begin{equation} \label{forms-of-Dbctr-for-rings}
\sD^\bctr(B^\cu\bModl)\simeq\sK(R\rModl_\proj)\simeq\sK(R\rModl_\flat^\cot)
\end{equation}
by Theorems~\ref{contraderived-cotorsion-pair-category-theorem}, \ref{projective-and-flat-contraderived-theorem} and~\ref{graded-flat-coderived-contraderived-theorem}. In fact, the second equivalence is known much more generally for any (possibly non-commutative) ring $R$ by~\cite[Corollary~5.8]{Sto2} applied to the category $R\rModl_\flat$ of flat left $R$\+modules. Likewise, we have an equivalence
\begin{equation} \label{forms-of-Dbco-for-rings}
\sD^\bco(B^\cu\bModl)\simeq\sK(R\rModl_\inj)
\end{equation}
by Theorem~\ref{coderived-cotorsion-pair-category-theorem}.
Suppose further that $R$ has a dualizing complex, i.~e.\ a bounded complex $D^\bu$ of injective $R$\+modules with all cohomologies finitely generated and such that the canonical map $R\to\Hom^\bu_R(D^\bu,D^\bu)$ is a quasi-isomorphism (see e.g.~\cite[Section~3]{IK}).
Then \cite[Theorem 4.2]{IK} in view of~\eqref{forms-of-Dbctr-for-rings} and~\eqref{forms-of-Dbco-for-rings} shows that $\sD^\bco(B^\cu\bModl)$ and $\sD^\bctr(B^\cu\bModl)$ are triangle equivalent. The recipe in~\cite{IK} is not entirely straightforward, but in view of the last equivalence in~\eqref{forms-of-Dbctr-for-rings}, we can present it in a very easy form:
\begin{equation} \label{dualizing-cpx-co-contra-corresp}
 \Hom^\bu_{R}(D^\bu,{-})\:
 \sK(R\rModl_\inj)\simeq\sK(R\rModl_\flat^\cot)
 :\!D^\bu\ot_{R}{-}.
\end{equation}
Notice the similarity with~\eqref{fin-dim-algebra-co-contra-corresp}. This is no coincidence as when $R$ is a finite-dimensional commutative algebra over a field $k$ (which is the intersection of the setups in Examples~\ref{finite-dimensional-algebra-example} and~\ref{covariant-Serre-Grothendieck-example}), then $R\spcheck=\Hom_R(R,k)$ is a dualizing complex.

 The equivalence~\eqref{dualizing-cpx-co-contra-corresp} is an immediate
consequence of the argument for~\cite[Theorem~2.4]{IK} and our
Theorems~\ref{projective-and-flat-contraderived-theorem}
and~\ref{graded-flat-coderived-contraderived-theorem}.
 The only extra thing to observe is that the functor from left to right
is well defined, or in other words that $\Hom_R(D^\bu,X^\bu)$ is
a complex of flat cotorsion $R$\+modules whenever $X^\bu$ is a complex
of injective modules.
 However, this follows from~\cite[Lemma~2.3]{enochs-flat-cot}.

We conclude this discussion by pointing out that~\eqref{dualizing-cpx-co-contra-corresp} is not the only significance of the flat cotorsion incarnation of the contraderived category. A structure theorem for flat cotorsion modules over commutative Noetherian rings, dual-analogous to the structure of injectives, is the main result of~\cite{enochs-flat-cot}, and this has found further applications, for instance in~\cite{BT-flat-cot-MF,Thomp-FCmin}, and also in~\cite[Chapter~6]{Pcosh}.
\end{ex}

\begin{ex}[Algebraic de~Rham complexes] \label{de-Rham-complex-example}
 Let $X$ be a smooth affine algebraic variety of dimension $n$ over
a field~$k$ of characteristic zero.
 Then the de~Rham complex of algebraic differential forms
$\Omega^\bu(X/k)$ is a graded commutative DG\+algebra over~$k$
\,\cite[Section~16.6]{EGAIV}, \cite[Section Tag~0FKF]{SP},
\cite[Construction~10.5]{Prel}.
 The de~Rham complex is \emph{not} a finite-dimensional DG\+algebra
over~$k$ (even though it is concentrated in a finite set of
cohomological degrees).
 So the discussion in Example~\ref{finite-dimensional-algebra-example}
is \emph{not} applicable to $\Omega^\bu(X/k)$.

 The classes of graded-projective and graded-flat DG\+modules over
$\Omega^\bu(X/k)$ do \emph{not} coincide, and \emph{not} every
DG\+module over $\Omega^\bu(X/k)$ is graded-cotorsion.
 For example, the free graded module $\Omega^*(X/k)$ over the graded
algebra $\Omega^*(X/k)$ is usually \emph{not} cotorsion, because
it is not even cotorsion as a graded module over the ring of functions
on~$X$ (cf.\ Lemma~\ref{restriction-of-scalars-preserves-cotorsion}).
 For example, in the case of the affine line $X=\mathbb A^1_k=
\Spec k[x]$, one can easily see that the ring of polynomials $k[x]$
in one variable~$x$ over a field~$k$ is \emph{not} a cotorsion module
over itself, while the flat $k[x]$\+module $k[x,x^{-1}]$ is not
projective.
 In fact, one has $\Ext^1_{k[x]}(k[x,x^{-1}],k[x])\ne0$.
 Furthermore, denoting by $Y=\Spec k[x,x^{-1}]$ the complement to
a closed point in $\mathbb A^1_k$, the DG\+module $\Omega^\bu(Y/k)$
over the DG\+algebra $\Omega^\bu(X/k)$ is graded-flat but \emph{not}
graded-projective.
 So the results of
Theorems~\ref{projective-and-flat-contraderived-theorem}
and~\ref{graded-flat-coderived-contraderived-theorem} are nontrivial
in this case already.

 Nevertheless, for any smooth affine algebraic variety $X$ over~$k$,
Positselski's and Becker's derived categories of the second kind
agree for the DG\+ring $\Omega^\bu(X/k)$.
 Indeed, the graded algebra of differential forms $\Omega^*(X/k)$ is
left and right Noetherian, so one has
$\sD^\co(\Omega^\bu(X/k)\bModl)\simeq\sD^\bco(\Omega^\bu(X/k)\bModl)$
by~\cite[Theorem~3.7]{Pkoszul}, \cite[Theorem~7.9(a)]{Pksurv}.
 Furthermore, the ring of functions $O(X)$ is commutative Noetherian
of Krull dimension $n$, while the graded ring $\Omega^*(X/k)$ is
a finitely generated projective graded $O(X)$\+module.
 By~\cite[Theorem~3.8]{Pkoszul}, \cite[Theorem~7.9(b)]{Pksurv}
(see also~\cite[Corollaire~II.3.2.7]{RG}, \cite[Proposition~4.3]{Pfp})
together with~\cite[beginning of the proof of Theorem~3.10]{Pkoszul},
one has $\sD^\ctr(\Omega^\bu(X/k)\bModl)\simeq
\sD^\bctr(\Omega^\bu(X/k)\bModl)$.

 Finally, even though formula~\eqref{fin-dim-algebra-co-contra-corresp}
is not applicable, the coderived and the contraderived category of
DG\+modules over $\Omega^\bu(X/k)$ are still naturally equivalent to
each other.
 Essentially, this holds because the graded ring $\Omega^*(X/k)$ is
Iwanaga--Gorenstein (in the sense of~\cite[Section~9.1]{EJ}).
 More in detail, the module of the highest degree forms $L:=\Omega^n(X/k)$ is invertible, i.~e.\ the module $L^{\ot-1}:=\Hom_{O(X)}(L,O(X))$ satisfies $L\ot_{O(X)}L^{\ot-1}\simeq O(X)$. Then
$$ \Omega^*(X/k)[n]\cong \Hom_{O(X)}(\Omega^*(X/k),L) $$
as a graded bimodule over itself.
 If $O(X)\to D^\bu$ is a minimal injective $O(X)$\+module resolution of 
$O(X)$ (so $D^\bu$ is finite of length~$n$), then
\begin{multline}\label{dualizing-cpx-for-deRham}
\Omega^*(X/k)\ot_{O(X)}D^\bu \simeq
\Hom_{O(X)}(\Omega^*(X/k),L)\ot_{O(X)}D^\bu[-n] \\
\simeq \Hom_{O(X)}(\Omega^*(X/k),L\ot_{O(X)}D^\bu)[-n],
\end{multline}
where the second isomorphism holds since $\Omega^*(X/k)$ is a finitely generated projective graded $O(X)$\+module. In particular, $\Omega^*(X/k)\ot_{O(X)}D^\bu$ is a finite injective resolution of $\Omega^*(X/k)$ by graded modules. Therefore, $\Omega^*(X/k)$ is also Gorenstein
 in the sense of~\cite[Section~3.9]{Pkoszul}, so the result
of~\cite[Theorem~3.9]{Pkoszul} applies.

 In fact, there is again (in view of
Theorems~\ref{contraderived-cotorsion-pair-category-theorem}, \ref{projective-and-flat-contraderived-theorem},
\ref{coderived-cotorsion-pair-category-theorem}
and~\ref{graded-flat-coderived-contraderived-theorem}) a more direct
version of this equivalence somewhat similar 
to~\eqref{dualizing-cpx-co-contra-corresp}.
 Consider the abelian DG\+category of DG\+bimodules over
$\Omega^\bu(X/k)$, where we presume that left and right actions of
the field~$k$ in our bimodules $E^\bu$ agree (while the left
and right actions of $O(X)$ can be completely different).
 Let us take a partial graded-injective resolution of
$\Omega^\bu(X/k)$ in the abelian category of DG\+bimodules:
$$
 0\lrarrow \Omega^\bu(X/k)\lrarrow E^{0,\bu}\lrarrow E^{1,\bu}
 \lrarrow\dotsb\lrarrow E^{n-1,\bu}\lrarrow E^{n,\bu}\lrarrow0.
$$
 Here the graded $\Omega^*(X/k)$\+$\Omega^*(X/k)$\+bimodules $E^{i,*}$
are injective objects in the category of graded bimodules for all
$0\le i\le n-1$, but $E^{n,\bu}$ is just the $n$\+th cosyzygy.
 Now, since $\Omega^*(X/k)$ is a flat graded $k$\+module, it follows 
that $E^{i,*}$ are also injective one-sided graded
$\Omega^*(X/k)$\+modules from either side.
 Since the graded ring $\Omega^*(X/k)$ is $n$\+Iwanaga--Gorenstein,
the one-sided graded $\Omega^*(X/k)$\+module $E^{n,*}$ is also
injective from either side.

 Now let $D^\bu=\Tot(E^{0,\bu}\to E^{1,\bu}\to\dotsb\to E^{n,\bu})$
be the totalization of the complex of DG\+bimodules $E^{\bu,\bu}$.
 Then the cone of the natural morphism of DG\+bimodules
$\Omega^\bu(X/k)\rarrow D^\bu$ is absolutely acyclic as
a DG\+bimodule over $\Omega^\bu(X/k)$ (see
Section~\ref{compact-generators-of-coderived-subsecn} below for
the definition).
 Then, similarly to the discussion in
Example~\ref{covariant-Serre-Grothendieck-example}, we have a pair
of adjoint functors between the homotopy categories of graded-injective
and graded-flat graded-cotorsion DG\+modules
\begin{equation}\label{dualizing-cpx-co-contra-deRham}
 \Hom^\bu_{\Omega^*(X/k)}(D^\bu,{-})\:
 \sH^0(\Omega^\bu(X/k)\bModl_\binj)\leftrightarrows
 \sH^0(\Omega^\bu(X/k)\bModl_\bflat^\bcot)
 :\!D^\bu\ot_{\Omega^*(X/k)}{-}.
\end{equation}

 Let us show that the functors~\eqref{dualizing-cpx-co-contra-deRham}
are mutually inverse equivalences.
 First of all, it suffices to construct, for every graded-injective
DG\+module $J^\bu$ over $\Omega^\bu(X/k)$, \emph{some} functorial
isomorphism between $J^\bu$ and $D^\bu\ot_{\Omega^*(X/k)}
\Hom_{\Omega^*(X/k)}(D^\bu,J^\bu)$ in
$\sH^0(\Omega^\bu(X/k)\bModl_\binj)$ and, for every graded-flat
graded-cotorsion DG\+module $F^\bu$ over $\Omega^\bu(X/k)$, \emph{some}
functorial isomorphism between $F^\bu$ and
$\Hom_{\Omega^*(X/k)}(D^\bu,\>D^\bu\ot_{\Omega^*(X/k)}F^\bu)$ in
$\sH^0(\Omega^\bu(X/k)\bModl_\bflat^\bcot)$.
 Then it would follow that both the adjoint functors
in~\eqref{dualizing-cpx-co-contra-deRham} are category equivalences,
and therefore, their adjunction morphisms are isomorphisms in
the homotopy categories.

 Furthermore, it follows from
Theorem~\ref{coderived-cotorsion-pair-category-theorem} that
any isomorphism in the coderived category
$\sD^\bco(\Omega^\bu(X/k)\bModl)$ between two objects from
the homotopy category $\sH^0(\Omega^\bu(X/k)\bModl_\binj)$ comes
from a unique isomorphism in $\sH^0(\Omega^\bu(X/k)\bModl_\binj)$.
 Similarly, it follows from
Theorems~\ref{contraderived-cotorsion-pair-category-theorem}, \ref{projective-and-flat-contraderived-theorem},
and~\ref{graded-flat-coderived-contraderived-theorem} that any
isomorphism in the contraderived category
$\sD^\bctr(\Omega^\bu(X/k)\bModl)$ between two objects from the homotopy
category $\sH^0(\Omega^\bu(X/k)\bModl_\bflat^\bcot)$ comes from
a unique isomorphism in $\sH^0(\Omega^\bu(X/k)\bModl_\bflat^\bcot)$.
 Furthermore, any isomorphism in the absolute derived category
$\sD^\abs(\Omega^\bu(X/k)\bModl)$ is an isomorphism in the coderived
category as well as in the contraderived category.

 It remains to notice that the DG\+bimodule morphism
$\Omega^\bu(X/k)\rarrow D^\bu$ with an absolutely acyclic cone induces
DG\+module morphisms with absolutely acyclic cones
\begin{multline*}
 D^\bu\ot_{\Omega^*(X/k)}\Hom_{\Omega^*(X/k)}(D^\bu,J^\bu)\llarrow
 \Omega^\bu(X/k)\ot_{\Omega^*(X/k)}\Hom_{\Omega^*(X/k)}(D^\bu,J^\bu)
 \\ \lrarrow
 \Omega^\bu(X/k)\ot_{\Omega^*(X/k)}
 \Hom_{\Omega^*(X/k)}(\Omega^\bu(X/k),J^\bu)=J^\bu,
\end{multline*}
where we are using the facts that the DG\+module
$\Hom_{\Omega^*(X/k)}(D^\bu,J^\bu)$ is graded-flat and the DG\+module
$J^\bu$ is graded-injective.
 Similarly, the same DG\+bimodule morphism with an absolutely acyclic
cone induces DG\+module morphisms with absolutely acyclic cones
\begin{multline*}
 \Hom_{\Omega^*(X/k)}(D^\bu,\>D^\bu\ot_{\Omega^*(X/k)}F^\bu)\lrarrow
 \Hom_{\Omega^*(X/k)}(\Omega^\bu(X/k),\>D^\bu\ot_{\Omega^*(X/k)}F^\bu)
 \\ \llarrow
 \Hom_{\Omega^*(X/k)}(\Omega^\bu(X/k),\>
 \Omega^\bu(X/k)\ot_{\Omega^*(X/k)}F^\bu)=F^\bu,
\end{multline*}
where we are using the facts that the DG\+module
$D^\bu\ot_{\Omega^*(X/k)}F^\bu$ is graded-injective and
the DG\+module $F^\bu$ is graded-flat.
 Notice that both
$\Hom_{\Omega^*(X/k)}(D^\bu,\>D^\bu\ot_{\Omega^*(X/k)}F^\bu)$ and
$F^\bu$ are graded-flat graded-cotorsion DG\+modules, while
the intermediate DG\+module
$\Hom_{\Omega^*(X/k)}(\Omega^\bu(X/k),\>D^\bu\ot_{\Omega^*(X/k)}F^\bu)$
in the latter formula is graded-injective.
 Nevertheless, the ``roof'' (fraction) formed by the two morphisms
defines a natural isomorphism in the absolute derived category
$\sD^\abs(\Omega^\bu(X/k)\bModl)$, which implies a natural isomorphism
in the homotopy category $\sH^0(\Omega^\bu(X/k)\bModl_\bflat^\bcot)$
as explained in the previous paragraph.


 We conclude by pointing out that there are more equivalences connected to the algebraic de~Rham complex. Most notably, there are natural triangulated equivalences of
$D$\+$\Omega$ duality
\begin{equation} \label{D-Omega-duality}
 \sD^\bco(\Omega^\bu(X/k)\bModl)\simeq
 \sD(D(X/k)\rModl)\simeq
 \sD^\bctr(\Omega^\bu(X/k)\bModl),
\end{equation}
where $\sD(D(X/k)\rModl)$ denotes the derived category of the abelian
category of modules over the ring $D(X/k)$ of $k$\+linear algebraic
differential operators $O(X)\rarrow O(X)$.
 See~\cite[Theorems~B.2 and~B.3]{Pkoszul} or~\cite[Theorem~10.7]{Prel}.
 (The definition of the ring of algebraic differential operators can be
found in~\cite[Section~16.8]{EGAIV}, \cite[Section Tag~09CH]{SP},
or~\cite[Construction~10.2 or~10.8]{Prel}.)
\end{ex}

\begin{ex}[Matrix factorizations] \label{matrix-factorizations-example}
 We will now discuss the class of examples advertized
in Section~\ref{introd-matrix-factorizations} of the Introduction.
We present a non-commutative version which encompasses examples such as~\cite[\S2.3]{DK} or~\cite[\S2.4]{IT} here, as this generalization comes only at a little additional cost. The classical setting (originating in~\cite{Eis}) restricts to commutative rings and invertible modules rather than bimodules.

 Let $R$ be a ring and $L$ be an invertible $R$\+$R$\+bimodule,
i.~e., an $R$\+$R$\+bimodule for which there is an $R$\+$R$\+bimodule
$L^{\ot-1}$ such that there are $R$\+$R$\+bimodule isomorphisms
$L\ot_R L^{\ot-1}\simeq R\simeq L^{\ot-1}\ot_R L$.
 We presume such isomorphisms to have been chosen so that they are
compatible with each other in the sense that the two induced
isomorphisms $L\simeq L\ot_R L^{\ot-1}\ot_R L\simeq L$ coincide.
 Since then $L\ot_R-$ and $-\ot_RL$ induce autoequivalences of $R\rModl$ and $\rModr{R}$, respectively, $L$ must be finitely generated projective both as a left and a right module, and so must be $L^{\ot-1}$ by the classical Morita theory.
 Then the $R$\+$R$\+bimodule $L^{\ot n}$ is well-defined for all $n\in\boZ$.
 Consider the graded ring $B^*$ with the grading components
$B^{2n}=L^{\ot n}$ and $B^{2n+1}=0$ for all $n\in\boZ$.
 In particular, the degree-zero subring in $B$ is $B^0=R$.
 The multiplication in $B^*$ is given by the obvious maps
$B^{2n}\times B^{2m}\rarrow B^{2n}\ot_R B^{2m}\simeq B^{2n+2m}$,
\ $n$, $m\in\boZ$.

 Let $w\in L$ be a \emph{central} element, meaning that $wr=rw\in L$ for all $r\in R$ and $w\ot l=l\ot w\in L^{\ot2}$ for all $l\in L$. This is always satisfied when $R$ is commutative and $L$ is an $R$-module (i.~e.\ the left and the right actions of $R$ coincide), or when $L=R$ and $w$ is from the centre of the ring $R$.
The element $w$ is called the \emph{potential}.

 Then the graded ring $B^*$ with the zero differential $d=0$ and
the curvature element $h=w\in L=B^2$ is a CDG\+ring $B^\cu=(B^*,0,w)$ 
(because $h$~is a central element in~$B^*$).
 Left CDG\+modules $M^\cu=(M^*,d_M)$ over $B^\cu$ are called
(\emph{$R$\+module}) \emph{matrix factorizations} of
the potential $w\in L$.
 Explicitly, an $R$\+module matrix factorization $M^\cu$ is given
by a pair of left $R$\+modules $M^0$ and $M^1$ together with two
$R$\+linear maps $d_{M,0}\:M^0\rarrow M^1$ and $d_{M,1}\:M^1\rarrow
L\ot_R M^0=M^2$ such that both the compositions $M^0\rarrow M^1
\rarrow L\ot_R M^0$ and $M^1\rarrow L\ot_RM^0\rarrow L\ot_RM^1$
are equal to the map of multiplication with~$w$.
 All the other grading components of $M^\cu$ are then recovered
as $M^{2n}=L^{\ot n}\ot_R M^0$ and $M^{2n+1}=L^{\ot n}\ot_R M^1$;
and the maps $d_{M,2n}\:M^{2n}\rarrow M^{2n+1}$ and
$d_{M,2n+1}\:M^{2n+1}\rarrow M^{2n+2}$ are obtained by applying
the functor $L^{\ot n}\ot_R{-}$ to the maps $d_{M,0}$ and~$d_{M,1}$,
respectively.

 Following our terminology in this paper, a matrix factorization
$P^\cu$ is said to be \emph{graded-projective} if the $R$\+modules
$P^0$ and $P^1$ are projective.
 Similarly, a matrix factorization $F^\cu$ is said to be
\emph{graded-flat} if the $R$\+modules $F^0$ and $F^1$ are flat.
 (In the more traditional terminology of matrix factorization theory,
as in~\cite{EP}, these would be called simply ``projective''
or ``flat'' matrix factorizations.)
 A matrix factorization $C^\cu$ is said to be \emph{graded-cotorsion}
if the $R$\+modules $C^0$ and $C^1$ are cotorsion,
and a matrix factorization $E^\cu$ is said to be \emph{graded-injective} if $E^0$ and $E^1$ are injective.

 Now Theorem~\ref{projective-and-flat-contraderived-theorem} tells us
that the homotopy category of graded-projective matrix
factorizations of a potential $w\in L$ is equivalent to the
triangulated quotient category of the homotopy category of graded-flat
matrix factorizations by the thick subcategory of flat matrix
factorizations. Furthermore, by
Theorem~\ref{graded-flat-coderived-contraderived-theorem}, the same
triangulated category is equivalent to the homotopy category of
graded-flat graded-cotorsion matrix factorizations.

 Assume that the ring $R$ is left and right Noetherian and $D^\bu$ is
a dualizing complex in the sense of~\cite[\S3.3.1]{IK} (i.~e.\ $D^\bu$
is a bounded complex of $R$-$R$-bimodules which is a complex of
injective modules with finitely generated cohomologies from either
side and both $R\to\Hom_{R}(D^\bu,D^\bu)$ and
$R\to\Hom_{R^\rop}(D^\bu,D^\bu)$ are quasi-isomorphisms).
 Assume further that an isomorphism of complexes of $R$\+$R$\+bimodules
$L\ot_RD^\bu\simeq D^\bu\ot_RL$ has been chosen, and it forms
a commutative triangular diagram with the maps $w\ot{-}\,\:D^\bu\rarrow
L\ot_RD^\bu$ and ${-}\ot w\:D^\bu\rarrow D^\bu\ot_RL$.
 Then the Becker coderived and contraderived categories are again
equivalent via
\[
 \Hom^\cu_{R}(D^\bu,{-})\:
 \sH^0(B^\cu\bModl_\binj)\simeq\sH^0(B^\cu\bModl_\bflat^\bcot)
 :\!D^\bu\ot_{R}{-}
\]
(compare to~\cite[Theorem~2.5]{EP}). Here, the functors are explained in detail in~\cite[p.~1199--1200]{EP}, they are well defined thanks to~\cite[Remark~4.1]{IK} and~\cite[Lemma~2.3]{enochs-flat-cot}, and they are mutually inverse equivalences since the functors in~\eqref{dualizing-cpx-co-contra-corresp} are such even for non-commutative $R$ (see~\cite[Theorem 4.2]{IT}).

 To show a concrete instance of matrix factorizations, the one from~\cite[\S2.3.5]{DK} involves, translated to our language, the hereditary order
\[
R = \begin{pmatrix}
S & \mathfrak{m} & \mathfrak{m} & \cdots & \mathfrak{m} \\
S & S & \mathfrak{m} && \vdots \\
\vdots && \ddots & \ddots & \vdots \\
\vdots &&& \ddots & \mathfrak{m} \\
S & S & S & \cdots & S\\
\end{pmatrix},
\]
where $S=k[x]$ is the polynomial ring over a field $k$ and $\mathfrak{m}=(x)$. The invertible bimodule is the trivial one, $L=R$ (and so $B^*=R[T,T^{-1}]$ for a symbol $T$ of degree $2$), and the potential is $w=x\cdot I_n$, where $I_n$ is the identity matrix. As explained in~\cite[Theorem~2.3.6]{DK}, the full subcategory of $\sZ^0(B^\cu\bModl_\bproj)$ formed by CDG\+modules over $B^\cu$ with finitely generated graded projective components is equivalent to Happel's root category~\cite[\S5]{Hap} of Dynkin quivers of type $A_n$. In fact, this subcategory is precisely the category of compact objects of $\sZ^0(B^\cu\bModl_\bproj)$ (compact matrix factorizations will be discussed more in detail in Example~\ref{matrix-factorizations-example-contd}).

 For $R$ commutative regular local, $L=R$ and $w\ne 0$, the equivalent graded-projective and graded-flat graded-cotorsion incarnations of $\sD^\bctr(B^\cu\bModl)$ were proved in \cite[Theorems~4.2, 5.4, and Corollaries~4.3, 5.5]{BT-flat-cot-MF} to be equivalent to the stable categories of Gorenstein projective and Gorenstein flat-cotorsion $R/(w)$\+modules. So these stable categories are consequently equivalent as well.

 In general, if $R$ is commutative, we can view these matrix factorizations in the context of the affine scheme $X=\Spec R$ and quasi-coherent sheaves.
 The results here are also of interest to matrix factorization theory on
schemes that are not necessarily Noetherian of finite Krull
dimension (see the discussion in~\cite[Remarks~1.4, 2.4, and~2.6]{EP}).
 It would be even more interesting to extend these assertions to
nonaffine schemes (e.~g., using very flat quasi-coherent
sheaves~\cite[Sections~0.2\+-0.5]{PSl} as a substitute for
nonexistent projective ones; cf.~\cite[Corollary~6.2]{ES}).
\end{ex}

\Section{Quillen Equivalence of Contraderived Abelian Model Structures}
\label{the-Quillen-equivalence-secn}

 The aim of this section is to promote the triangulated equivalences
from Theorems~\ref{contraderived-cotorsion-pair-category-theorem}(b), \ref{projective-and-flat-contraderived-theorem}
and~\ref{graded-flat-coderived-contraderived-theorem} to a Quillen
equivalence of suitable abelian model structures.
This is in principle a qualitatively better result (see~\cite[Remark~2.5]{DS-K-th}) as Quillen equivalences preserve homotopical structure such as homotopy limits and colimits or simplicial mapping spaces, as well as invariants such as the Waldhausen $K$-theory~\cite[\S3]{DS-K-th}. Plain equivalences of homotopy categories may not enjoy such favorable properties by~\cite[Remarks~2.5 and~6.8]{DS-K-th}. We also refer to~\cite{Schli,DS-expl} for a concrete counterexample of strikingly simple equivalent homotopy categories whose natural model structures differ in the Waldhausen $K$-theory and so cannot be connected by Quillen equivalences.

\subsection{Graded-flat cotorsion pair}
 As explained in \S\ref{abelian-model-structures-subsecn}, Hovey observed in~\cite{Hov} that a class of model structures, which are now called abelian, are based on cotorsion pairs. With this in mind,
 the aim of this section is to prove the following proposition.

\begin{prop} \label{graded-flat-cotorsion-pair-prop}
 Let $B^\cu=(B^*,d,h)$ be a CDG\+ring.
 Then there exists a hereditary complete cotorsion pair $(\sL,\sR)$
generated by a set of objects in the abelian category\/
$\sA=\sZ^0(B^\cu\bModl)$ such that\/ $\sL=\sZ^0(B^\cu\bModl_\bflat)$
is the class of all graded-flat left CDG\+modules over~$B^\cu$.
\end{prop}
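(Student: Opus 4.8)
The plan is to exhibit the desired cotorsion pair as the one \emph{generated by a set}, via the Eklof--Trlifaj theorem. Throughout I work in the abelian category $\sA=\sZ^0(B^\cu\bModl)\simeq B^*[\delta]\sModl$ of graded $B^*[\delta]$\+modules, a Grothendieck category with projective generator $G=\bigoplus_{n\in\boZ}B^*[\delta][n]$, so that Theorem~\ref{eklof-trlifaj-theorem} applies. I would let $\sS$ be a set of representatives of the isomorphism classes of countably presented graded-flat CDG\+modules over $B^\cu$ (there is only a set of these, each being a cokernel of a map between countably generated free graded $B^*[\delta]$\+modules, cf.\ Proposition~\ref{finiteness-countability-for-cdg-modules-prop}), together with $G$ itself, which is graded-flat (as $B^*[\delta]$ is free, hence flat, over $B^*$) and countably presented. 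The central claim to prove is $\sZ^0(B^\cu\bModl_\bflat)=\Fil(\sS)$, after which Theorem~\ref{eklof-trlifaj-theorem} and Lemma~\ref{hereditary-cotorsion-lemma} conclude the argument.

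One inclusion is formal. Applying the forgetful functor $\#\:\sA\rarrow B^*\sModl$ sends a filtration to a filtration of the underlying graded $B^*$\+modules, and the class of flat graded $B^*$\+modules — being the left-hand class ${}^{\perp_1}(\text{cotorsion})$ of the flat cotorsion pair in $B^*\sModl$ (Section~\ref{cotorsion-graded-modules-subsecn}) — is closed under transfinitely iterated extensions by the Eklof lemma (Lemma~\ref{eklof-lemma}) and under direct summands. Hence $\Fil(\sS)^\oplus\subseteq\sZ^0(B^\cu\bModl_\bflat)$.

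The substance of the proof, and the main obstacle, is the reverse inclusion: that \emph{every} graded-flat CDG\+module is filtered by countably presented graded-flat ones. I would establish this by a relative Kaplansky-device argument. Given a graded-flat $F^\cu=(F^*,d_F)$ and a countable set $S\subseteq F^*$ of homogeneous elements, the goal is a countably generated $B^*[\delta]$\+submodule $F_0^\cu\subseteq F^\cu$ containing $S$ with both $F_0^*$ and $F^*/F_0^*$ flat over $B^*$. Such $F_0^*$ must simultaneously be closed under the action of $\delta=d_F$ (so as to be a CDG\+submodule) and be a \emph{pure} graded $B^*$\+submodule (so that, $F^*$ being flat, both $F_0^*$ and the quotient are flat). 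I would build it as the union of an $\omega$\+chain of countable subsets, alternately closing under the $\delta$\+action — which adjoins only finitely many homogeneous coordinates per element and keeps the stages countable (Proposition~\ref{finiteness-countability-for-cdg-modules-prop} ties countable generation over $B^*[\delta]$ to that over $B^*$) — and closing toward purity via the equational criterion, i.e.\ adjoining, for each finite inhomogeneous linear system with parameters already present that is solvable in $F^*$, one solution from $F^*$. In the limit the union is $\delta$\+closed and pure in $F^*$, since the parameters of any finite system occur at a finite stage and a solution is adjoined at the next. A transfinite induction along a well-ordering of a generating set of $F^\cu$ — at successor steps applying this device to $F^\cu/F_i^\cu$ (still graded-flat, as the $F_i^*$ are kept pure and unions of chains of pure submodules are pure) and at limit steps taking unions — yields a filtration whose successive quotients are countably generated graded-flat, hence countably presented and so represented in $\sS$. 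The delicate point, where I expect the real work, is arranging the two closure operations so that $\delta$\+invariance and $B^*$\+purity hold \emph{simultaneously} in the limit; Proposition~\ref{graded-flats-as-aleph-1-direct-limits} records the same countably-presented ``smallness'' from the $\aleph_1$\+direct limit viewpoint and may be used to streamline it.

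With $\sZ^0(B^\cu\bModl_\bflat)=\Fil(\sS)$ established, Theorem~\ref{eklof-trlifaj-theorem} shows that the cotorsion pair $(\sL,\sR)$ generated by the set $\sS$ (which contains the projective generator $G$) is complete with $\sL=\Fil(\sS)^\oplus=\sZ^0(B^\cu\bModl_\bflat)$; by construction it is generated by a set. Finally, completeness forces $\sL$ to be generating and $\sR$ to be cogenerating, and $\sL=\sZ^0(B^\cu\bModl_\bflat)$ is closed under kernels of epimorphisms (the kernel of a surjection of flat graded $B^*$\+modules is flat, the quotient being flat). Hence $(\sL,\sR)$ is hereditary by Lemma~\ref{hereditary-cotorsion-lemma}(1), completing the proof.
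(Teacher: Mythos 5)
The central claim of your proof---that $\sZ^0(B^\cu\bModl_\bflat)=\Fil(\sS)$ for $\sS$ a set of \emph{countably presented} graded-flat CDG\+modules---is false in general, and this is a genuine gap, not a technical point to be streamlined. By the graded version of Lazard's theorem (\cite[Corollary~2.23]{GT}, cited and used in the paper in the proof of Lemma~\ref{resolution-of-countably-presented-graded-flat}), every countably presented flat graded $B^*$\+module has projective dimension at most~$1$ in $B^*\sModl$; and by Auslander's lemma, a filtration with consecutive quotients of projective dimension at most~$1$ yields a module of projective dimension at most~$1$. Since filtrations in $\sZ^0(B^\cu\bModl)$ restrict to filtrations of underlying graded $B^*$\+modules, every object of $\Fil(\sS)^\oplus$ has underlying graded $B^*$\+module of projective dimension at most~$1$. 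But whenever $B^*$ admits a flat graded module of projective dimension at least~$2$ (for instance $B^*=k[x,y,z]$ concentrated in degree~$0$, with $d=0$, $h=0$; the Raynaud--Gruson bound on projective dimensions of flat modules is attained over such rings), that module viewed as a CDG\+module with zero differential is graded-flat yet lies outside $\Fil(\sS)^\oplus$. Concretely, the place where your Kaplansky device breaks is the cardinality bookkeeping: the purity closure of a countable subset requires adjoining solutions of finite linear systems whose coefficients range over $B^*$, so the closure has cardinality up to $|B^*|+\aleph_0$, not $\aleph_0$; the stages of your $\omega$\+chain cannot be kept countable when $B^*$ is uncountable. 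Your appeal to Proposition~\ref{graded-flats-as-aleph-1-direct-limits} does not rescue this: it produces $\aleph_1$\+\emph{direct limits} of countably presented graded-flat CDG\+modules, which is a much weaker structure than a filtration (the structure maps are not monomorphisms with graded-flat cokernels arranged in a chain), and direct limits, unlike filtrations, do not propagate projective dimension bounds---which is exactly why the two notions cannot be interchanged here.

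Your strategy can be repaired by replacing ``countably presented'' with ``$\kappa$\+presented'' for $\kappa=|B^*|+\aleph_0$: the alternating closure under the $\delta$\+action and toward $B^*$\+purity then stays within size~$\kappa$, purity of each stage in $F^*$ gives flatness of the stages and of the successive quotients, and Theorem~\ref{eklof-trlifaj-theorem} plus Lemma~\ref{hereditary-cotorsion-lemma} finish as you indicate. This corrected version would be a legitimate alternative argument (essentially the Bican--El~Bashir--Enochs deconstructibility argument adapted to respect~$\delta$). Note, however, that the paper proves the proposition by an entirely different and shorter route that avoids deconstructibility of graded-flat CDG\+modules altogether: it takes a set $\sS$ of right CDG\+modules generating the coderived cotorsion pair in $\sZ^0(\bModr B^\cu)$ (Section~\ref{coderived-subsecn}), cogenerates a cotorsion pair in $\sZ^0(B^\cu\bModl)$ by the character CDG\+modules $\Hom_\boZ^\cu(S^\cu,\boQ/\boZ)$, identifies the left class with $\sZ^0(B^\cu\bModl_\bflat)$ via the Ext--Tor duality
formula~\eqref{character-module-Ext-Tor-homological-formula} applied on both sides, and obtains completeness and generation by a set from Theorem~\ref{cogenerated-by-pure-injectives}, since character modules are pure-injective.
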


\begin{proof}
 Let $\sS\subset\sZ^0(\bModr B^\cu)$ be a class of right CDG\+modules
over $B^\cu$ generating the coderived cotorsion pair in
$\sZ^0(\bModr B^\cu)$ (as defined in Section~\ref{coderived-subsecn}).
 For example, one can take $\sS$ to be the class of all coacyclic
right CDG\+modules over $B^\cu$, or $\sS$ can be the set of
contractible CDG\+modules defined in~\cite[proof of Theorem~7.11]{PS5}
(where the functor denoted by $\Psi^+$ in~\cite{PS5} corresponds to
the functor $G^+$ from Section~\ref{prelim-delta-and-G-subsecn} above
in the case of CDG\+modules; see~\cite[diagram~(6)
in Section~2.6]{PS5}).

 Let $(\sL,\sR)$ be the cotorsion pair in $\sZ^0(B^\cu\bModl)$
cogenerated by the class $\sT=\{\Hom_\boZ^\cu(S^\cu,\boQ/\boZ)\mid
S^\cu\in\sS\}$ of the character CDG\+modules of the CDG\+modules
from the class~$\sS$.
 We claim that $\sL=\sZ^0(B^\cu\bModl_\bflat)$ is the class of all
graded-flat left CDG\+modules over~$B^\cu$.

 Indeed, a graded left module $F^*$ over a graded ring $B^*$ is flat
if and only the graded right $B^*$\+module $\Hom_\boZ^*(F^*,\boQ/\boZ)$
is injective.
 Hence a graded left CDG\+module $F^\cu$ over $B^\cu$ is graded-flat
if and only if the graded right CDG\+module
$\Hom_\boZ^\cu(F^\cu,\boQ/\boZ)$ is graded-injective.
 Since the class $\sS$ generates the coderived cotorsion pair in
$\sZ^0(\bModr B^\cu)$, the latter condition holds if and only if
the abelian group
$$
 \Ext^1_{\sZ^0(\bModr B^\cu)}(S^\cu,\Hom_\boZ^\cu(F^\cu,\boQ/\boZ))
$$
vanishes for all $S^\cu\in\sS$.
 In view of
the formula~\eqref{character-module-Ext-Tor-homological-formula}
for the graded ring $B^*[\delta]^\rop$ and $n=1$, this $\Ext^1$ group
vanishes if and only if so does the group
$$
 \Tor_1^{B^*[\delta]}(S^\cu,F^\cu)^0.
$$
 Applying
the same formula~\eqref{character-module-Ext-Tor-homological-formula}
for the graded ring $B^*[\delta]$ and $n=1$, we see that the latter
$\Tor_1$ group vanishes if and only if so does the group
$$
 \Ext^1_{\sZ^0(B^\cu\bModl)}(F^\cu,\Hom_\boZ^\cu(S^\cu,\boQ/\boZ)),
$$
as desired.

 Finally, the cotorsion pair $(\sL,\sR)$ in $\sZ^0(B^\cu\bModl)$
is complete and generated by a set of CDG\+modules by
Theorem~\ref{cogenerated-by-pure-injectives}.
 This cotorsion pair is hereditary, since the class of all graded-flat
CDG\+modules is obviously closed under kernels of epimorphisms in
$\sZ^0(B^\cu\bModl)$.
\end{proof}

\subsection{Flat contraderived abelian model structure}
 Now we obtain both the abelian model structures needed for our Quillen equivalence.
 We first recall the abelian model structure constructed
in~\cite[Section~6]{PS5} (in the particular case of CDG\+modules).

\begin{thm} \label{projective-contraderived-model-structure}
 Let $B^\cu=(B^*,d,h)$ be a CDG\+ring.
 Then the triple of classes of objects\/
$\sL=\sZ^0(B^\cu\bModl_\bproj)$, \ $\sW=\sZ^0(B^\cu\bModl)_\ac^\bctr$,
and\/ $\sR=\sZ^0(B^\cu\bModl)$ is a cofibrantly generated hereditary
abelian model structure on the abelian category of CDG\+modules\/
$\sZ^0(B^\cu\bModl)$.
\end{thm}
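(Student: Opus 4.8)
The plan is to obtain the model structure from Gillespie's correspondence (Theorem~\ref{gillespie-theorem}) between hereditary abelian model structures and nested pairs of hereditary complete cotorsion pairs sharing a common core, rather than from Hovey's Theorem~\ref{abelian-model-structures-thm} directly. The advantage is that Gillespie's theorem produces the thickness of $\sW$ automatically, which is otherwise the one genuinely delicate point (the class $\sZ^0(B^\cu\bModl)_\ac^\bctr$ is the right-hand side of a hereditary cotorsion pair, so it is closed under cokernels of admissible monomorphisms, but closedness under kernels of admissible epimorphisms is not formal). Since the intended class of fibrant objects $\sR=\sZ^0(B^\cu\bModl)$ is the whole category, the two cotorsion pairs to exhibit are $(\widetilde\sL,\sR)=(\sL\cap\sW,\sR)$ and $(\sL,\widetilde\sR)=(\sL,\sW\cap\sR)$. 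I claim these are, respectively, the trivial ``projective'' cotorsion pair $\big(\sZ^0(B^\cu\bModl)_\proj,\,\sZ^0(B^\cu\bModl)\big)$ of projective objects versus everything, and the contraderived cotorsion pair $\big(\sZ^0(B^\cu\bModl_\bproj),\,\sZ^0(B^\cu\bModl)_\ac^\bctr\big)$ of Theorem~\ref{contraderived-cotorsion-pair-category-theorem}(a).

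First I would check that both are hereditary complete cotorsion pairs. The abelian category $\sZ^0(B^\cu\bModl)\simeq B^*[\delta]\sModl$ is a category of graded modules, hence has a projective generator and enough projective objects; so $\big(\sZ^0(B^\cu\bModl)_\proj,\,\sZ^0(B^\cu\bModl)\big)$ is a cotorsion pair, it is complete (special precover sequences come from epimorphisms off projectives, while the special preenvelope of an object $E$ is simply $0\to E\to E\to 0\to 0$), and it is hereditary because $\Ext^2$ out of a projective object vanishes (Lemma~\ref{hereditary-cotorsion-lemma}(3)). The contraderived cotorsion pair is hereditary and complete by Theorem~\ref{contraderived-cotorsion-pair-category-theorem}(a).

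Next come the nesting and common-core conditions. The inclusion $\widetilde\sL\subset\sL$ reads $\sZ^0(B^\cu\bModl)_\proj\subset\sZ^0(B^\cu\bModl_\bproj)$, i.e.\ projective $\Rightarrow$ graded-projective, which holds since a projective CDG\+module is graded-projective and contractible (Section~\ref{prelim-proj-inj-flatness-subsecn}); equivalently $\widetilde\sR\subset\sR$ trivially. For the common core I must verify $\widetilde\sL\cap\sR=\sL\cap\widetilde\sR$. The left-hand side is just the class of projective CDG\+modules, while the right-hand side is $\sZ^0(B^\cu\bModl_\bproj)\cap\sZ^0(B^\cu\bModl)_\ac^\bctr$; here a graded-projective contraacyclic module $P^\cu$ has $\id_{P^\cu}$ homotopic to zero (apply the definition of contraacyclicity to the identity closed morphism $P^\cu\to P^\cu$), hence $P^\cu$ is contractible and therefore projective, and conversely every projective CDG\+module is graded-projective and contraacyclic. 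Thus both cores equal the projectives, and Gillespie's Theorem~\ref{gillespie-theorem} yields a hereditary abelian model structure whose cofibrant class is recovered as $\sL=\sZ^0(B^\cu\bModl_\bproj)$ and whose fibrant class is $\sR=\sZ^0(B^\cu\bModl)$. Because $\sR$ is the entire category, its class of weakly trivial objects satisfies $\sW=\sW\cap\sR=\widetilde\sR=\sZ^0(B^\cu\bModl)_\ac^\bctr$, which is exactly the asserted triple $(\sL,\sW,\sR)$.

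Finally, cofibrant generation. By \cite[Lemma~3.7]{PS4}, and since $\sZ^0(B^\cu\bModl)$ is locally presentable, it suffices that each of the two cotorsion pairs be generated by a \emph{set} of objects. The pair $\big(\sZ^0(B^\cu\bModl)_\proj,\,\sZ^0(B^\cu\bModl)\big)$ is generated by the one-element set consisting of a projective generator of $\sZ^0(B^\cu\bModl)$. The contraderived cotorsion pair is generated by a set by the very proof of Theorem~\ref{contraderived-cotorsion-pair-category-theorem}(a), which proceeds through the Eklof--Trlifaj Theorem~\ref{eklof-trlifaj-theorem} applied to the deconstruction of graded-projective CDG\+modules into countably generated graded-free ones (Lemmas~\ref{graded-projective-direct-summand-of-graded-free}--\ref{deconstruction-of-graded-free}). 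The steps I expect to require the most care are the common-core identification $\sL\cap\widetilde\sR=\sZ^0(B^\cu\bModl)_\proj$ above and the reliance on the generation-by-a-set assertion packaged inside Theorem~\ref{contraderived-cotorsion-pair-category-theorem}(a); the remaining verifications are formal.
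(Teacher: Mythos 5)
Your proof is correct, but it follows a genuinely different route from the paper's. The paper disposes of this theorem by pure citation, to \cite[Proposition~1.3.6(1)]{Bec} and to \cite[Theorem~6.17]{PS5}, and those sources proceed through Hovey's correspondence (Theorem~\ref{abelian-model-structures-thm}), which requires checking by hand that the class of contraacyclic CDG\+modules is thick; that verification is the content of \cite[Lemma~6.13]{PS5}, quoted in Section~\ref{contraderived-subsecn}. You instead feed Gillespie's Theorem~\ref{gillespie-theorem} the nested pair consisting of the trivial projective cotorsion pair $\big(\sZ^0(B^\cu\bModl)_\proj,\,\sZ^0(B^\cu\bModl)\big)$ and the contraderived cotorsion pair of Theorem~\ref{contraderived-cotorsion-pair-category-theorem}(a); your common-core computation (graded-projective plus contraacyclic implies contractible, by applying contraacyclicity to $\id_{P^\cu}$, hence projective) is exactly the right key step, and the thickness of $\sW$ then falls out of Gillespie's theorem as a consequence instead of being an input. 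What this buys is both an economy (no direct thickness verification) and uniformity: it is precisely the strategy the paper itself employs one theorem later for the flat contraderived model structure (Theorem~\ref{flat-contraderived-model-structure}), with your projective cotorsion pair replaced there by the flat one, so your argument makes the two proofs parallel. The one caveat, which you flag honestly, is that your argument is not self-contained: it still rests on Theorem~\ref{contraderived-cotorsion-pair-category-theorem}(a), including the generation-by-a-set property that appears only in that theorem's proof sketch (deconstruction into countably generated graded-free CDG\+modules plus Eklof--Trlifaj, Theorem~\ref{eklof-trlifaj-theorem}) rather than in its statement; since all of this occurs earlier in the paper than the model structure theorem, there is no circularity.
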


\begin{proof}
 This is~\cite[Proposition~1.3.6(1)]{Bec} and a particular case
of~\cite[Theorem~6.17]{PS5}.
\end{proof}

 The abelian model structure of
Theorem~\ref{projective-contraderived-model-structure} was called
the ``contraderived model structure'' in~\cite{Bec,PS4,PS5}.
 We call it the \emph{projective contraderived model structure}, to
distinguish it from the abelian model structure provided by the next
theorem.

\begin{thm} \label{flat-contraderived-model-structure}
 Let $B^\cu=(B^*,d,h)$ be a CDG\+ring.
 Then the triple of classes of objects\/
$\sL=\sZ^0(B^\cu\bModl_\bflat)$, \ $\sW=\sZ^0(B^\cu\bModl)_\ac^\bctr$,
and\/ $\sR=\sZ^0(B^\cu\bModl^\bcot)$ is a cofibrantly generated
hereditary abelian model structure on the abelian category of
CDG\+modules\/ $\sZ^0(B^\cu\bModl)$.
\end{thm}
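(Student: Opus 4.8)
The plan is to realize $(\sL,\sW,\sR)=\big(\sZ^0(B^\cu\bModl_\bflat),\,\sZ^0(B^\cu\bModl)_\ac^\bctr,\,\sZ^0(B^\cu\bModl^\bcot)\big)$ as an abelian model structure by verifying the three conditions of Hovey's Theorem~\ref{abelian-model-structures-thm} for the ambient abelian category $\sA=\sZ^0(B^\cu\bModl)\simeq B^*[\delta]\sModl$. The thickness of $\sW$ comes for free: since the projective contraderived model structure of Theorem~\ref{projective-contraderived-model-structure} has $\sZ^0(B^\cu\bModl)_\ac^\bctr$ as its class of weakly trivial objects, that class is thick by Theorem~\ref{abelian-model-structures-thm}. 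So the whole burden is to identify the two intersection classes and check they form complete cotorsion pairs. For the first, Corollary~\ref{flat=graded-flat-and-contraacyclic-cor} gives $\sL\cap\sW=\sZ^0(B^\cu\bModl_\bflat)\cap\sZ^0(B^\cu\bModl)_\ac^\bctr=\sZ^0(B^\cu\bModl)_\flat$, so $(\sL\cap\sW,\sR)$ is the flat cotorsion pair of $B^*[\delta]\sModl$ from Section~\ref{cotorsion-graded-modules-subsecn}; its right-hand class is the class of cotorsion CDG\+modules, which equals the graded-cotorsion ones $\sZ^0(B^\cu\bModl^\bcot)=\sR$ by Theorem~\ref{cotorsion=graded-cotorsion-theorem}. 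This pair is complete and hereditary.

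The main work, and the step I expect to be the principal obstacle, is to show that $\sW\cap\sR$ is the right-hand class of the graded-flat cotorsion pair $(\sZ^0(B^\cu\bModl_\bflat),\widetilde\sR)$ supplied by Proposition~\ref{graded-flat-cotorsion-pair-prop}, i.e.\ that $\sW\cap\sR=\widetilde\sR=\sZ^0(B^\cu\bModl_\bflat)^{\perp_1}$. The inclusion $\widetilde\sR\subset\sW\cap\sR$ is easy: an object right $\Ext^1$\+orthogonal to all graded-flat CDG\+modules is orthogonal to the flat ones, hence cotorsion (so graded-cotorsion), and orthogonal to the graded-projective ones, hence contraacyclic. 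For the reverse inclusion I would take $X^\cu$ contraacyclic and graded-cotorsion and an arbitrary graded-flat $G^\cu$, and invoke the special preenvelope $0\rarrow G^\cu\rarrow F^\cu\rarrow Q^\cu\rarrow0$ in the exact category of graded-flat CDG\+modules (the restricted contraderived cotorsion pair of Section~\ref{restricted-contraderived-cotorsion-pair-subsecn}, whose right class is the flat CDG\+modules by Corollary~\ref{flat=graded-flat-and-contraacyclic-cor}), with $F^\cu$ flat and $Q^\cu$ graded-projective. Since this is a genuine short exact sequence in $\sA$, applying $\Ext^\bu_{\sA}(-,X^\cu)$ yields $\Ext^1_\sA(F^\cu,X^\cu)=0$ (as $X^\cu$ is cotorsion and $F^\cu$ flat) and $\Ext^2_\sA(Q^\cu,X^\cu)=0$ (by hereditarity of the contraderived cotorsion pair, Theorem~\ref{contraderived-cotorsion-pair-category-theorem}(a), since $X^\cu$ is contraacyclic), whence $\Ext^1_\sA(G^\cu,X^\cu)=0$. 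This gives $X^\cu\in\widetilde\sR$, so $\sW\cap\sR=\widetilde\sR$ and $(\sL,\sW\cap\sR)$ is the complete hereditary cotorsion pair of Proposition~\ref{graded-flat-cotorsion-pair-prop}.

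With all three hypotheses of Theorem~\ref{abelian-model-structures-thm} verified, $(\sL,\sW,\sR)$ is an abelian model structure on $\sA$. It is hereditary because both associated cotorsion pairs are hereditary (the flat cotorsion pair, and the pair of Proposition~\ref{graded-flat-cotorsion-pair-prop}), using the criterion recalled in Section~\ref{abelian-model-structures-subsecn}. Finally, it is cofibrantly generated since both cotorsion pairs are generated by a set of objects in the Grothendieck category $\sA$ with its projective generator — the flat cotorsion pair by deconstructibility of the flat graded modules (Section~\ref{cotorsion-graded-modules-subsecn}) and the graded-flat cotorsion pair directly by Proposition~\ref{graded-flat-cotorsion-pair-prop} — so that both weak factorization systems are cofibrantly generated. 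As an alternative organization, one could instead feed the two nested cotorsion pairs (with common core the flat cotorsion CDG\+modules $\sZ^0(B^\cu\bModl)_\flat\cap\sZ^0(B^\cu\bModl^\bcot)$, computed from either side via Corollary~\ref{flat=graded-flat-and-contraacyclic-cor}) into Gillespie's Theorem~\ref{gillespie-theorem}; this route produces the hereditary structure at once, at the cost of separately checking that the weakly trivial class it recovers is exactly the contraacyclic class, which follows from closure of the contraacyclic class under cokernels of admissible monomorphisms (Lemma~\ref{hereditary-cotorsion-lemma}) together with Corollaries~\ref{flats-are-contraacyclic-cor} and~\ref{flat=graded-flat-and-contraacyclic-cor}.
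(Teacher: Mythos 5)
Your proof is correct, but it is organized differently from the paper's. The paper feeds the two nested hereditary complete cotorsion pairs $(\widetilde\sL,\sR)=\big(\sZ^0(B^\cu\bModl)_\flat,\>\sZ^0(B^\cu\bModl)^\cot\big)$ and $(\sL,\widetilde\sR)$ (from Proposition~\ref{graded-flat-cotorsion-pair-prop}) into Gillespie's Theorem~\ref{gillespie-theorem}: there one only has to check the formally weaker statement that the two cores agree, $\widetilde\sL\cap\sR=\sL\cap\widetilde\sR$, but one then pays for it afterwards by identifying the weakly trivial class that Gillespie's theorem outputs (described via approximation sequences) with the contraacyclic class, in both directions, using thickness of the contraacyclic class and Corollaries~\ref{flats-are-contraacyclic-cor} and~\ref{flat=graded-flat-and-contraacyclic-cor} --- this is the route you only sketch at the end. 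You instead verify Hovey's Theorem~\ref{abelian-model-structures-thm} directly for the given triple: thickness of $\sW$ is inherited from Theorem~\ref{projective-contraderived-model-structure}, the identification $\sL\cap\sW=\sZ^0(B^\cu\bModl)_\flat$ is exactly Corollary~\ref{flat=graded-flat-and-contraacyclic-cor}, and the real work is the identification $\sW\cap\sR=\widetilde\sR$, whose nontrivial inclusion you prove by the $\Ext^1$/$\Ext^2$ computation with a special preenvelope sequence $0\rarrow G^\cu\rarrow F^\cu\rarrow Q^\cu\rarrow0$ from the restricted contraderived cotorsion pair. That computation is literally the same one the paper runs, except that the paper applies it to a flat cotorsion module $F^\cu$ (to get the core equality), while you apply it to an arbitrary contraacyclic graded-cotorsion $X^\cu$; your statement is the more general one and subsumes the paper's, since flat implies contraacyclic. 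What each approach buys: yours dispenses entirely with the a~posteriori identification of $\sW$ (it is input, not output, of Hovey's theorem) and is thus somewhat more economical; the paper's makes the hereditarity of the resulting model structure automatic from Gillespie's formulation and only ever needs Ext-vanishing against the narrower class of flat cotorsion modules. Both routes rest on the same pillars --- Theorem~\ref{cotorsion=graded-cotorsion-theorem}, Corollaries~\ref{flats-are-contraacyclic-cor} and~\ref{flat=graded-flat-and-contraacyclic-cor}, Proposition~\ref{graded-flat-cotorsion-pair-prop}, and hereditarity of the contraderived cotorsion pair --- and your treatment of cofibrant generation (both pairs generated by sets, hence both weak factorization systems cofibrantly generated, via~\cite[Lemma~3.7]{PS4}) matches the paper's appeal to~\cite[Corollary~4.3]{PS4} in substance.
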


\begin{proof}
 The argument is based on Gillespie's theorem
(Theorem~\ref{gillespie-theorem} above).
 Consider two nested hereditary complete cotorsion pairs in
the abelian category of CDG\+modules $\sZ^0(B^\cu\bModl)$:
\begin{enumerate}
\renewcommand{\theenumi}{\roman{enumi}}
\item the flat cotorsion pair ($\widetilde\sL=\sZ^0(B^\cu\bModl)_\flat$,
$\sZ^0(B^\cu\bModl)^\cot=\sR$) in the graded module category
$\sZ^0(B^\cu\bModl)=B^*[\delta]\sModl$, as per
Section~\ref{cotorsion-graded-modules-subsecn};
\item the graded-flat cotorsion pair ($\sL=\sZ^0(B^\cu\bModl_\bflat)$,
$\widetilde\sR$) from Proposition~\ref{graded-flat-cotorsion-pair-prop}.
\end{enumerate}

 Recall that $\sZ^0(B^\cu\bModl)^\cot=\sZ^0(B^\cu\bModl^\bcot)$ by
Theorem~\ref{cotorsion=graded-cotorsion-theorem}; so there is
no ambiguity in our definitions of the class~$\sR$.
 The inclusion $\widetilde\sL=\sZ^0(B^\cu\bModl)_\flat\subset
\sZ^0(B^\cu\bModl_\bflat)=\sL$ explained in
Section~\ref{prelim-proj-inj-flatness-subsecn} implies the inclusion
$\widetilde\sR\subset\sR$.

 Next we need to show that the cores of the two cotorsion pairs agree,
that is $\widetilde\sL\cap\sR=\sL\cap\widetilde\sR$.
 Let $F^\cu$ be a CDG\+module belonging to $\sL\cap\widetilde\sR=
\sL\cap\sL^{\perp_1}\subset\sZ^0(B^\cu\bModl)$.
 So, first of all, $F^\cu$ is a graded-flat CDG\+module over~$B^\cu$.
 Furthermore, we have $\Ext^1_{\sZ^0(B^\cu\bModl)}(P^\cu,F^\cu)=0$
for any graded-flat CDG\+module $P^\cu$, and in particular, for any
graded-projective CDG\+module $P^\cu$ over~$B^\cu$.
 In other words, $F^\cu$ is a graded-flat contraacyclic CDG\+module.
 By Corollary~\ref{flat=graded-flat-and-contraacyclic-cor}, it follows
that $F^\cu$ is a flat CDG\+module over~$B^\cu$;
so $F^\cu\in\widetilde\sL$.

 Conversely, let $F^\cu$ be a CDG\+module belonging to
$\widetilde\sL\cap\sR$.
 So, $F^\cu$ is a flat cotorsion CDG\+module over~$B^\cu$.
 Let us show that $F^\cu\in\widetilde\sR=\sL^{\perp_1}\subset
\sZ^0(B^\cu\bModl)$.
 Given a graded-flat CDG\+module $G^\cu$ over $B^\cu$, we need
to prove that the abelian group
$\Ext^1_{\sZ^0(B^\cu\bModl)}(G^\cu,F^\cu)$ vanishes.

 For this purpose, consider the restricted contraderived cotorsion pair
from Section~\ref{restricted-contraderived-cotorsion-pair-subsecn}.
 This is the complete cotorsion pair formed by the classes of
graded-projective CDG\+modules and graded-flat contraacyclic
CDG\+modules in the exact category of graded-flat CDG\+modules
$\sZ^0(B^\cu\bModl_\bflat)$.
 By Corollary~\ref{flat=graded-flat-and-contraacyclic-cor},
the class of graded-flat contraacyclic CDG\+modules coincides with
the class of flat CDG\+modules.

 Pick a special preenvelope exact sequence $0\rarrow G^\cu\rarrow H^\cu
\rarrow P^\cu\rarrow0$ \,\eqref{special-preenvelope-sequence}
in $\sZ^0(B^\cu\bModl_\bflat)$ with a flat CDG\+module $H^\cu$ and
a graded-projective CDG\+module $P^\cu$ over~$B^\cu$.
 Now we have $\Ext^1_{\sZ^0(B^\cu\bModl)}(H^\cu,F^\cu)=0$ since
$H^\cu$ is a flat CDG\+module and $F^\cu$ is a cotorsion CDG\+module.
 We also have $\Ext^2_{\sZ^0(B^\cu\bModl)}(P^\cu,F^\cu)=0$ since
$P^\cu$ is a graded-projective CDG\+module, $F^\cu$ is a flat
CDG\+module (hence contraacyclic, by
Corollary~\ref{flats-are-contraacyclic-cor}), and the projective
contraderived cotorsion pair in $\sZ^0(B^\cu\bModl)$ is hereditary.
 Thus $\Ext^1_{\sZ^0(B^\cu\bModl)}(G^\cu,F^\cu)=0$.

 Now we can apply Theorem~\ref{gillespie-theorem} to produce
a hereditary abelian model structure on $\sZ^0(B^\cu\bModl)$ with
the desired class of cofibrant objects $\sL$ and the desired class
of fibrant objects~$\sR$.
 It remains to prove that the resulting class of weakly trivial
objects $\sW$ coincides with the class of all contraacyclic
CDG\+modules.

 Indeed, all objects in the class
$\widetilde\sL=\sZ^0(B^\cu\bModl)_\flat$ are contraacyclic by
Corollary~\ref{flats-are-contraacyclic-cor}.
 All objects in the class $\widetilde\sR=\sL^{\perp_1}$ are
contraacyclic by the definition, since the class
$\sL=\sZ^0(B^\cu\bModl_\bflat)$ contains all the graded-projective
CDG\+modules.
 Using any one of the two descriptions of the class $\sW$ in
Gillespie's theorem (Theorem~\ref{gillespie-theorem}) and
the fact that the class of all contraacyclic CDG\+modules is thick
(by~\cite[Lemma~6.13(a)]{PS5} or
Theorem~\ref{projective-contraderived-model-structure}), it follows
that all the CDG\+modules belonging to $\sW$ are contraacyclic.

 Conversely, let $W^\cu$ be a contraacyclic left CDG\+module
over~$B^\cu$.
 Consider either a special precover or a special preenvelope short
exact sequence for $W^\cu$ in the complete cotorsion pair
$(\sL,\widetilde\sR)$ in $\sZ^0(B^\cu\bModl)$.
 We have already mentioned that all the CDG\+modules from
$\widetilde\sR$ are contraacyclic and the class $\sW$ is thick
in $\sZ^0(B^\cu\bModl)$.
 Hence all the three terms of our approximation sequence are
contraacyclic CDG\+modules.
 As all graded-flat contraacyclic CDG\+modules are flat by
Corollary~\ref{flat=graded-flat-and-contraacyclic-cor}, we have
obtained a short exact sequence showing that $W\in\sW$ according to
the description of the class $\sW$ in Theorem~\ref{gillespie-theorem}.

 Finally, the abelian model structure $(\sL,\sW,\sR)$ on
$\sZ^0(B^\cu\bModl)$ is cofibrantly generated
by~\cite[Corollary~4.3]{PS4}, as both the cotorsion pairs
$(\widetilde\sL,\sR)$ and $(\sL,\widetilde\sR)$ are generated by some
sets of objects (see Section~\ref{cotorsion-graded-modules-subsecn}
and Proposition~\ref{graded-flat-cotorsion-pair-prop}).
\end{proof}

 We will call the abelian model structure $(\sL,\sW,\sR)$ defined in
Theorem~\ref{flat-contraderived-model-structure} the \emph{flat
contraderived model structure} on the abelian category of CDG\+modules
$\sZ^0(B^\cu\bModl)$.

%
%
%
%
%

\subsection{The Quillen equivalence}
 Now we can see that the projective and flat contraderived model
structures on the abelian category of CDG\+modules $\sZ^0(B^\cu\bModl)$
are Quillen equivalent.

\begin{thm}\label{the-Quillen-equivalence-thm}
 Let $B^\cu=(B^*,d,h)$ be a CDG\+ring.
 The identity adjunction (of the two identity functors) between
two copies of the abelian category\/ $\sZ^0(B^\cu\bModl)$ is
a Quillen adjunction, and in fact a Quillen equilalence, between
the projective contraderived model structure and the flat contraderived
model structure on the abelian category of CDG\+modules over~$B^\cu$.
 The identity functor acting from\/ $\sZ^0(B^\cu\bModl)$ endowed with
the projective contraderived model structure to $\sZ^0(B^\cu\bModl)$
endowed with the flat contraderived model structure is the left
Quillen equivalence, and the identity functor acting from the flat
contraderived model structure to the projective one is the right
Quillen equivalence.
\end{thm}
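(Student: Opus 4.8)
The plan is to take advantage of the fact that the projective and flat contraderived model structures of Theorems~\ref{projective-contraderived-model-structure} and~\ref{flat-contraderived-model-structure} share \emph{one and the same} class of weakly trivial objects, namely $\sW=\sZ^0(B^\cu\bModl)_\ac^\bctr$, the class of contraacyclic CDG\+modules. Abbreviating the two classes of cofibrant objects by $\sL_\bproj=\sZ^0(B^\cu\bModl_\bproj)$ and $\sL_\bflat=\sZ^0(B^\cu\bModl_\bflat)$, I would begin by recording the inclusion $\sL_\bproj\subseteq\sL_\bflat$ (every graded-projective CDG\+module is graded-flat), which together with the equality of the two classes $\sW$ immediately gives $\sL_\bproj\cap\sW\subseteq\sL_\bflat\cap\sW$ as well. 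I would likewise record the inclusion of fibrant objects $\sZ^0(B^\cu\bModl^\bcot)\subseteq\sZ^0(B^\cu\bModl)$.

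To check that the identity adjunction is a Quillen adjunction, with $F=\id$ regarded as a functor from the projective to the flat model structure, I would use the description of the morphism classes in Theorem~\ref{abelian-model-structures-thm}. The cofibrations of either structure are the $\sL$\+monomorphisms for the respective class $\sL$ of cofibrant objects, and the trivial cofibrations are the $(\sL\cap\sW)$\+monomorphisms. Thus the inclusions $\sL_\bproj\subseteq\sL_\bflat$ and $\sL_\bproj\cap\sW\subseteq\sL_\bflat\cap\sW$ show at once that $F$ carries cofibrations to cofibrations and trivial cofibrations to trivial cofibrations. Hence $F$ is a left Quillen functor and its identity right adjoint $G$, acting from the flat to the projective structure, is a right Quillen functor; dually, one sees from $\sZ^0(B^\cu\bModl^\bcot)\subseteq\sZ^0(B^\cu\bModl)$ that $G$ preserves fibrations and trivial fibrations.

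The key point in upgrading this to a Quillen equivalence is that, by Theorem~\ref{abelian-model-structures-thm}, the class of \emph{weak equivalences} of an abelian model structure is determined by its class of weakly trivial objects alone: a morphism is a weak equivalence exactly when it factors as a composite $rl$ of an $\sW$\+monomorphism $l$ and an $\sW$\+epimorphism $r$. Since both model structures have the same $\sW$, they have literally the same class of weak equivalences. I would then verify condition~(1) of Lemma~\ref{quillen-equivalence-lemma}: for a cofibrant object $L$ in the projective structure and a fibrant object $R$ in the flat structure, the morphism $F(L)=L\rarrow R$ is a weak equivalence in the flat structure if and only if the adjoint morphism $L\rarrow R=G(R)$ is a weak equivalence in the projective structure. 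Since $F$ and $G$ are identity functors and the two classes of weak equivalences coincide, this is the same assertion about the same morphism, so the biconditional is automatic. Therefore the identity adjunction is a Quillen equivalence, with $F$ the left and $G$ the right Quillen equivalence.

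I do not anticipate any genuine obstacle at this stage: once both abelian model structures are available, the statement is essentially formal, the content being concentrated entirely in the coincidence of the two classes $\sW$ and in the inclusion $\sL_\bproj\subseteq\sL_\bflat$. The one place where nontrivial input hides is the identification, via Corollary~\ref{flat=graded-flat-and-contraacyclic-cor}, of graded-flat contraacyclic CDG\+modules with flat ones; but this was already used in the construction of the flat contraderived model structure in Theorem~\ref{flat-contraderived-model-structure}, so here it may simply be invoked. It is worth remarking that, because the two homotopy categories are both the localization $\sZ^0(B^\cu\bModl)[\cW^{-1}]$ at the \emph{same} class $\cW$ of weak equivalences, the derived functors $\boL F$ and $\boR G$ are in fact identity functors on a single triangulated category, which renders the equivalence transparent.
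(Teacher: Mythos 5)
Your proposal is correct and follows essentially the same route as the paper: the Quillen adjunction comes from the inclusion of graded-projective CDG\+modules into graded-flat ones (equivalently, of graded-cotorsion CDG\+modules into all CDG\+modules), and the upgrade to a Quillen equivalence comes from the observation that the two model structures share the same class of weakly trivial objects, hence (by Theorem~\ref{abelian-model-structures-thm}) the same class of weak equivalences, which makes condition~(1) of Lemma~\ref{quillen-equivalence-lemma} automatic for the identity functors. Your write-up merely spells out in more detail the steps the paper's proof compresses into two sentences.
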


\begin{proof}
 The pair of identity functors is a Quillen adjunction because
all graded-projective CDG\+modules are graded-flat, or if one wishes,
because the class of graded-cotorsion CDG\+modules is contained in
the class of all CDG\+modules.
 The Quillen adjunction is a Quillen equivalence (as per the discussion
in Section~\ref{quillen-adjunctions-equivalences-subsecn}) because
the classes of weakly trivial objects and hence also the classes of 
weak equivalences (recall Theorem~\ref{abelian-model-structures-thm})
in the two model structures agree.
\end{proof}

\begin{rem}
A more practical description of the class of weak equivalences
than that provided by the above proof is as follows.
A morphism in the abelian category of CDG\+modules\/
$\sZ^0(B^\cu\bModl)$ is a weak equivalence in either of the two
contraderived model structures if and only if it has a contraacyclic
cone.
This is for example~\cite[Lemma~6.18]{PS5} specialized to the DG\+categories
of CDG\+modules.
Consequently, a morphism between graded-flat CDG\+modules (i.~e.\ cofibrant objects of the flat contraderived module structure) is a weak equivalence by Corollary~\ref{flat=graded-flat-and-contraacyclic-cor} if and only if its cone is a flat CDG\+module.
\end{rem}

\begin{rem}\label{rem:how-the-Quillen-equivalence-connects}
Let us make it clear how Theorems~\ref{projective-contraderived-model-structure}, \ref{flat-contraderived-model-structure} and~\ref{the-Quillen-equivalence-thm} encapsulate the main results of Section~\ref{projective-and-flat-contraderived-secn}. First of all, Becker proved in~\cite[Proposition~1.1.14]{Bec} that, in the terminology of~\S\ref{abelian-model-structures-subsecn},
\begin{enumerate}
\item the class of cofibrant and fibrant objects of a hereditary abelian model category $\sA$ is a Frobenius exact category, and
\item the stable category of this Frobenius exact category is canonically equivalent to the homotopy category $\sA[\cW^{-1}]$.
\end{enumerate}
If we apply this to the projective contraderived model structure from Theorem~\ref{projective-contraderived-model-structure}, we recover the equivalence from Theorem~\ref{contraderived-cotorsion-pair-category-theorem}(b). Next, it is a standard fact that for any model category $\sC$ with the class of weak equivalences $\cW$ and the full subcategory of cofibrant objects $\sL$, the canonical functor $\sL[(\sL\cap\cW)^{-1}]\to\sC[\cW^{-1}]$ is an equivalence \cite[Proposition 1.2.3]{Hov-book}. Since Quillen equivalences induce equivalences between homotopy categories by Lemma~\ref{quillen-equivalence-lemma}, the combination of the latter observation with Theorem~\ref{the-Quillen-equivalence-thm} recovers Theorem~\ref{projective-and-flat-contraderived-theorem}. Finally, Theorem~\ref{graded-flat-coderived-contraderived-theorem} again follows by an application of \cite[Proposition~1.1.14]{Bec} to the flat contraderived model structure of Theorem~\ref{flat-contraderived-model-structure}.
\end{rem}

\Section{Compact Generators of the Contraderived Category}
\label{compact-generators-of-contraderived-secn}

 In this section we are interested in CDG\+rings $B^\cu=(B^*,d,h)$
such that the graded ring $B^*$ is graded right coherent.
 Under this assumption, \cite[Corollary~8.20]{PS5} tells that
the coderived category of right CDG\+modules
$\sD^\bco(\bModr B^\cu)$ is compactly generated and describes its
full subcategory of compact objects (up to idempotent completion).
 The aim of this section is to show that, under the same assumption,
the contraderived category of left CDG\+modules
$\sD^\bctr(B^\cu\bModl)$ is compactly generated as well, and describe
its full subcategory of compact objects.

\subsection{The tensor product pairing functor}      
\label{tensor-product-pairing-subsecn}
 Let $B^\cu=(B^*,d,h)$ be a CDG\+ring.
 Let $N^\cu=(N^*,d_N)$ be a right CDG\+module and $M^\cu=(M^*,d_M)$
be a left CDG\+module over~$B^\cu$.
 Then, following the discussion in
Sections~\ref{prelim-cdg-bimodules-subsecn}
and~\ref{prelim-dg-categories-of-cdg-modules-subsecn}, the tensor
product $N^*\ot_{B^*}M^*$ of the graded right $B^*$\+module $N^*$
and the graded left $B^*$\+module $M^*$ is endowed with a natural
differential making it a complex of abelian groups.
 The DG\+functor of two arguments
$$
 \ot_{B^*}\:\bModr B^\cu\times B^\cu\bModl\lrarrow\bC(\Ab)
$$
induces a triangulated functor of two arguments
\begin{equation} \label{tensor-product-on-homotopy-categories}
 \ot_{B^*}\:\sH^0(\bModr B^\cu)\times\sH^0(B^\cu\bModl)
 \lrarrow\sK(\Ab)
\end{equation}
taking values in the homotopy category of complexes of abelian
groups $\sK(\Ab)$.

 The aim of this section is to construct a left derived functor of
the functor~\eqref{tensor-product-on-homotopy-categories} acting
from the Cartesian product of the coderived and the contraderived
category to the derived category of abelian groups,
\begin{equation} \label{derived-tensor-product}
 \ot_{B^*}^\boL\:\sD^\bco(\bModr B^\cu)\times\sD^\bctr(B^\cu\bModl)
 \lrarrow\sD(\Ab).
\end{equation}
 It is easy to construct
a derived functor~\eqref{derived-tensor-product} using
graded-projective resolutions of the second argument, but a more
substantial approach consists in using graded-flat resolutions and
Theorem~\ref{projective-and-flat-contraderived-theorem}.
In the jargon of model categories, this is the same as proving that
$\ot_{B^*}\:\sZ^0(\bModr B^\cu)\times\sZ^0(B^\cu\bModl)
\lrarrow\sZ^0\bC(\Ab)$ is a Quillen bifunctor with respect to
the coderived model structure on $\sZ^0(\bModr B^\cu)$, the flat
contraderived model structure on $\sZ^0(B^\cu\bModl)$, and the injective
model structure on the abelian category of complexes of abelian groups
$\sC(\Ab)=\sZ^0\bC(\Ab)$ in the sense of~\cite[Theorem~2.3.13]{Hov-book}
(we refer to~\cite[Section~4.2]{Hov-book}
or~\cite[Section~8.3]{Sto-ICRA} for details).

 Specifically, let $N^\cu$ be a right CDG\+module and $M^\cu$ be
a left CDG\+module over~$B^\cu$.
 Following Theorems~\ref{contraderived-cotorsion-pair-category-theorem}
and~\ref{projective-and-flat-contraderived-theorem}, we have
a natural triangulated equivalence
\begin{equation} \label{flat-interpretation-of-contraderived}
 \sD^\bctr(B^\cu\bModl)\simeq
 \sH^0(B^\cu\bModl_\bflat)/\sH^0(B^\cu\bModl)_\flat.
\end{equation}
 The construction of the category
equivalence~\eqref{flat-interpretation-of-contraderived}, as per
Theorem~\ref{contraderived-cotorsion-pair-category-theorem}, involves
the possibility to find, for any left CDG\+module $M^\cu$ over $B^\cu$,
a graded-flat left CDG\+module $F^\cu$ over $B^\cu$ together with
a closed morphism of CDG\+modules $F^\cu\rarrow M^\cu$ with
contraacyclic cone.
 We put
\begin{equation} \label{derived-tensor-product-constructed}
 N^\cu\ot_{B^*}^\boL M^\cu=N^\cu\ot_{B^*} F^\cu.
\end{equation}

 In order to show that the derived tensor product
functor~$\ot_{B^*}^\boL$ \,\eqref{derived-tensor-product} is
well-defined by the formula~\eqref{derived-tensor-product-constructed}, 
it remains to check that
\begin{enumerate}
\item the complex of abelian groups $N^\cu\ot_{B^*}G^\cu$ is acyclic
for any right CDG\+module $N^\cu$ and any flat left CDG\+module $G^\cu$
over~$B^\cu$;
\item the complex of abelian groups $Y^\cu\ot_{B^*}F^\cu$ is acyclic
for any coacyclic right CDG\+module $Y^\cu$ and any graded-flat left
CDG\+module $F^\cu$ over~$B^\cu$.
\end{enumerate}

 Concerning~(1), we observe that, by the graded version of
the Govorov--Lazard characterization of flat modules (over the graded
ring~$B^*[\delta]$), the CDG\+module $G^\cu$ is a direct limit of
projective left CDG\+modules over~$B^\cu$.
 Following the discussion in
Section~\ref{prelim-proj-inj-flatness-subsecn}, all projective
CDG\+modules are contractible.
 Since the tensor product of CDG\+modules preserves direct limits,
it follows that the complex $N^\cu\ot_{B^*}G^\cu$ is a direct limit of
contractible complexes of abelian groups; hence it is a (pure) acyclic
complex.
 An alternative proof of~(1) can be obtained by reversing the argument
in the proof of
Lemma~\ref{flat-cdg-modules-acyclic-tensor-product-lemma} below.

 In order to prove~(2), we use the description of coacyclic
CDG\+modules obtained in the paper~\cite{PS5}.
 According to~\cite[Corollary~7.17]{PS5}, the class of coacyclic right
CDG\+modules over $B^\cu$ is precisely the closure of the class of
all contractible right CDG\+modules under extensions and direct limits.
 The tensor product functor ${-}\ot_{B^*}\nobreak F^\cu$ with
a graded-flat left CDG\+module $F^\cu$ over $B^\cu$ preserves
extensions; and the tensor product preserves direct limits
quite generally.
 So the complex $Y^\cu\ot_{B^*}F^\cu$ can be obtained from contractible
complexes of abelian groups using the operations of extensions and
direct limits; hence this complex is acyclic.

\subsection{Compact generators of the coderived category}
\label{compact-generators-of-coderived-subsecn}
 Let $\sT$ be a triangulated category and $\sS\subset\sT$ be
a class of objects.
 One says that the class $\sS$ \emph{weakly generates} the triangulated
category $\sT$ if for every nonzero object $T\in\sT$ there exist
an object $S\in\sS$ and an integer $n\in\boZ$ for which
$\Hom_\sT(S,T[n])\ne0$.

 Let $\sT$ be a triangulated category with coproducts and
$\sS\subset\sT$ be a class of objects.
 One says that the class $\sS$ \emph{generates} $\sT$ (as a triangulated
category with coproducts) if the minimal full triangulated subcategory
of $\sT$ containing $\sS$ and closed under coproducts coincides
with~$\sT$.
 One can easily see that if $\sS$ generates $\sT$ then $\sS$ also
weakly generates $\sT$ (as the terminology suggests).

 Let $\sT$ be a triangulated category with coproducts.
 An object $S\in\sT$ is said to be \emph{compact} if the functor
$\Hom_\sT(S,{-})\:\sT\rarrow\Ab$ preserves coproducts.
 It is well-known that if $\sS$ is a set of compact objects in $\sT$,
then $\sS$ generates $\sT$ if and only if $\sS$ weakly generates~$\sT$
\,\cite[Theorem~2.1(2) or~4.1]{Neem-bb}.
 If this is the case, then the set $\sS$ is said to be a set of
\emph{compact generators} of $\sT$ and the triangulated category $\sT$
is said to be \emph{compactly generated}.

 For any triangulated category $\sT$ with coproducts, the class of
all compact objects in $\sT$ is a full triangulated subcategory.
 If $\sS$ is a set of compact generators of $\sT$, then the class
of all compact objects of $\sT$ is the closure of $\sS$ under shifts,
cones, and direct summands in~$\sT$ \,\cite[Theorem~2.1(3)]{Neem-bb}.

 A graded ring $B^*$ is said to be \emph{graded right coherent} if
every finitely generated homogeneous right ideal in $B^*$ is finitely
presented (as a graded right $B^*$\+module).
 Given a CDG\+ring $B^\cu=(B^*,d,h)$, the graded ring $B^*$ is graded
right coherent if and only if the graded ring $B^*[\delta]$
is~\cite[Corollary~8.9(b)]{PS5}.

 Let $B^\cu=(B^*,d,h)$ be a CDG\+ring such that the graded ring $B^*$
is graded right coherent.
 Then the additive category $\sZ^0(\bmodr B^\cu)$ of finitely presented
right CDG\+modules over $B^\cu$ (as defined in
Section~\ref{prelim-finite-countable-modules-subsecn}) is abelian.

 Any short exact sequence in $\sZ^0(\bModr B^\cu)$, and in particular,
any short exact sequence in $\sZ^0(\bmodr B^\cu)$, can be viewed as
a finite complex of CDG\+modules.
 So one can construct its totalization (the total CDG\+module), as
explained in Section~\ref{totalizations-subsecn} below.
 Clearly, the totalization of a finite complex of finitely presented
CDG\+modules is a finitely presented CDG\+module.

 A finitely presented CDG\+module over $B^\cu$ is said to be
\emph{absolutely acyclic}~\cite[Sections~3.3 and~3.11]{Pkoszul},
\cite[Section~8.4]{PS5} if it belongs to the minimal full triangulated
subcategory closed under direct summands in the homotopy category
$\sH^0(\bmodr B^\cu)$ containing the totalizations of all short exact
sequences in $\sZ^0(\bmodr B^\cu)$.
 The full subcategory of absolutely acyclic finitely presented
CDG\+modules is denoted by $\sH^0(\bmodr B^\cu)_\ac^\abs\subset
\sH^0(\bmodr B^\cu)$ or $\sZ^0(\bmodr B^\cu)_\ac^\abs\subset
\sZ^0(\bmodr B^\cu)$.
 The full subcategory $\sZ^0(\bmodr B^\cu)_\ac^\abs$ is the closure
of the full subcategory of contractible finitely presented
CDG\+modules under extensions and direct summands in the abelian
category $\sZ^0(\bmodr B^\cu)$ \,\cite[Proposition~8.12]{PS5}.

 The triangulated Verdier quotient category
$$
 \sD^\abs(\bmodr B^\cu)=\sH^0(\bmodr B^\cu)/\sH^0(\bmodr B^\cu)_\ac^\abs
$$
is called the \emph{absolute derived category} of finitely presented
right CDG\+modules over~$B^\cu$.
 The following theorem is one of the main results of
the paper~\cite{PS5} (specialized from abelian DG\+categories
to CDG\+modules).

\begin{thm} \label{compact-generators-of-coderived-theorem}
 Let $B^\cu=(B^*,d,h)$ be a CDG\+ring such that the graded ring $B^*$
is graded right coherent.
 Then the inclusion of DG\+categories\/ $\bmodr B^\cu\rarrow
\bModr B^\cu$ induces a fully faithful triangulated functor
\begin{equation} \label{absolute-derived-into-coderived}
 \sD^\abs(\bmodr B^\cu)\lrarrow\sD^\bco(\bModr B^\cu).
\end{equation}
 The coderived category\/ $\sD^\bco(\bModr B^\cu)$ is compactly 
generated, and the set (of representatives of the isomorphism classes)
of all objects in the image of the triangulated
functor~\eqref{absolute-derived-into-coderived} is a set of compact
generators of\/ $\sD^\bco(\bModr B^\cu)$.
\end{thm}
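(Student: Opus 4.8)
The plan is to deduce this statement by specializing the general theory of locally coherent abelian DG-categories from \cite{PS5} and then transporting the conclusion across the equivalence $\sH^0(\bModrinj B^\cu)\simeq\sD^\bco(\bModr B^\cu)$ furnished by Theorem~\ref{coderived-cotorsion-pair-category-theorem}(b). Concretely, I would first verify that $\bModr B^\cu$ is a locally coherent abelian DG-category whose DG-subcategory of finitely presented objects is exactly $\bmodr B^\cu$; then apply the abstract compact-generation theorem for the homotopy category of graded-injective objects; and finally rewrite everything in terms of the coderived category rather than $\sH^0(\bModrinj B^\cu)$.

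For the first step, recall that $\sZ^0(\bModr B^\cu)\simeq\sModr B^*[\delta]$, and that by \cite[Corollary~8.9(b)]{PS5} the graded ring $B^*$ is graded right coherent if and only if $B^*[\delta]$ is. Hence $\sModr B^*[\delta]$ is a locally coherent Grothendieck abelian category, and its finitely presented objects are precisely $\smodr B^*[\delta]=\sZ^0(\bmodr B^\cu)$ by Proposition~\ref{finiteness-countability-for-cdg-modules-prop}(b). This is exactly what it means for $\bModr B^\cu$ to be a locally coherent abelian DG-category with finitely presented part $\bmodr B^\cu$. The absolute derived category $\sD^\abs(\bmodr B^\cu)$ is the one defined above, its acyclics being described by \cite[Proposition~8.12]{PS5}.

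The core step is then to invoke the abstract result \cite[Theorem~8.19]{PS5}: for a locally coherent abelian DG-category $\bA$, the natural functor $\sD^\abs(\bA_\bfp)\rarrow\sH^0(\bA_\binj)$ is fully faithful, and its image is a set of compact generators of $\sH^0(\bA_\binj)$. Specializing to $\bA=\bModr B^\cu$ and postcomposing with the equivalence $\sH^0(\bModrinj B^\cu)\simeq\sD^\bco(\bModr B^\cu)$ of Theorem~\ref{coderived-cotorsion-pair-category-theorem}(b) produces the fully faithful functor~\eqref{absolute-derived-into-coderived} and shows that $\sD^\bco(\bModr B^\cu)$ is compactly generated by the images of finitely presented right CDG-modules.

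I expect the genuinely hard part — the content buried in the cited \cite[Theorem~8.19]{PS5} — to be the full faithfulness of~\eqref{absolute-derived-into-coderived}, equivalently the assertion that for finitely presented $M^\cu$ and $N^\cu$ the map from $\Hom$ in $\sD^\abs(\bmodr B^\cu)$ computes $\Hom$ in the coderived category correctly. The subtlety is that morphisms in $\sD^\bco$ are computed via graded-injective resolutions, so one must show that a finitely presented CDG-module cannot distinguish absolute-acyclicity from coacyclicity of the target; this rests on the description of coacyclic CDG-modules (via \cite[Corollary~7.17]{PS5}) as the closure of contractibles under extensions and direct limits, together with the fact that $\Hom$ out of a finitely presented module commutes with those direct limits. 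Once full faithfulness and compactness of the images are in hand, weak generation — and hence genuine generation by \cite[Theorem~2.1]{Neem-bb} — follows from local coherence: a nonzero object of $\sD^\bco(\bModr B^\cu)$ is represented by a non-contractible graded-injective CDG-module, into which some finitely presented CDG-module must admit a morphism that is not null-homotopic.
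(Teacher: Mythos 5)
Your proposal is correct and follows essentially the same route as the paper: the paper's proof is a one-line citation of \cite[Corollary~8.20]{PS5}, which is precisely the specialization of \cite[Theorem~8.19]{PS5} to CDG\+modules that you reconstruct (local coherence of $\sModr B^*[\delta]$ via \cite[Corollary~8.9(b)]{PS5}, identification of the finitely presented objects, and transport across the equivalence of Theorem~\ref{coderived-cotorsion-pair-category-theorem}(b)). Your closing paragraph sketching what lies inside the cited result is a reasonable gloss but is not part of the paper's argument, which delegates all of that content to \cite{PS5}.
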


\begin{proof}
 This is~\cite[Corollary~8.20]{PS5} (with the left CDG\+modules
replaced by the right ones).
\end{proof}

\subsection{Contraacyclic and coacyclic total complexes}
\label{totalizations-subsecn}
 Let $B^\cu=(B^*,d,h)$ be a CDG\+ring and $C^{\bu,\cu}$ be a complex
of left CDG\+modules over~$B^\cu$, i.~e., a complex in the abelian
category $\sZ^0(B^\cu\bModl)$
$$
 \dotsb\lrarrow C^{-1,\cu}\lrarrow C^{0,\cu}\lrarrow C^{1,\cu}
 \lrarrow C^{2,\cu}\lrarrow\dotsb
$$
 So, for every $n\in\boZ$, a left CDG\+module
$C^{n,\cu}=(C^{n,*},d_{C^n})$ over $B^\cu$ is given.
 The differential $d_n\:C^{n,\cu}\rarrow C^{n+1,\cu}$ is a closed
morphism of CDG\+modules; and the composition $d_{n+1}d_n\:
C^{n,\cu}\rarrow C^{n+2,\cu}$ vanishes.

 Then the \emph{direct sum totalization}
$S^\cu=\Tot^\sqcup(C^{\bu,\cu})$ of the complex $C^{\bu,\cu}$ is
the CDG\+module $S^\cu=(S^*,d_S)$ with the grading components
$S^i=\bigoplus_{n+j=i}C^{n,j}$ for all $i\in\boZ$ and the differential
given by the formula $d_S(c_{n,j})=d_n(c_{n,j})+(-1)^nd_{C^n}(c_{n,j})$
for all $c_{n,j}\in C^{n,j}$.
 The usual sign rule for the shift of a left graded module mentioned
in Section~\ref{prelim-graded-modules} is involved with the definition
of the action of $B^*$ on~$S^*$.

 Dually, the \emph{direct product totalization}
$T^\cu=\Tot^\sqcap(C^{\bu,\cu})$ of the complex $C^{\bu,\cu}$ is
the CDG\+module $T^\cu=(T^*,d_T)$ with the grading components
$T^i=\prod_{n+j=i}C^{n,j}$ and the differential given by the same
formula as in the previous paragraph.
 The same sign rule is involved with the definition of the action of
$B^*$ on~$T^*$.

 Of course, for a \emph{finite} complex of CDG\+modules $C^{\bu,\cu}$,
one has $\Tot^\sqcup(C^{\bu,\cu})=\Tot^\sqcap(C^{\bu,\cu})$.
 In this context, we will drop the superindex and write simply
$\Tot(C^{\bu,\cu})$.
 We refer to~\cite[Section~1.2]{Pkoszul}, \cite[Section~1.3]{Pedg},
or~\cite[Section~1.6]{PS5} for a more general discussion of
totalizations of complexes in DG\+categories.

 In particular, let $R$ be a ring and $C^{\bu,\bu}$ be a bicomplex
of $R$\+modules.
 Then the direct sum totalization $S^\bu=\Tot^\sqcup(C^{\bu,\bu})$
and the direct product totalization $T^\bu=\Tot^\sqcap(C^{\bu,\bu})$
are complexes of $R$\+modules with the terms
$S^i=\bigoplus_{n+j=i}C^{n,j}$ and $T^i=\prod_{n+j=i}C^{n,j}$.

\begin{lem} \label{co-contra-acyclic-total-complex-lemma}
 Let $R$ be a ring and $C^{\bu,\bu}$ be a bicomplex of $R$\+modules.
\par
\textup{(a)} Assume that $C^{n,\bu}=0$ for all $n<0$ and, for every
$j\in\boZ$, the complex of $R$\+modules
$$
 0\lrarrow C^{0,j}\lrarrow C^{1,j}\lrarrow C^{2,j}\lrarrow\dotsb
$$
is acyclic.
 Then the complex of $R$\+modules\/ $\Tot^\sqcup(C^{\bu,\bu})$ is
coacyclic (and therefore, acyclic). \par
\textup{(b)} Assume that $C^{n,\bu}=0$ for all $n>0$ and, for every
$j\in\boZ$, the complex of $R$\+modules
$$
 \dotsb\lrarrow C^{-2,j}\lrarrow C^{-1,j}\lrarrow C^{0,j}\lrarrow0
$$
is acyclic.
 Then the complex of $R$\+modules\/ $\Tot^\sqcap(C^{\bu,\bu})$ is
contraacyclic (and therefore, acyclic).
\end{lem}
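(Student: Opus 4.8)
The plan is to read the bicomplex $C^{\bu,\bu}$ as a complex of CDG\+modules over the CDG\+ring $(R,0,0)$ and to reduce both statements to one basic fact: the totalization of a \emph{finite} exact complex of CDG\+modules is both coacyclic and contraacyclic. Concretely, I treat the first index $n$ as the ``outer'' direction and set $C^{n,\cu}=(C^{n,\bu},d'')$, the $n$\+th column regarded as a complex of $R$\+modules (a CDG\+module over $(R,0,0)$); the horizontal differential $d'$ then makes $C^{\bu,\cu}$ a complex in the abelian category $\sZ^0(R\rModl)$, and the acyclicity hypothesis says exactly that this complex is exact in $\sZ^0(R\rModl)$ (exactness being tested degreewise in $j$). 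The building block I would invoke is that the totalization $\Tot([K^\cu\rarrow L^\cu\rarrow M^\cu])$ of a short exact sequence in $\sZ^0(R\rModl)$ is absolutely acyclic, and that absolutely acyclic CDG\+modules are both coacyclic and contraacyclic in the sense of Becker (standard in the theory of derived categories of the second kind; see~\cite{Pkoszul}). Splicing a finite exact complex into short exact sequences and using that this class is closed under cones then shows that the totalization of \emph{any} finite exact complex of CDG\+modules is absolutely acyclic.

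For part~(a), put $Z^{n,\cu}=\ker(d'\:C^{n,\cu}\rarrow C^{n+1,\cu})$; since $d''$ commutes with $d'$ up to sign, $Z^{n,\cu}$ is a CDG\+submodule, and exactness of the rows yields short exact sequences $0\rarrow Z^{n,\cu}\rarrow C^{n,\cu}\rarrow Z^{n+1,\cu}\rarrow0$ with $Z^{0,\cu}=0$. For each $p\ge0$ I form the finite exact complex $C_{(p)}\:\ 0\rarrow C^{0,\cu}\rarrow\dotsb\rarrow C^{p-1,\cu}\rarrow Z^{p,\cu}\rarrow0$, whose totalization $T_p=\Tot(C_{(p)})$ is coacyclic. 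The inclusions $Z^{p,\cu}\rarrow C^{p,\cu}$ induce closed morphisms $T_p\rarrow T_{p+1}$, and inspecting the underlying graded modules shows $\Tot^\sqcup(C^{\bu,\cu})=\varinjlim_p T_p$. As the class of coacyclic CDG\+modules is closed under direct limits (Section~\ref{coderived-subsecn}, \cite[Corollary~7.17]{PS5}), $\Tot^\sqcup(C^{\bu,\cu})$ is coacyclic.

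For part~(b) the obstacle is that $\Tot^\sqcap$ is an inverse limit, and the contraacyclic class is known to be closed only under products, cones, extensions, and summands, \emph{not} under inverse limits. Dualizing the truncation, with $Z^{n,\cu}=\ker(d'\:C^{n,\cu}\rarrow C^{n+1,\cu})$ I form $C_{(p)}\:\ 0\rarrow Z^{-p,\cu}\rarrow C^{-p,\cu}\rarrow\dotsb\rarrow C^{0,\cu}\rarrow0$, with contraacyclic totalizations $T_p$; the corestrictions $C^{-p-1,\cu}\twoheadrightarrow Z^{-p,\cu}$ assemble into a tower $\dotsb\rarrow T_{p+1}\rarrow T_p$ with degreewise surjective transition maps, and $\Tot^\sqcap(C^{\bu,\cu})=\varprojlim_p T_p$. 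The plan is to sidestep inverse limits via the telescope short exact sequence $0\rarrow\Tot^\sqcap(C^{\bu,\cu})\rarrow\prod_p T_p\xrightarrow{\,1-s\,}\prod_p T_p\rarrow0$, which is exact because the transition maps are surjective. Here $\prod_p T_p$ is contraacyclic as a product of contraacyclic CDG\+modules, and the totalization $W$ of this three\+term exact sequence is absolutely acyclic, hence contraacyclic. The outer\+degree decomposition of $W$ splits its underlying graded module as a direct sum of its three columns, so the two column sub/quotient sequences are graded\+split and give genuine distinguished triangles in $\sH^0(R\rModl)$ (as in the proof of Lemma~\ref{Ext-1-homotopy-hom-lemma}). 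Using closure of the contraacyclic class under cones, the first triangle shows $\cone(\Tot^\sqcap(C^{\bu,\cu})\rarrow\prod_p T_p)$ is contraacyclic (cone of $(\prod_p T_p)[-2]\rarrow W$), and the second then yields that $\Tot^\sqcap(C^{\bu,\cu})$ is contraacyclic. The hard part is precisely this last manipulation, which converts the non\+split telescope sequence into statements about cones of morphisms between contraacyclic objects.

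Finally, the parenthetical ``therefore acyclic'' is immediate: computing morphisms in $\sH^0(R\rModl)$ out of, respectively into, the one\+term complex $R$ shows that any coacyclic or contraacyclic complex has vanishing cohomology.
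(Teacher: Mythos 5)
Your proof is correct in substance, but it takes a genuinely different route from the paper's. The paper's proof is purely citational: it invokes Positselski's classical results that the totalization of an acyclic bounded below (resp.\ bounded above) complex of CDG\+modules is coacyclic (resp.\ contraacyclic) \emph{in the sense of Positselski} \cite[Lemma~2.1 and Lemma~4.1]{Psemi}, \cite[Lemma~9.12]{Pksurv}, and then the comparison lemmas \cite[Lemmas~7.9 and~6.13]{PS5} saying that Positselski's classes are contained in Becker's. You instead work directly with Becker's classes. In part~(a) your argument is actually \emph{simpler} than the classical one: writing $\Tot^\sqcup(C^{\bu,\bu})$ as the union of totalizations $T_p$ of the canonical truncations and invoking closure of the Becker coacyclic class under direct limits (\cite[Corollary~7.17]{PS5}, quoted in Section~\ref{coderived-subsecn}) replaces the telescope argument entirely; note this shortcut is specific to Becker's class, since Positselski's coacyclics are not known to be closed under direct limits. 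In part~(b) you essentially reconstruct Positselski's classical proof (tower of degreewise surjections with truncated totalizations, Mittag--Leffler exactness of the telescope sequence, closure under products, and the column-filtration trick converting a non-split short exact sequence with two contraacyclic terms into the contraacyclicity of the third), but carried out inside Becker's framework. What the paper's proof buys is brevity; what yours buys is a self-contained argument that never mentions Positselski's derived categories of the second kind. All the auxiliary facts you use (totalizations of short exact sequences of CDG\+modules are both coacyclic and contraacyclic; closure of the contraacyclic class under products, shifts, and cones; graded-split exact sequences yield distinguished triangles as in Lemma~\ref{Ext-1-homotopy-hom-lemma}) are available in the paper or its cited references, and I checked that your identifications $\Tot^\sqcup(C^{\bu,\bu})=\varinjlim_p T_p$ and $\Tot^\sqcap(C^{\bu,\bu})=\varprojlim_p T_p$, as well as the surjectivity of the transition maps, are correct.

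One small slip, in the final parenthetical remark only: as stated, ``computing morphisms out of, respectively into, the one-term complex $R$'' does not prove acyclicity of coacyclic complexes. Morphisms \emph{out of} $R$ compute cohomology, $\Hom_{\sH^0(R\rModl)}(R,X[n])\simeq H^n(X)$, and since $R$ is graded-projective this settles the \emph{contraacyclic} case. For the \emph{coacyclic} case you must map \emph{into} a one-term complex of an injective module (not into $R$, which need not be injective): if $H^n(X)\ne0$, choose an injective $E$ and a nonzero map $H^n(X)\rarrow E$; the induced closed morphism $X\rarrow E[-n]$ into a graded-injective complex is not homotopic to zero, contradicting coacyclicity. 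So your pairing of ``out of/into'' with ``coacyclic/contraacyclic'' is backwards, and $R$ must be replaced by an injective cogenerator such as $\Hom_\boZ(R,\boQ/\boZ)$ on the coacyclic side. This is a two-line repair and does not affect the main argument.
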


\begin{proof}
 In fact, for any acyclic bounded below complex of CDG\+modules
$C^{\bu,\cu}$, the CDG\+module $\Tot^\sqcup(C^{\bu,\cu})$ is
coacyclic in the sense of Positselski~\cite[Lemma~2.1]{Psemi},
\cite[Lemma~9.12(a)]{Pksurv}.
 Any CDG\+module coacyclic in the sense of Positselski is also
coacyclic in the sense of Becker~\cite[Lemma~7.9]{PS5}.
 One can easily see that any DG\+module coacyclic in the sense of
Becker is acyclic.

 Dually, for any acyclic bounded above complex of CDG\+modules
$C^{\bu,\cu}$, the CDG\+module $\Tot^\sqcap(C^{\bu,\cu})$ is
contraacyclic in the sense of Positselski~\cite[Lemma~4.1]{Psemi},
\cite[Lemma~9.12(b)]{Pksurv}.
 Any CDG\+module contraacyclic in the sense of Positselski is also
contraacyclic in the sense of Becker~\cite[Lemma~6.13]{PS5}.
 One can easily see that any DG\+module contraacyclic in the sense
of Becker is acyclic.
\end{proof}

\begin{lem} \label{contractible-totalization-lemma}
 Let $B^\cu=(B^*,d,h)$ be a CDG\+ring. \par
\textup{(a)} Let $C^{\bu,\cu}$ be a bounded below complex of
CDG\+modules over $B^\cu$, i.~e., $C^{n,\cu}=0$ for $n<0$,
$$
 0\lrarrow C^{0,\cu}\lrarrow C^{1,\cu}\lrarrow C^{2,\cu}\lrarrow\dotsb
$$
 Assume that the complex $C^{\bu,\cu}$ is graded-split exact, that is,
its underlying complex of graded $B^*$\+modules $C^{\bu,*}$
is split exact as a complex in the additive category of graded
$B^*$\+modules.
 Then the total CDG\+module\/ $\Tot^\sqcup(C^{\bu,\cu})$ over $B^\cu$
is contractible. \par
\textup{(b)} Let $C^{\bu,\cu}$ be a bounded above complex of
CDG\+modules over $B^\cu$, i.~e., $C^{n,\cu}=0$ for $n>0$,
$$
 \dotsb\lrarrow C^{-2,\cu}\lrarrow C^{-1,\cu}\lrarrow C^{0,\cu}
 \lrarrow0.
$$
 Assume that the complex $C^{\bu,\cu}$ is graded-split exact.
 Then the total CDG\+module\/ $\Tot^\sqcap(C^{\bu,\cu})$ over $B^\cu$
is contractible. 
\end{lem}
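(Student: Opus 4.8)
The plan is to produce an explicit contracting homotopy for the total CDG\+module that is genuinely $B^*$\+linear, treating (a) and (b) uniformly. Write $S^\cu$ for the total CDG\+module ($\Tot^\sqcup$ in case~(a), $\Tot^\sqcap$ in case~(b)) and decompose its differential as $d_S=A+B$, where $A$ is the horizontal part induced by the closed morphisms $C^{n,\cu}\rarrow C^{n+1,\cu}$ and $B$ acts as $(-1)^n d_{C^n}$ on the $n$\+th column. The point is that $A$ is a graded $B^*$\+module map (the differentials $d_n$ are closed, hence $B^*$\+linear), whereas $B$ is only an odd derivation. I would work inside the DG\+algebra $E=\Hom_{B^*}^\bu(S^\cu,S^\cu)$, whose differential $d_E(f)=d_Sf-(-1)^{|f|}fd_S$ squares to zero (the curvature terms cancel, exactly as for any Hom\+complex of CDG\+modules over a common CDG\+ring) and whose elements are by definition $B^*$\+linear maps.

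The graded\+split exactness of the complex $C^{\bu,*}$ supplies a graded $B^*$\+module contracting homotopy, that is, an element $s\in E^{-1}$ lowering the column index by one with $As+sA=\id_S$. The key step is to avoid perturbing by the non\+linear $B$, and instead to set
$$
 \beta:=d_E(s)-\id_S=Bs+sB\in E^0 .
$$
Then $\beta$ is automatically $B^*$\+linear (being a difference of elements of $E$), it is a $d_E$\+cocycle since $d_E(\beta)=d_E^2(s)=0$, and it again lowers the column index by one. I would then define
$$
 h:=\sum_{k\ge0}(-1)^k\,s\beta^k\in E^{-1},
$$
where each summand is $B^*$\+linear and the series is locally finite: in case~(a) every element of the direct\+sum totalization has finite support among the bounded\+below columns, so $\beta^k$ eventually annihilates it; in case~(b) the operator $s\beta^k$ lowers the column degree by $k+1$, so for each fixed output column of the product totalization only finitely many $k$ contribute. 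Hence $h$ is a well\+defined $B^*$\+module endomorphism of $S^\cu$ of degree~$-1$.

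Finally, using that $d_E$ is a derivation, that $d_E(s)=\id_S+\beta$, and that $\beta$ is an even\+degree cocycle, one computes
$$
 d_E(s\beta^k)=d_E(s)\beta^k=\beta^k+\beta^{k+1},
$$
so that $d_E(h)=\sum_{k\ge0}(-1)^k(\beta^k+\beta^{k+1})=\id_S$ by telescoping, the cancellation being column\+wise finite in both totalizations. Thus $\id_{S^\cu}$ is a boundary in $E$, i.e.\ $S^\cu$ is contractible, which settles both~(a) and~(b). The one genuinely delicate point — and the reason the naive homological\+perturbation homotopy fails here — is precisely the $B^*$\+linearity requirement: it forces perturbation by the linear cocycle $\beta=[B,s]$ rather than by the derivation $B$, after which the only remaining thing to verify is the column\+wise convergence of the series, which the boundedness hypothesis guarantees.
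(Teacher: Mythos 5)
Your proof is correct and is in substance the same as the paper's: writing $t=\id-(d_Ss+sd_S)$ as the paper does, your $\beta$ equals $-t$, and your homotopy $\sum_{k\ge0}(-1)^k s\beta^k$ is exactly $s\circ\bigl(d_E(s)\bigr)^{-1}=s\sum_{m\ge0}t^m$, i.e.\ the graded contracting homotopy composed with the locally finite geometric-series inverse of the closed automorphism $d_E(s)=\id-t$ that the paper's argument produces. The only differences are presentational: you verify $d_E(h)=\id_S$ directly by telescoping instead of invoking that a closed automorphism homotopic to zero forces contractibility, and you spell out the product-totalization convergence for part~(b), which the paper dismisses as dual-analogous.
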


\begin{proof}
 We will only prove and use part~(a).
 This is a version of (the proof of)
Lemma~\ref{co-contra-acyclic-total-complex-lemma}(a).
 One can argue similarly to the proof of~\cite[Lemma~2.1]{Psemi}
or~\cite[Lemma~9.12(a)]{Pksurv} using the fact that the totalization
of a graded-split short exact sequence of CDG\+modules is contractible.
 Alternatively, here is a direct argument.
 Let $h\:C^{\bu,*}\rarrow C^{\bu,*}$ be a cochain homotopy contracting
the split exact complex $C^{\bu,*}$ in the additive category of graded
$B^*$\+modules.
 So, for every $n\ge0$, we have a morphism of graded $B^*$\+modules
$h_n\:C^{n,*}\rarrow C^{n-1,*}$.
 Put $S^\cu=\Tot^\sqcup(C^{\bu,\cu})$, and consider~$h$ as
a homogeneous $B^*$\+module map $S^*\rarrow S^*$ of degree~$-1$.
 Then $t=\id-(d_Sh+hd_S)\:S^\cu\rarrow S^\cu$ is a closed endomorphism
of degree~$0$ of the CDG\+module $S^\cu$ mapping $C^{n,*}\subset S^*$
to $C^{n-1,*}\subset S^*$ for every $n\ge0$.
 For this reason, the endomorphism $t$ is locally nilpotent, hence
the endomorphism $d_Sh+hd_S=\id-t\:S^\cu\rarrow S^\cu$ is
an automorphism.
 Indeed, the inverse automorphism $(\id-t)^{-1}\:S^\cu\rarrow S^\cu$
can be constructed as $(\id-t)^{-1}=\sum_{m=0}^\infty t^m$, with
the infinite sum becoming finite when applied to any specific
element of~$S^*$.
 A CDG\+module (more generally, a DG\+category object) admitting
a closed automorphism homotopic to zero is contractible.
 The proof of part~(b) is dual-analogous.
\end{proof}

\subsection{Absolute derived category produced via complexes
of CDG-modules}
\label{complexes-of-cdg-modules-subsecn}
 The aim of this section is to extend to (finitely presented)
CDG\+modules a certain technique invented by
Efimov~\cite[Remark~2.7]{EP}, \cite[Appendix~A]{Ef} in the context of
matrix factorizations.

 Let $B^\cu=(B^*,d,h)$ be a CDG\+ring.
 When the graded ring $B^*$ is graded right coherent (as defined in
Section~\ref{compact-generators-of-coderived-subsecn}), the additive
category $\sZ^0(\bmodr B^\cu)$ of finitely presented right CDG\+modules
over $B^\cu$ is abelian.
 We start with defining the \emph{graded-split exact structure}
on the additive category $\sZ^0(\bmodr B^\cu)$ for an arbitrary
CDG\+ring~$B^\cu$.

 A short sequence of CDG\+modules $0\rarrow K^\cu\rarrow L^\cu
\rarrow M^\cu\rarrow0$ over $B^\cu$ is said to be \emph{graded-split
exact} if the underlying sequence of graded $B^*$\+modules
$0\rarrow K^*\rarrow L^*\rarrow M^*\rarrow0$ is split exact in
the additive category $\sModr B^*$.
 One can easily observe that pullbacks and pushouts of graded-split
exact sequences are graded-split exact.
 Furthermore, the class of all finitely presented CDG\+modules is
closed under extensions in $\sZ^0(\bModr B^\cu)$; so in particular
it is closed under graded-split extensions.
 Consequently, the class of all graded-split exact sequences of
finitely presented right CDG\+modules defines an exact structure on
the additive category $\sZ^0(\bmodr B^\cu)$, which we call
the \emph{graded-split exact structure}.

 An exact category is said to be \emph{Frobenius} if it has enough
projective objects, enough injective objects, and the classes of
projective and injective objects coincide.

\begin{lem} \label{graded-split-exact-structure-lemma}
 Let $B^\cu=(B^*,d,h)$ be a CDG\+ring.
 Then the exact category\/ $\sZ^0(\bmodr B^\cu)$ with the graded-split
exact structure is Frobenius.
 The projective-injective objects of the graded-split exact structure
on\/ $\sZ^0(\bmodr B^\cu)$ are precisely the contractible CDG\+modules.
\end{lem}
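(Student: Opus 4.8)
The plan is to reduce the whole statement to a single computation of $\Ext^1$ in the graded-split exact structure by means of Lemma~\ref{Ext-1-homotopy-hom-lemma}. The crucial observation is that the Yoneda group $\Ext^1$ of the exact category $\sZ^0(\bmodr B^\cu)$ with its graded-split exact structure consists precisely of those extensions that split as sequences of graded $B^*$\+modules; by (the right-module analogue of) Lemma~\ref{Ext-1-homotopy-hom-lemma} these form exactly the kernel of the forgetful map into $\Ext^1_{\sModr B^*}(L^*,M^*)$, so there is a natural isomorphism
$$
 \Ext^1_{\sZ^0(\bmodr B^\cu)}(L^\cu,M^\cu)\,\simeq\,
 \Hom_{\sH^0(\bmodr B^\cu)}(L^\cu,M^\cu[1])
$$
for all $L^\cu$, $M^\cu\in\sZ^0(\bmodr B^\cu)$. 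Here one uses that finitely presented CDG\+modules are closed under graded-split extensions (so every such extension remains inside $\sZ^0(\bmodr B^\cu)$ and the restricted Yoneda $\Ext^1$ is the full group, not a proper subgroup), together with the fact that homotopy classes of morphisms are computed the same way in $\sH^0(\bmodr B^\cu)$ as in $\sH^0(\bModr B^\cu)$, since $\bmodr B^\cu$ is a full DG\+subcategory of $\bModr B^\cu$.

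First I would establish that the graded-split exact structure has enough projective and enough injective objects, using the right-module versions of the functors $G^+$ and $G^-$ from Section~\ref{prelim-delta-and-G-subsecn}. Since $G^+$ is left adjoint and $G^-$ is right adjoint to the forgetful functor $\#\colon\sZ^0(\bmodr B^\cu)\rarrow\sModr B^*$, and since every admissible epimorphism (resp.\ monomorphism) in the graded-split structure splits as a morphism of graded modules, a routine lifting argument through the adjunction shows that $G^+(N^*)$ is projective and $G^-(N^*)$ is injective for every graded module $N^*$. Moreover, the counit $G^+(M^\#)\rarrow M^\cu$ is a graded-split epimorphism and the unit $M^\cu\rarrow G^-(M^\#)$ is a graded-split monomorphism (the underlying graded sequences, analogous to~\eqref{G-plus-short-exact-sequence} and~\eqref{G-minus-short-exact-sequence}, split because $B^*[\delta]$ is graded-free over $B^*$); and since $M^\#$ is a finitely presented graded module whenever $M^\cu$ is finitely presented, Proposition~\ref{finiteness-countability-for-cdg-modules-prop}(b) guarantees that $G^\pm(M^\#)$ stays in $\sZ^0(\bmodr B^\cu)$. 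This yields both enough projectives and enough injectives.

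It then remains to identify the projective, injective, and contractible objects, and here the displayed isomorphism does all the work. An object $P^\cu$ is projective exactly when $\Ext^1_{\sZ^0(\bmodr B^\cu)}(P^\cu,K^\cu)=0$ for all $K^\cu$, i.e.\ when $\Hom_{\sH^0(\bmodr B^\cu)}(P^\cu,K^\cu[1])=0$ for all $K^\cu$; taking $K^\cu=P^\cu[-1]$ forces $\id_{P^\cu}=0$ in $\sH^0(\bmodr B^\cu)$, so $P^\cu$ is contractible, while conversely a contractible module vanishes in $\sH^0(\bmodr B^\cu)$ and hence kills all these Hom\+groups. Dually, $J^\cu$ is injective iff $\Hom_{\sH^0(\bmodr B^\cu)}(K^\cu,J^\cu[1])=0$ for all $K^\cu$, which upon taking $K^\cu=J^\cu[1]$ is again equivalent to contractibility. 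Thus the classes of projective and of injective objects both coincide with the contractible CDG\+modules, and the exact category is Frobenius. The main point to get right is the bookkeeping behind the $\Ext^1$ identification — ensuring that the Yoneda $\Ext^1$ of the restricted exact category on $\sZ^0(\bmodr B^\cu)$ genuinely equals $\Hom_{\sH^0(\bmodr B^\cu)}(L^\cu,M^\cu[1])$ and not a proper subgroup — which is exactly where the closure of finitely presented CDG\+modules under graded-split extensions is needed.
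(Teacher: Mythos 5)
Your proof is correct, and while it pivots on the same key ingredient as the paper's proof (Lemma~\ref{Ext-1-homotopy-hom-lemma}), the way you deploy it and the other two steps are organized differently. The paper's proof goes: contractible CDG\+modules are projective-injective (by Lemma~\ref{Ext-1-homotopy-hom-lemma}, any graded-split extension with a contractible end term splits); there are enough of them, exhibited by the explicit graded-split sequences $0\rarrow N^\cu\rarrow\cone(\id_{N^\cu})\rarrow N^\cu[1]\rarrow0$ and its shift; and finally every projective (resp.\ injective) object is a direct summand of such a contractible approximation, hence contractible, since contractibles are closed under direct summands. You instead first upgrade Lemma~\ref{Ext-1-homotopy-hom-lemma} to a complete identification $\Ext^1_{\sZ^0(\bmodr B^\cu)}(L^\cu,M^\cu)\simeq\Hom_{\sH^0(\bmodr B^\cu)}(L^\cu,M^\cu[1])$ — where the closure of finitely presented CDG\+modules under graded-split extensions is exactly what justifies that no extension classes escape the subcategory, as you rightly stress — and then (i)~get enough projectives/injectives from $G^\pm$ via adjunction lifting, and (ii)~characterize the projective and injective objects by testing the formula against $K^\cu=P^\cu[-1]$, forcing $\id_{P^\cu}$ to be null-homotopic. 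Note that your approximating objects are essentially the paper's in disguise: the sequence~\eqref{G-plus-CDG-module-sequence} together with Lemma~\ref{Ext-1-homotopy-hom-lemma} identifies $G^+(M^\cu{}^\#)$ with $\cone(\id_{M^\cu})[-1]$, so the two constructions of special precovers/preenvelopes agree up to isomorphism; the difference is that the paper sees their contractibility immediately, while you only learn it a posteriori from step~(ii). What your route buys is the explicit $\Ext^1$ formula, a sharper statement of independent interest (it computes the derived Homs of this Frobenius exact structure and makes the stable-category comparison in Lemma~\ref{stable-category-lemma} and Proposition~\ref{bounded-derived-and-absolute-derived-prop} transparent), and it avoids the direct-summand-closure argument; what the paper's route buys is brevity and completely explicit, manifestly finitely presented and contractible approximating objects, with no adjunction bookkeeping.
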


\begin{proof}
 Let $E^\cu$ be a contractible CDG\+module over~$B^\cu$.
 Then Lemma~\ref{Ext-1-homotopy-hom-lemma} implies that any graded-split
short exact sequence of CDG\+modules with the leftmost or rightmost term
$E^\cu$ splits.
 So contractible CDG\+modules are both projective and injective in
the graded-split exact structure on $\sZ^0(\bmodr B^\cu)$.
 For any finitely presented right CDG\+module $N^\cu$ over $B^\cu$,
the natural graded-split short exact sequences of CDG\+modules $0\rarrow
N^\cu\rarrow\cone(\id_{N^\cu})\rarrow N^\cu[1]\rarrow0$ and $0\rarrow
N^\cu[-1]\rarrow\cone(\id_{N^\cu})[-1]\rarrow N^\cu\rarrow0$ represent
$N^\cu$ as the domain of an admissible monomorphism into a contractible
finitely presented CDG\+module and the codomain of an admissible
epimorphism from a contractible finitely presented CDG\+module.
 So there are enough contractible CDG\+modules both as projective and as
injective objects in $\sZ^0(\bmodr B^\cu)$.
 As the class of all contractible CDG\+modules is closed under direct
summands in $\sZ^0(\bmodr B^\cu)$, it follows that all projective
objects of the graded-split exact structure on $\sZ^0(\bmodr B^\cu)$
are contractible, and all injective objects of the same exact structure
are contractible.
\end{proof}

 The following lemma is quite well-known and standard,
see for example~\cite[Theorem~4.4.1]{Buch86} or~\cite[Exemple~2.3]{KeVo87}.

\begin{lem} \label{stable-category-lemma}
 Let\/ $\sE$ be a Frobenius exact category.
 Then the following three triangulated categories are naturally
equivalent:
\begin{enumerate}
\item the stable category\/ $\sE^\st$ of\/ $\sE$, i.~e.,
the additive quotient category of\/ $\sE$ by the ideal of morphisms
factorizable through projective-injective objects;
\item the full subcategory in the unbounded homotopy category\/
$\sK(\sE)$ of complexes in\/ $\sE$ consisting of all the acyclic
complexes of projective-injective objects;
\item the triangulated Verdier quotient category of the bounded derived
category\/ $\sD^\bb(\sE)$ by the thick subcategory of bounded complexes
of projective-injective objects in\/~$\sE$.
\end{enumerate}
\end{lem}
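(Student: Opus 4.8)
The plan is to prove the two equivalences (1)$\simeq$(2) and (1)$\simeq$(3) separately; together they yield all three identifications. Throughout, write $\mathcal P\subset\sE$ for the (coinciding) class of projective and injective objects, so that $\sE^\st=\sE/\mathcal P$ is the quotient by the ideal of morphisms factoring through $\mathcal P$, and recall that the suspension of $\sE^\st$ is the cosyzygy functor $\Sigma$, defined by choosing admissible short exact sequences $0\rarrow X\rarrow I\rarrow\Sigma X\rarrow0$ with $I\in\mathcal P$, while distinguished triangles arise from admissible short exact sequences.

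For (1)$\simeq$(2) I would use complete resolutions. As $\sE$ is Frobenius, every object $X$ admits a two-sided resolution, obtained by splicing a projective resolution $\dotsb\rarrow P_1\rarrow P_0\rarrow X\rarrow0$ with an injective coresolution $0\rarrow X\rarrow I^0\rarrow I^1\rarrow\dotsb$ into an acyclic complex $P^\bu$ of objects of $\mathcal P$ with $Z^0(P^\bu)\cong X$. Any acyclic complex of objects of $\mathcal P$ is automatically totally acyclic, since it decomposes into admissible short exact sequences on which $\Hom(Q,-)$ and $\Hom(-,Q)$ are exact for $Q\in\mathcal P$; such complexes are thus precisely the objects of category~(2). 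The candidate functors are $P^\bu\mapsto Z^0(P^\bu)$ in one direction and $X\mapsto P^\bu$ in the other. The heart is the comparison lemma: a morphism $X\rarrow Y$ lifts to a chain map of chosen complete resolutions, uniquely up to homotopy, and the lift is null-homotopic exactly when $X\rarrow Y$ factors through $\mathcal P$ --- here one lifts over the projective tail using projectivity and over the injective tail using injectivity, and it is the coincidence of projectives and injectives that makes both liftings available. This yields a natural isomorphism $\Hom_{\sE^\st}(X,\Sigma^nY)\cong\Hom_{\sK(\sE)}(P^\bu,Q^\bu[n])$, so $Z^0$ is an equivalence; it is triangulated because $Z^0(P^\bu[1])=Z^1(P^\bu)=\Sigma\,Z^0(P^\bu)$ and the triangles on both sides come from short exact sequences.

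For (1)$\simeq$(3), which is Buchweitz's theorem, I would study the stalk functor $F\colon\sE^\st\rarrow\sD^\bb(\sE)/\sK^\bb(\mathcal P)$ sending $X$ to the complex concentrated in degree~$0$. It is well defined: objects of $\mathcal P$ give perfect complexes, hence vanish in the quotient, and morphisms through $\mathcal P$ become zero. In the quotient every shifted stalk reduces to an object of $\sE$, because the admissible short exact sequence $0\rarrow\Omega X\rarrow P\rarrow X\rarrow0$ with $P\in\mathcal P$ yields $X[-1]\cong\Omega X$ and dually $X[1]\cong\Sigma X$; combined with the stupid-truncation triangles that build any bounded complex from its stalks, this makes $F$ essentially surjective. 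Fullness and faithfulness --- the crux --- require identifying $\Hom_{\sD^\bb(\sE)/\sK^\bb(\mathcal P)}(X,\Sigma^nY)$ with $\Hom_{\sE^\st}(X,\Sigma^nY)$; computing the quotient Hom as a filtered colimit over roofs, this reduces once more to the comparison statement for complete resolutions. Together with (1)$\simeq$(2), this identifies all three categories.

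The main obstacle, common to both parts, is precisely this comparison/full-faithfulness step: establishing that stable morphisms, homotopy classes of chain maps of complete resolutions, and morphisms in the Verdier quotient all coincide. This is exactly where the Frobenius hypothesis is indispensable, since the coincidence of projective and injective objects supplies liftings at both the projective and the injective ends of a complete resolution and forces their uniqueness up to homotopy. As the statement is classical, in the final text I would write out the comparison lemma explicitly and defer the remaining standard verifications to \cite[Theorem~4.4.1]{Buch86} and \cite[Exemple~2.3]{KeVo87}.
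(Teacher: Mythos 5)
Your proposal is correct, and for the equivalence (1)$\simeq$(2) it uses exactly the paper's functors (two-sided projective-injective resolutions one way, degree-$0$ cocycles the other), with the comparison lemma correctly identified as the point where the Frobenius hypothesis enters. Where you diverge is in how category~(3) is tied in: the paper closes a cycle of functors by exhibiting an explicit functor from (3) to (2), sending a bounded complex $C^\bu$ to the cone of the composite quasi-isomorphism $P^\bu\rarrow C^\bu\rarrow J^\bu$ (bounded-above projective resolution into bounded-below injective coresolution) --- this cone is automatically an acyclic complex of projective-injectives, so the stabilization of (3) is produced in one stroke and no Hom computation in the Verdier quotient is needed beyond what (1)$\simeq$(2) already provides. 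You instead prove directly that the stalk functor $\sE^\st\rarrow\sD^\bb(\sE)/\sK^\bb(\mathcal P)$ is an equivalence, Buchweitz-style: essential surjectivity by stupid truncations plus syzygy/cosyzygy rotation, and full faithfulness by a roof calculus reduced to the comparison lemma. Both routes are classical and both defer the same verifications to \cite{Buch86} and \cite{KeVo87}; the paper's cone construction is the slicker of the two, since it avoids the delicate computation of morphisms in the Verdier quotient, which is precisely the step you flag as the crux. One small ordering point in your dévissage: to conclude that the essential image of the stalk functor is closed under cones (and hence is all of the quotient) you already need fullness of the functor, so full faithfulness should be established before, not after, essential surjectivity --- a harmless rearrangement, but worth making explicit in the final write-up.
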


\begin{proof}
 The natural triangulated functor from~(1) to~(2) assigns to
an object of $\sE$ its acyclic two-sided projective-injective
resolution.
 The inverse functor from~(2) to~(1) assigns to an acyclic complex
of projective-injective objects in $\sE$ its object of degree~$0$
cocycles.
 The natural triangulated functor from~(1) to~(3) is induced by
the inclusion $\sE\rarrow\sD^\bb(\sE)$.
 The natural triangulated functor from~(3) to~(2) assigns to a bounded
complex $C^\bu$ in~$\sE$ the cone of the composition of
quasi-isomorphisms $P^\bu\rarrow C^\bu\rarrow J^\bu$, where $P^\bu$
is a bounded above complex of projectives and $J^\bu$ is a bounded
below complex of injectives in~$\sE$.
\end{proof}

 In the next proposition, we consider (bounded) \emph{complexes of}
(finitely presented) \emph{CDG\+modules} and denote the category of
such complexes by $\sC^\bb(\sZ^0(\bmodr B^\cu))$.
 In this context, the category of finitely presented CDG\+modules
$\sZ^0(\bmodr B^\cu)$ is viewed just as an additive category endowed
with various exact structures.

 We denote by $\sZ^0(\bmodr B^\cu)_\gs$ the additive category of
finitely presented CDG\+mod\-ules endowed with the graded-split
exact structure.
 Assuming the graded ring $B^*$ to be graded right coherent,
the undecorated notation $\sZ^0(\bmodr B^\cu)$ stands for
the abelian category $\sZ^0(\bmodr B^\cu)$ endowed with the abelian
exact structure.
 Accordingly, we speak about the bounded derived category of
the abelian/exact category of finitely presented CDG\+modules.

\begin{prop} \label{bounded-derived-and-absolute-derived-prop}
\textup{(a)} Let $B^\cu=(B^*,d,h)$ be a CDG\+ring.
 Then the totalization functor
$$
 \Tot\:\sC^\bb(\sZ^0(\bmodr B^\cu))\lrarrow\sZ^0(\bmodr B^\cu)
$$
from Section~\ref{totalizations-subsecn} induces a triangulated functor
$$
 \sD^\bb(\sZ^0(\bmodr B^\cu)_\gs)\lrarrow
 \sH^0(\bmodr B^\cu),
$$
which is a Verdier quotient functor identifying the homotopy
category of finitely presented CDG\+modules\/ $\sH^0(\bmodr B^\cu)$
with the quotient
category of the bounded derived category of complexes of CDG\+modules\/
$\sD^\bb(\sZ^0(\bmodr B^\cu)_\gs)$ by the thick subcategory of
bounded complexes of contractible CDG\+modules. \par
\textup{(b)} Let $B^\cu=(B^*,d,h)$ be a CDG\+ring such that
the graded ring $B^*$ is graded right coherent.
 Then the same totalization functor\/ $\Tot$ induces a triangulated
functor
\begin{equation} \label{Verdier-quotient-bounded-to-absolute-derived}
 \sD^\bb(\sZ^0(\bmodr B^\cu))\lrarrow
 \sD^\abs(\bmodr B^\cu),
\end{equation}
which is a Verdier quotient functor identifying the absolute
derived category of CDG\+modules\/ $\sD^\abs(\bmodr B^\cu)$ with
the quotient category of the bounded derived category of complexes
of CDG\+modules\/ $\sD^\bb(\sZ^0(\bmodr B^\cu))$ by the thick
subcategory spanned by contractible CDG\+modules viewed as objects
of\/ $\sZ^0(\bmodr B^\cu)$.
\end{prop}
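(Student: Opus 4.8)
The plan is to treat part~(a) by recognizing the homotopy category $\sH^0(\bmodr B^\cu)$ as the stable category of the Frobenius exact category $\sZ^0(\bmodr B^\cu)_\gs$, and then to bootstrap part~(b) from part~(a) through a chain of Verdier localizations. For part~(a) I would first record that a closed morphism of finitely presented CDG\+modules is null-homotopic if and only if it factors through a contractible CDG\+module (the ``if'' direction is immediate, and the ``only if'' direction factors a contracting homotopy through $\cone(\id)$). Combined with Lemma~\ref{graded-split-exact-structure-lemma}, which identifies the projective-injective objects of $\sZ^0(\bmodr B^\cu)_\gs$ with the contractible CDG\+modules, this yields a canonical identification of $\sH^0(\bmodr B^\cu)$ with the stable category $(\sZ^0(\bmodr B^\cu)_\gs)^\st$ of item~(1) in Lemma~\ref{stable-category-lemma} (the triangulated structures match, the shift $[1]$ of CDG\+modules realizing the cosyzygy).

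Next I would check that the totalization functor descends as claimed. Since $\Tot$ carries homotopic morphisms of complexes to homotopic morphisms of CDG\+modules, it induces a triangulated functor $\sK^\bb(\sZ^0(\bmodr B^\cu))\rarrow\sH^0(\bmodr B^\cu)$. By Lemma~\ref{contractible-totalization-lemma} the totalization of a bounded graded-split exact complex is contractible, so this functor annihilates the graded-split-acyclic complexes and factors through $\sD^\bb(\sZ^0(\bmodr B^\cu)_\gs)$. Moreover $\Tot$ sends a bounded complex of contractible CDG\+modules to a contractible one, since in $\sH^0(\bmodr B^\cu)$ the totalization of a bounded complex is an iterated cone of its (shifted) terms. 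It then remains to compare the induced functor with the equivalence of Lemma~\ref{stable-category-lemma}: both restrict, on degree-zero complexes, to the canonical functor $\sZ^0(\bmodr B^\cu)\rarrow\sH^0(\bmodr B^\cu)$, and as $\sZ^0(\bmodr B^\cu)$ generates $\sD^\bb(\sZ^0(\bmodr B^\cu)_\gs)$ as a triangulated category, they coincide. Thus $\Tot$ realizes the Verdier quotient of $\sD^\bb(\sZ^0(\bmodr B^\cu)_\gs)$ by the bounded complexes of contractibles, proving~(a).

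For part~(b) I would first verify that $\Tot$ carries any bounded complex acyclic in the abelian category $\sZ^0(\bmodr B^\cu)$ to an absolutely acyclic CDG\+module: by graded right coherence the cocycle modules of such a complex are again finitely presented, so it decomposes into short exact sequences of finitely presented CDG\+modules whose totalizations are absolutely acyclic by definition, and since the absolutely acyclic CDG\+modules form a thick subcategory the totalization of the whole complex is absolutely acyclic. Hence the composite $\sK^\bb(\sZ^0(\bmodr B^\cu))\rarrow\sH^0(\bmodr B^\cu)\rarrow\sD^\abs(\bmodr B^\cu)$ annihilates the abelian-acyclic complexes and factors through $\sD^\bb(\sZ^0(\bmodr B^\cu))$, producing the functor~\eqref{Verdier-quotient-bounded-to-absolute-derived}.

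To identify its kernel I would combine part~(a) with the definition $\sD^\abs(\bmodr B^\cu)=\sH^0(\bmodr B^\cu)/\sH^0(\bmodr B^\cu)_\ac^\abs$ and compose Verdier quotients. Write $\sK_1\subset\sD^\bb(\sZ^0(\bmodr B^\cu)_\gs)$ for the thick subcategory of bounded complexes of contractibles and $\sK_2$ for the thick subcategory of bounded abelian-acyclic complexes, so that $\sD^\bb(\sZ^0(\bmodr B^\cu))=\sD^\bb(\sZ^0(\bmodr B^\cu)_\gs)/\sK_2$. Part~(a) gives $\sD^\abs(\bmodr B^\cu)\simeq\sD^\bb(\sZ^0(\bmodr B^\cu)_\gs)/\langle\sK_1,\widetilde{\sK}\rangle$, where $\widetilde{\sK}$ is the preimage of $\sH^0(\bmodr B^\cu)_\ac^\abs$ under the quotient functor of part~(a); since the latter is generated by totalizations of short exact sequences, which are the $\Tot$\+images of the three-term short-exact-sequence complexes, one has $\langle\sK_1,\widetilde{\sK}\rangle=\langle\sK_1,\sK_2\rangle$. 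Telescoping the quotients then yields $\sD^\abs(\bmodr B^\cu)\simeq\sD^\bb(\sZ^0(\bmodr B^\cu))/\sK_3$, where $\sK_3$ is the image of $\sK_1$, i.e.\ the thick subcategory generated by contractible CDG\+modules viewed as degree-zero complexes, and the equivalence is induced by $\Tot$. The main obstacle I anticipate is exactly this bookkeeping of nested Verdier localizations: verifying that $\sK_2$ is generated inside $\sD^\bb(\sZ^0(\bmodr B^\cu)_\gs)$ by the short-exact-sequence complexes (via the syzygy filtration of a bounded acyclic complex), and that replacing $\widetilde{\sK}$ by $\langle\sK_1,\sK_2\rangle$ does not enlarge the generated subcategory.
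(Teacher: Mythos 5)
Your proposal is correct and takes essentially the same route as the paper's own proof: part~(a) is the equivalence of items~(1) and~(3) in Lemma~\ref{stable-category-lemma} applied to the Frobenius exact category of Lemma~\ref{graded-split-exact-structure-lemma}, with $\Tot$ inducing the quasi-inverse, and part~(b) is precisely the paper's ``passage to the Verdier quotients on the two sides'' of that equivalence, which you spell out in more detail than the paper does. One step to tighten: two triangulated functors agreeing on a generating class of objects need not be isomorphic, so instead of invoking generation, observe that the composite $\sH^0(\bmodr B^\cu)\rarrow\sD^\bb(\sZ^0(\bmodr B^\cu)_\gs)/\sK_1\rarrow\sH^0(\bmodr B^\cu)$ (the equivalence of Lemma~\ref{stable-category-lemma} followed by your $\Tot$\+induced functor) is literally the identity functor---because precomposing it with the projection $\sZ^0(\bmodr B^\cu)\rarrow\sH^0(\bmodr B^\cu)$ gives $\Tot$ on degree-zero complexes, i.~e., that projection itself, and the projection is full and bijective on objects---which exhibits the $\Tot$\+induced functor as a quasi-inverse and hence as the asserted Verdier quotient functor.
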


\begin{proof}
 This is a finitely presented CDG\+module version
of~\cite[Proposition~A.3\,(1\<2)]{Ef}.
 Part~(a) is the triangulated equivalence of~(1) and~(3) in
Lemma~\ref{stable-category-lemma} applied to the Frobenius exact
category $\sE=\sZ^0(\bmodr B^\cu)_\gs$ from
Lemma~\ref{graded-split-exact-structure-lemma}.
 The triangulated functor induced by the totalization functor is
quasi-inverse to the functor from (1) to~(3) mentioned in
the proof of Lemma~\ref{stable-category-lemma}.
 Part~(b) follows from part~(a) by a passage to the Verdier
quotients on the two sides of a triangulated equivalence.
\end{proof}

\subsection{Totalized dual finitely generated graded-projective
resolutions} \label{totalized-dual-construction-subsecn}
 The aim of this section is to construct, for a CDG\+ring $B^\cu$
with a graded right coherent underlying graded ring $B^*$,
a contravariant triangulated functor
$$
 \widetilde\Xi_{B^\cu}\:\sD^\bb(\sZ^0(\bmodr B^\cu))^\sop\lrarrow
 \sD^\bctr(B^\cu\bModl).
$$
 Then in the rest of
Section~\ref{compact-generators-of-contraderived-secn} we will
show that the functor $\widetilde\Xi_{B^\cu}$ factorizes through
the absolute derived category $\sD^\abs(\bmodr B^\cu)$, the resulting
triangulated functor $\Xi_{B^\cu}\:\sD^\abs(\bmodr B^\cu)^\sop\rarrow
\sD^\bctr(B^\cu\bModl)$ is fully faithful, and the essential image
of $\Xi_{B^\cu}$ is a set of compact generators of
$\sD^\bctr(B^\cu\bModl)$.

 In order to be able to prove in a natural way that the functor
$\Xi_{B^\cu}$ is triangulated, we make the functor
$\widetilde\Xi_{B^\cu}$ act from the triangulated category
$\sD^\bb(\sZ^0(\bmodr B^\cu))^\sop$ rather than just from
the abelian category $\sZ^0(\bmodr B^\cu)^\sop$ (which would be
otherwise sufficient).
 Then we will use the technology of
Section~\ref{complexes-of-cdg-modules-subsecn}.

 Let $B^\cu=(B^*,d,h)$ be a CDG\+ring.
 Consider the abelian category of left CDG\+mod\-ules
$\sZ^0(B^\cu\bModl)\simeq B^*[\delta]\sModl$, and form
the abelian category $\sC(\sZ^0(B^\cu\bModl))$ of complexes
in $\sZ^0(B^\cu\bModl)$.
 Here $\sZ^0(B^\cu\bModl)$ is viewed as just an abelian category.
{\emergencystretch=0em\hfuzz=2pt\par}

 Then the direct sum totalization, as constructed in
Section~\ref{totalizations-subsecn}, is an exact functor
$$
 \Tot^\sqcup\:\sC(\sZ^0(B^\cu\bModl))\lrarrow\sZ^0(B^\cu\bModl)
$$
preserving infinite direct sums.
 Furthermore, one easily observes that the functor $\Tot^\sqcup$
takes shifts and cones in the category of complexes 
$\sC(\sZ^0(B^\cu\bModl))$ to shifts and cones in the category of
CDG\+modules $\sZ^0(B^\cu\bModl)$.

 In particular, the functor $\Tot^\sqcup$ takes the cones of identity
endomorphisms of complexes in $\sZ^0(B^\cu\bModl)$ to the cones of
identity endomorphisms of CDG\+modules.
 It follows that the functor $\Tot^\sqcup$ takes homotopic morphisms
of complexes to homotopic morphisms of CDG\+modules, and induces
a triangulated functor
\begin{equation} \label{totalization-between-homotopy-categories-eqn}
 \Tot^\sqcup\:\sK(\sZ^0(B^\cu\bModl))\lrarrow\sH^0(B^\cu\bModl).
\end{equation}
 Here $\sK(\sZ^0(B^\cu\bModl))$ is the homotopy category of (unbounded) 
complexes in the abelian category $\sZ^0(B^\cu\bModl)$.

 Let $\sA$ be an exact category.
 A full subcategory $\sF\subset\sA$ is said to be \emph{resolving}
if $\sF$ is closed under extensions and kernels of admissible 
epimorphisms in $\sA$, and for every object $A\in\sA$
there exists an object $F\in\sF$
together with an admissible epimorphism $F\rarrow A$ in~$\sA$.
 We refer to the paper~\cite{Neem-de} for the definition of
the (bounded and unbounded) \emph{derived categories} of an exact
category.
 The following lemma is well-known.

\begin{lem} \label{resolving-subcategory-lemma}
 Let\/ $\sA$ be an exact category and\/ $\sF\subset\sA$ be a resolving
subcategory.
 Endow\/ $\sF$ with the inherited exact category structure.
 Then the triangulated functor between the bounded above derived
categories
$$
 \sD^-(\sF)\lrarrow\sD^-(\sA)
$$
induced by the inclusion of exact categories\/ $\sF\rarrow\sA$ is
an equivalence of triangulated categories.
\end{lem}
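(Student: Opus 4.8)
The plan is to prove that the induced functor $\sD^-(\sF)\rarrow\sD^-(\sA)$ is both essentially surjective and fully faithful, exploiting separately the two defining features of a resolving subcategory: that $\sF$ is generating (every object of $\sA$ receives an admissible epimorphism from $\sF$), and that $\sF$ is closed under extensions and under kernels of admissible epimorphisms in $\sA$. The generating property will drive essential surjectivity and the representation of morphisms by roofs, while the closure property will be responsible for transferring acyclicity from $\sA$ back to $\sF$; this transfer is the conceptual heart of the argument.

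First I would establish the resolution statement underlying essential surjectivity: every bounded-above complex $A^\bu$ in $\sA$ admits a quasi-isomorphism $F^\bu\rarrow A^\bu$ from a bounded-above complex $F^\bu$ with terms in $\sF$. I would build $F^\bu$ by descending induction on the cohomological degree. If $A^n=0$ for $n>N$, pick an admissible epimorphism $F^N\rarrow A^N$ with $F^N\in\sF$; at each successive degree, form the pullback of the already-constructed data against the differential of $A^\bu$ and cover the resulting object by an admissible epimorphism from $\sF$, which exists since $\sF$ is generating. The outcome $F^\bu$ is bounded above (possibly unbounded below, which is permitted in $\sD^-$), and the comparison $F^\bu\rarrow A^\bu$ is an admissible epimorphism of complexes with acyclic kernel, hence a quasi-isomorphism.

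Next I would prove the \emph{acyclicity transfer}: a bounded-above complex $F^\bu$ with terms in $\sF$ that is acyclic in $\sA$ is already acyclic in $\sF$ for the inherited exact structure. Factor each differential through its cocycle object, $F^n\twoheadrightarrow Z^{n+1}\rightarrowtail F^{n+1}$, an admissible epi--mono factorization in $\sA$. For $n\gg0$ one has $Z^n=0\in\sF$; descending, if $Z^{n+1}\in\sF$ then $Z^n=\ker(F^n\twoheadrightarrow Z^{n+1})$ lies in $\sF$ because $\sF$ is closed under kernels of admissible epimorphisms. Thus every cocycle lies in $\sF$, each short exact sequence $0\rarrow Z^n\rarrow F^n\rarrow Z^{n+1}\rarrow0$ is admissible in $\sF$, and $F^\bu$ is acyclic in $\sF$. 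Applying this to the cone of a map shows that a morphism of bounded-above complexes in $\sF$ which is a quasi-isomorphism in $\sA$ is a quasi-isomorphism in $\sF$.

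Finally, full faithfulness would follow from the calculus of fractions for quasi-isomorphisms. A morphism $F^\bu\rarrow G^\bu$ in $\sD^-(\sA)$ between complexes in $\sF$ is represented by a roof through some quasi-isomorphism $X^\bu\rarrow F^\bu$ with $X^\bu$ bounded above in $\sA$; resolving $X^\bu$ via the first step yields $\widetilde X^\bu\rarrow X^\bu$ with $\widetilde X^\bu$ in $\sF$, and the composite $\widetilde X^\bu\rarrow F^\bu$ is a quasi-isomorphism in $\sF$ by the transfer, giving a roof inside $\sD^-(\sF)$ and hence fullness; faithfulness is the parallel statement for a morphism that dies in $\sD^-(\sA)$. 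The hard part will be the careful bookkeeping in Step two — checking that the degreewise pullbacks produce admissible morphisms and that the comparison map is genuinely a quasi-isomorphism — together with confirming that quasi-isomorphisms in $\sD^-(\sA)$ admit a calculus of fractions, so that every morphism and every homotopy is representable by roofs amenable to resolution.
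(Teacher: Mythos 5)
Your overall architecture (left resolutions by complexes in $\sF$, transfer of acyclicity from $\sA$ to $\sF$, then the calculus of fractions) is exactly the standard argument; the paper itself offers no proof at all but simply cites \cite[Proposition~13.2.2(i)]{KS} and \cite[Proposition~A.3.1(a)]{Pcosh}, and those references argue along your lines. Your acyclicity-transfer step is correct and is indeed the place where closure of $\sF$ under kernels of admissible epimorphisms does all the work, and your reduction of fullness and faithfulness to the resolution lemma plus the transfer, via roofs, is also fine.

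The resolution step, however, would fail as literally described. Pulling back $F^{n+1}\rarrow A^{n+1}$ along the differential $d_A\:A^n\rarrow A^{n+1}$ and covering the result does not produce a complex: take $A^\bu$ to be a single object $A^0$ in degree~$0$. In degree~$-1$ your recipe correctly covers $\ker(F^0\twoheadrightarrow A^0)$, but in degree~$-2$ the pullback of $F^{-1}\rarrow A^{-1}=0$ along $A^{-2}=0\rarrow A^{-1}$ is all of $F^{-1}$, and covering it gives a differential $F^{-2}\rarrow F^{-1}$ whose composite with $F^{-1}\rarrow F^0$ is nonzero, so $d^2=0$ already fails. The object one must cover is not this pullback but the kernel $W^n$ of the map from the cone term $C^n=F^{n+1}\oplus A^n$ onto the previously constructed cocycle object $W^{n+1}$; in the abelian case this is the pullback along $d_A$ of the \emph{cocycles} of the already-built part of $F^\bu$, not of $F^{n+1}$ itself. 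This correction matters twice over in an exact category, where arbitrary kernels and pullbacks need not exist: the descending induction has to carry, as part of its data, the assertion that $C^n\rarrow W^{n+1}$ is an admissible epimorphism (so that its kernel $W^n$ exists and $W^n\rightarrowtail C^n\twoheadrightarrow W^{n+1}$ is admissible exact), and one then checks that covering $W^n$ by some $F^n\in\sF$ propagates this assertion one degree lower, using that $F^n\oplus A^{n-1}\rarrow W^n$ is an admissible epimorphism as soon as its restriction to $F^n$ is. With this bookkeeping the cone of $F^\bu\rarrow A^\bu$ is acyclic by construction, which is all you need; your stronger formulation that $F^\bu\rarrow A^\bu$ is a termwise admissible epimorphism with acyclic kernel is not automatic from the construction (surjectivity of $W^n\rarrow A^n$ requires a separate argument) and is best dropped in favor of aiming directly at an acyclic cone.
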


\begin{proof}
 See~\cite[Proposition~13.2.2(i)]{KS}
or~\cite[Proposition~A.3.1(a)]{Pcosh}.
\end{proof}

 We will apply Lemma~\ref{resolving-subcategory-lemma} in
the following context.
Let $B^\cu=(B^*,d,h)$ be a CDG\+ring such that the graded
ring $B^*$ is graded right coherent.
Let $\bmodr B^\cu\subset\bModr B^\cu$ denote
the full DG\+subcategory of finitely presented right CDG\+modules
(as in Section~\ref{prelim-finite-countable-modules-subsecn}).
 Let $\sA=\sZ^0(\bmodr B^\cu)$ be the abelian category of finitely
presented right CDG\+modules over~$B^\cu$.
 Consider the full DG\+subcategory of graded-projective CDG\+modules
$\bModrproj B^\cu$ in the DG\+category of right CDG\+modules
$\bModr B^\cu$, as per the discussion in
Section~\ref{prelim-proj-inj-flatness-subsecn}.
 Let
$$
 \bmodrproj B^\cu=\bmodr B^\cu\cap\bModrproj B^\cu\subset\bModr B^\cu
$$ 
be the full DG\+subcategory of finitely generated graded-projective
right CDG\+mod\-ules, and let $\sF\subset\sA$ be the full subcategory
of finitely generated graded-projective right CDG\+modules in
the abelian category $\sZ^0(\bmodr B^\cu)$,
$$
 \sF=\sZ^0(\bmodrproj B^\cu)=\sZ^0(\bmodr B^\cu\cap\bModrproj B^\cu).
$$
 Recall that all finitely generated graded-projective CDG\+modules
are finitely presented by
Corollary~\ref{graded-projective-generated-presented-cor}(a).

\begin{lem} \label{graded-projective-resolving-lemma}
\textup{(a)} For any CDG\+ring $B^\cu=(B^*,d,h)$, the full
subcategory of graded-projective CDG\+modules\/
$\sZ^0(B^\cu\bModl_\bproj)$ is resolving in the abelian category
of left CDG\+modules\/ $\sZ^0(B^\cu\bModl)$. \par
\textup{(b)} For any CDG\+ring $B^\cu=(B^*,d,h)$ with a graded
right coherent graded ring $B^*$, the full subcategory of finitely
generated graded-projective CDG\+modules\/
$\sF=\sZ^0(\bmodrproj B^\cu)$ is resolving in the abelian category
of finitely presented right CDG\+modules\/ $\sA=\sZ^0(\bmodr B^\cu)$.
\end{lem}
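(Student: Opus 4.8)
The plan is to verify, in each case, the three defining properties of a resolving subcategory: closure under extensions, closure under kernels of admissible epimorphisms, and the existence, for every object, of an admissible epimorphism onto it from an object of the subcategory. Throughout I would exploit two structural facts: the forgetful functor $\#$ to graded $B^*$\+modules is exact, and both ambient categories $\sZ^0(B^\cu\bModl)$ and (under coherence) $\sZ^0(\bmodr B^\cu)$ are abelian, so that ``admissible epimorphism'' simply means ``epimorphism''.

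For part~(a) the engine is the observation that a short exact sequence $0\to X^\cu\to Y^\cu\to Z^\cu\to 0$ in $\sZ^0(B^\cu\bModl)$ produces, via the exact functor $\#$, a short exact sequence $0\to X^*\to Y^*\to Z^*\to 0$ of graded $B^*$\+modules, which splits whenever $Z^*$ is projective. From this I would read off closure under extensions (if $X^\cu$ and $Z^\cu$ are graded-projective then $Y^*\cong X^*\oplus Z^*$ is projective) and closure under kernels of epimorphisms (if $Y^\cu$ and $Z^\cu$ are graded-projective then $X^*$ is a direct summand of the projective module $Y^*$, hence projective). For the third property, given $M^\cu$ I would pick a surjection of graded $B^*$\+modules $F^*\to M^*$ with $F^*$ free, apply the left adjoint $G^+$ of the forgetful functor, and compose with the counit $\epsilon\colon G^+(M^*)\to M^\cu$. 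As $G^+$ is right exact and $\epsilon$ is the surjective action map, the composite $G^+(F^*)\to M^\cu$ is an epimorphism; and the splitting of~\eqref{G-plus-short-exact-sequence} gives $G^+(F^*)^\#\cong F^*\oplus F^*[-1]$, so $G^+(F^*)$ is graded-free, in particular graded-projective.

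Part~(b) will follow by running the same argument with finiteness conditions attached and using coherence to stay inside $\sA=\sZ^0(\bmodr B^\cu)$. Extensions and kernels go through verbatim: the splitting over $B^*$ exhibits the relevant underlying module as a direct summand of a finitely generated projective graded $B^*$\+module, hence itself finitely generated projective, and for the kernel the object automatically belongs to $\sA$ since $\sA$ is abelian. For the epimorphism condition, given $A\in\sA$ its underlying module $A^*$ is finitely presented over $B^*$ by Proposition~\ref{finiteness-countability-for-cdg-modules-prop}(b), so I would start from a surjection onto $A^*$ from a finitely generated free graded $B^*$\+module and repeat the $G^+$\+construction; the resulting $G^+(F^*)$ is finitely generated graded-free, hence finitely presented by Corollary~\ref{graded-projective-generated-presented-cor}(a), so it lies in $\sF$.

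No serious obstacle is expected, as both parts rest only on the splitting of sequences with projective quotient and on the adjunction properties of $G^+$. The only points requiring a little care are checking that the $G^+$\+construction yields a genuine epimorphism (reducing to right exactness of $G^+$ and surjectivity of the counit) and, in part~(b), that finiteness is preserved by $G^+$ and that the kernel remains finitely presented, the latter guaranteed precisely by the graded right coherence of~$B^*$.
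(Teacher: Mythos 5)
Your proof is correct and follows essentially the same route as the paper: the closure of graded-projectivity under extensions and kernels of epimorphisms via the splitting of sequences with projective quotient (which the paper dismisses as obvious), and the construction of enough graded-projective objects via the adjunction map $G^+(F^*)\rarrow M^\cu$ from a (finitely generated, in part~(b)) free graded $B^*$\+module $F^*$ surjecting onto $M^*$, which is exactly the paper's explicit argument. The only cosmetic difference is that in part~(a) the paper phrases this as ``there are enough projective objects in $\sZ^0(B^\cu\bModl)=B^*[\delta]\sModl$ and all projective CDG\+modules are graded-projective,'' which amounts to the same construction since $G^+$ of a free graded $B^*$\+module is a free graded $B^*[\delta]$\+module.
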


\begin{proof}
 It is obvious that the property of a CDG\+module to be
graded-projective is preserved by extensions and kernels of
surjective morphisms.
 So it remains to explain why any CDG\+module is a quotient of
a graded-projective one.
 In both the contexts of~(a) and~(b), this holds because there are
enough projective objects in the respective abelian category of
CDG\+modules, and all projective CDG\+modules are graded-projective.
 For part~(a), this was explained in
Section~\ref{prelim-proj-inj-flatness-subsecn}.

 For part~(b), the situation it similar.
 It suffices to say that the graded ring $B^*[\delta]$ is graded right
coherent by~\cite[Corollary~8.9(b)]{PS5}, so there are enough
finitely generated projective graded modules in the abelian category
of finitely presented graded right $B^*[\delta]$\+modules
$\sZ^0(\bmodr B^\cu)=\smodr B^*[\delta]$ (see
Section~\ref{prelim-finite-countable-modules-subsecn}).
 Furthermore, any (finitely generated) projective graded right
$B^*[\delta]$\+module is also (finitely generated) projective
as a graded module over $B^*$, as the graded right $B^*$\+module
$B^*[\delta]$ is projective/free with two homogeneous generators.

 Explicitly, let $N^\cu$ be a finitely presented graded right
CDG\+module over~$B^\cu$.
 Let $P^*$ be a finitely generated projective graded right $B^*$\+module
endowed with a surjective graded $B^*$\+module map $P^*\rarrow N^*$.
 Then the morphism of CDG\+modules $G^+(P^*)\rarrow N^\cu$
corresponding by adjunction to the morphism of graded modules
$P^*\rarrow N^*$ represents $N^*$ as an epimorphic image of
a finitely generated projective CDG\+module $G^+(P^*)$
(see Section~\ref{prelim-delta-and-G-subsecn}).
\end{proof}

 By Lemma~\ref{graded-projective-resolving-lemma}(b),
\,$\sF$ is a resolving subcategory in $\sA$.
 So Lemma~\ref{resolving-subcategory-lemma} tells us that
\begin{equation} \label{fin-gen-graded-proj-derived-eqn}
 \sD^-(\sZ^0(\bmodrproj B^\cu))\simeq\sD^-(\sZ^0(\bmodr B^\cu)).
\end{equation}
 This is the conclusion we make from the two previous lemmas.

 As mentioned at the end of Section~\ref{prelim-cdg-modules-subsecn},
any CDG\+ring $B^\cu$ is (obviously) a CDG\+bi\-module over itself,
in the sense of the definition in
Section~\ref{prelim-cdg-bimodules-subsecn}.
 Therefore, for any right CDG\+module $N^\cu$ over $B^\cu$, the graded
Hom module $\Hom_{B^\rop{}^*}^*(N^*,B^*)$ has a natural structure of
a left CDG\+module $\Hom_{B^\rop{}^*}^\cu(N^\cu,B^\cu)$ over $B^\cu$,
as per the discussion in Section~\ref{prelim-cdg-bimodules-subsecn}.
 We will apply this construction to finitely generated
graded-projective right CDG\+modules $N^\cu$ over~$B^\cu$.

To that end, for any CDG\+ring $B^\cu$, consider the full
DG\+subcategory of graded-projective CDG\+modules $B^\cu\bModl_\bproj$
in the DG\+category of left CDG\+modules $B^\cu\bModl$ and the 
corresponding full subcategory $\sZ^0(B^\cu\bModl_\bproj)$ in the 
abelian category of left CDG\+modules $\sZ^0(B^\cu\bModl)$.
 As before, let $B^\cu\bmodl\subset B^\cu\bModl$ denote
the full DG\+subcategory of finitely presented left CDG\+modules, let
$$
 B^\cu\bmodl_\bproj=B^\cu\bmodl\cap B^\cu\bModl_\bproj
 \subset B^\cu\bModl
$$
denote the full DG\+subcategory of finitely generated
graded-projective left CDG\+mod\-ules in the DG\+category
$B^\cu\bModl$, and set
$$
 \sE=\sZ^0(B^\cu\bmodl_\bproj)=\sZ^0(B^\cu\bmodl\cap B^\cu\bModl_\bproj)
$$
to be the full subcategory of finitely generated graded-projective
CDG\+modules in the abelian category of left CDG\+modules
$\sZ^0(B^\cu\bModl)$.
 The full subcategory $\sE$ is closed under extensions in
$\sZ^0(B^\cu\bModl)$, so it inherits an exact category structure.
 Then the contravariant DG\+functor
$$
 \Hom_{B^\rop{}^*}^\cu({-},B^\cu)\:
 (\bmodrproj B^\cu)^\sop\lrarrow B^\cu\bmodl_\bproj
$$
is an anti-equivalence of DG\+categories, while the contravariant
additive functor
\begin{equation} \label{dualization-of-fin-gen-graded-proj-eqn}
 \Hom_{B^\rop{}^*}^\cu({-},B^\cu)\:
 (\sZ^0(\bmodrproj B^\cu))^\sop\lrarrow\sZ^0(B^\cu\bmodl_\bproj)
\end{equation}
is an anti-equivalence of exact categories $\sF^\sop\simeq\sE$.
 The anti-equivalence of exact
categories~\eqref{dualization-of-fin-gen-graded-proj-eqn} induces
an anti-equivalence of their unbounded derived categories
$$
 \Hom_{B^\rop{}^*}^\cu({-},B^\cu)\:\sD(\sZ^0(\bmodrproj B^\cu))^\sop
 \lrarrow\sD(\sZ^0(B^\cu\bmodl_\bproj)),
$$
which restricts to an anti-equivalence between the bounded above
and bounded below derived categories
\begin{equation} \label{triangulated-dualization-functor-eqn}
 \Hom_{B^\rop{}^*}^\cu({-},B^\cu)\:\sD^-(\sZ^0(\bmodrproj B^\cu))^\sop
 \lrarrow\sD^+(\sZ^0(B^\cu\bmodl_\bproj)).
\end{equation}

 Furthermore, we observe that any short exact sequence of
graded-projective CDG\+modules is graded-split.
 Therefore, the restriction of the totalization
functor~\eqref{totalization-between-homotopy-categories-eqn} to
the homotopy category of bounded below complexes of
graded-projective CDG\+modules
$$
 \Tot^\sqcup\:\sK^+(\sZ^0(B^\cu\bModl_\bproj))
 \lrarrow\sH^0(B^\cu\bModl_\bproj)
$$
factorizes through the bounded below derived category of
the exact category $\sZ^0(B^\cu\bModl_\bproj)$ and induces
a triangulated functor
$$
 \Tot^\sqcup\:\sD^+(\sZ^0(B^\cu\bModl_\bproj))
 \lrarrow\sH^0(B^\cu\bModl_\bproj),
$$
in view of Lemma~\ref{contractible-totalization-lemma}(a).
 Restricting further to bounded below complexes of finitely generated
graded-projective CDG\+modules, we obtain a triangulated functor
\begin{equation} \label{totalization-acting-from-derived}
 \Tot^\sqcup\:\sD^+(\sZ^0(B^\cu\bmodl_\bproj))
 \lrarrow\sH^0(B^\cu\bModl_\bproj).
\end{equation}

 Finally, we consider the composition of triangulated functors
\begin{multline} \label{long-composition-eqn}
 \sD^\bb(\sZ^0(\bmodr B^\cu))^\sop\,\rightarrowtail\,
 \sD^-(\sZ^0(\bmodr B^\cu))^\sop \\
 \,\simeq\,\sD^-(\sZ^0(\bmodrproj B^\cu))^\sop
 \,\simeq\,\sD^+(\sZ^0(B^\cu\bmodl_\bproj)) \\ \lrarrow
 \sH^0(B^\cu\bModl_\bproj)\,\simeq\,\sD^\bctr(B^\cu\bModl).
\end{multline}
 Here the first functor $\sD^\bb(\sZ^0(\bmodr B^\cu))^\sop
\rightarrowtail \sD^-(\sZ^0(\bmodr B^\cu))^\sop$ is opposite to
the natural triangulated inclusion of the bounded derived category
of the abelian category $\sZ^0(\bmodr B^\cu)$ into the bounded
above derived category.
 The second functor $\sD^-(\sZ^0(\bmodr B^\cu))^\sop\rarrow
\sD^-(\sZ^0(\bmodrproj B^\cu))^\sop$ is opposite to the triangulated
equivalence~\eqref{fin-gen-graded-proj-derived-eqn} provided by
Lemma~\ref{resolving-subcategory-lemma}.
 The third functor $\Hom_{B^\rop{}^*}^\cu({-},B^\cu)\:\allowbreak
\sD^-(\sZ^0(\bmodrproj B^\cu))^\sop\rarrow
\sD^+(\sZ^0(B^\cu\bmodl_\bproj))$ is the triangulated
anti-equiv\-a\-lence~\eqref{triangulated-dualization-functor-eqn}.
 The fourth functor $\Tot^\sqcup\:
\sD^+(\sZ^0(B^\cu\bmodl_\bproj))\rarrow\sH^0(B^\cu\bModl_\bproj)$ is
the totalization~\eqref{totalization-acting-from-derived}.
 The fifth functor $\sH^0(B^\cu\bModl_\bproj)\rarrow
\sD^\bctr(B^\cu\bModl)$ is the triangulated equivalence of
Theorem~\ref{contraderived-cotorsion-pair-category-theorem}(b).
{\emergencystretch=1em\par}

 The composition of functors~\eqref{long-composition-eqn} is
the desired triangulated functor
\begin{equation} \label{functor-tilde-Xi-on-bounded-derived-category}
 \widetilde\Xi_{B^\cu}\:\sD^\bb(\sZ^0(\bmodr B^\cu))^\sop
 \lrarrow\sD^\bctr(B^\cu\bModl).
\end{equation}
 Explicitly, the contravariant functor $\widetilde\Xi_{B^\cu}$ assigns
to a finite complex of finitely presented right CDG\+modules
$N^{\bu,\cu}$ over $B^\cu$ the left CDG\+module
$$
 \widetilde\Xi(N^{\bu,\cu})=
 \Tot^\sqcup(\Hom_{B^\rop{}^*}^\cu(P^{\bu,\cu},B^\cu)).
$$
 Here
$$
 P^{\bu,\cu}=(\dotsb\rarrow P^{-2,\cu}\rarrow P^{-1,\cu}
 \rarrow P^{0,\cu}\rarrow P^{1,\cu}\rarrow\dotsb)
$$
is a bounded above complex of finitely generated graded-projective
right CDG\+mod\-ules $P^{n,\cu}$, \,$n\in\boZ$, mapping
quasi-isomorphically into $N^{\bu,\cu}$ (in the sense of
a quasi-isomorphism of complexes in the abelian category
$\sZ^0(\bmodr B^\cu)$).
 The complex of finitely generated graded-projective left CDG\+modules
$$
 \Hom_{B^\rop{}^*}^\cu(P^{\bu,\cu},B^\cu)=
 (\dotsb\rarrow\Hom_{B^\rop{}^*}^\cu(P^{0,\cu},B^\cu)\rarrow
 \Hom_{B^\rop{}^*}^\cu(P^{-1,\cu},B^\cu)\rarrow\dotsb)
$$
is the result of termwise application of the dualization functor
$\Hom_{B^\rop{}^*}^\cu({-},B^\cu)$ to the complex of finitely generated 
graded-projective right CDG\+modules~$P^{\bu,\cu}$.

\subsection{Hom from the totalized dual resolution computed}
 The aim of this section is to prove the following proposition, and
deduce a corollary defining the contravariant triangulated functor
$$
 \Xi_{B^\cu}\:\sD^\abs(\bmodr B^\cu)^\sop
 \lrarrow\sD^\bctr(B^\cu\bModl).
$$

\begin{prop} \label{Hom-from-tilde-Xi-prop}
 Let $B^\cu=(B^*,d,h)$ be a CDG\+ring such that the graded ring $B^*$ is
graded right coherent.
 Let $N^\cu$ be a finitely presented right CDG\+module over $B^\cu$ and
$M^\cu$ be a left CDG\+module over~$B^\cu$.
 Then there is a natural isomorphism of abelian groups
$$
 \Hom_{\sD^\bctr(B^\cu\bModl)}(\widetilde\Xi_{B^\cu}(N^\cu),M^\cu)
 \simeq H^0(N^\cu\ot_{B^*}^\boL M^\cu).
$$
 Here\/ $\widetilde\Xi_{B^\cu}$ is the triangulated
functor~\eqref{functor-tilde-Xi-on-bounded-derived-category} from
Section~\ref{totalized-dual-construction-subsecn} (applied
to the one-term complex of CDG\+modules $N^\cu\in\sZ^0(\bmodr B^\cu)
\subset\sD^\bb(\sZ^0(\bmodr B^\cu))$) and\/ $\ot_{B^*}^\boL$ is
the derived tensor product functor~\eqref{derived-tensor-product} from
Section~\ref{tensor-product-pairing-subsecn}.
\end{prop}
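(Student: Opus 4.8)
The plan is to descend from the contraderived category to the homotopy category, replace $M^\cu$ by a graded-flat resolution, and then unwind the definition of $\widetilde\Xi_{B^\cu}$ via a dualization isomorphism followed by the acyclic assembly lemma. First I would record that $\widetilde\Xi_{B^\cu}(N^\cu)$ is a graded-projective left CDG\+module: by its construction~\eqref{long-composition-eqn} it is the direct sum totalization $\Tot^\sqcup$ of a bounded below complex of finitely generated graded-projective CDG\+modules, and $B^\cu\bModl_\bproj$ is closed under coproducts. For a graded-projective $P^\cu$ and an arbitrary $M^\cu$ the morphism groups agree, $\Hom_{\sD^\bctr(B^\cu\bModl)}(P^\cu,M^\cu)\simeq\Hom_{\sH^0(B^\cu\bModl)}(P^\cu,M^\cu)$: taking a special precover sequence $0\to A^\cu\to\tilde P^\cu\to M^\cu\to0$ from the contraderived cotorsion pair of Theorem~\ref{contraderived-cotorsion-pair-category-theorem}(a) with $A^\cu$ contraacyclic and $\tilde P^\cu$ graded-projective, the functor $\Hom_{\sH^0(B^\cu\bModl)}(P^\cu,-)$ kills $A^\cu$ and $A^\cu[1]$ (by the definition of contraacyclicity and Lemma~\ref{Ext-1-homotopy-hom-lemma}), so it inverts $\tilde P^\cu\to M^\cu$, while on graded-projectives $\sH^0(B^\cu\bModl)$ and $\sD^\bctr(B^\cu\bModl)$ coincide by Theorem~\ref{contraderived-cotorsion-pair-category-theorem}(b). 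The very same orthogonality allows me to replace $M^\cu$ by the graded-flat resolution $F^\cu\to M^\cu$ with contraacyclic cone used in Section~\ref{tensor-product-pairing-subsecn} (so that $N^\cu\ot_{B^*}^\boL M^\cu=N^\cu\ot_{B^*}F^\cu$ by~\eqref{derived-tensor-product-constructed}), reducing the left-hand side to $H^0\Hom^\bu_{B^*}(\widetilde\Xi_{B^\cu}(N^\cu),F^\cu)$.

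Next I would compute this complex termwise. Writing $\widetilde\Xi_{B^\cu}(N^\cu)=\Tot^\sqcup(\Hom^\cu_{B^\rop{}^*}(P^{\bu,\cu},B^\cu))$ for a bounded above complex $P^{\bu,\cu}$ of finitely generated graded-projective right CDG\+modules quasi-isomorphic to $N^\cu$, and using that $\Hom^\bu_{B^*}$ turns the coproduct totalization into a product totalization, I obtain $\Hom^\bu_{B^*}(\widetilde\Xi_{B^\cu}(N^\cu),F^\cu)\simeq\Tot^\sqcap(\Hom^\bu_{B^*}(\Hom^\cu_{B^\rop{}^*}(P^{\bu,\cu},B^\cu),F^\cu))$. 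The core of the argument is the natural evaluation isomorphism of complexes of abelian groups $\Hom^\cu_{B^*}(\Hom^\cu_{B^\rop{}^*}(P,B^\cu),F^\cu)\simeq P\ot_{B^*}F^\cu$, valid for every finitely generated graded-projective right CDG\+module $P$; it follows from the rank-one case $P=B^\cu$ by additivity and passage to direct summands, with the CDG\+bimodule sign conventions of Section~\ref{prelim-cdg-bimodules-subsecn} to be matched. This yields $\Hom^\bu_{B^*}(\widetilde\Xi_{B^\cu}(N^\cu),F^\cu)\simeq\Tot^\sqcap(P^{\bu,\cu}\ot_{B^*}F^\cu)$.

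Finally I would identify $\Tot^\sqcap(P^{\bu,\cu}\ot_{B^*}F^\cu)$ with $N^\cu\ot_{B^*}F^\cu$ in $\sD(\Ab)$. Since $F^\cu$ is graded-flat, the functor ${-}\ot_{B^*}F^\cu$ is exact on $\sZ^0(\bModr B^\cu)$, hence it carries the acyclic bounded above augmented complex $\dotsb\to P^{-1,\cu}\to P^{0,\cu}\to N^\cu\to0$ to a bounded above complex of complexes of abelian groups that is acyclic in the complex direction; its product totalization is then acyclic by the acyclic assembly lemma (product totalization of a column-bounded-above double complex with exact columns). Thus the comparison map $\Tot^\sqcap(P^{\bu,\cu}\ot_{B^*}F^\cu)\to N^\cu\ot_{B^*}F^\cu$ is a quasi-isomorphism, and applying $H^0$ gives the asserted natural isomorphism. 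I expect the main obstacle to be bookkeeping rather than conceptual: verifying the evaluation isomorphism with the correct CDG\+signs, keeping straight the passage between $\Tot^\sqcup$ and $\Tot^\sqcap$ and the boundedness that the acyclic assembly lemma requires, and checking that every identification is compatible with the differentials and natural in both $N^\cu$ and $M^\cu$. The genuinely substantive inputs, namely graded-projective orthogonality and the exactness of tensoring with a graded-flat module, are already in hand.
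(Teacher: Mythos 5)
Your proposal is correct and takes essentially the same route as the paper's proof: you reduce the left-hand side to $H^0\Hom^\bu_{B^*}(\widetilde\Xi_{B^\cu}(N^\cu),F^\cu)$ using graded-projectivity of $\widetilde\Xi_{B^\cu}(N^\cu)$ and a graded-flat resolution $F^\cu\rarrow M^\cu$ with contraacyclic cone, rewrite it as $H^0\Tot^\sqcap(P^{\bu,\cu}\ot_{B^*}F^\cu)$ via the evaluation isomorphism~\eqref{hom-from-hom-from-fin-gen-graded-proj-eqn} and the totalization swap~\eqref{hom-from-direct-sum-totalization-eqn}, and conclude by the acyclicity of the product totalization of the bounded above exact bicomplex, which is exactly Lemma~\ref{co-contra-acyclic-total-complex-lemma}(b). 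The one imprecision is your precover-based justification of $\Hom_{\sD^\bctr(B^\cu\bModl)}(P^\cu,{-})\simeq\Hom_{\sH^0(B^\cu\bModl)}(P^\cu,{-})$ on graded-projectives: the special precover sequence $0\rarrow A^\cu\rarrow\widetilde P^\cu\rarrow M^\cu\rarrow0$ is in general \emph{not} graded-split, hence not a distinguished triangle in $\sH^0(B^\cu\bModl)$, so ``killing $A^\cu$ and $A^\cu[1]$'' does not formally invert $\widetilde P^\cu\rarrow M^\cu$ (and one would additionally need this morphism to become invertible in $\sD^\bctr(B^\cu\bModl)$ itself); the fact you need is the paper's~\eqref{contraderived-hom-from-graded-projective-eqn}, which follows from the orthogonality of graded-projective CDG\+modules to contraacyclic ones combined with the calculus-of-fractions description of morphisms in a Verdier quotient.
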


\begin{proof}
 We start with two elementary observations of natural isomorphisms.
 For any finitely generated graded-projective right CDG\+module $P^\cu$
over a CDG\+ring $B^\cu$ and any left CDG\+module $G^\cu$ over $B^\cu$
there is a natural isomorphism of complexes of abelian groups
\begin{equation} \label{hom-from-hom-from-fin-gen-graded-proj-eqn}
 \Hom^\bu_{B^*}(\Hom^\cu_{B^\rop{}^*}(P^\cu,B^\cu),G^\cu)
 \simeq P^\cu\ot_{B^*}G^\cu.
\end{equation}
 For any complex of left CDG\+modules $Q^{\bu,\cu}$ and any left
CDG\+module $G^\cu$ over $B^\cu$, there is a natural isomorphism of
complexes of abelian groups
\begin{equation} \label{hom-from-direct-sum-totalization-eqn}
 \Hom^\bu_{B^*}(\Tot^\sqcup(Q^{\bu,\cu}),G^\cu)\simeq
 \Tot^\sqcap(\Hom^\bu_{B^*}(Q^{\bu,\cu},G^\cu)).
\end{equation}
 Here $\Hom^\bu_{B^*}(Q^{\bu,\cu},G^\cu)$ is the complex of complexes
of abelian groups whose terms are the complexes
$\Hom^\bu_{B^*}(Q^{n,\cu},G^\cu)$, \,$n\in\boZ$.

 Furthermore, we observe that, by the definition of the contraderived
category given in Section~\ref{contraderived-subsecn}, a natural
isomorphism of abelian groups
\begin{equation} \label{contraderived-hom-from-graded-projective-eqn}
 \Hom_{\sD^\bctr(B^\cu\bModl)}(Q^\cu,G^\cu)\simeq
 \Hom_{\sH^0(B^\cu\bModl)}(Q^\cu,G^\cu)=
 H^0\Hom_{B^*}^\bu(Q^\cu,G^\cu)
\end{equation}
holds for any graded-projective left CDG\+module $Q^\cu$ and any
left CDG\+module $G^\cu$ over~$B^\cu$.
 Moreover, for any left CDG\+module $M^\cu$ over $B^\cu$ there exists
a graded-flat (or even graded-projective) left CDG\+module $F^\cu$
over $B^\cu$ together with a closed morphism of left CDG\+modules
$F^\cu\rarrow M^\cu$ with a contraacyclic cone
(see Theorem~\ref{contraderived-cotorsion-pair-category-theorem}).
 Then
the isomorphism~\eqref{contraderived-hom-from-graded-projective-eqn}
for $G^\cu=F^\cu$ implies a natural isomorphism
\begin{equation} \label{contraderived-hom-graded-proj-flat-eqn}
 \Hom_{\sD^\bctr(B^\cu\bModl)}(Q^\cu,M^\cu)\simeq
 H^0\Hom_{B^*}^\bu(Q^\cu,F^\cu).
\end{equation}

 Returning to the situation at hand, choose a resolution $P_\bu^\cu$
of the finitely presented right CDG\+module $N^\cu$ over $B^\cu$ by
finitely generated graded-projective CDG\+modules $P_n^\cu$, \,$n\ge0$
in the abelian category $\sZ^0(\bmodr B^\cu)$,
\begin{equation} \label{fin-gen-graded-projective-resolution-eqn}
 \dotsb\lrarrow P_2^\cu\lrarrow P_1^\cu\lrarrow P_0^\cu\lrarrow N^\cu
 \lrarrow0.
\end{equation}
 Put $Q^{n,\cu}=\Hom_{B^\rop{}^*}^\cu(P_n^\cu,B^\cu)$ and
$Q^\cu=\Tot^\sqcup(Q^{\bu,\cu})$.
 So $Q^{\bu,\cu}$ is a bounded below complex of finitely generated
graded-projective left CDG\+modules over $B^\cu$ and $Q^\cu$ is
a (countably generated) graded-projective left CDG\+module over~$B^\cu$.
 By the definition of the functor $\widetilde\Xi_{B^\cu}$, we have
$\widetilde\Xi_{B^\cu}(N^\cu)=Q^\cu$.
 
 Now the formulas~\eqref{contraderived-hom-graded-proj-flat-eqn},
\eqref{hom-from-direct-sum-totalization-eqn},
and~\eqref{hom-from-hom-from-fin-gen-graded-proj-eqn} tell us that
\begin{multline*}
 \Hom_{\sD^\bctr(B^\cu\bModl)}(Q^\cu,M^\cu)\simeq
 H^0\Hom_{B^*}^\bu(Q^\cu,F^\cu) \\ \simeq
 H^0\Tot^\sqcap(\Hom^\bu_{B^*}(Q^{\bu,\cu},F^\cu))
 \simeq H^0\Tot^\sqcap(P_\bu^\cu\ot_{B^*}F^\cu).
\end{multline*}

 Finally, the bounded above complex of complexes of abelian groups
\begin{equation} \label{acyclic-complex-of-tensor-products-eqn}
 \dotsb\lrarrow P_2^\cu\ot_{B^*}F^\cu\lrarrow P_1^\cu\ot_{B^*}F^\cu
 \lrarrow P_0^\cu\ot_{B^*}F^\cu\lrarrow N^\cu\ot_{B^*}F^\cu
 \lrarrow0
\end{equation}
is acyclic, since the complex of right
CDG\+modules~\eqref{fin-gen-graded-projective-resolution-eqn} is
acyclic in $\sZ^0(\bModr B^\cu)$ and the left CDG\+module $F^\cu$
is graded-flat.
 By Lemma~\ref{co-contra-acyclic-total-complex-lemma}(b),
the direct product totalization of
the bicomplex~\eqref{acyclic-complex-of-tensor-products-eqn} is
an acyclic complex of abelian groups.
 Hence the induced map of abelian groups
$$
 H^0\Tot^\sqcap(P_\bu^\cu\ot_{B^*}F^\cu)\lrarrow
 H^0(N^\cu\ot_{B^*}F^\cu)
$$
is an isomorphism.
 It remains to recall that the complex $N^\cu\ot_{B^*}F^\cu$
represents the derived category object $N^\cu\ot_{B^*}^\boL M^\cu$
as per the construction of the derived functor~$\ot_{B^*}^\boL$
in the formula~\eqref{derived-tensor-product-constructed}.
\end{proof}

\begin{cor} \label{factorizes-through-absolute-derived-cor}
 The contravariant triangulated
functor~\eqref{functor-tilde-Xi-on-bounded-derived-category}
$$
 \widetilde\Xi_{B^\cu}\:\sD^\bb(\sZ^0(\bmodr B^\cu))^\sop
 \lrarrow\sD^\bctr(B^\cu\bModl)
$$
from Section~\ref{totalized-dual-construction-subsecn} factorizes
through the triangulated Verdier quotient
functor~\eqref{Verdier-quotient-bounded-to-absolute-derived}
$$
 \sD^\bb(\sZ^0(\bmodr B^\cu))\lrarrow
 \sD^\abs(\bmodr B^\cu)
$$
from Proposition~\ref{bounded-derived-and-absolute-derived-prop}(b) and,
consequently, induces a well-defined contravariant triangulated functor
\begin{equation} \label{functor-Xi-on-absolute-derived-category}
 \Xi_{B^\cu}\:\sD^\abs(\bmodr B^\cu)^\sop
 \lrarrow\sD^\bctr(B^\cu\bModl).
\end{equation}
\end{cor}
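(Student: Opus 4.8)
The plan is to invoke the universal property of the Verdier quotient functor~\eqref{Verdier-quotient-bounded-to-absolute-derived}. By Proposition~\ref{bounded-derived-and-absolute-derived-prop}(b), the absolute derived category $\sD^\abs(\bmodr B^\cu)$ is the Verdier quotient of $\sD^\bb(\sZ^0(\bmodr B^\cu))$ by the thick subcategory $\sS$ spanned by the contractible finitely presented CDG\+modules, regarded as one\+term complexes in $\sZ^0(\bmodr B^\cu)$. Passing to opposite categories, \eqref{Verdier-quotient-bounded-to-absolute-derived} becomes the Verdier quotient functor $\sD^\bb(\sZ^0(\bmodr B^\cu))^\sop\rarrow\sD^\abs(\bmodr B^\cu)^\sop$ by $\sS^\sop$. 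Hence the triangulated functor $\widetilde\Xi_{B^\cu}$ of~\eqref{functor-tilde-Xi-on-bounded-derived-category} factorizes as in~\eqref{functor-Xi-on-absolute-derived-category} if and only if it annihilates every object of $\sS$.

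First I would observe that the full subcategory of those $X$ with $\widetilde\Xi_{B^\cu}(X)=0$ is a thick subcategory of $\sD^\bb(\sZ^0(\bmodr B^\cu))^\sop$, being the kernel of a triangulated functor. Since $\sS$ is generated, as a thick subcategory, by the contractible finitely presented right CDG\+modules, it therefore suffices to prove that $\widetilde\Xi_{B^\cu}(E^\cu)=0$ in $\sD^\bctr(B^\cu\bModl)$ for a single contractible finitely presented right CDG\+module $E^\cu$.

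The key computation uses Proposition~\ref{Hom-from-tilde-Xi-prop}: for every left CDG\+module $M^\cu$ there is a natural isomorphism
$$
 \Hom_{\sD^\bctr(B^\cu\bModl)}(\widetilde\Xi_{B^\cu}(E^\cu),M^\cu)
 \simeq H^0(E^\cu\ot_{B^*}^\boL M^\cu).
$$
If $E^\cu$ is contractible, then a contracting homotopy is a $B^*$\+linear homogeneous map $s\:E^\cu\rarrow E^\cu$ of degree~$-1$ with $d_Es+sd_E=\id_{E^\cu}$; tensoring $s$ with the identity of any graded\+flat resolution $F^\cu\rarrow M^\cu$ used in~\eqref{derived-tensor-product-constructed} yields a contracting homotopy $s\ot_{B^*}\id_{F^\cu}$ for the complex of abelian groups $E^\cu\ot_{B^*}F^\cu$ (a short sign check confirms $d(s\ot\id)+(s\ot\id)d=\id$). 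Thus $E^\cu\ot_{B^*}^\boL M^\cu$ is represented by a contractible, hence acyclic, complex, so the right\+hand side above vanishes. Taking $M^\cu=\widetilde\Xi_{B^\cu}(E^\cu)$ shows that the identity endomorphism of $\widetilde\Xi_{B^\cu}(E^\cu)$ is zero, whence $\widetilde\Xi_{B^\cu}(E^\cu)=0$, as required. The induced contravariant triangulated functor~\eqref{functor-Xi-on-absolute-derived-category} then follows from the universal property of the quotient.

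The main (and essentially only) obstacle is the bookkeeping with opposite categories in the universal property and the verification that the contracting homotopy survives the tensor product; both are routine once Proposition~\ref{Hom-from-tilde-Xi-prop} is available, so the real content of the corollary sits in that proposition rather than in this deduction.
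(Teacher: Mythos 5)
Your proposal is correct and follows essentially the same route as the paper: reduce via Proposition~\ref{bounded-derived-and-absolute-derived-prop}(b) to showing that $\widetilde\Xi_{B^\cu}$ annihilates contractible finitely presented right CDG\+modules, and then conclude from the Hom-computation of Proposition~\ref{Hom-from-tilde-Xi-prop}. The only (harmless) difference is that where the paper disposes of $N^\cu\ot_{B^*}^\boL M^\cu$ in one line --- contractible CDG\+modules vanish in the coderived category, through which the pairing factors in its first argument --- you instead verify by hand that the contracting homotopy $s\ot\id_{F^\cu}$ contracts the complex $E^\cu\ot_{B^*}F^\cu$; both arguments are valid and rest on the same underlying fact.
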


\begin{proof}
 According to
Proposition~\ref{bounded-derived-and-absolute-derived-prop}(b),
we only need to check that $\widetilde\Xi_{B^\cu}(N^\cu)=0$ in
$\sD^\bctr(B^\cu\bModl)$ for any contractible finitely presented
right CDG\+module $N^\cu$ over $B^\cu$ (viewed as an object of
$\sZ^0(\bmodr B^\cu)\subset\sD^\bb(\sZ^0(\bmodr B^\cu))$).
 This follows immediately from Proposition~\ref{Hom-from-tilde-Xi-prop}
(as any contractible CDG\+module vanishes in the coderived category).
 Alternatively, one can give a direct proof along the following lines.
 By~\cite[Lemma~3.3]{PS5}, \,$N^\cu$ is a direct summand of
the cone of the identity endomorphism of some finitely presented
right CDG\+module $M^\cu$ over~$B^\cu$.
 Choose a resolution $P_\bu^\cu$ of $M^\cu$ by finitely generated
graded-projective right CDG\+modules $P_n^\cu$, and apply
the functor of the cone of the identity endomorphism to every
CDG\+module $P_n^\cu$, producing a resolution of
$N^\cu=\cone(\id_{M^\cu})$ by finitely generated graded-projective
right CDG\+modules $\cone(\id_{P_n^\cu})$.
 Following the construction of the functor $\widetilde\Xi_{B^\cu}$,
show that the cones of the identity endomorphisms are transformed into
the cocones of the identity endomorphisms by this contravariant functor.
\end{proof}

\begin{rem} \label{RHom-from-tilde-Xi-prop}
The proof of Proposition~\ref{Hom-from-tilde-Xi-prop} (together with
the result of Corollary~\ref{factorizes-through-absolute-derived-cor})
in fact tells us that there is a natural isomorphism
$\boR\Hom_{B^*}^\bu(\Xi_{B^\cu}(N^\cu),M^\cu)\allowbreak
\simeq N^\cu\ot_{B^*}^\boL M^\cu$ of functors
$$
 \sD^\abs(\bmodr B^\cu)\times\sD^\bctr(B^\cu\bModl)
 \lrarrow\sD(\Ab).
$$
Here, $\boR\Hom_{B^*}^\bu$ is the right derived functor of $\Hom^\bu_{B^*}$
with respect to either of the contraderived model structures on
$\sZ^0(B^\cu\bModl)$.
\end{rem}

\subsection{The contraderived category is compactly generated}
 In this section we prove one of our main results,
Theorem~\ref{image-of-Xi-compact-generators-thm}.

\begin{lem} \label{derived-tensor-product-preserves-coproducts}
 For any CDG\+ring $B^\cu=(B^*,d,h)$, the derived tensor product
functor~\eqref{derived-tensor-product}
$$
 \ot_{B^*}^\boL\:\sD^\bco(\bModr B^\cu)\times\sD^\bctr(B^\cu\bModl)
 \lrarrow\sD(\Ab)
$$
preserves coproducts in both of its arguments.
\end{lem}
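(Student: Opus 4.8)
The plan is to reduce the statement to three elementary observations: that the underived tensor product DG\+functor $\ot_{B^*}$ commutes with direct sums in each of its two arguments (an isomorphism of complexes of abelian groups $N^\cu\ot_{B^*}\coprod_\alpha F_\alpha^\cu\simeq\coprod_\alpha(N^\cu\ot_{B^*}F_\alpha^\cu)$ and symmetrically); that coproducts in $\sD(\Ab)$ are computed as direct sums of complexes (since a direct sum of quasi-isomorphisms of complexes of abelian groups is a quasi-isomorphism); and that coproducts in $\sD^\bco(\bModr B^\cu)$ and in $\sD^\bctr(B^\cu\bModl)$ are represented by direct sums of CDG\+modules, for which one must choose representatives compatibly with the construction~\eqref{derived-tensor-product-constructed} of $\ot_{B^*}^\boL$.

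For the second argument I would work inside the flat incarnation~\eqref{flat-interpretation-of-contraderived} of the contraderived category, i.e.\ $\sD^\bctr(B^\cu\bModl)\simeq\sH^0(B^\cu\bModl_\bflat)/\sH^0(B^\cu\bModl)_\flat$ furnished by Theorem~\ref{projective-and-flat-contraderived-theorem}. The class of graded-flat CDG\+modules is closed under coproducts in $\sH^0(B^\cu\bModl_\bflat)$, and so is the subcategory $\sH^0(B^\cu\bModl)_\flat$ of flat CDG\+modules (see Section~\ref{prelim-proj-inj-flatness-subsecn}); hence the Verdier quotient functor preserves coproducts, and the coproduct $\coprod_\alpha M_\alpha^\cu$ is represented by the direct sum $\coprod_\alpha F_\alpha^\cu$ of graded-flat representatives $F_\alpha^\cu$ of the $M_\alpha^\cu$. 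Since $\coprod_\alpha F_\alpha^\cu$ is again graded-flat, the derived tensor product is computed without further resolution, giving
$$
 N^\cu\ot_{B^*}^\boL\Bigl(\coprod\nolimits_\alpha M_\alpha^\cu\Bigr)
 \simeq N^\cu\ot_{B^*}\coprod\nolimits_\alpha F_\alpha^\cu
 \simeq\coprod\nolimits_\alpha\bigl(N^\cu\ot_{B^*}F_\alpha^\cu\bigr)
 \simeq\coprod\nolimits_\alpha\bigl(N^\cu\ot_{B^*}^\boL M_\alpha^\cu\bigr),
$$
where the last coproduct is taken in $\sD(\Ab)$.

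For the first argument I would use that the class of coacyclic right CDG\+modules is closed under infinite direct sums (Section~\ref{coderived-subsecn}), so that coproducts in $\sD^\bco(\bModr B^\cu)$ are represented by direct sums in $\sZ^0(\bModr B^\cu)$. Fixing a single graded-flat representative $F^\cu$ of $M^\cu$, the isomorphism $\bigl(\coprod_\alpha N_\alpha^\cu\bigr)\ot_{B^*}F^\cu\simeq\coprod_\alpha\bigl(N_\alpha^\cu\ot_{B^*}F^\cu\bigr)$ of complexes of abelian groups together with the identification of coproducts in $\sD(\Ab)$ with direct sums yields the claim in the first variable. The main obstacle to anticipate is precisely the second variable: one cannot naively represent the coproduct via graded-projective (or arbitrary) representatives, because the class of contraacyclic CDG\+modules is only known to be closed under \emph{products}, so a direct sum of the contraacyclic cones $\cone(F_\alpha^\cu\rarrow M_\alpha^\cu)$ need not be contraacyclic. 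Passing to the flat contraderived incarnation—where the subcategory to be quotiented out is closed under coproducts—is what circumvents this difficulty and makes the argument go through.
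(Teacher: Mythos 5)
Your proposal is correct and follows essentially the same route as the paper: the paper's proof likewise reduces everything to the facts that the underived tensor product preserves direct sums, that coacyclic CDG\+modules are closed under direct sums (so the Verdier quotient functor onto $\sD^\bco(\bModr B^\cu)$ preserves coproducts), and that coproducts in $\sD^\bctr(B^\cu\bModl)$ can be computed in the flat incarnation $\sH^0(B^\cu\bModl_\bflat)/\sH^0(B^\cu\bModl)_\flat$, where both subcategories are closed under coproducts --- which is exactly the alternative construction the paper spells out (its primary one uses instead that $\sH^0(B^\cu\bModl_\bproj)$ is closed under coproducts in $\sH^0(B^\cu\bModl)$, which works equally well since graded-projective modules are graded-flat). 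Your closing remark correctly identifies why one cannot naively take direct sums of arbitrary representatives on the contraderived side (contraacyclics are only known to be closed under products), which is precisely the point both constructions are designed to circumvent.
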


\begin{proof}
 Here it needs to be explained why the coproducts exist in the coderived
and contraderived categories, and how they are constructed.
 The infinite coproducts do exist in both the coderived and
the contraderived category, but their constructions are different.

 To construct coproducts in the coderived category
$\sD^\bco(\bModr B^\cu)$, one needs to represent it as
the \emph{Verdier quotient category} of the homotopy category,
$$
 \sD^\bco(\bModr B^\cu)=\sH^0(\bModr B^\cu)/\sH^0(\bModr B^\cu)_\ac^\bco
$$
(see Section~\ref{coderived-subsecn}).
 Then the claim is that the Verdier quotient functor
$\sH^0(\bModr B^\cu)\allowbreak\rarrow\sD^\bco(\bModr B^\cu)$ preserves
coproducts~\cite[paragraph before Corollary~7.15]{PS5}.
 The coproducts in the homotopy category $\sH^0(\bModr B^\cu)$, of
course, agree with those in the abelian category $\sZ^0(\bModr B^\cu)
=\sModr B^*[\delta]$.

 To construct coproducts in the contraderived category
$\sD^\bctr(\bModr B^\cu)$, one needs to represent it as
the \emph{full triangulated subcategory} of the homotopy category,
$$
 \sD^\bctr(B^\cu\bModl)\simeq\sH^0(B^\cu\bModl_\bproj)
$$
(see Theorem~\ref{contraderived-cotorsion-pair-category-theorem}(b)).
 Then it suffices to observe that the full subcategory
$\sH^0(B^\cu\bModl_\bproj)$ is closed under coproducts in
$\sH^0(B^\cu\bModl)$.
 So the claim is that the embedding $\sD^\bctr(B^\cu\bModl)\rarrow
\sH^0(B^\cu\bModl)$ preserves coproducts.

 Alternatively, one can use the construction of the flat contraderived
category,
$$
 \sD^\bctr(B^\cu\bModl)\simeq
 \sH^0(B^\cu\bModl_\bflat)/\sH^0(B^\cu\bModl)_\flat.
$$
(see Theorem~\ref{projective-and-flat-contraderived-theorem}).
 Then the observation is that both the full subcategories
of flat and graded-flat CDG\+modules $\sH^0(B^\cu\bModl)_\flat$
and $\sH^0(B^\cu\bModl_\bflat)$ are closed under coproducts in
the homotopy category $\sH^0(B^\cu\bModl)$.
 Consequently, the Verdier quotient functor
$\sH^0(B^\cu\bModl_\bflat)\rarrow\sD^\bctr(B^\cu\bModl)$
preserves coproducts.

 With these observations in mind, the claim that the derived
tensor product functor~$\ot_{B^*}^\boL$ preserves coproducts in
both of the arguments follows immediately from its
definition~\eqref{derived-tensor-product-constructed} and
the fact that the underived functor of tensor product of CDG\+modules
preserves infinite direct sums.
\end{proof}

\begin{lem} \label{flat-cdg-modules-acyclic-tensor-product-lemma}
 Let $B^\cu=(B^*,d,h)$ be a CDG\+ring and
$F^\cu\in\sZ^0(B^\cu\bModl_\bflat)$ be a graded-flat left CDG\+module
over~$B^\cu$.
 Then the following three conditions are equivalent:
\begin{enumerate}
\item the complex of abelian groups $N^\cu\ot_{B^*}F^\cu$ is acyclic
for every right CDG\+module $N^\cu$ over~$B^\cu$;
\item the complex of abelian groups $N^\cu\ot_{B^*}F^\cu$ is acyclic
for every finitely presented right CDG\+module $N^\cu$ over~$B^\cu$;
\item the left CDG\+module $F^\cu$ over $B^\cu$ is flat.
\end{enumerate}
\end{lem}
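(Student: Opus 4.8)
The plan is to translate everything into the language of graded $B^*[\delta]$-modules, using the identification $\sZ^0(B^\cu\bModl)\simeq B^*[\delta]\sModl$ recorded in Section~\ref{prelim-delta-and-G-subsecn}, and to reduce all three conditions to the vanishing of $\Tor$-groups over $A:=B^*[\delta]$. The implication (1)$\Rightarrow$(2) is trivial. The heart of the matter is a single natural isomorphism
$$
 \Tor^A_k(N^*,F^*)^j\;\cong\;H^{j-k}(N^\cu\ot_{B^*}F^\cu)
 \qquad(k\ge1,\ j\in\boZ),
$$
valid for every right CDG\+module $N^\cu$ and every graded-flat left CDG\+module $F^\cu$; here the superscript $j$ denotes the internal grading component, and $N^\cu\ot_{B^*}F^\cu$ is the complex of abelian groups with its CDG differential. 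Granting this formula, both (3)$\Rightarrow$(1) and (2)$\Rightarrow$(3) follow formally.

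To prove the formula I would first build a flat resolution of $F^\cu$ over $A$. Iterating the natural short exact sequence $0\rarrow F^\cu[-1]\rarrow G^+(F^*)\rarrow F^\cu\rarrow0$ from~\eqref{G-plus-CDG-module-sequence} (applied in turn to $F^\cu$, $F^\cu[-1]$, $F^\cu[-2]$, \dots) produces an exact complex
$$
 \cdots\rarrow G^+(F^*[-2])\rarrow G^+(F^*[-1])\rarrow G^+(F^*)
 \rarrow F^\cu\rarrow0.
$$
Since $F^\cu$ is graded-flat, each $F^*[-k]$ is a flat graded $B^*$-module, so each term $G^+(F^*[-k])=B^*[\delta]\ot_{B^*}F^*[-k]$ is a flat $A$-module: for any right $A$-module $N^*$ one has $N^*\ot_A G^+(F^*[-k])=N^*\ot_{B^*}F^*[-k]$, and $-\ot_{B^*}F^*[-k]$ is exact. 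Thus this is a genuine flat resolution computing $\Tor^A(N^*,F^*)$. Applying $N^*\ot_A{-}$ identifies the $k$-th term with $(N^*\ot_{B^*}F^*)[-k]$, i.e.\ with a grading shift of the underlying graded group of $C^\cu:=N^\cu\ot_{B^*}F^\cu$; and a direct inspection of the counit together with the identification of the kernel with $F^\cu[-1]$ shows that the induced differential is, up to sign, exactly the CDG differential $d_C$ of $C^\cu$. Reading off cohomology in each internal degree then yields the displayed isomorphism (with the $k=0$ term recovering $N^*\ot_A F^*=\operatorname{coker}d_C$, compatibly with Remark~\ref{flat-complexes-remark}).

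With the formula in hand I would conclude as follows. For (3)$\Rightarrow$(1): if $F^\cu$ is flat, then $\Tor^A_k(N^*,F^*)=0$ for all $k\ge1$ and all $N^*$, so $H^m(N^\cu\ot_{B^*}F^\cu)=0$ for every $m$ and every right CDG\+module $N^\cu$. For (2)$\Rightarrow$(3): every finitely generated homogeneous right ideal $I\subset A$ gives a finitely presented right CDG\+module $A/I$ (finite presentation over $A$ and over $B^*$ agree by Proposition~\ref{finiteness-countability-for-cdg-modules-prop}(b)); the hypothesis makes $(A/I)\ot_{B^*}F^\cu$ acyclic, whence $\Tor^A_1(A/I,F^*)=0$, and the classical criterion for flatness of $F^*$ over $A$ applies. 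The main obstacle — the only genuinely computational step — is the careful sign-bookkeeping needed to identify the differential of $N^*\ot_A(\text{resolution})$ with $d_C$; everything else is the formal machinery of flat resolutions and the exactness of $-\ot_{B^*}F^*$ guaranteed by graded-flatness. I would also note that (3)$\Rightarrow$(1) admits an alternative elementary proof bypassing the $\Tor$ formula: by the graded Govorov--Lazard theorem $F^\cu$ is a direct limit of contractible graded-projective CDG\+modules (Remark~\ref{flat-complexes-remark}), the DG\+functor $N^\cu\ot_{B^*}{-}$ sends contractible CDG\+modules to contractible complexes, and direct limits of acyclic complexes are acyclic.
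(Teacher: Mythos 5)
Your proof is correct, but it takes a genuinely different route from the paper for the key implication. The paper proves (1)$\Leftrightarrow$(2) by writing an arbitrary $N^\cu$ as a direct limit of finitely presented CDG\+modules, proves (3)$\Rightarrow$(1) exactly as in your alternative argument (graded Govorov--Lazard plus direct limits of contractible complexes), and proves (1)$\Rightarrow$(3) by a duality trick: $\Tor_1^{B^*[\delta]}(N^\cu,F^\cu)=0$ is equivalent, via the character-module formula~\eqref{character-module-Ext-Tor-homological-formula}, to the vanishing of $\Ext^1_{B^*[\delta]\sModl}(F^\cu,\Hom^\cu_\boZ(N^\cu,\boQ/\boZ))$; since the character module is pure-injective, hence graded-cotorsion, and $F^\cu$ is graded-flat, Lemma~\ref{Ext-1-homotopy-hom-lemma} identifies this $\Ext^1$ with $H^1\Hom^\bu_{B^*}(F^\cu,\Hom_\boZ^\cu(N^\cu,\boQ/\boZ))\simeq\Hom_\boZ(H^{-1}(N^\cu\ot_{B^*}F^\cu),\boQ/\boZ)$, which vanishes by~(1). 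You instead build the explicit flat $B^*[\delta]$\+resolution $\dotsb\to G^+(F^*[-1])\to G^+(F^*)\to F^\cu\to 0$ and extract the formula $\Tor^{B^*[\delta]}_k(N^*,F^*)^j\simeq H^{j-k}(N^\cu\ot_{B^*}F^\cu)$ for $k\ge1$, from which (3)$\Rightarrow$(1) is immediate and (2)$\Rightarrow$(3) follows from the graded Baer-type ideal criterion applied to the cyclic finitely presented test modules $B^*[\delta]/I$. Your approach buys more: it computes \emph{all} higher Tor groups rather than just $\Tor_1$, it proves the sharper implication (2)$\Rightarrow$(3) directly without any duality detour, and it is the natural CDG\+generalization of the cokernel formula in Remark~\ref{flat-complexes-remark}; the paper's approach is shorter because it reuses machinery already in place (Lemma~\ref{Ext-1-homotopy-hom-lemma}, the character-module formula, and the pure-injective${}\Rightarrow{}$cotorsion observation) and needs no sign bookkeeping. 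One small caution on the step you flagged: with the naive identification $N^*\ot_{B^*}(F^*[-k])\cong(N^*\ot_{B^*}F^*)[-k]$, the induced differential differs from $d_C$ by the sign $-(-1)^{|n|}$ on an elementary tensor $n\ot y$, which depends on the degree of the $N$\+factor and so is not literally ``$d_C$ up to sign''; you must first twist the identification by the Koszul signs $n\ot y\mapsto(-1)^{k|n|}\,n\ot y$, after which the differentials agree up to a sign depending only on the homological position~$k$, which is harmless for homology. This works out, so it is a presentational imprecision rather than a gap.
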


\begin{proof}
 The equivalence (1)\,$\Longleftrightarrow$\,(2) holds because all
the right CDG\+modules over $B^\cu$ are direct limits of finitely
presented ones (in the abelian category $\sZ^0(\bModr B^\cu)=
\sModr B^*[\delta]$) and tensor products of CDG\+modules commute with
direct limits.
 The implication (3)\,$\Longrightarrow$\,(1) was already explained
as the assertion~(1) in Section~\ref{tensor-product-pairing-subsecn}.

 Let us prove the implication (1)\,$\Longrightarrow$\,(3).
 Given a right CDG\+module $N^\cu$ over $B^\cu$, we need to show that
$\Tor^1_{B^*[\delta]}(N^\cu,F^\cu)=0$.
 Equivalently, it suffices to show that
$\Ext^1_{B^*[\delta]\sModl}(F^\cu,\Hom_{\boZ}^\cu(N^\cu,\boQ/\boZ))=0$.
 Now the underlying graded $B^*$\+module of the left CDG\+module
$\Hom_{\boZ}^\cu(N^\cu,\boQ/\boZ)$ over $B^\cu$ is pure-injective,
hence it is cotorsion; while, by assumption, the underlying graded
$B^*$\+module of the left CDG\+module $F^\cu$ over $B^\cu$ is flat.
 By Lemma~\ref{Ext-1-homotopy-hom-lemma}, we have
\begin{multline*}
 \Ext^1_{B^*[\delta]\sModl}(F^\cu,\Hom_{\boZ}^\cu(N^\cu,\boQ/\boZ))
 \simeq
 \Hom_{\sH^0(B^\cu\bModl)}(F^\cu,\Hom_{\boZ}^\cu(N^\cu,\boQ/\boZ)[1])
 \\ = H^1\Hom_{B^*}^\bu(F^\cu,\Hom_{\boZ}^\cu(N^\cu,\boQ/\boZ))
 \simeq\Hom_{\boZ}(H^{-1}(N^\cu\ot_{B^*}F^\cu),\>\boQ/\boZ).
\end{multline*}
 Finally, $H^{-1}(N^\cu\ot_{B^*}F^\cu)=0$ by~(1), and we are done.
\end{proof}

 Now we can prove the following theorem, which is the first main result
of Section~\ref{compact-generators-of-contraderived-secn}.

\begin{thm} \label{image-of-Xi-compact-generators-thm}
 Let $B^\cu=(B^*,d,h)$ be a CDG\+ring such that the graded ring $B^*$
is graded right coherent.
 Then the contraderived category of left CDG\+modules\/
$\sD^\bctr(B^\cu\bModl)$ is compactly generated.
 The image of the contravariant triangulated functor\/~$\Xi_{B^\cu}$
\,\eqref{functor-Xi-on-absolute-derived-category} from
Corollary~\ref{factorizes-through-absolute-derived-cor} consists of
compact objects in\/ $\sD^\bctr(B^\cu\bModl)$ and generates this
triangulated category.
\end{thm}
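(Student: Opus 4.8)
The plan is to deduce both assertions from Proposition~\ref{Hom-from-tilde-Xi-prop}, which computes morphisms out of the objects $\Xi_{B^\cu}(N^\cu)$ as the cohomology of a derived tensor product. First I would establish compactness. For a single finitely presented right CDG\+module $N^\cu$, applying Proposition~\ref{Hom-from-tilde-Xi-prop} to the shifts $M^\cu[n]$ gives natural isomorphisms
$$
 \Hom_{\sD^\bctr(B^\cu\bModl)}(\Xi_{B^\cu}(N^\cu),M^\cu[n])
 \simeq H^n(N^\cu\ot_{B^*}^\boL M^\cu).
$$
Since the derived tensor product preserves coproducts in its second argument by Lemma~\ref{derived-tensor-product-preserves-coproducts}, and the functors $H^n$ commute with coproducts in $\sD(\Ab)$, the functor $\Hom_{\sD^\bctr(B^\cu\bModl)}(\Xi_{B^\cu}(N^\cu),{-})$ preserves coproducts, so $\Xi_{B^\cu}(N^\cu)$ is compact. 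To pass from single finitely presented modules to the entire essential image of $\Xi_{B^\cu}$, I would use that $\sD^\abs(\bmodr B^\cu)$ is generated as a triangulated category by the objects of $\sZ^0(\bmodr B^\cu)$, that $\Xi_{B^\cu}$ is triangulated, and that the compact objects of $\sD^\bctr(B^\cu\bModl)$ form a thick subcategory; hence $\Xi_{B^\cu}(X)$ is compact for every $X$.

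Next I would prove that these compact objects generate. As the finitely presented CDG\+modules form an essentially small category, the image of $\Xi_{B^\cu}$ is a set of compact objects, so by Neeman's criterion (\cite[Theorem~2.1]{Neem-bb}, recalled in Section~\ref{compact-generators-of-coderived-subsecn}) it suffices to establish weak generation: any object $M^\cu\in\sD^\bctr(B^\cu\bModl)$ with $\Hom_{\sD^\bctr(B^\cu\bModl)}(\Xi_{B^\cu}(N^\cu),M^\cu[n])=0$ for all finitely presented $N^\cu$ and all $n\in\boZ$ must vanish. By the displayed isomorphism, this hypothesis says precisely that the complex $N^\cu\ot_{B^*}^\boL M^\cu$ is acyclic for every finitely presented right CDG\+module $N^\cu$.

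To exploit this, I would choose a graded-flat representative. By Theorem~\ref{contraderived-cotorsion-pair-category-theorem} (together with Theorem~\ref{projective-and-flat-contraderived-theorem}), there is a graded-flat CDG\+module $F^\cu$ with a closed morphism $F^\cu\rarrow M^\cu$ whose cone is contraacyclic, so that $M^\cu\simeq F^\cu$ in $\sD^\bctr(B^\cu\bModl)$ and $N^\cu\ot_{B^*}^\boL M^\cu$ is represented by the plain complex $N^\cu\ot_{B^*}F^\cu$. Thus the hypothesis becomes that $N^\cu\ot_{B^*}F^\cu$ is acyclic for every finitely presented $N^\cu$. Now Lemma~\ref{flat-cdg-modules-acyclic-tensor-product-lemma}, in the form (2)\,$\Rightarrow$\,(3), yields that $F^\cu$ is a flat CDG\+module. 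But flat CDG\+modules are contraacyclic by Corollary~\ref{flats-are-contraacyclic-cor}, so $F^\cu\simeq0$ and hence $M^\cu\simeq0$ in $\sD^\bctr(B^\cu\bModl)$. This establishes weak generation, and therefore generation.

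Combining the two halves, the image of $\Xi_{B^\cu}$ is a set of compact generators, so $\sD^\bctr(B^\cu\bModl)$ is compactly generated. The conceptual content is entirely front-loaded into Proposition~\ref{Hom-from-tilde-Xi-prop} and Lemma~\ref{flat-cdg-modules-acyclic-tensor-product-lemma}; given those, the present argument is formal, with the graded right coherence of $B^*$ entering only through the construction of $\Xi_{B^\cu}$ and the validity of Proposition~\ref{Hom-from-tilde-Xi-prop}. The step I expect to require the most care is the weak-generation argument: specifically, arranging that acyclicity tested only against \emph{finitely presented} $N^\cu$ (rather than all right CDG\+modules) already forces the graded-flat representative $F^\cu$ to be flat, which is exactly the content of the equivalence (2)\,$\Leftrightarrow$\,(3) in Lemma~\ref{flat-cdg-modules-acyclic-tensor-product-lemma}.
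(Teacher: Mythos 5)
Your proposal is correct and follows essentially the same route as the paper's own proof: compactness via Proposition~\ref{Hom-from-tilde-Xi-prop} combined with Lemma~\ref{derived-tensor-product-preserves-coproducts}, and weak generation by replacing $M^\cu$ with a graded-flat representative $F^\cu$, invoking Lemma~\ref{flat-cdg-modules-acyclic-tensor-product-lemma} to conclude that $F^\cu$ is flat, and then Corollary~\ref{flats-are-contraacyclic-cor} to conclude it is contraacyclic, hence zero. The only (harmless) redundancy is your step passing from single finitely presented modules to the whole image of $\Xi_{B^\cu}$ via thickness of the compacts: since $\sD^\abs(\bmodr B^\cu)$ is a Verdier quotient of $\sH^0(\bmodr B^\cu)$, its objects are literally the finitely presented CDG\+modules, so no such passage is needed.
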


\begin{proof}
 First of all, it needs to be mentioned that infinite coproducts exist
in the contraderived category $\sD^\bctr(B^\cu\bModl)$.
 This was explained in the proof of
Lemma~\ref{derived-tensor-product-preserves-coproducts}.
 Now the object $\Xi_{B^\cu}(N^\cu)\in\sD^\bctr(B^\cu\bModl)$ is
compact for any $N^\cu\in\sD^\abs(\bmodr B^\cu)$ in view of
the computation of the functor
$\Hom_{\sD^\bctr(B^\cu\bModl)}(\widetilde\Xi_{B^\cu}(N^\cu),{-})$ in
Proposition~\ref{Hom-from-tilde-Xi-prop} and the assertion that
the derived tensor product functor~$\ot_{B^*}^\boL$ preserves
coproducts in its second argument
(Lemma~\ref{derived-tensor-product-preserves-coproducts}).

 Finally, it is clear that there is only a set of objects, up to
isomorphism, in the image of the functor~$\Xi_{B^\cu}$.
 So, in view of the discussion in
Section~\ref{compact-generators-of-coderived-subsecn}, it suffices
to show that the image of $\Xi_{B^\cu}$ weakly generates
the contraderived category.
 Now let $M^\cu\in\sD^\bctr(B^\cu\bModl)$ be an object such that
$\Hom_{\sD^\bctr(B^\cu\bModl)}(\Xi_{B^\cu}(N^\cu),M^\cu)=0$
for all $N^\cu\in\sD^\abs(\bmodr B^\cu)$.
 By Proposition~\ref{Hom-from-tilde-Xi-prop}, this means that
the $N^\cu\ot_{B^*}^\boL M^\cu=0$ in the derived category of
abelian groups $\sD(\Ab)$ for all finitely presented right
CDG\+modules $N^\cu$ over~$B^\cu$.
 Let $F^\cu$ be a graded-flat left CDG\+module over $B^\cu$ together
with a closed morphism of CDG\+modules $F^\cu\rarrow M^\cu$ with
contraacyclic cone.
 Then the complex $N^\cu\ot_{B^*}F^\cu$ is acyclic for all
$N^\cu\in\sH^0(\bmodr B^\cu)$.
 By Lemma~\ref{flat-cdg-modules-acyclic-tensor-product-lemma}, it
follows that the CDG\+module $F^\cu$ is flat.
 Applying Corollary~\ref{flats-are-contraacyclic-cor}, we conclude
that the CDG\+module $F^\cu$ is contraacyclic; hence $M^\cu$ is
a zero object of $\sD^\bctr(B^\cu\bModl)$.
\end{proof}

\subsection{Hom between totalized dual resolutions computed}
 The aim of this section is to prove that the contravariant
triangulated functor $\Xi_{B^\cu}$
\,\eqref{functor-Xi-on-absolute-derived-category} from
Corollary~\ref{factorizes-through-absolute-derived-cor}
is fully faithful.

\begin{prop} \label{Hom-from-Xi-to-Xi-prop}
 Let $B^\cu=(B^*,d,h)$ be a CDG\+ring such that the graded ring $B^*$
is graded right coherent.
 Then, for any finitely presented right CDG\+modules $N^\cu$ and $K^\cu$
over $B^\cu$, there is a natural isomorphism of abelian groups
$$
 \Hom_{\sD^\bctr(B^\cu\bModl)}(\Xi_{B^\cu}(N^\cu),\Xi_{B^\cu}(K^\cu))
 \simeq
 \Hom_{\sD^\bco(\bModr B^\cu)}(K^\cu,N^\cu).
$$
\end{prop}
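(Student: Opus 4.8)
The plan is to reduce the statement to a computation of the derived tensor product $N^\cu\ot_{B^*}^\boL\Xi_{B^\cu}(K^\cu)$ and then to reinterpret that complex as a coderived $\Hom$ complex. First I would apply Proposition~\ref{Hom-from-tilde-Xi-prop} with the left CDG\+module $M^\cu=\Xi_{B^\cu}(K^\cu)=\widetilde\Xi_{B^\cu}(K^\cu)$ (recall that $\Xi_{B^\cu}$ agrees with $\widetilde\Xi_{B^\cu}$ on objects of $\sZ^0(\bmodr B^\cu)$), which gives a natural isomorphism
\[
 \Hom_{\sD^\bctr(B^\cu\bModl)}(\Xi_{B^\cu}(N^\cu),\Xi_{B^\cu}(K^\cu))
 \simeq H^0\bigl(N^\cu\ot_{B^*}^\boL\Xi_{B^\cu}(K^\cu)\bigr).
\]
By construction $\Xi_{B^\cu}(K^\cu)=Q^\cu=\Tot^\sqcup(\Hom_{B^\rop{}^*}^\cu(P_\bu^\cu,B^\cu))$, where $P_\bu^\cu\to K^\cu$ is a resolution of $K^\cu$ by finitely generated graded-projective right CDG\+modules (as in the proof of Proposition~\ref{Hom-from-tilde-Xi-prop}), and $Q^\cu$ is graded-projective, hence graded-flat. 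Therefore the derived tensor product $N^\cu\ot_{B^*}^\boL\Xi_{B^\cu}(K^\cu)$ is represented by the underived $N^\cu\ot_{B^*}Q^\cu$ (take the identity as the graded-flat resolution in~\eqref{derived-tensor-product-constructed}). Using that tensor product commutes with $\Tot^\sqcup$ and the elementary natural isomorphism $N^\cu\ot_{B^*}\Hom_{B^\rop{}^*}^\cu(P_n^\cu,B^\cu)\simeq\Hom_{B^\rop{}^*}^\cu(P_n^\cu,N^\cu)$ valid for finitely generated graded-projective $P_n^\cu$ (the evaluation isomorphism, a companion of~\eqref{hom-from-hom-from-fin-gen-graded-proj-eqn}), I would identify $N^\cu\ot_{B^*}Q^\cu\simeq\Tot^\sqcup_n\Hom_{B^\rop{}^*}^\cu(P_\bu^\cu,N^\cu)$.

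Next I would compute the right-hand side. By Theorem~\ref{coderived-cotorsion-pair-category-theorem} there is a graded-injective right CDG\+module $J^\cu$ together with a closed morphism $N^\cu\rarrow J^\cu$ whose cone $C^\cu$ is coacyclic, and then $\Hom_{\sD^\bco(\bModr B^\cu)}(K^\cu,N^\cu)\simeq\Hom_{\sH^0(\bModr B^\cu)}(K^\cu,J^\cu)=H^0\Hom_{B^\rop{}^*}^\bu(K^\cu,J^\cu)$. Since $J^*$ is an injective graded right $B^*$\+module, applying $\Hom_{B^\rop{}^*}^*({-},J^*)$ to the exact resolution $\dotsb\rarrow P_0^*\rarrow K^*\rarrow0$ of underlying graded $B^*$\+modules yields, in each internal degree, an exact complex in the resolution direction. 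Viewing the augmented object as a bicomplex of abelian groups that is bounded below in the resolution index with acyclic rows, Lemma~\ref{co-contra-acyclic-total-complex-lemma}(a) shows that the natural map $\Hom_{B^\rop{}^*}^\bu(K^\cu,J^\cu)\rarrow\Tot^\sqcup_n\Hom_{B^\rop{}^*}^\cu(P_\bu^\cu,J^\cu)$ is a quasi-isomorphism.

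It then remains to compare $N^\cu$ with $J^\cu$ inside the totalization, i.e.\ to prove that the map $\Tot^\sqcup_n\Hom_{B^\rop{}^*}^\cu(P_\bu^\cu,N^\cu)\rarrow\Tot^\sqcup_n\Hom_{B^\rop{}^*}^\cu(P_\bu^\cu,J^\cu)$ is a quasi-isomorphism, equivalently that $\Tot^\sqcup_n\Hom_{B^\rop{}^*}^\cu(P_\bu^\cu,C^\cu)$ is acyclic for the coacyclic $C^\cu$. I expect this to be the main obstacle, and I would resolve it using the description of coacyclic CDG\+modules recalled in Section~\ref{coderived-subsecn} as the closure of contractible CDG\+modules under extensions and direct limits. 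For contractible $C^\cu$ the total complex is contractible, the contracting homotopy being post-composition with the contraction (which commutes with the resolution differential); acyclicity is preserved under extensions because each $P_n^\cu$ is graded-projective (so $\Hom_{B^\rop{}^*}^\cu(P_n^\cu,{-})$ is exact and $\Tot^\sqcup$ preserves short exact sequences); and it is preserved under direct limits because each $P_n^\cu$ is finitely generated (so $\Hom_{B^\rop{}^*}^\cu(P_n^\cu,{-})$ commutes with direct limits, as does $\Tot^\sqcup$, while direct limits of acyclic complexes of abelian groups are acyclic). Hence the property holds for all coacyclic $C^\cu$. Crucially, this is exactly where the finite generation of the $P_n^\cu$ is needed, and it is also the conceptual reason why $\Tot^\sqcup$ (rather than $\Tot^\sqcap$) matches the coderived category on the right.

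Combining the three steps gives the chain
\[
 \Hom_{\sD^\bctr(B^\cu\bModl)}(\Xi_{B^\cu}(N^\cu),\Xi_{B^\cu}(K^\cu))
 \simeq H^0\bigl(\Tot^\sqcup_n\Hom_{B^\rop{}^*}^\cu(P_\bu^\cu,N^\cu)\bigr)
 \simeq H^0\Hom_{B^\rop{}^*}^\bu(K^\cu,J^\cu)
 \simeq \Hom_{\sD^\bco(\bModr B^\cu)}(K^\cu,N^\cu),
\]
which is the desired natural isomorphism; naturality in $N^\cu$ and $K^\cu$ follows since every identification above is natural up to the (functorial up to homotopy) choices of resolutions.
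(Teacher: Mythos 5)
Your proposal is correct, and most of its skeleton coincides with the paper's own proof: both reduce the left-hand side to $H^0(N^\cu\ot_{B^*}S^\cu)$, where $S^\cu=\Xi_{B^\cu}(K^\cu)$, by applying Proposition~\ref{Hom-from-tilde-Xi-prop} with $F^\cu=M^\cu=S^\cu$; both choose a closed morphism $N^\cu\rarrow J^\cu$ into a graded-injective right CDG\+module with coacyclic cone (Theorem~\ref{coderived-cotorsion-pair-category-theorem}); and both use Lemma~\ref{co-contra-acyclic-total-complex-lemma}(a), together with graded-injectivity of $J^\cu$, to identify $H^0\Hom^\bu_{B^\rop{}^*}(K^\cu,J^\cu)$ with $H^0$ of the totalized Hom complex from the chosen resolution of $K^\cu$ (your $P_\bu^\cu$, the paper's $R_\bu^\cu$). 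The genuine divergence is in the bridge between the two sides, and it is a matter of the order of operations. The paper replaces $N^\cu$ by $J^\cu$ while still on the tensor-product side, where the quasi-isomorphism $N^\cu\ot_{B^*}S^\cu\rarrow J^\cu\ot_{B^*}S^\cu$ is a one-line citation of assertion~(2) from Section~\ref{tensor-product-pairing-subsecn} (the tensor product of a coacyclic CDG\+module with a graded-flat CDG\+module is acyclic), and only afterwards applies the evaluation isomorphism~\eqref{tensor-with-hom-from-fin-gen-graded-proj-eqn} with $G^\cu=J^\cu$. You apply the evaluation isomorphism first, with $G^\cu=N^\cu$, and are then forced to prove from scratch that $\Tot^\sqcup\Hom^\cu_{B^\rop{}^*}(P_\bu^\cu,{-})$ annihilates coacyclic CDG\+modules; your closure argument (contractibles, then extensions using graded-projectivity of the $P_n^\cu$, then direct limits using their finite presentability, via \cite[Corollary~7.17]{PS5} as recalled in Section~\ref{coderived-subsecn}) is valid, including the contractible case, where the post-composition homotopy needs the usual $(-1)^n$ sign twist to contract the totalization, but this does work out. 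Under the evaluation isomorphism your acyclicity statement is literally equivalent to the paper's assertion~(2), and the paper proves that assertion by exactly the same closure argument; so your route is self-contained where the paper's is a citation, at the cost of duplicating a verification already carried out in Section~\ref{tensor-product-pairing-subsecn}.
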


\begin{proof}
 Let
$$
 \dotsb\lrarrow P_2^\cu\lrarrow P_1^\cu\lrarrow P_0^\cu\lrarrow N^\cu
 \lrarrow0
$$
(as in formula~\eqref{fin-gen-graded-projective-resolution-eqn}
in the proof of Proposition~\ref{Hom-from-tilde-Xi-prop}) and
\begin{equation} \label{fin-gen-graded-projective-resolution-2nd}
 \dotsb\lrarrow R_2^\cu\lrarrow R_1^\cu\lrarrow R_0^\cu\lrarrow K^\cu
 \lrarrow0
\end{equation}
be some chosen resolutions of the CDG\+modules $N^\cu$ and $K^\cu$
by finitely generated graded-projective CDG\+modules $P_n^\cu$ and
$R_n^\cu$, \,$n\ge0$ in the abelian category of finitely presented
right CDG\+modules $\sZ^0(\bmodr B^\cu)$.
 Consider the finitely generated graded-projective left CDG\+modules
$Q^{n,\cu}=\Hom_{B^\rop{}^*}(P_n^\cu,B^\cu)$ and
$S^{n,\cu}=\Hom_{B^\rop{}^*}(R_n^\cu,B^\cu)$; and denote the respective
totalizations of bounded below complexes of left CDG\+mod\-ules by
$Q^\cu=\Tot^\sqcup(Q^{\bu,\cu})$ and $S^\cu=\Tot^\sqcup(S^{\bu,\cu})$.
 So $Q^\cu$ and $S^\cu$ are (countably generated) graded-projective
left CDG\+modules over $B^\cu$ representing the contraderived category
objects $\Xi_{B^\cu}(N^\cu)=Q^\cu$ and $\Xi_{B^\cu}(K^\cu)=S^\cu$.

 Following the computations in the proof of
Proposition~\ref{Hom-from-tilde-Xi-prop} (for $F^\cu=M^\cu=S^\cu$),
we have a natural isomorphism of abelian groups
\begin{multline} \label{hom-of-Xi-as-tensor-product-eqn}
 \Hom_{\sD^\bctr(B^\cu\bModl)}(\Xi_{B^\cu}(N^\cu),\Xi_{B^\cu}(K^\cu))
 \\ = \Hom_{\sD^\bctr(B^\cu\bModl)}(Q^\cu,S^\cu)\simeq
 H^0(N^\cu\ot_{B^*}S^\cu).
\end{multline}

 Quite generally, for any finitely generated graded-projective
right CDG\+module $E^\cu$ over a CDG\+ring $B^\cu$ and any right
CDG\+module $G^\cu$ over $B^\cu$, there is a natural isomorphism of
complexes of abelian groups
\begin{equation} \label{tensor-with-hom-from-fin-gen-graded-proj-eqn}
 G^\cu\ot_{B^*}\Hom^\cu_{B^\rop{}^*}(E^\cu,B^\cu)
 \simeq\Hom^\bu_{B^\rop{}^*}(E^\cu,G^\cu).
\end{equation}
 For any complex of left CDG\+modules $D^{\bu,\cu}$ and any right
CDG\+module $G^\cu$ over $B^\cu$, there is a natural isomorphism of
complexes of abelian groups
\begin{equation} \label{tensor-with-direct-sum-totalization-eqn}
 G^\cu\ot_{B^*}\Tot^\sqcup(D^{\bu,\cu})\simeq
 \Tot^\sqcup(G^\cu\ot_{B^*}D^{\bu,\cu}).
\end{equation}
 Here $G^\cu\ot_{B^*}D^{\bu,\cu}$ is the complex of complexes of
abelian groups whose terms are the complexes
$G^\cu\ot_{B^*}D^{n,\cu}$, \,$n\in\boZ$.

 Returning to the situation at hand,
by Theorem~\ref{coderived-cotorsion-pair-category-theorem}
there exists a graded-injective right CDG\+module $J^\cu$ over $B^\cu$
together with a closed morphism of right CDG\+modules
$N^\cu\rarrow J^\cu$ with a coacyclic cone.
 Following the discussion of the derived functor of tensor
product~$\ot_{B^*}^\boL$ in
Section~\ref{tensor-product-pairing-subsecn}, and specifically
according to the assertion~(2), the induced morphism of complexes
of abelian groups
$$
 N^\cu\ot_{B^*}S^\cu\lrarrow J^\cu\ot_{B^*}S^\cu
$$
is a quasi-isomorphism.
 Now we have natural isomorphisms of abelian groups
\begin{multline} \label{long-isomorphism-of-hom-and-tensor-cohomology}
 \Hom_{\sD^\bctr(B^\cu\bModl)}(Q^\cu,S^\cu)\simeq
 H^0(N^\cu\ot_{B^*}S^\cu)\simeq H^0(J^\cu\ot_{B^*}S^\cu) \\
 \simeq H^0\Tot^\sqcup(J^\cu\ot_{B^*}S^{\bu,\cu})\simeq
 H^0\Tot^\sqcup(\Hom^\bu_{B^\rop{}^*}(R_\bu^\cu,J^\cu))
\end{multline}
by the formulas~\eqref{hom-of-Xi-as-tensor-product-eqn},
\eqref{tensor-with-direct-sum-totalization-eqn},
and~\eqref{tensor-with-hom-from-fin-gen-graded-proj-eqn}
for $D^{\bu,\cu}=S^{\bu,\cu}$ and $E^\cu=R_n^\cu$, \,$n\ge0$.

 Finally, the bounded below complex of complexes of abelian groups
\begin{equation} \label{acyclic-complex-of-homs-eqn}
 0\rarrow\Hom^\bu_{B^\rop{}^*}(K^\cu,J^\cu)\rarrow 
 \Hom^\bu_{B^\rop{}^*}(R_0^\cu,J^\cu)\rarrow
 \Hom^\bu_{B^\rop{}^*}(R_1^\cu,J^\cu)\rarrow\dotsb
\end{equation}
is acyclic, since the complex of right
CDG\+modules~\eqref{fin-gen-graded-projective-resolution-2nd} is
acyclic in $\sZ^0(\bModr B^\cu)$ and the right CDG\+module $J^\cu$
is graded-injective.
 By Lemma~\ref{co-contra-acyclic-total-complex-lemma}(a),
the direct sum totalization of
the bicomplex~\eqref{acyclic-complex-of-homs-eqn} is
an acyclic complex of abelian groups.
 Hence the induced map of abelian groups
\begin{equation} \label{totalized-bicomplex-induced-map}
 H^0\Hom^\bu_{B^\rop{}^*}(K^\cu,J^\cu)\lrarrow
 H^0\Tot^\sqcup(\Hom^\bu_{B^\rop{}^*}(R_\bu^\cu,J^\cu))
\end{equation}
is an isomorphism.
 It remains to recall that the map
\begin{multline*}
 H^0\Hom^\bu_{B^\rop{}^*}(K^\cu,J^\cu)=
 \Hom_{\sH^0(\bModr B^\cu)}(K^\cu,J^\cu) \\ \lrarrow
 \Hom_{\sD^\bco(\bModr B^\cu)}(K^\cu,J^\cu)\simeq
 \Hom_{\sD^\bco(\bModr B^\cu)}(K^\cu,N^\cu)
\end{multline*}
induced by the Verdier quotient functor
$\sH^0(\bModr B^\cu)\rarrow\sD^\bco(\bModr B^\cu)$ is
an isomorphism by the definition of the coderived category,
since the right CDG\+module $J^\cu$ over $B^\cu$ is
graded-injective.
\end{proof}

 Now we can deduce the next theorem, which is the second
main result of Section~\ref{compact-generators-of-contraderived-secn}.

\begin{thm} \label{Xi-fully-faithful-theorem}
 Let $B^\cu=(B^*,d,h)$ be a CDG\+ring such that the graded ring $B^*$
is graded right coherent.
 Then the contravariant triangulated
functor~\eqref{functor-Xi-on-absolute-derived-category}
$$
 \Xi_{B^\cu}\:\sD^\abs(\bmodr B^\cu)^\sop
 \lrarrow\sD^\bctr(B^\cu\bModl)
$$
from Corollary~\ref{factorizes-through-absolute-derived-cor}
is fully faithful.
\end{thm}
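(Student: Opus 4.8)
The plan is to reduce full faithfulness to the two computations already available, namely Proposition~\ref{Hom-from-Xi-to-Xi-prop} and Theorem~\ref{compact-generators-of-coderived-theorem}, and then to verify that the resulting isomorphism of Hom groups is genuinely the map induced by the functor~$\Xi_{B^\cu}$. First I would observe that, since the absolute derived category is a Verdier quotient of the homotopy category $\sH^0(\bmodr B^\cu)$, its objects are literally the finitely presented right CDG\+modules; so it suffices to treat a pair of objects $N^\cu$ and $K^\cu\in\sZ^0(\bmodr B^\cu)$. For such a pair, and using the identification $\Hom_{\sD^\abs(\bmodr B^\cu)^\sop}(N^\cu,K^\cu)=\Hom_{\sD^\abs(\bmodr B^\cu)}(K^\cu,N^\cu)$, full faithfulness amounts to bijectivity of the map
$$
 \Xi_{B^\cu}\:\Hom_{\sD^\abs(\bmodr B^\cu)}(K^\cu,N^\cu)\rarrow
 \Hom_{\sD^\bctr(B^\cu\bModl)}(\Xi_{B^\cu}(N^\cu),\Xi_{B^\cu}(K^\cu)),
 \qquad f\longmapsto\Xi_{B^\cu}(f).
$$

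Second, I would assemble the chain of isomorphisms
$$
 \Hom_{\sD^\abs(\bmodr B^\cu)}(K^\cu,N^\cu)\,\simeq\,
 \Hom_{\sD^\bco(\bModr B^\cu)}(K^\cu,N^\cu)\,\simeq\,
 \Hom_{\sD^\bctr(B^\cu\bModl)}(\Xi_{B^\cu}(N^\cu),\Xi_{B^\cu}(K^\cu)),
$$
where the first isomorphism is the full faithfulness of the inclusion $\sD^\abs(\bmodr B^\cu)\rarrow\sD^\bco(\bModr B^\cu)$ from Theorem~\ref{compact-generators-of-coderived-theorem}, and the second is Proposition~\ref{Hom-from-Xi-to-Xi-prop}. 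This already shows that the two groups are abstractly isomorphic; denote the composite isomorphism by~$\Phi$.

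Third---and this is the crux---I would check that $\Phi$ coincides with the functorial map $\Xi_{B^\cu}$, so that $\Xi_{B^\cu}$ is itself an isomorphism. Here I would invoke the Yoneda lemma. Fixing $K^\cu$ and letting $N^\cu$ vary, both $f\mapsto\Xi_{B^\cu}(f)$ and $\Phi$ are natural transformations from the corepresentable functor $\Hom_{\sD^\abs(\bmodr B^\cu)}(K^\cu,{-})$ to the (covariant, since $\Xi_{B^\cu}$ is contravariant) functor $N^\cu\mapsto\Hom_{\sD^\bctr(B^\cu\bModl)}(\Xi_{B^\cu}(N^\cu),\Xi_{B^\cu}(K^\cu))$. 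The naturality of $\Phi$ in the variable $N^\cu$ follows from the naturality of the isomorphisms in Theorem~\ref{compact-generators-of-coderived-theorem} and Proposition~\ref{Hom-from-Xi-to-Xi-prop}, the latter ultimately from the bifunctorial isomorphism $\boR\Hom_{B^*}^\bu(\Xi_{B^\cu}(N^\cu),{-})\simeq N^\cu\ot_{B^*}^\boL{-}$ of Remark~\ref{RHom-from-tilde-Xi-prop} and Proposition~\ref{Hom-from-tilde-Xi-prop}. By the Yoneda lemma, such a natural transformation is determined by its value at $\mathrm{id}_{K^\cu}$; since $\Xi_{B^\cu}(\mathrm{id}_{K^\cu})=\mathrm{id}_{\Xi_{B^\cu}(K^\cu)}$, it therefore suffices to verify the single identity $\Phi(\mathrm{id}_{K^\cu})=\mathrm{id}_{\Xi_{B^\cu}(K^\cu)}$, after which $\Phi=\Xi_{B^\cu}$ and bijectivity of $\Xi_{B^\cu}$ follows from that of~$\Phi$. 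The main obstacle is precisely this compatibility bookkeeping: organizing the naturality of the several isomorphisms (through finitely generated graded-projective resolutions of $N^\cu$ and $K^\cu$, the dualization functor $\Hom_{B^\rop{}^*}^\cu({-},B^\cu)$, and the two kinds of totalizations) and tracing the identity morphism through the whole chain. I expect this to be routine but delicate, and it is arranged most cleanly by choosing the resolutions functorially and appealing to the bifunctorial formula of Remark~\ref{RHom-from-tilde-Xi-prop} rather than re-deriving each identification by hand.
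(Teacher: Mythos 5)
Your skeleton coincides with the paper's own proof: both combine Proposition~\ref{Hom-from-Xi-to-Xi-prop} with Theorem~\ref{compact-generators-of-coderived-theorem} to produce the composite isomorphism~$\Phi$, and both correctly identify that the theorem reduces to showing $\Phi$ agrees with the map induced by~$\Xi_{B^\cu}$. Your Yoneda reduction is a legitimate variant of the paper's reduction step: the paper instead uses the fact that every morphism of $\sD^\abs(\bmodr B^\cu)$ is, up to isomorphism, the image of a closed morphism of CDG\+modules, and checks commutativity of the triangle~\eqref{triangular-diagram-of-homs} on those; given naturality of $\Phi$ over $\sD^\abs(\bmodr B^\cu)$, your version reduces further, to the single element $\id_{K^\cu}$. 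Be aware, though, that this naturality is not a free citation: the isomorphism of Proposition~\ref{Hom-from-Xi-to-Xi-prop} is built from chosen resolutions and a chosen graded-injective replacement, so one must first argue (as the paper does) that it is natural on $\sZ^0(\bmodr B^\cu)\times\sZ^0(\bmodr B^\cu)^\sop$ and then passes to $\sD^\abs(\bmodr B^\cu)\times\sD^\abs(\bmodr B^\cu)^\sop$ because the latter is a localization of the former; Remark~\ref{RHom-from-tilde-Xi-prop} gives you the identification $\Hom_{\sD^\bctr(B^\cu\bModl)}(\Xi_{B^\cu}(N^\cu),\Xi_{B^\cu}(K^\cu))\simeq H^0(N^\cu\ot_{B^*}^\boL\Xi_{B^\cu}(K^\cu))$ naturally in $N^\cu$, but not the remaining identification of the right-hand side with $\Hom_{\sD^\bco(\bModr B^\cu)}(K^\cu,N^\cu)$.

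The genuine gap is the step you defer as ``routine but delicate'': the verification that $\Phi(\id_{K^\cu})=\id_{\Xi_{B^\cu}(K^\cu)}$. This does not follow from general nonsense --- a priori the isomorphism of Proposition~\ref{Hom-from-Xi-to-Xi-prop} could differ from the functorial map by a nontrivial automorphism, and an abstract isomorphism of Hom groups never by itself yields full faithfulness. Checking it requires unwinding the actual construction: choosing the resolution $R_\bu^\cu$ of $K^\cu$, forming $S^\cu=\Tot^\sqcup(\Hom_{B^\rop{}^*}^\cu(R_\bu^\cu,B^\cu))$, choosing the closed morphism $K^\cu\rarrow J^\cu$ with coacyclic cone, and tracing the identity, at the level of explicit cocycles, through the identifications \eqref{hom-of-Xi-as-tensor-product-eqn}, \eqref{long-isomorphism-of-hom-and-tensor-cohomology}, and~\eqref{totalized-bicomplex-induced-map}. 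This cocycle-tracking is precisely what occupies the second half of the paper's proof; it is carried out there for an arbitrary closed morphism $f\:K^\cu\rarrow N^\cu$, of which your identity check is the special case $f=\id$, $N^\cu=K^\cu$, and that special case is not appreciably shorter, since the general~$f$ enters the paper's computation only through the composite $R_0^\cu\rarrow K^\cu\rarrow N^\cu$. Until this trace is performed, your argument establishes only that the two Hom groups are isomorphic, not that $\Xi_{B^\cu}$ induces the isomorphism --- and the latter is the content of the theorem.
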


\begin{proof}
 Let $N^\cu$ and $K^\cu$ be two finitely presented right CDG\+modules
over~$B^\cu$.
 By Proposition~\ref{Hom-from-Xi-to-Xi-prop}, we have a natural
isomorphism of abelian groups
\begin{equation} \label{Hom-from-Xi-to-Xi-isomorphism-eqn}
 \Hom_{\sD^\bctr(B^\cu\bModl)}(\Xi_{B^\cu}(N^\cu),\Xi_{B^\cu}(K^\cu))
 \simeq
 \Hom_{\sD^\bco(\bModr B^\cu)}(K^\cu,N^\cu).
\end{equation}
 The construction in the proof of the proposition shows
that~\eqref{Hom-from-Xi-to-Xi-isomorphism-eqn} is an isomorphism of
functors on the category $\sZ^0(\bmodr B^\cu)\times
\sZ^0(\bmodr B^\cu)^\sop$, but both the left-hand side and
the right-hand side of~\eqref{Hom-from-Xi-to-Xi-isomorphism-eqn}
are actually functors on $\sD^\abs(\bmodr B^\cu)\times
\sD^\abs(\bmodr B^\cu)^\sop$.
 As the triangulated category $\sD^\abs(\bmodr B^\cu)$ can be obtained
from the abelian category $\sZ^0(\bmodr B^\cu)$ by inverting some
morphisms (specifically, those with absolutely acyclic cones), it
follows that~\eqref{Hom-from-Xi-to-Xi-isomorphism-eqn} is actually
an isomorphism of functors on $\sD^\abs(\bmodr B^\cu)\times
\sD^\abs(\bmodr B^\cu)^\sop$.

 Furthermore, by Theorem~\ref{compact-generators-of-coderived-theorem},
the triangulated functor $\sD^\abs(\bmodr B^\cu)\rarrow
\sD^\bco(\bModr B^\cu)$ \,\eqref{absolute-derived-into-coderived}
induced by the inclusion of DG\+categories $\bmodr B^\cu\rarrow
\bModr B^\cu$ is fully faithful.
 So the functor~\eqref{absolute-derived-into-coderived} induces
an isomorphism of  the abelian groups of morphisms
\begin{equation} \label{hom-abs-derived=hom-coderived}
 \Hom_{\sD^\abs(\bmodr B^\cu)}(K^\cu,N^\cu)\simeq
 \Hom_{\sD^\bco(\bModr B^\cu)}(K^\cu,N^\cu).
\end{equation}
 We need to show that the map of abelian groups
\begin{equation} \label{map-of-homs-induced-by-Xi-eqn}
 \Xi_{B^\cu}\:\Hom_{\sD^\abs(\bmodr B^\cu)}(K^\cu,N^\cu)\lrarrow
 \Hom_{\sD^\bctr(B^\cu\bModl)}(\Xi_{B^\cu}(N^\cu),\Xi_{B^\cu}(K^\cu))
\end{equation}
is an isomorphism.
 For this purpose, it suffices to check that the triangular diagram
of abelian group maps
\begin{equation} \label{triangular-diagram-of-homs}
\begin{gathered}
 \xymatrixcolsep{-3.5em}
 \xymatrix{
  & \Hom_{\sD^\abs(\bmodr B^\cu)}(K^\cu,N^\cu)
  \ar[ld]_-{\eqref{map-of-homs-induced-by-Xi-eqn}}
  \ar@{=}[rd]^-{\eqref{hom-abs-derived=hom-coderived}} \\
  \Hom_{\sD^\bctr(B^\cu\bModl)}(\Xi_{B^\cu}(N^\cu),\Xi_{B^\cu}(K^\cu))
  \ar@{=}[rr]_-{\eqref{Hom-from-Xi-to-Xi-isomorphism-eqn}} &&
  \Hom_{\sD^\bco(\bModr B^\cu)}(K^\cu,N^\cu)
 }
\end{gathered}
\end{equation}
is commutative.

 In fact, as we have established in the discussion above,
\eqref{triangular-diagram-of-homs}~is a diagram of
natural transformations of functors on the category
$\sD^\abs(\bmodr B^\cu)\times\sD^\abs(\bmodr B^\cu)^\sop$.
 As any morphism in the absolute derived category
$\sD^\abs(\bmodr B^\cu)$ is isomorphic to a morphism coming from
the abelian category $\sZ^0(\bmodr B^\cu)$ via the localization
functor $\sZ^0(\bmodr B^\cu)\rarrow\sD^\abs(\bmodr B^\cu)$,
it suffices to check that, for any closed morphism of CDG\+modules
$f\:K^\cu\rarrow N^\cu$, the two images of~$f$ in
$\Hom_{\sD^\bco(\bModr B^\cu)}(K^\cu,N^\cu)$ obtained via
the two routes in~\eqref{triangular-diagram-of-homs} agree.

 The latter property can be checked following the construction of
the isomorphism in Proposition~\ref{Hom-from-Xi-to-Xi-prop}
step by step.
 Choose resolutions $P_\bu^\cu$
\,\eqref{fin-gen-graded-projective-resolution-eqn} of
the CDG\+module $N^\cu$ and $R_\bu^\cu$
\,\eqref{fin-gen-graded-projective-resolution-2nd}
of the CDG\+module $K^\cu$ so that the morphism $f\:K^\cu\rarrow
N^\cu$ can be extended to a morphism of complexes of CDG\+modules
$\tilde f\:R_\bu^\cu\rarrow P_\bu^\cu$ forming a commutative
diagram with~$f$ in the abelian category $\sZ^0(\bmodr B^\cu)$.

 Then there is a naturally defined degree~$0$ cocycle~$c$ in
the complex $N^\cu\ot_{B^*}S^{0,\cu}$ corresponding to
the composition of closed morphisms of CDG\+modules
$g\:R_0^\cu\rarrow K^\cu\rarrow N^\cu$.
 The image of~$c$ vanishes in $N^\cu\ot_{B^*}S^{1,\cu}$ (since
the composition of morphisms of CDG\+modules $R_1^\cu\rarrow R_0^\cu
\rarrow N^\cu$ vanishes), so $c\in N^\cu\ot_{B^*}S^{0,\cu}\subset
N^\cu\ot_{B^*}S^\cu$ is actually a degree~$0$ cocycle
in the complex $N^\cu\ot_{B^*}S^\cu$.
 The cocycle~$c$ represents the cohomology class corresponding to
the morphism $\Xi_{B^\cu}(f)\:\Xi_{B^\cu}(N^\cu)\rarrow
\Xi_{B^\cu}(K^\cu)$ under
the isomorphism~\eqref{hom-of-Xi-as-tensor-product-eqn}.

 Continuing to follow the proof of
Proposition~\ref{Hom-from-Xi-to-Xi-prop}, denote by $j\:N^\cu
\rarrow J^\cu$ our chosen closed morphism of CDG\+modules with
coacyclic cone.
 Then $jg\:R_0^\cu\rarrow J^\cu$ is a degree~$0$ cocycle in
the complex $\Hom_{B^\rop{}^*}^\bu(R_0^\cu,J^\cu)$.
 Moreover, $jg\in\Hom_{B^\rop{}^*}^0(R_0^\cu,J^\cu)\subset
\Tot^\sqcup(\Hom^\bu_{B^\rop{}^*}(R_\bu^\cu,J^\cu))$ is
actually a degree~$0$ cocycle in the complex
$\Tot^\sqcup(\Hom^\bu_{B^\rop{}^*}(R_\bu^\cu,J^\cu))$ (for
the same reason that the composition of morphisms of CDG\+modules
$R_1^\cu\rarrow R_0^\cu\rarrow J^\cu$ vanishes).
 The cohomology class of the cocycle~$c$ corresponds to
the cohomology class of the cocycle~$jg$ under the isomorphism
in~\eqref{long-isomorphism-of-hom-and-tensor-cohomology}.

 Finally, the cocycle~$jg$ is the image of the cocycle
$jf\in\Hom_{B^\rop{}^*}^0(K^\cu,J^\cu)$ under the map of complexes
arising from~\eqref{acyclic-complex-of-homs-eqn} and inducing
the cohomology isomorphism~\eqref{totalized-bicomplex-induced-map}.
 We have shown that the cocycle~$jf$ represents the image of
the morphism $\Xi_{B^\cu}(f)$ under
the isomorphism~\eqref{Hom-from-Xi-to-Xi-isomorphism-eqn}
constructed in Proposition~\ref{Hom-from-Xi-to-Xi-prop}.
 The same cocycle $jf\in\Hom_{B^\rop{}^*}^0(K^\cu,J^\cu)$ represents
the image of~$f$ under
the isomorphism~\eqref{hom-abs-derived=hom-coderived}.
\end{proof}

\subsection{Compact generators of the coderived and
contraderived categories}
 The following corollary summarizes our results on compact generators
of the coderived and contraderived categories of CDG\+modules.

 Given a triangulated category $\sS$, we denote by $\overline\sS$
the idempotent completion of~$\sS$.
 Given a triangulated category $\sT$ with coproducts, we denote by
$\sT^\cmp\subset\sT$ the full triangulated subcategory of compact
objects in~$\sT$.
In view of Theorem~\ref{compact-generators-of-coderived-theorem}, 
we also denote by
\begin{equation} \label{RHom-on-Dabs-definition}
\boR\Hom_{B^*}^\bu\:\sD^\abs(\bmodr B^\cu)^\sop\times
\sD^\abs(\bmodr B^\cu)\lrarrow\sD(\Ab)
\end{equation}
the functor given by the formula
$\boR\Hom_{B^*}^\bu(X^\cu,Y^\cu):=\Hom_{B^*}^\bu(X^\cu,I^\cu(Y^\cu))$,
where $A^\cu(Y^\cu)\to Y^\cu\to I^\cu(Y^\cu)\to A^\cu(Y^\cu)[1]$ is the
(unique up to a unique isomorphism) triangle in $\sH^0(\bModr B^\cu)$
corresponding to the semiorthogonal decomposition to the classes
$\sH^0(B^\cu\bModl_\binj)$ and $\sH^0(B^\cu\bModl)_\ac^\bco$
(recall Section~\ref{coderived-subsecn}).
This is the best available candidate for the right derived functor of
$\Hom_{B^*}^\bu$ on $\sD^\abs(\bmodr B^\cu)$, as explained in
Remark~\ref{RHom-for-Dabs-remark} below.

\begin{cor} \label{summary-on-compact-generators-cor}
 Let $B^\cu=(B^*,d,h)$ be a CDG\+ring such that the graded ring $B^*$
is graded right coherent.
 Then both the coderived category of right CDG\+modules\/
$\sD^\bco(\bModr B^\cu)$ and the contraderived category of left
CDG\+modules\/ $\sD^\bctr(B^\cu\bModl)$ over $B^\cu$ are compactly
generated.

 The full triangulated subcategory of compact objects in
the coderived category\/ $\sD^\bco(\bModr B^\cu)$ is equivalent to
the idempotent completion of the absolute derived category of
finitely presented right CDG\+modules\/ $\sD^\abs(\bmodr B^\cu)$,
\begin{equation} \label{compact-objects-of-coderived}
 \sD^\bco(\bModr B^\cu)^\cmp\simeq
 \overline{\sD^\abs(\bmodr B^\cu)}.
\end{equation}
 The full triangulated subcategory of compact objects in
the contraderived category\/ $\sD^\bctr(B^\cu\bModl)$ is
anti-equivalent to the idempotent completion of the absolute
derived category\/ $\sD^\abs(\bmodr B^\cu)$,
\begin{equation} \label{compact-objects-of-contraderived}
 \sD^\bctr(B^\cu\bModl)^\cmp\simeq
 \overline{\sD^\abs(\bmodr B^\cu)}^\sop.
\end{equation}
 Thus the full triangulated subcategories of compact objects in\/
$\sD^\bco(\bModr B^\cu)$ and\/ $\sD^\bctr(B^\cu\bModl)$ are
opposite to each other,
$$
 \sD^\bctr(B^\cu\bModl)^\cmp\simeq
 (\sD^\bco(\bModr B^\cu)^\cmp)^\sop.
$$
 The resulting connection between the coderived category\/
$\sD^\bco(\bModr B^\cu)$ and the contraderived category\/
$\sD^\bctr(B^\cu\bModl)$ can be expressed by the tensor product
pairing functor
$$
 \ot_{B^*}^\boL\:\sD^\bco(\bModr B^\cu)\times
 \sD^\bctr(B^\cu\bModl)\lrarrow\sD(\Ab),
$$
as per Proposition~\ref{Hom-from-tilde-Xi-prop},
which, using the identifications \eqref{compact-objects-of-coderived} and~\eqref{compact-objects-of-contraderived}, restricts to
the functor
$$
 \boR\Hom_{B^*}^\bu(-,-)\:\sD^\abs(\bmodr B^\cu)^\sop\times
 \sD^\abs(\bmodr B^\cu)\lrarrow\sD(\Ab).
$$
\end{cor}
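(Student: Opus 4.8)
The plan is to assemble the statement from the results already proved, organizing the argument into the coderived side, the contraderived side, and the pairing. For the coderived category, equation~\eqref{compact-objects-of-coderived} is essentially a repackaging of Theorem~\ref{compact-generators-of-coderived-theorem}. That theorem supplies the fully faithful triangulated functor~\eqref{absolute-derived-into-coderived} $\sD^\abs(\bmodr B^\cu)\rarrow\sD^\bco(\bModr B^\cu)$ whose essential image is a set of compact generators. By the general principle recalled in Section~\ref{compact-generators-of-coderived-subsecn}, namely \cite[Theorem~2.1(3)]{Neem-bb}, the full subcategory of compact objects in a compactly generated triangulated category is the closure of any set of compact generators under shifts, cones, and direct summands. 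Since the essential image of a fully faithful triangulated functor out of the triangulated category $\sD^\abs(\bmodr B^\cu)$ is already closed under shifts and cones, this closure reduces to closure under direct summands, i.e.\ to the idempotent completion, which yields~\eqref{compact-objects-of-coderived}.

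For the contraderived category I would argue identically, now using Theorem~\ref{image-of-Xi-compact-generators-thm} (which gives that $\sD^\bctr(B^\cu\bModl)$ is compactly generated and that the image of $\Xi_{B^\cu}$ consists of compact generators) together with Theorem~\ref{Xi-fully-faithful-theorem} (full faithfulness of $\Xi_{B^\cu}$). As $\Xi_{B^\cu}$ is a fully faithful triangulated functor out of $\sD^\abs(\bmodr B^\cu)^\sop$, its essential image is a triangulated subcategory equivalent to $\sD^\abs(\bmodr B^\cu)^\sop$, and the same Neeman--B\"okstedt argument identifies the compact objects with its idempotent completion. Using that idempotent completion is self-dual (split idempotents in $\sS^\sop$ correspond to those in $\sS$), this gives $\sD^\bctr(B^\cu\bModl)^\cmp\simeq\overline{\sD^\abs(\bmodr B^\cu)^\sop}\simeq\overline{\sD^\abs(\bmodr B^\cu)}^\sop$, which is~\eqref{compact-objects-of-contraderived}. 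Combining the two identifications yields the anti-equivalence $\sD^\bctr(B^\cu\bModl)^\cmp\simeq(\sD^\bco(\bModr B^\cu)^\cmp)^\sop$.

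For the final statement about the pairing, I would trace the identifications. Under~\eqref{compact-objects-of-coderived} an object $N^\cu$ of $\sD^\abs(\bmodr B^\cu)$ maps to its image in $\sD^\bco(\bModr B^\cu)$, and under~\eqref{compact-objects-of-contraderived} an object $K^\cu$ of $\sD^\abs(\bmodr B^\cu)^\sop$ maps to $\Xi_{B^\cu}(K^\cu)$. Thus it suffices to compute the pairing $N^\cu\ot_{B^*}^\boL\Xi_{B^\cu}(K^\cu)$. By Remark~\ref{RHom-from-tilde-Xi-prop} this equals $\boR\Hom_{B^*}^\bu(\Xi_{B^\cu}(N^\cu),\Xi_{B^\cu}(K^\cu))$, and the chain of isomorphisms in the proof of Proposition~\ref{Hom-from-Xi-to-Xi-prop}, which is valid in every cohomological degree rather than only on $H^0$, identifies this with the functor $\boR\Hom_{B^*}^\bu(K^\cu,N^\cu)$ of~\eqref{RHom-on-Dabs-definition}. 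The contravariant $\Xi_{B^\cu}$ flips the variance in the second factor, so the pairing becomes contravariant in $K^\cu$ and covariant in $N^\cu$, matching $\sD^\abs(\bmodr B^\cu)^\sop\times\sD^\abs(\bmodr B^\cu)\rarrow\sD(\Ab)$ as desired.

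Since the substantive work resides in the cited theorems, the main obstacle is the mild bookkeeping in the last paragraph: one must verify that the identification of Proposition~\ref{Hom-from-Xi-to-Xi-prop} is natural and holds at the level of the derived Hom complexes, and that the variances introduced by the contravariant functor $\Xi_{B^\cu}$ line up so that the pairing indeed becomes $\boR\Hom_{B^*}^\bu$ with contravariant first argument. This is precisely the compatibility checked step by step in the proof of Theorem~\ref{Xi-fully-faithful-theorem}, so it can be invoked rather than repeated.
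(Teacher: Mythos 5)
Your proposal is correct and follows essentially the same route as the paper: both deduce the two compact-objects identifications from Theorems~\ref{compact-generators-of-coderived-theorem}, \ref{image-of-Xi-compact-generators-thm} and~\ref{Xi-fully-faithful-theorem} together with the standard principle (recalled in Section~\ref{compact-generators-of-coderived-subsecn}) that the compact objects form the thick, i.e.\ idempotent-completed, closure of any set of compact generators, and both identify the restricted pairing with $\boR\Hom_{B^*}^\bu(-,-)$ via Remark~\ref{RHom-from-tilde-Xi-prop} and Proposition~\ref{Hom-from-Xi-to-Xi-prop}. Your slightly more explicit tracing of the pairing through the complex-level chain of quasi-isomorphisms in the proof of Proposition~\ref{Hom-from-Xi-to-Xi-prop} is a harmless (indeed welcome) elaboration of the paper's one-line citation of Remark~\ref{RHom-from-tilde-Xi-prop}.
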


\begin{proof}
 By Theorem~\ref{compact-generators-of-coderived-theorem},
the covariant triangulated functor $\sD^\abs(\bmodr B^\cu)\rarrow
\sD^\bco(\bModr B^\cu)$ \,\eqref{absolute-derived-into-coderived}
induced by the inclusion of DG\+categories $\bmodr B^\cu\rarrow
\bModr B^\cu$ is fully faithful, and (a set of representatives
of the isomorphism classes of objects in) its image is a set
of compact generators of $\sD^\bco(\bModr B^\cu)$.
 In view of the discussion in the beginning of
Section~\ref{compact-generators-of-coderived-subsecn}, a triangulated
equivalence~\eqref{compact-objects-of-coderived} follows.

 By Theorem~\ref{Xi-fully-faithful-theorem}, the contravariant
triangulated functor $\Xi_{B^\cu}\:\sD^\abs(\bmodr B^\cu)^\sop
\allowbreak\rarrow\sD^\bctr(B^\cu\bModl)$
\,\eqref{functor-Xi-on-absolute-derived-category} from
Corollary~\ref{factorizes-through-absolute-derived-cor} is
fully faithful.
 By Theorem~\ref{image-of-Xi-compact-generators-thm},
(a~set of representatives of the isomorphism classes of objects in)
the image of $\Xi_{B^\cu}$ is a set of compact generators of
$\sD^\bctr(B^\cu\bModl)$.
 In view of the same discussion in the beginning of
Section~\ref{compact-generators-of-coderived-subsecn}, a triangulated
equivalence~\eqref{compact-objects-of-contraderived} follows.

 To sum up:
the equivalence~\eqref{compact-objects-of-coderived}
is induced by the inclusion of the DG\+category of finitely
presented right CDG\+modules $\bmodr B^\cu$ into the DG\+category
of all right CDG\+modules $\bModr B^\cu$.
 The equivalence~\eqref{compact-objects-of-contraderived} is induced
by the contravariant functor~$\Xi_{B^\cu}$ constructed in
Section~\ref{totalized-dual-construction-subsecn}
and Corollary~\ref{factorizes-through-absolute-derived-cor}.
 Finally, the identification of the pairing $-\ot_{B^*}^\boL-$
with $\boR\Hom_{B^*}^\bu(-,-)$ is obtained from
Remark~\ref{RHom-from-tilde-Xi-prop}.
\end{proof}

\begin{rem} \label{RHom-for-Dabs-remark}
The notation in~\eqref{RHom-on-Dabs-definition} suggests that
the functor $\boR\Hom_{B^*}^\bu$ satisfies a certain universal property.
Usually, if $\sA$, $\sB$ are categories such that $\sA$ has direct limits
and $\sB$ is essentially small, $\cW$ is a right multiplicative system
of morphisms in $\sB$ in the sense of~\cite[Definition~7.1.5]{KS},
and $F\:\sB\to\sA$ is a functor,
one denotes by $\boR F\:\sB[\cW^{-1}]\to\sA$ the functor given by
$$
\boR F(Y) = \varinjlim_{Y\to Z} F(Z),
$$
where the colimit runs over all homomorphims in $\cW$ originating
in $Y$. This is explained in~\cite[Proposition~7.3.3]{KS} and the
colimit is actually filtered (so exists in $\sA$) by
\cite[Proposition~7.1.10]{KS}.

We would wish to apply this recipe to a functor of the form
\begin{equation} \label{Hom-functor-on-Dabs-to-derive}
\Hom^\bu_{B^*}(X^\cu,-)\:\sH^0(\bmodr B^\cu)\lrarrow\sD(\Ab)
\end{equation}
with $X^\cu\in\sH^0(\bmodr B^\cu)$ and to the class $\cW$ of
morphisms with absolutely acyclic cones, so that
$\sD^\abs(\bmodr B^\cu)=\sH^0(\bmodr B^\cu)[\cW^{-1}]$.
However, the purported colimits exist only in the enhanced setting
(i.~e.\ as suitable homotopy colimits on the underlying
DG\+categories) in this setup.
More in detail, there is a representative of the
morphism $Y^\cu\to I^\cu(Y^\cu)$ in $\sZ^0(\bModr B^\cu)$
which is a direct limit of monomorphisms with absolutely acyclic cokernels there thanks to~\cite[Proposition~8.13]{PS5},
and $\Hom^\bu(X^\cu,-)$ preserves direct limits as a functor
from $\sZ^0(\bModr B^\cu)$ to $\sC(\Ab)$.
However, the actual universal property of the right derived functor
from \cite[Definition~7.3.1(i)]{KS} might not be provable in general
using only the framework of triangulated categories.
\end{rem}

\subsection{Examples}
\label{examples-compacts-subsecn}

Here we continue the examples from Section~\ref{examples-Dbctr-subsecn} to illustrate the results about the pairing between (Becker) coderived and contraderived categories and the compact generators.

\begin{ex}[Finite-dimensional CDG\+rings] \label{finite-dimensional-algebra-example-contd}
 Let $B^\cu=(B^*,d,h)$ be a finite-dimensional CDG\+algebra over
a field~$k$, as in Example~\ref{finite-dimensional-algebra-example}.
 Then we have the graded $k$\+vector space dual CDG\+coalgebra
$\cC^\cu=(\cC^*,d,h)=B^\cu{}\spcheck$ over~$k$.
 
 Quite generally, for any CDG\+coalgebra $\cC^\cu$ over~$k$,
the coderived category of CDG\+comodules $\sD^\bco(\cC^\cu\bComodl)$
is compactly generated, as explained in~\cite[Section~5.5]{Pkoszul},
\cite[Remark~7.14]{Pksurv}.
 The compact generators of $\sD^\bco(\cC^\cu\bComodl)$ are
the finite-dimensional CDG\+comodules; so the full triangulated
subcategory of compact objects in $\sD^\bco(\cC^\cu\bComodl)$ is
equivalent to the idempotent completion of the absolute derived
category of finite-dimensional CDG\+comodules
$\sD^\abs(\cC^\cu\bcomodl)$ \,\cite[Theorem~4.6]{Pkoszul}.
 Here a CDG\+comodule $\cM^\cu$ is said to be (totally)
finite-dimensional if $\cM^i=0$ for all but a finite set of degrees~$i$
and $\cM^i$ is a finite-dimensional $k$\+vector space for all degrees
$i\in\boZ$.

 In view of the triangulated equivalence of derived
comodule-contramodule
correspondence~\eqref{coalgebra-co-contra-corresp}, it follows that
the contraderived category $\sD^\bctr(\cC^\cu\bContra)$ is also
compactly generated, and its full triangulated subcategory of
compact objects is equivalent to the idempotent completion of
$\sD^\abs(\cC^\cu\bcomodl)$.
 However, the description of the compact objects in
the contraderived category $\sD^\bctr(\cC^\cu\bContra)$ through
the compact objects in the coderived category
$\sD^\bco(\cC^\cu\bComodl)$ and the triangulated
equivalence~\eqref{coalgebra-co-contra-corresp} is a bit indirect.
 Can one ``see'' the compact objects in the contraderived
category $\sD^\bctr(\cC^\cu\bContra)$ without passing to
the coderived category?

 The results of this paper answer this question in the case of
a finite-dimensional CDG\+coalgebra~$\cC^\cu$.
 Specifically, let $B^\cu$ be a finite-dimensional CDG\+algebra
over~$k$ and $\cC^\cu=B^\cu{}\spcheck$.
 Let us point out that a graded $B^*$\+module is finitely presented
if and only if it is (totally) finite-dimensional over~$k$.
 We claim that there is a commutative diagram of fully faithful
triangulated functors and triangulated equivalences
\begin{equation} \label{finite-dim-algebra-main-comm-diagram}
\begin{gathered}
 \xymatrix{
  \sD^\abs(B^\cu\bmodl) \ar@{=}^{\vee}[r] \ar@{>->}[d]
  & \sD^\abs(\bmodr B^\cu)^\sop \ar@{>->}[d]^{\Xi_{B^\subcu}} \\
  \sD^\bco(B^\cu\bModl) \ar@{=}[r] & \sD^\bctr(B^\cu\bModl)
 }
\end{gathered}
\end{equation}
 Here the triangulated anti-equivalence in the upper horizontal line
takes a finite-dimensional right CDG\+module $N^\cu$ over $B^\cu$ to
its graded dual $k$\+vector space $L^\cu=N^\cu{}\spcheck=
\Hom_k^*(N^\cu,k)$, which is a finite-dimensional left CDG\+module
over~$B^\cu$.
 The triangulated equivalence~\eqref{fin-dim-algebra-co-contra-corresp}
in the lower horizontal line is provided by the derived functors
$\boR\Hom^\cu_{B^*}(B^\cu{}\spcheck,{-})$ and
$B^\cu{}\spcheck\ot_{B^*}^\boL{-}$.
 The leftmost vertical covariant fully faithful functor is induced by
the inclusion of DG\+categories $B^\cu\bmodl\rarrow B^\bu\bModl$,
as in Theorem~\ref{compact-generators-of-coderived-theorem}.
 The rightmost vertical contravariant fully faithful functor
$\Xi_{B^\cu}$ was constructed in
Corollary~\ref{factorizes-through-absolute-derived-cor}.

 Before proving the commutativity
of~\eqref{finite-dim-algebra-main-comm-diagram}, let us first recall
the construction of the functor (triangulated equivalence)
$\boR\Hom^\cu_{B^*}(B^\cu{}\spcheck,{-})\:\sD^\bco(B^\cu\bModl)
\rarrow\sD^\bctr(B^\cu\bModl)$.
 Let $M^\cu$ be a left CDG\+module over $B^\cu$, viewed as an object
of $\sD^\bco(B^\cu\bModl)$.
 Following~\cite[proofs of Theorems~3.7, 4.4(c) and~5.2]{Pkoszul},
one starts with choosing a coresolution
$0\rarrow M^\cu\rarrow J^{0,\cu}\rarrow J^{1,\cu}\rarrow J^{2,\cu}
\rarrow\dotsb$ of the CDG\+module $M^\cu$ by graded-injective
CDG\+modules $J^{n,\cu}$ in the abelian category $\sZ^0(B^\cu\bModl)$.
 The direct sum totalization $J^\cu=\Tot^\sqcup(J^{\bu,\cu})$
is a left CDG\+module over $B^\cu$ naturally isomorphic to $M^\cu$
in the coderived category $\sD^\bco(B^\cu\bModl)$.
 The left CDG\+module $\Hom^\cu_{B^*}(B^\cu{}\spcheck,J^\cu)$ over
$B^\cu$ represents the object of $\sD^\bctr(B^\cu\bModl)$ corresponding
to the object $M^\cu\in\sD^\bco(B^\cu\bModl)$.

 Now we start with a finite-dimensional right CDG\+module $N^\cu$
over~$B^\cu$, and pick its resolution $\dotsb\rarrow P_2^\cu\rarrow
P_1^\cu\rarrow P_0^\cu\rarrow N^\cu\rarrow0$ by finite-dimensional
graded-projective right CDG\+modules $P_n^\cu$ in the abelian
category $\sZ^0(\bmodr B^\cu)$.
 Passing to the graded dual $k$\+vector spaces, we obtain
a coresolution $0\rarrow L^\cu\rarrow P_0^\cu{}\spcheck\rarrow
P_1^\cu{}\spcheck\rarrow P_2^\cu{}\spcheck\rarrow\dotsb$ of
the left CDG\+module $L^\cu=N^\cu{}\spcheck$ by graded-injective
(finite-dimensional) left CDG\+modules $P_n^\cu{}\spcheck$
in the abelian category $\sZ^0(B^\cu\bmodl)\subset\sZ^0(B^\cu\bModl)$.
 Setting $J^{n,\cu}=P_n^\cu{}\spcheck$ and
$J^\cu=\Tot^\sqcup(J^{\bu,\cu})$, we have
$\boR\Hom_{B^*}^\cu(B^\cu{}\spcheck,L^\cu)=
\Hom_{B^*}^\cu(B^\cu{}\spcheck,J^\cu)$.
 On the other hand, putting $Q^{n,\cu}=
\Hom_{B^\rop{}^*}^\cu(P_n^\cu,B^\cu)$, we have
$\Xi_{B^\cu}(N^\cu)=Q^\cu=\Tot^\sqcup(Q^{\bu,\cu})$
by the definition of the functor $\Xi_{B^\cu}$.

 We claim that there is a natural closed isomorphism
$\Hom_{B^*}^\cu(B^\cu{}\spcheck,J^\cu)\simeq Q^\cu$ of
left CDG\+modules over~$B^\cu$.
 Indeed, for every $n\ge0$, we have
$$
 \Hom_{B^*}^\cu(B^\cu{}\spcheck,J^{n,\cu})=
 \Hom_{B^*}^\cu(B^\cu{}\spcheck,P_n^\cu{}\spcheck)
 \simeq\Hom_{B^\rop{}^*}^\cu(P_n^\cu,B^\cu)=Q^{n,\cu},
$$
since $B^*$ (as well as~$P_n^*$) is a finite-dimensional graded
$k$\+vector space.
 Since $B^*{}\spcheck$ is a finite-dimensional graded $k$\+vector
space, too, the functor $\Hom_{B^*}^\cu(B^\cu{}\spcheck,{-})$
commutes with direct sum totalizations, so we have
$$
 \Hom_{B^*}^\cu(B^\cu{}\spcheck,\Tot^\sqcup(J^{\bu,\cu}))
 \simeq\Tot^\sqcup(\Hom_{B^*}^\cu(B^\cu{}\spcheck,J^{\bu,\cu}))
 =\Tot^\sqcup(Q^{\bu,\cu}).
$$
 This finishes the proof of commutativity of
the diagram~\eqref{finite-dim-algebra-main-comm-diagram}.

 The commutative diagram~\eqref{finite-dim-algebra-main-comm-diagram}
provides two equivalent descriptions of the compact objects in
the contraderived category $\sD^\bctr(B^\cu\bModl)$.
 The description via the derived comodule-contramodule correspondence
is more general, in that it works for infinite-dimensional
CDG\+coalgebras $\cC^\cu$, too.
 However, the description via the functor~$\Xi_{B^\cu}$, as per
Theorems~\ref{image-of-Xi-compact-generators-thm}
and~\ref{Xi-fully-faithful-theorem}, may be a bit more explicit.
\end{ex}

\begin{ex}[Algebraic de~Rham complexes] \label{de-Rham-complex-example-contd}
 Let $X$ be a smooth affine algebraic variety over a field~$k$ of
characteristic zero.
 Consider the de~Rham DG\+algebra of differential forms
$\Omega^\bu(X/k)$, as in Example~\ref{de-Rham-complex-example}.
 Then there are several indirect ways to describe the compact
generators of the contraderived category
$\sD^\bctr(\Omega^\bu(X/k)\bModl)$.
 In particular, according to~\eqref{D-Omega-duality}, there is
a triangulated equivalence $\sD^\bctr(\Omega^\bu(X/k)\bModl)
\simeq\sD^\bco(\Omega^\bu(X/k)\bModl)$, and the compact generators
of the coderived category $\sD^\bco(\Omega^\bu(X/k)\bModl)$ are
the finitely generated DG\+modules over $\Omega^\bu(X/k)$,
as per~\cite[Section~3.11]{Pkoszul} and
Theorem~\ref{compact-generators-of-coderived-theorem} above.

 Furthermore, according to~\eqref{D-Omega-duality}, there is
also a triangulated equivalence of $D$\+$\Omega$ duality
$\sD^\bctr(\Omega^\bu(X/k)\bModl)\simeq\sD(D(X/k)\rModl)$.
 The derived category of modules over any associative ring $A$ is
compactly generated by perfect complexes of $A$\+modules, i.~e.,
essentially, bounded complexes of finitely generated projective
$A$\+modules.
 In the case of $A=D(X/k)$, the associative algebra of differential
operators $D(X/k)$ is Noetherian and has finite homological dimension
by~\cite[Proposition~4.2 and Theorem~4.3]{Ch}
(cf.~\cite[Theorem~3.7]{Sm}); so all bounded complexes of finitely
generated $D(X/k)$\+modules are quasi-isomorphic to bounded complexes
of finitely generated projective $D(X/k)$\+modules.
 The derived category of such complexes,
$\sD^\bb(D(X/k)\rmodl_\proj)\simeq\sD^\bb(D(X/k)\rmodl)$, is
the full subcategory of compact objects in $\sD(D(X/k)\rModl)$.

 The description of the full subcategory of compact
objects/generators of the contraderived
category $\sD^\bctr(\Omega^\bu(X/k)\bModl)$ provided by
Theorems~\ref{image-of-Xi-compact-generators-thm}
and~\ref{Xi-fully-faithful-theorem} may be a bit more explicit.

 Much more generally, let $X\rarrow S$ be a morphism of finite
presentation between affine schemes.
 Assume that $X$ is a coherent affine scheme
(cf.~\cite[Section~9.1]{PS5}).
 Then the de~Rham DG\+algebra $\Omega^\bu(X/S)$ of (fiberwise)
differential forms on $X$ relative to $S$, as defined
in~\cite[Section~16.6]{EGAIV} or~\cite[Section Tag~0FKF]{SP}, has
a (left and right) coherent underlying graded ring $\Omega^*(X/S)$.
 Indeed, $\Omega^*(X/S)$ is a finitely presented graded module
over the ring of functions $O(X)=\Omega^0(X/S)$, which is coherent.
 So Theorem~\ref{compact-generators-of-coderived-theorem} provides
a description of the compact generators of the coderived category
$\sD^\bco(\Omega^\bu(X/S)\bModl)$, while
Theorems~\ref{image-of-Xi-compact-generators-thm}
and~\ref{Xi-fully-faithful-theorem} describe the compact generators
of the contraderived category $\sD^\bctr(\Omega^\bu(X/S)\bModl)$
(with Corollary~\ref{summary-on-compact-generators-cor} providing
a summary and additional details).
\end{ex}

\begin{ex}[Matrix factorizations] \label{matrix-factorizations-example-contd}
 Continuing the discussion in
Example~\ref{matrix-factorizations-example}, consider a right coherent
ring~$R$.
 If $R$ is commutative, then $X=\Spec R$ is a coherent affine scheme
(cf.~\cite[Section~9.1]{PS5}), and one can view finitely
presented $R$\+modules as coherent sheaves on $\Spec R$.

 Let $L$ be an invertible $R$\+$R$\+bimodule and $w\in L$ be a central element.
 Consider the CDG\+ring $B^\cu=(B^*,0,w)$ as in
Example~\ref{matrix-factorizations-example}; so the left CDG\+modules
over $B^\cu$ are interpreted as left $R$\+module matrix factorizations of the potential~$w$. Similarly, right CDG\+modules
over $B^\cu$ can be interpreted as right $R$\+module matrix factorizations of the potential~$-w$.
 Let us say that a matrix factorization $M^\cu=(M^*,d_M)$ of right modules is \emph{coherent} if the $R$\+modules $M^0$ and $M^1$ are finitely presented.
 For $R$ commutative, this is the affine particular case of the definition of a coherent (matrix) factorization in~\cite[Section~9.2]{PS5}.

 According to Theorem~\ref{image-of-Xi-compact-generators-thm}
and~\ref{Xi-fully-faithful-theorem} (or
Corollary~\ref{summary-on-compact-generators-cor}), the homotopy
category of projective matrix factorizations of left modules is compactly generated
by the opposite category to the absolute derived category of
coherent matrix factorizations of right modules.
 In the case of a Noetherian scheme $X$ with a dualizing complex,
an analogous result can be obtained by comparing~\cite[Corollary~2.3(l)
or Proposition~1.5(d)]{EP} with~\cite[Proposition~2.5 and
Theorem~2.5]{EP}.
 See~\cite[Corollary~2.5]{EP} for the details.
 Our result in this paper is much more general in that it does not
require a dualizing complex, it does not presume Noetherianity
but only coherence, and it works for non-commutative rings.
However, even in the case of commutative rings it is also less general in that it is only applicable to affine schemes.
\end{ex}

\bigskip

\end{document}